\documentclass[a4paper]{amsart}
\usepackage{amsmath,amsthm,amssymb,latexsym,epic,bbm,comment,mathrsfs}
\usepackage{graphicx,enumerate,stmaryrd,xcolor,tikz}
\usepackage[all,2cell]{xy}
\xyoption{2cell}

\newtheorem{theorem}{Theorem}
\newtheorem{lemma}[theorem]{Lemma}
\newtheorem{corollary}[theorem]{Corollary}
\newtheorem{proposition}[theorem]{Proposition}

\newtheorem{remark}[theorem]{Remark}
\usepackage[all]{xy}
\usepackage[active]{srcltx}
\usepackage[parfill]{parskip}
\usepackage{enumerate}

\newcommand{\tto}{\twoheadrightarrow}

\begin{document}

\title[Some homological properties of category $\mathcal{O}$, VII]
{Some homological properties of category $\mathcal{O}$, VII}

\author[V.~Mazorchuk]
{Volodymyr Mazorchuk}

\begin{abstract}
We describe Calabi-Yau objects in the regular block of the 
(parabolic) BGG category $\mathcal{O}$ associated to a
semi-simple finite dimensional complex Lie algebra.
Each such object comes with a natural transformation from
the Serre functor to a shifted identity whose evaluation
at that object is an isomorphism.
\end{abstract}

\maketitle

{\em Dedicated to the memory of Andriy Vitaliyovych Olshanskyy.}

\section{Introduction}\label{s1}

\subsection{Motivation}\label{s1.1}

This paper is motivated by the ideas and the results of the two 
very interesting preprints \cite{EH1,EH2} which outline the theory of 
categorical diagonalization and study it in the context of
diagrammatic Soergel bimodules. 
The ideas of these preprints have 
shown to be very useful for the study of 
Soergel bimodules and their applications to low dimensional topology,
see \cite{GHW,GHMN,EH3,Ho,MMMTZ2,HRW} and references therein.
The present paper is an attempt to understand
these ideas and results in the context of the BGG category $\mathcal{O}$, cf.
\cite{BGG,Hu}.

Category $\mathcal{O}$ is a certain category of modules associated to a fixed
triangular decomposition of a semi-simple finite dimensional complex Lie algebra.
The principal block $\mathcal{O}_0$ of $\mathcal{O}$ is equivalent to the category
of finite dimensional modules over a certain finite dimensional, associative,
Koszul algebra $A$. This block is equipped with an action of the finitary
bicategory $\mathscr{P}$ of projective functors. There is a combinatorial model
for the action of $\mathscr{P}$ on $\mathcal{O}_0$, due to Soergel, which uses
the coinvariant algebra $\mathtt{C}$ of the Weyl group of the underlying Lie
algebra. This model represents the projective functors acting on $\mathcal{O}_0$
via the so-called {\em Soergel bimodules} (over $\mathtt{C}$). Grothendieck
decategorification of the graded version of this model gives the right regular
representation of the associated Hecke algebra, where the Kazhdan-Lusztig
basis of the latter appears naturally as the image of the indecomposable 
Soergel bimodules.

Lifting the action of $\mathscr{P}$ on $\mathcal{O}_0$ to the level of the derived
category (or, equivalently, to the level of the homotopy category of complexes),
gives, in particular, an action of Rouquier's $2$-braid group on
$\mathcal{D}^b(\mathcal{O}_0)$ by the so-called {\em shuffling functors}.
The canonical generator of the center of the braid group corresponds to
the square of the longest element of the Weyl group and is sometimes 
referred to as the {\em full twist}, see \cite{EH2}. At the level of 
the category $\mathcal{D}^b(\mathcal{O}_0)$, this full twist is 
a Serre functor, denoted $\mathbb{S}$, as shown in \cite{MS2}.
Understanding the homological properties of $\mathbb{S}$ is important
for the study of the category $\mathcal{O}_0$.

The paper \cite{EH2} discusses, in the setup of Soergel bimodules over the
polynomial algebra, and especially in type $A$, the so-called {\em categorical 
diagonalization} of the full twist. This is a special kind of structure given 
by certain  natural transformations from this full twist to the identity,
shifted both in the direction of homological position and in the 
direction of Koszul grading. 

The notion of a Serre functor $\mathbb{S}$ on a $\Bbbk$-linear category
$\mathcal{C}$ was defined in \cite{BoKa}. The principal defining
property of $\mathbb{S}$ are isomorphisms 
$\mathcal{C}(X,\mathbb{S}(Y))\cong \mathcal{C}(Y,X)^*$,
natural in both, $X$ and $Y$, for all $X,Y\in \mathcal{C}$ .
In \cite{CZ},  an object $Y\in \mathcal{C}$ is called a 
{\em Calabi-Yau object of dimension $i$}, where $i\in\mathbb{Z}$, provided that
$\mathcal{C}(X,Y[i])\cong \mathcal{C}(Y,X)^*$, or, equivalently,
$\mathbb{S}Y\cong Y[i]$. Understanding Calabi-Yau object 
provides important information about the structure and behavior of
the Serre functor. 

Combining the ideas of \cite{EH1,EH2} with those of  \cite{CZ},  
given a Calabi-Yau object $Y$ of dimension $i$, it seems reasonable to ask
whether there is a natural transformation from $\mathbb{S}$ to the
$i$-th shift of the identity in the homological position whose evaluation
at $Y$ is an isomorphism. That is exactly the main question we address in the
present paper in the setup of the category $\mathcal{O}_0$.

\subsection{Results}\label{s1.2}
To be able to formulate the main results of this paper, we need 
to introduce some notation. Let $W$ be the Weyl group of our
Lie algebra. Then the isomorphism classes of simple objects in
$\mathcal{O}_0$ are indexed by the elements of $W$. For
$w\in W$, we denote by $L_w$ the simple object corresponding to
$w$. Here $L_e$ is the trivial module. Also, the indecomposable
projective endofunctors of $\mathcal{O}_0$ are indexed 
by the elements of $W$ and denoted $\theta_w$. The Kazhdan-Lusztig
combinatorics splits $W$ into left, right and two-sided Kazhdan-Lusztig cells.
Each left and each right KL-cell contains a unique distinguished involution
called a {\em Duflo element}. We denote by $\mathbf{D}$ the set of
all Duflo elements in $W$. If $x,y\in W$ belong to the same right
cell, we write $x\sim_R y$. We denote by $\mathbf{a}$ Lusztig's
$\mathbf{a}$-function on $W$. As usual, we denote by $[i]$ the shift
in homological position and by $\langle i\rangle$ the shift in
the Koszul grading.

We can now state our first main result, Theorem~\ref{conj1} in the text,
as follows:
\vspace{2mm}

{\bf Theorem~A.}
{\em
For $d\in\mathbf{D}$ and $w\in W$ such that $w\sim_R d$, the object 
$\theta_w L_d$ is a Calabi-Yau object in $\mathcal{D}^b(\mathcal{O}_0)$
of dimension $2\mathbf{a}(w_0d)$. More precisely, in the graded picture, we have 
\begin{displaymath}
\mathbb{S}(\theta_w L_{d})\cong
\theta_w L_{d}
\langle 2\big(\mathbf{a}(d)-\mathbf{a}(w_0d)\big)\rangle
[2\mathbf{a}(w_0d)].
\end{displaymath}
}
\vspace{2mm}

Our second main result, which is 
Theorem~\ref{prop-s8.25-1} in the text, asserts the following:
\vspace{2mm}

{\bf Theorem~B.}
{\em
For any $d\in \mathbf{D}$, there exists a natural transformation
\begin{displaymath}
\alpha_d:\mathbb{S}\to 
\mathrm{Id}\langle 2(\mathbf{a}(d)-\mathbf{a}(w_0d))\rangle[2\mathbf{a}(w_0d)]
\end{displaymath}
such that, for any $w\sim_R d$, the evaluation of $\alpha_d$ at 
$\theta_{w}L_{d}$ is an isomorphism.
}
\vspace{2mm}

We note that the objects of the form $\theta_w L_d$ appeared previously in
\cite{MS5} in the context of categorification of 
Wedderburn's basis in type $A$. They also appear as projective-injective
objects for certain subquotients of $\mathcal{O}_0$, see \cite{MS4}.
The latter was probably the main motivation to investigate
these objects as potential Calabi-Yau object. Projective-injective
objects (with isomorphic top and socle) are obvious Calabi-Yau objects
of dimension $0$ in the very general case. Projective-injective
objects for subquotient categories are, of course, not 
Calabi-Yau objects in general. However, the category $\mathcal{O}_0$
and, especially, the action of projective functors on this category
makes the situation ``nice'' enough for the above results to
be possible.

We propose the following conjecture:

{\bf Conjecture~C.}
{\em
For a fixed $d\in\mathbf{D}$, any Calabi-Yau object in 
$\mathcal{D}^b(\mathcal{O}_0)$ of dimension
$2\mathbf{a}(w_0d)$ admits a filtration with subquotients
of the form $\theta_w L_{d'}$, where
$d'\in\mathbf{D}$ is such that $\mathbf{a}(d)=\mathbf{a}(d')$
and $w\in W$ such that $w\sim_R d'$.
}

\subsection{Methods}\label{s1.3}

Proofs in the paper involve a wide variety of various methods and techniques,
most of which were developed only very recently.  The general background of 
all arguments is, as usual, 
\begin{itemize}
\item the Kazhdan-Lusztig combinatorics from \cite{KL}
and its connection to $\mathcal{O}_0$ as established in \cite{BB,BK,EW};
\item Soergel's combinatorial description of 
both $\mathcal{O}_0$ and $\mathscr{P}$ from \cite{So,So3};
\item Koszul duality from \cite{BGS}.
\end{itemize}

Further, we extensively use the ideas and results from several of the
previous papers in the ``Some homological properties of category $\mathcal{O}$''
series, namely, from \cite{Maz1,Maz2,CM5,KMM,KMM2}. This includes:
\begin{itemize}
\item explicit information on the homological invariants
(for instance, on projective dimension) of the structural
objects in $\mathcal{O}_0$;
\item explicit information on the structure of resolutions
of structural modules in $\mathcal{O}_0$;
\item Auslander regularity of $\mathcal{O}_0$;
\item latest advances in our understanding of Kostant's problem
for simple highest weight modules.
\end{itemize}
The present paper would hardly be possible without the insight and the 
intuition about some very tiny special features of various complexes
of modules in the category $\mathcal{O}_0$ which was developed during
the work on all previous papers in this series.

Finally, we crucially employ the methods and techniques from
$2$-representation theory which allow us to study the
category $\mathcal{O}_0$ from the point of view of the 
bicategory $\mathscr{P}$ acting on $\mathcal{O}_0$.
Here, in particular, we heavily rely on
\begin{itemize}
\item properties of categories with full projective functors
as developed in \cite{Kh};
\item abstract theory of birepresentations of finitary
bicategories as developed in \cite{MM1,MMMTZ1} and its concrete
realization via the action of projective functors on category 
$\mathcal{O}$ as studied in \cite{MMMTZ2};
\item $2$-categorical approach to Kostant's problem
developed in \cite{KMM2}.
\end{itemize}

\subsection{Structure of the paper}\label{s1.4}

The paper is organized as follows:
In Section~\ref{s2} we collected all necessary preliminaries 
on category $\mathcal{O}$ and various technical tools that 
are used in the paper. Section~\ref{s3} is devoted to the first 
main result, Theorem~\ref{conj1}, which describes a family
of Calabi-Yau objects in the principal block of $\mathcal{O}$.
We start by proving the result is some special cases.
This eventually directs us towards the idea how to prove 
Theorem~\ref{conj1} in the general case.
Section~\ref{s8} is dedicated to the second 
main result, Theorem~\ref{prop-s8.25-1}, which describes
certain natural transformations from the Serre functor to
the (shifted) identity, whose evaluations at the
corresponding Calabi-Yau objects are isomorphisms.
Section~\ref{s6} contains some bonus homological properties
of category $\mathcal{O}$ which are inspired by the proofs
of Theorem~\ref{conj1} and Theorem~\ref{prop-s8.25-1}.
Section~\ref{s9} provides a generalizations of the two
main results to the case of the parabolic category 
$\mathcal{O}$. Finally, in Section~\ref{s7} we collected
a number of explicit examples, both low rank and general.
These examples were quite helpful to the author from the
point of view of explaining, better understanding and 
checking the main results of the paper.

We are very far away from solving all interesting problems
in the area. A number of guesses, expectations and conjectures are 
formulated throughout the paper.
\vspace{5mm}

\textbf{Acknowledgements:} This research was partially supported by
the Swedish Research Council. The author thanks Chih-Whi Chen for
stimulating discussions at the very early stages of this  project.
The author thanks the referee  for very helpful comments.
\vspace{5mm}

\section{Preliminaries on category $\mathcal{O}$}\label{s2}

\subsection{Conventions}\label{s2.1}

In this paper we work over the field $\mathbb{C}$ of complex numbers.
In particular, all algebras are assumed to be $\mathbb{C}$-algebras
and all categories and functors are assumed
to be $\mathbb{C}$-linear. We denote by $*$ the usual contravariant
$\mathbb{C}$-duality $\mathrm{Hom}_{\mathbb{C}}({}_-,\mathbb{C})$.

For a Lie algebra $\mathfrak{a}$, we denote by $U(\mathfrak{a})$
its universal enveloping algebra. We have the canonical
isomorphism between the category $\mathfrak{a}$-Mod of all $\mathfrak{a}$-modules
and the category $U(\mathfrak{a})$-Mod of all $U(\mathfrak{a})$-modules.
It restricts to an isomorphism between the category $\mathfrak{a}$-mod 
of all finitely generated $\mathfrak{a}$-modules
and the category $U(\mathfrak{a})$-mod of all finitely generated 
$U(\mathfrak{a})$-modules.
We denote by $Z(\mathfrak{a})$ the center of $U(\mathfrak{a})$.

It what follows, {\em graded} means $\mathbb{Z}$-graded.

\subsection{Definition and the principal block}\label{s2.2}

Let $\mathfrak{g}$ be a semi-simple, finite dimensional, complex Lie 
algebra with a fixed triangular decomposition 
$\mathfrak{g}=\mathfrak{n}_-\oplus \mathfrak{h}\oplus\mathfrak{n}_+$,
where $\mathfrak{h}$ is a Cartan subalgebra. Associated to this datum,
we have the corresponding BGG category $\mathcal{O}$ defined as the
full subcategory of $U(\mathfrak{g})$-mod, consisting of all objects, the action of
$\mathfrak{h}$ on which is diagonalizable and the action of $U(\mathfrak{n}_+)$
on which is locally finite, see \cite{BGG,Hu}.

For $\lambda\in \mathfrak{h}^*$, we denote by $\Delta(\lambda)$ the corresponding
Verma module with highest weight $\lambda$ and by $L(\lambda)$ the simple
top of $\Delta(\lambda)$, see \cite[Chapter~7]{Di}.

Let $\mathbf{R}$ denote the root system of the pair $(\mathfrak{g},\mathfrak{h})$.
Our fixed triangular decomposition of $\mathfrak{g}$ induces a decomposition of $\mathbf{R}$
into a disjoint union of the set $\mathbf{R}_+$ of all positive roots and
the set $\mathbf{R}_-$ of all negative roots. Let $\boldsymbol{\pi}$ be the
corresponding basis of $\mathbf{R}$. Denote by $\rho$ the half of the sum of
all positive roots.

We also have the Weyl group $W$ of the root system $\mathbf{R}$ which acts on $\mathfrak{h}^*$
via the defining action, written $(w,\lambda)\mapsto w(\lambda)$,
for $w\in W$ and $\lambda\in \mathfrak{h}^*$. We also have the dot-action of 
$W$ on $\mathfrak{h}^*$ given by $w\cdot\lambda:=w(\lambda+\rho)-\rho$.

Category $\mathcal{O}$ admits a decomposition
\begin{displaymath}
\mathcal{O}\cong \bigoplus_{\chi:Z(\mathfrak{g})\to\mathbb{C}} \mathcal{O}_\chi 
\end{displaymath}
with respect to the action of the center $Z(\mathfrak{g})$ defined in the terms
of a central character $\chi$, where $\mathcal{O}_\chi$ consist of 
all modules $M\in \mathcal{O}$ such that, for every $m\in M$ and 
$z\in Z(\mathfrak{g})$, we have $(z-\chi(z))^km=0$, for $k\gg 0$.

Let $\chi_{{}_0}$ denote the central character of the trivial $\mathfrak{g}$-module
$L(0)$. In this paper we will mostly study the principal block 
$\mathcal{O}_0:=\mathcal{O}_{\chi_{{}_0}}$ of $\mathcal{O}$. This block coincides
with the Serre subcategory of $\mathcal{O}$ generated by $L_w:=L(w\cdot 0)$, where
$w\in W$. The category $\mathcal{O}_0$ is equivalent to the category $A$-mod of 
finite dimensional modules over some basic, finite dimensional,
associative algebra $A$ of finite global dimension.

For $w\in W$, we set $\Delta_w:=\Delta(w\cdot 0)$ and also denote by
$P_w$ and $I_w$ the indecomposable projective cover and injective
envelope of $L_w$, respectively. 

We denote by $\star$ the usual simple preserving duality on $\mathcal{O}$ and set
$\nabla_w:=\Delta_w^\star$, for $w\in W$. The latter modules are usually called
the {\em dual Verma modules}.

We have the full subcategories $\mathcal{F}(\Delta)$ and $\mathcal{F}(\nabla)$ 
of $\mathcal{O}_0$ consisting of all objects that admit a filtration with Verma
or dual Verma subquotients, respectively. For $w\in W$, we denote by $T_w$ the 
corresponding tilting module, that is the unique, up to isomorphism, indecomposable
object in $\mathcal{F}(\Delta)\cap\mathcal{F}(\nabla)$ for which there is 
an embedding $\Delta_w\subset T_w$ such that the corresponding cokernel
is in $\mathcal{F}(\Delta)$, see \cite{CI,Ri}. We have $T_w^\star\cong T_w$.

For $M\in \mathcal{O}_0$, we denote by
\begin{itemize}
\item $\mathcal{P}_\bullet(M)$ a minimal projective resolution of $M$;
\item $\mathcal{I}_\bullet(M)$ a minimal injective coresolution of $M$;
\item $\mathcal{T}_\bullet(M)$ a minimal complex of tilting modules
that represents $M$ in $\mathcal{D}^b(\mathcal{O}_0)$.
\end{itemize}

\subsection{Grading and combinatorics}\label{s2.3}

The algebra $A$ is Koszul, see \cite{So,BGS}, in particular, 
it has the corresponding positive Koszul grading 
$\displaystyle A\cong \bigoplus_{i\geq 0}A_0$ with semi-simple $A_0$.
The corresponding category $A$-fgmod of finite dimensional
graded $A$-modules and homogeneous module homomorphisms of degree zero
is called the {\em graded lift of $\mathcal{O}_0$}
and denoted ${}^{\mathbb{Z}}{\mathcal{O}}_0$. We have the obvious canonical functor 
$\mathrm{Forget}$ from 
${}^{\mathbb{Z}}{\mathcal{O}}_0$ to $A$-mod$\,\cong \mathcal{O}_0$ which
forgets the grading. For $n\in\mathbb{Z}$, we denote by $ \langle n\rangle$
the usual degree shift endofunctor of ${}^{\mathbb{Z}}{\mathcal{O}}_0$
which maps elements of degree $m$
to elements of degree $m-n$, for $m\in\mathbb{Z}$.
As usual, we use the lowercase notation $\mathrm{hom}$ and 
$\mathrm{ext}$ for homogeneous homomorphisms and extensions of degree zero
as compared to the ungraded notation $\mathrm{Hom}$ and 
$\mathrm{Ext}$.

All structural objects in $\mathcal{O}_0$ admit graded lifts, in the sense
that they belong to the image of $\mathrm{Forget}$. Moreover, for indecomposable
structural objects, the corresponding graded lift (i.e. the preimage under
$\mathrm{Forget}$) is unique up to isomorphism and graded shift, see \cite{St}.
We fix the following graded lifts of the structural modules
which we will denote in the same way as ungraded modules, for $w\in W$:
\begin{itemize}
\item $L_w$ is concentrated in degree zero;
\item $P_w$ and $\Delta_w$ have tops concentrated in degree zero;
\item $I_w$ and $\nabla_w$ have socles concentrated in degree zero;
\item $T_w$ has the unique $L_w$-subquotient in degree zero.
\end{itemize}
The duality $\star$ also admits a graded lift, denoted by the same symbol.
It satisfies $\star\circ \langle n\rangle=\langle -n\rangle\circ\star$, 
for $n\in\mathbb{Z}$.

Let $\ell$ be the length function on $W$ and $w_0$ the longest element of $W$.
Let $\mathbf{H}$ be the Hecke algebra of $W$. It is an algebra 
over $\mathbb{Z}[v,v^{-1}]$. It has the {\em standard basis}
$\{H_w\,:\,w\in W\}$ and the {\em Kazhdan-Lusztig (KL) basis}
$\{\underline{H}_w\,:\,w\in W\}$, see \cite{KL}.
We use the normalization of \cite{So2}. The entries of the 
transformation matrix between the two bases are called 
{\em KL-polynomials} and denoted $p_{x,y}$, where $x,y\in W$.
We have $\underline{H}_y=\sum_{x}p_{x,y}H_x$.

The Grothendieck group of ${}^{\mathbb{Z}}{\mathcal{O}}_0$
is isomorphic to $\mathbf{H}$ by sending $[\Delta_w]$
to $H_w$, for $w\in W$, and letting $v$ act as 
$\langle -1\rangle$. By the Kazhdan-Lusztig conjecture,
proved in \cite{BK,BB}, see also \cite{EW},
this isomorphism maps $[P_w]$ to $\underline{H}_w$, for $w\in W$.

As usual, we have the theory of left, right and two-sided
KL-orders and cells. For  $x,y\in W$, we have $x\leq_L y$
provided that there is $w\in W$ such that $\underline{H}_y$
appears with a non-zero coefficient in the decomposition
of $\underline{H}_w\underline{H}_x$ as a linear combination
of elements in the KL-basis. Then $\leq_L$ is a partial pre-order,
called the {\em left KL-pre-order} and the equivalence classes
with respect to it are called the {\em left KL-cells}.
We write $x\sim_L y$ if $x\leq_L y$ and $y\leq_L x$.
The {\em right} and the {\em two-sided} pre-orders $\leq_R$ and $\leq_J$ 
and the corresponding cells $\sim_R$ and $\sim_J$ are defined
similarly using multiplication on the right or from both sides,
respectively.  We denote by $\sim_H$ the equivalence relation
given by the intersection of $\sim_R$ and $\sim_L$.

Each left (and each right) KL-cell contains a unique element 
called {\em Duflo element} or {\em Duflo involution} 
(this element is an involution). We denote by $\mathbf{D}$
the set of all Duflo involutions. In type $A$, all involutions
are Duflo involutions. We refer to \cite{MM1} for details.

We also have {\em Lusztig's $\mathbf{a}$-function}
$\mathbf{a}:W\to\mathbb{Z}_{\geq 0}$ introduced in \cite{Lu}.
For $w\in W$, the value $\mathbf{a}(w)$ is defined as the 
maximum $v$-degree, taken over all $x,y\in W$, with which 
$\underline{H}_w$ appears in the decomposition of 
$\underline{H}_x\underline{H}_y$ into a linear combination
of the elements of the KL basis. Note that, to attain 
this maximal degree requires both $x\sim_J w$
and $y\sim_J w$.
The function $\mathbf{a}$ is constant on the two-sided KL-cells
of $W$, moreover, for $x\in W$, we have $\mathbf{a}(x)=\ell(x)$
provided that $x$ is the longest element of some parabolic
subgroup of $W$. These two properties identify $\mathbf{a}$
uniquely in type $A$. In all types, the function $\mathbf{a}$ is
strictly monotone with respect to all  KL orders.

\subsection{Functors}\label{s2.4}

For each $w\in W$, we denote by $\theta_w$ the indecomposable 
projective endofunctor of ${\mathcal{O}}_0$ such that 
$\theta_w\Delta_e\cong P_w$, see \cite{BG}. Each $\theta_w$
is exact, biadjoint to $\theta_{w^{-1}}$ and admits a
graded lift which we will denote by the same symbol.
We denote by $\mathscr{P}$ the bicategory of projective
endofunctors of ${\mathcal{O}}_0$. It is finitary 
in the sense of \cite{MM1,MMMTZ1}. We also denote by 
${}^{\mathbb{Z}}{\mathscr{P}}$ the bicategory of projective
endofunctors of ${}^{\mathbb{Z}}{\mathcal{O}}_0$. It is 
locally finitary in the sense of \cite{Mac1,Mac2}.
The split Grothendieck group of ${}^{\mathbb{Z}}{\mathscr{P}}$ 
is isomorphic to $\mathbf{H}$ by sending $[\theta_w]$
to $\underline{H}_w$ and with the same convention for 
the action of $v$ as above. The defining action of 
${}^{\mathbb{Z}}{\mathscr{P}}$ on ${}^{\mathbb{Z}}{\mathcal{O}}_0$
decategorifies, by taking the corresponding Grothendieck groups,
to a right regular $\mathbf{H}$-module.

For each simple reflection $s\in W$, we have the corresponding 
shuffling functor $C_s$ defined in \cite{Ca} as the cokernel of the 
adjunction morphism $\theta_e\to\theta_s$ and the coshuffling 
functor $K_s$ defined in \cite{Ca} as the kernel of the 
adjunction morphism $\theta_s\to\theta_e$. The functor 
$C_s$ is left adjoint to $K_s$, in particular, the functor 
$C_s$ is right exact while $K_s$ is left exact. Furthermore, the left derived functor
$\mathcal{L}C_s$ is an auto-equivalence of the bounded derived
category $\mathcal{D}^b({\mathcal{O}}_0)$ whose inverse is
the right derived functor $\mathcal{R}K_s$. 
The functor $\mathcal{L}C_s$ admits a realization 
via tensoring with the complex  $0\to\theta_e\to\theta_s\to 0$
of functors, in which the non-zero morphism is the adjunction morphism,
followed by taking the total complex. The complexes of the  form
$0\to\theta_e\to\theta_s\to 0$ are known as {\em Rickard or 
Rouquier complexes},  see \cite{Rickard,Rouquier}.
Similarly one can describe the coshuffling functors.
Further, the functors
$\mathcal{L}C_s$, where $s$ runs through the set $S$ of all simple
reflections in $W$, satisfy the same braid relations as the 
corresponding simple reflections of $W$, see \cite{Ca,MS1}.
In particular, given a reduced decomposition
$w=s_1s_2\dots s_k$ of some $w\in W$, we can consider
the corresponding shuffling functor
$C_w:=C_{s_k}\circ \dots \circ C_{s_2}\circ C_{s_1}$
(here, reversing the order reflects the fact that the action of 
projective functors on ${\mathcal{O}}_0$ is the right action) 
and let $K_{w^{-1}}$ be its adjoint. We have
$\mathcal{L}C_w:=\mathcal{L}C_{s_k}\circ \dots \circ \mathcal{L}C_{s_2}\circ \mathcal{L}C_{s_1}$
and a similar decomposition for the corresponding coshuffling functors.
As both adjunctions morphisms $\theta_e\to\theta_s$ and 
$\theta_s\to\theta_e$ are gradeable, both shuffling and coshuffling
functors admit the obvious graded lifts.

Finally, for each $w\in W$ we have the corresponding twisting functor $\top_w$ and 
its right adjoint $E_{w^{-1}}$, the Enright completion functor, see \cite{AS,KM,Jo}.
The functor $\top_w$ is right exact while $E_w$ is left exact.
The left derived functor $\mathcal{L}\top_w$ is an auto-equivalence of
$\mathcal{D}^b({\mathcal{O}}_0)$ whose inverse is
the right derived functor $\mathcal{R}E_w$, see \cite{AS}. Further, the functors
$\mathcal{L}\top_s$, where $s\in S$, satisfy the same braid relations as the 
corresponding simple reflections of $W$, see \cite{KM}. Both 
$\top_w$ and $E_w$ are gradeable, see the appendix of \cite{MO}.
Last, but not least, is that both twisting and Enright completion functors 
functorially commute with projective functors, see \cite{AS}.
Consequently, both twisting and Enright completion functors commute
with shuffling and coshuffling functors.

\subsection{Soergel's combinatorial description}\label{s2.5}

Let $\mathtt{C}$ denote the coinvariant algebra of $W$. As was shown in
\cite{So}, the commutative algebra $\mathtt{C}$ is isomorphic to 
$\mathrm{End}_\mathfrak{g}(P_{w_0})$, moreover, the 
so-called {\em Soergel's combinatorial functor}
\begin{displaymath}
\mathbb{V}:=\mathrm{Hom}_{\mathfrak{g}}(P_{w_0},{}_-):
\mathcal{O}_0\to \mathtt{C}\text{-mod}
\end{displaymath}
is full and faithful on projective objects. Consequently, 
the algebra $A$ is isomorphic to the (opposite of the) endomorphism 
algebra of the $\mathtt{C}$-module given by the direct sum of 
all $\mathbb{V}(P_w)$, where $w\in W$. 

For a simple reflection $s$, let $\mathtt{C}^s$ be the algebra
of all $s$-invariants in $\mathtt{C}$. We can now define the
corresponding $\mathtt{C}$-$\mathtt{C}$-bimodule
$\mathtt{B}_s:=\mathtt{C}\otimes_{\mathtt{C}^s}\mathtt{C}$.
Given a reduced expression $w=s_1s_2\dots s_m$ of some $e\neq w\in W$,
consider the $\mathtt{C}$-$\mathtt{C}$-bimodule $\mathtt{B}_w$
defined as 
\begin{displaymath}
\mathtt{B}_w:= \mathtt{B}_{s_1}\otimes_{\mathtt{C}}
\mathtt{B}_{s_2}\otimes_{\mathtt{C}}\dots\otimes_{\mathtt{C}}\mathtt{B}_{s_m}.
\end{displaymath}
Set $\mathtt{B}_e:=\mathtt{C}$.
The additive closure $\mathscr{B}$ inside $\mathtt{C}$-mod-$\mathtt{C}$
of all $\mathtt{B}_w$, where $w\in W$, is a monoidal subcategory of
$\mathtt{C}$-mod-$\mathtt{C}$. Moreover, the category $\mathscr{B}$
is monoidally equivalent to $\mathscr{P}$, see \cite{So3}. The bimodule
$\mathtt{B}_e$ is, clearly, indecomposable. Each $\mathtt{B}_w$
has a unique indecomposable direct summand, denoted $\underline{\mathtt{B}}_w$,
which is not isomorphic to any $\underline{\mathtt{B}}_x$, where 
$x< w$ with respect to the Bruhat order on $W$.

This allows us to construct the $\mathtt{C}$-modules $\mathbb{V}(P_w)$
inductively. To start with, the $\mathtt{C}$-module $\mathbb{V}(P_e)$ is simple.
Furthermore, for $w\in W$, the $\mathtt{C}$-module $\mathbb{V}(P_w)$
is isomorphic to $\underline{\mathtt{B}}_w\otimes_{\mathtt{C}} \mathbb{V}(P_e)$.

The algebra $\mathtt{C}$ is graded, with generators in degree $2$,
and all the story above admits a natural graded lift.

The bimodules in the additive closure of $\underline{\mathtt{B}}_w$
are usually called {\em Soergel bimodules} over the coinvariant
algebra. A similar definition works over the polynomial algebra,
that is the symmetric algebra of $\mathfrak{h}$, and provides a 
combinatorial description for projective functors on the so-called
{\em thick category $\mathcal{O}$}, where the usual category 
$\mathcal{O}$ is a subcategory, see \cite{So3}. Projective functors
on the thick category $\mathcal{O}$ act on the usual 
category $\mathcal{O}$ by restriction, which defines a monoidal functor
from the monoidal category of Soergel bimodules over the polynomial algebra
to the monoidal category of Soergel bimodules over the coinvariant
algebra. This monoidal functor maps the full twist of the former 
category to the full twist of the latter.

\subsection{Harish-Chandra bimodules}\label{s2.6}

Yet another way to look at the category $\mathscr{P}$ is through the prism
of Harish Chandra bimodules. Consider the category $\mathscr{H}$ of
Harish Chandra $U(\mathfrak{g})$-$U(\mathfrak{g})$-bimodules, that is 
finitely generated $U(\mathfrak{g})$-$U(\mathfrak{g})$-bimodules, the adjoint
action of $\mathfrak{g}$ on which is locally finite and has finite multiplicities
for all simple subquotients.
A typical example of such a bimodule would be the quotient of $U(\mathfrak{g})$
by the two-sided ideal generated be some ideal in $Z(\mathfrak{g})$ of finite codimension.

Let $\mathbf{m}$ be the maximal ideal of $Z(\mathfrak{g})$ given by the kernel of 
$\chi_{{}_0}$. Consider the full subcategory ${}^{\infty}_{\,0}\hspace{-1mm}\mathscr{H}_0^1$
of $\mathscr{H}$ consisting of all object on which the right action of $\mathbf{m}$
is zero while the left action of $\mathbf{m}$ is locally nilpotent. Then
${}^{\infty}_{\,0}\hspace{-1mm}\mathscr{H}_0^1$ is equivalent to $\mathcal{O}_0$ by \cite{BG}.

Let $\mathbf{n}$ be the  ideal of $Z(\mathfrak{g})$ given by the kernel of 
the projection of $Z(\mathfrak{g})$ onto the endomorphism algebra of $P_{w_0}$,
see \cite{So}. In particular, we have $Z(\mathfrak{g})/\mathbf{n}\cong\mathtt{C}$.
Note that the latter algebra is isomorphic to the center of $\mathcal{O}_0$
and hence also of the algebra $A$. In particular, $\mathbf{n}$ has finite codimension, 
in fact, it contains $\mathbf{m}^{\ell(w_0)}$.
Also, the identity functor on $\mathcal{O}_0$ can be realized as tensoring with 
the Harish Chandra bimodule $U(\mathfrak{g})/U(\mathfrak{g})\mathbf{n}$.
Consider the full subcategory ${}^{\infty}_{\,0}\hspace{-1mm}\mathscr{H}_0^{\mathtt{C}}$
of $\mathscr{H}$ consisting of all object on which the right action of $\mathbf{n}$
is zero while the left action of $\mathbf{m}$ is locally nilpotent. Then
${}^{\infty}_{\,0}\hspace{-1mm}\mathscr{H}_0^{\mathtt{C}}$ is a monoidal 
category (in which the monoidal structure is given by tensoring over 
$U(\mathfrak{g})$) and it is monoidally equivalent to  $\mathscr{P}$. 

\subsection{Serre functor}\label{s2.7}

Given a $\mathbb{C}$-linear additive category $\mathcal{C}$ with finite 
dimensional morphism spaces, a {\em right Serre functor} on $\mathcal{C}$
is an endofunctor $\mathbb{S}$ of $\mathscr{C}$, for which there are
isomorphisms $\mathcal{C}(i,\mathbb{S}(j))\cong  \mathcal{C}(j,i)^*$,
for all $i,j\in \mathbb{C}$, natural in both $i$ and $j$.
If exists, a right Serre functor is unique, up to isomorphism, and
it commutes, up to isomorphism, with any auto-equivalence of $\mathcal{C}$.
A right Serre functor which is itself an equivalence is called a
{\em Serre functor}. We refer to \cite{BoKa} for details.

If $B$ is a finite dimensional associative algebra of finite global
dimension, then the corresponding bounded derived category $\mathcal{D}^b(B)$
of $B$-mod has a Serre functor given by the left derived of the Nakayama 
endofunctor $B^*\otimes_B{}_-$ of $B$-mod, see \cite{Ha}. In particular, 
in the setup of this paper, both
$\mathcal{D}^b(A)$ and $\mathcal{D}^b(\mathcal{O}_0)$ have a Serre functor.

In \cite{MS2} is was shown that the Serre functor $\mathbb{S}$ on 
$\mathcal{D}^b(\mathcal{O}_0)$ can be alternatively described both as
$(\mathcal{L}\top_{w_0})^2$ and as $(\mathcal{L}C_{w_0})^2$.

\subsection{Auslander regularity}\label{s2.8}

Recall that a finite dimensional associative algebra $B$ of finite global dimension 
is called {\em Auslander regular} provided that the regular $B$-module ${}_BB$
admits a minimal injective coresolution
\begin{displaymath}
0\to{}_BB\to Q_0\to Q_1\to\dots\to Q_m\to 0 
\end{displaymath}
such that the projective dimension of each component $Q_i$ of this 
resolution is at most $i$, where $i=0,1,2,\dots,m$. In \cite{KMM} 
it was shown that the algebra $A$ is Auslander regular.

More detailed information about the injective dimension of the indecomposable projective modules in 
$\mathcal{O}_0$ was obtained in \cite{Maz1,Maz2}. For $w\in W$, the injective dimension
of $P_w$ equals $2\mathbf{a}(w_0w)$.

As explained in \cite[Subsection~4.3]{MT}, the notion of Auslander regularity is closely
connected to certain homological properties of the Serre functor. Indeed, the fact that 
some $I_x$ appears as a summand of some $Q_i$ in the above resolution is equivalent to
$\mathrm{Ext}^i(L_x,B)\neq  0$. Applying the Serre functor $\mathbb{S}$, we get
$\mathrm{Ext}^i(\mathbb{S}L_x,B^*)\neq  0$  which, in turn, is equivalent to
the evaluation of $\mathcal{L}_i\mathbb{S}$ at $L_x$ being non-zero. Therefore,
Auslander regularity of $B$ can be reformulated via a certain vanishing property
for the cohomology of the Serre functor on simple objects.

\subsection{Koszul and Ringel self-dualities}\label{s2.9}

The category $\mathcal{O}_0$ is Ringel self-dual, in particular,
the functor $\top_{w_0}$ that maps $P_w$ to $T_{w_0w}$, for $w\in W$,
induces an equivalence between the categories of projective and 
tilting objects in $\mathcal{O}_0$. We refer to \cite{So4} for details.

The category $\mathcal{O}_0$ is also Koszul self-dual, see \cite{So,BGS}.
This means the following, see \cite{Maz3}: Denote by $\mathcal{LP}(\mathcal{O}_0)$
the category of linear complexes of projective objects in $\mathcal{O}_0$,
that is, those complexes $\mathcal{X}_\bullet$ of projective objects,
for which each $\mathcal{X}_i$ is generated in degree $-i$, for $i\in\mathbb{Z}$.
Then this category is equivalent to the category of graded $A$-modules.
This equivalence maps $\mathcal{P}_\bullet(L_w)$ to $I_{w^{-1}w_0}$, for $w\in W$.

Similarly, denote by $\mathcal{LI}(\mathcal{O}_0)$
the category of linear complexes of injective objects in $\mathcal{O}_0$,
that is, those complexes $\mathcal{Y}_\bullet$ of injective objects,
for which the socle of each $\mathcal{Y}_i$ is concentrated 
in degree $-i$, for $i\in\mathbb{Z}$.
Then this category is equivalent to the category of graded $A$-modules.
This equivalence maps $\mathcal{I}_\bullet(L_w)$ to $P_{w^{-1}w_0}$, for $w\in W$.

The Koszul self-duality of $\mathcal{O}_0$ swaps 
the derived twisting and the derived shuffling functors, see  \cite[Subsection~6.5]{MOS}.

Combining the Ringel and Koszul self-dualities, we get the following:
Denote by $\mathcal{LT}(\mathcal{O}_0)$
the category of linear complexes of tilting objects in $\mathcal{O}_0$,
that is, those complexes $\mathcal{Z}_\bullet$ of tilting objects,
for which the middle of each $\mathcal{Z}_i$ is concentrated 
in degree $-i$, for $i\in\mathbb{Z}$.
Then this category is equivalent to the category of graded $A$-modules.
This equivalence maps $\mathcal{T}_\bullet(L_w)$ to $T_{w_0w^{-1}w_0}$, for $w\in W$.

\section{Calabi-Yau objects in the principal block}\label{s3}

\subsection{Calabi-Yau objects in triangulated categories}\label{s3.1}

Let $\mathcal{C}$  be a  triangulated category with a shift 
functor denoted by $[1]$  and a Serre functor $\mathbb{S}$.
Following \cite{CZ},  an object $M\in \mathcal{C}$ is called a 
{\em Calabi-Yau object of dimension $i$}, where $i\in\mathbb{Z}$, provided that
$\mathbb{S}M\cong M[i]$.

For example,  if $B$ is a finite dimensional associative algebra of 
finite global dimension and $P$ is a projective-injective $B$-module
whose top is isomorphic to the socle, then $P$ is a 
Calabi-Yau object of dimension $0$ in $\mathcal{D}^b(B)$. This is because $\mathbb{S}P$
is isomorphic, by the definition of the Nakayama functor, to the
injective envelope  of the top of $P$. In the setup of the category
$\mathcal{O}_0$,  it follows that $P_{w_0}$ is a 
Calabi-Yau object of dimension $0$ in $\mathcal{D}^b(\mathcal{O}_0)$.

Another fairly straightforward example for the category $\mathcal{O}_0$
is that $L_e$  is a  Calabi-Yau object of dimension $2\ell(w_0)$. Indeed,
the Serre functor $\mathbb{S}$ is the composition of $2\ell(w_0)$ functors
of the form $\mathcal{L}\top_s$, where $s$ is a simple reflection.
And it is well-known that 
$\mathcal{L}\top_sL_e\cong L_e\langle -1\rangle [1]$, see \cite{AS}.

\subsection{Calabi-Yau objects in $\mathcal{D}^b(\mathcal{O}_0)$}\label{s3.2}

Our main result in this section is the following theorem.

\begin{theorem}\label{conj1}
For $d\in\mathbf{D}$ and $w\in W$ such that $w\sim_R d$, the object 
$\theta_w L_d$ is a Calabi-Yau object in $\mathcal{D}^b(\mathcal{O}_0)$
of dimension $2\mathbf{a}(w_0d)$.

More precisely, in the graded picture, we have 
\begin{equation}\label{eq-1}
\mathbb{S}(\theta_w L_{d})\cong
\theta_w L_{d}
\langle 2\big(\mathbf{a}(d)-\mathbf{a}(w_0d)\big)\rangle
[2\mathbf{a}(w_0d)].
\end{equation}
\end{theorem}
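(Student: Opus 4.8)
The plan is to compute $\mathbb{S}(\theta_w L_d)$ by combining three ingredients: the description of $\mathbb{S}$ as $(\mathcal{L}\top_{w_0})^2$ from \cite{MS2}, the commutation of twisting functors with projective functors from \cite{AS}, and an explicit understanding of $\mathbb{S} L_d$ for a Duflo involution $d$. Since $\theta_w$ is exact, it descends to $\mathcal{D}^b(\mathcal{O}_0)$, and since $\mathbb{S}\cong (\mathcal{L}\top_{w_0})^2$ while $\mathcal{L}\top_{w_0}$ commutes with the derived projective functor $\theta_w$ up to isomorphism, we obtain $\mathbb{S}(\theta_w L_d)\cong \theta_w\,\mathbb{S}(L_d)$ in $\mathcal{D}^b(\mathcal{O}_0)$. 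So the whole problem reduces to two sub-problems: first, identify $\mathbb{S}(L_d)$ in the graded derived category; second, show that applying $\theta_w$ to that object, for $w\sim_R d$, recovers $\theta_w L_d$ with the predicted shifts.

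For the first sub-problem, I would use the Koszul self-duality of $\mathcal{O}_0$ together with the known homological invariants of simple modules. Concretely, $\mathbb{S}$ is computed on $\mathcal{D}^b(\mathcal{O}_0)$ via the Nakayama functor, so $\mathbb{S}(L_d)$ is governed by the minimal injective coresolution $\mathcal{I}_\bullet(L_d)$, equivalently by $\mathrm{Ext}^\bullet(L_d, A)$; and Koszul self-duality identifies $\mathcal{I}_\bullet(L_w)$ with $P_{w^{-1}w_0}$. One expects $\mathbb{S}(L_d)$ to be a complex built from the indecomposable objects in the right cell of $d$, with homological and grading shifts controlled by $\mathbf{a}(d)$, $\mathbf{a}(w_0 d)$ and $\ell(w_0)$; the key input here is the computation of projective/injective dimensions of structural modules from \cite{Maz1,Maz2,KMM} (the injective dimension of $P_w$ being $2\mathbf{a}(w_0w)$), and the Auslander-regularity reformulation of the cohomology of $\mathbb{S}$ on simples from \cite{KMM,MT}. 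Equivalently, I would use the $2$-representation-theoretic picture: the cell $2$-representation attached to the right cell of $d$ realizes $\theta_w L_d$ as the image of $\theta_w$ under a cell functor, and Duflo involutions are precisely the elements for which $\theta_d L_d$ has $L_d$ in its socle/top, which pins down the self-dual behaviour needed for the Calabi-Yau property.

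For the second sub-problem, the point is that $\theta_w L_d$ for $w\sim_R d$ are (up to shift) exactly the indecomposable projective-injective objects of an appropriate subquotient category, as recorded in \cite{MS4,MS5}; applying $\theta_w$ to a complex supported on the right cell of $d$ collapses it, by adjunction and cell combinatorics (the $\mathbf{a}$-function being the relevant degree bound in products of KL basis elements), to a single shifted copy of $\theta_w L_d$. Tracking the Koszul grading through this collapse, using that $\theta_w$ is gradeable and that the graded shift introduced by $\mathbb{S}$ on $L_d$ is $\langle 2(\ell(w_0)-\mathbf{a}(w_0 d))\rangle$-ish corrected by the intrinsic grading of $\theta_w L_d$, should yield exactly $\langle 2(\mathbf{a}(d)-\mathbf{a}(w_0 d))\rangle[2\mathbf{a}(w_0 d)]$.

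The main obstacle I anticipate is the second sub-problem: controlling precisely how $\theta_w$ acts on the (generally non-trivial) complex $\mathbb{S}(L_d)$ and verifying that all higher homology cancels, leaving a single indecomposable in a single homological degree. This requires fine information about the structure of the minimal tilting/projective complexes representing these objects and about $\Hom$-spaces between them inside a cell — precisely the kind of ``tiny special features of various complexes'' the introduction alludes to. I expect the argument to proceed first in special cases (e.g. $d=e$, $d=w_0$, or $d$ the longest element of a parabolic subgroup, where $\mathbf{a}(d)=\ell(d)$ and the answer is transparent), and then to bootstrap to the general case using induction along the two-sided order together with the $2$-categorical machinery of \cite{MMMTZ2,KMM2}; a secondary difficulty will be bookkeeping the two independent shifts (homological $[1]$ versus Koszul $\langle 1\rangle$) consistently through Koszul and Ringel self-dualities.
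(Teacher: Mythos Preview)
Your proposal is essentially correct and follows the paper's approach: reduce to $\theta_w\,\mathbb{S}(L_d)$ via commutation of $\mathbb{S}=(\mathcal{L}\top_{w_0})^2$ with projective functors, then use Auslander regularity and the injective-dimension formula $\mathrm{inj.dim}\,P_w=2\mathbf{a}(w_0w)$ to fix the homological shift, Koszul--Ringel self-duality to fix the graded shift, and cell combinatorics so that $\theta_w$ annihilates everything indexed by strictly smaller two-sided cells. Two clarifications against the paper: you do \emph{not} need to identify $\mathbb{S}(L_d)$ in full---the paper only shows (Lemmas~\ref{lem6} and~\ref{lem11}) that among simples $L_x$ with $x\sim_H d$ the homology of $\mathbb{S}(L_d)$ contains exactly one copy of $L_d$, in the predicted position, while all remaining constituents satisfy $x<_J d$ and are killed by $\theta_w$; and the passage to the general case is not an induction along the two-sided order but a direct argument combining the above with the fact that $\mathbb{S}$ induces the identity on the ungraded Grothendieck group (since $\mathbb{S}P_w\cong I_w$).
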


Since Soergel  bimodules over the polynomial algebra act on the
usual category $\mathcal{O}$ via restriction, the second claim of 
Theorem~\ref{conj1} remains true if one replaces $\mathbb{S}$ with the
full twist for Soergel  bimodules over the polynomial algebra.

The remainder of this section is devoted to the proof of this
theorem, first in some special cases, and then in general. 
We will also derive some homological consequences from this
result at the end of the section.

Note that, in type $A$, the modules
$\{\theta_w L_d\,:\,d\in\mathbf{D},w\in W,w\sim_R d\}$ described in Theorem~\ref{conj1}
categorify Wedderburn's basis of the complex group algebra of 
the symmetric group, see \cite[Lemma~11]{MS5}.

\subsection{Parabolic subcategories}\label{s3.3}

Let $\mathfrak{p}$ be a parabolic subalgebra of $\mathfrak{g}$ containing
the Borel subalgebra $\mathfrak{b}:=\mathfrak{h}\oplus\mathfrak{n}_+$. Associated to 
$\mathfrak{p}$, we have the corresponding parabolic category $\mathcal{O}^\mathfrak{p}$
defined as the full subcategory of $\mathcal{O}$ consisting of all objects,
the action of $U(\mathfrak{p})$ on which is locally finite, see \cite{RC}.
The category $\mathcal{O}^\mathfrak{p}$ is a Serre subcategory of $\mathcal{O}$
and it inherits from $\mathcal{O}$ a block decomposition. We denote by
$A^\mathfrak{p}$ the quotient of $A$ corresponding to $\mathcal{O}^\mathfrak{p}_0$.
Then $A^\mathfrak{p}$ is a graded quotient of $A$ and, moreover, it turns
out to be Koszul, see \cite{BGS}. In particular, we have the graded 
version ${}^\mathbb{Z}\mathcal{O}^\mathfrak{p}_0$ of $\mathcal{O}^\mathfrak{p}_0$
defined as $A^\mathfrak{p}$-fgmod.

Let $W^\mathfrak{p}$ be the parabolic subgroup of $W$ corresponding to
$\mathfrak{p}$, that is, the subgroup generated by all simple reflections for which 
the corresponding negative root subspace belongs to $\mathfrak{p}$.
Consider the set $(W^\mathfrak{p}\setminus W)_{\mathrm{short}}$
of all shortest coset representatives in $W^\mathfrak{p}\setminus W$.
Then the block $\mathcal{O}_0^\mathfrak{p}$ is the Serre subcategory
of $\mathcal{O}_0$ generated by all simples $L_w$, where
$w\in (W^\mathfrak{p}\setminus W)_{\mathrm{short}}$.

The exact embedding $\iota_{\mathfrak{p}}:\mathcal{O}_0^\mathfrak{p}\subset
\mathcal{O}_0$ has both left and right adjoints. The left adjoint
$Z^{\mathfrak{p}}:\mathcal{O}\to \mathcal{O}_0^\mathfrak{p}$,
also called {\em Zuckerman functor}, see \cite{MS3}, takes a module from
$\mathcal{O}$ to its maximal quotient that belongs to 
$\mathcal{O}_0^\mathfrak{p}$. The right adjoint
$Z_{\mathfrak{p}}:\mathcal{O}\to \mathcal{O}_0^\mathfrak{p}$,
also called {\em dual Zuckerman functor}, takes a module from
$\mathcal{O}$ to its maximal submodule that belongs to 
$\mathcal{O}_0^\mathfrak{p}$. Note that the duality $\star$ restricts to
$\mathcal{O}_0^\mathfrak{p}$ and, moreover, we have $Z^{\mathfrak{p}}\cong
\star\circ Z_{\mathfrak{p}}\circ \star$. The action of $\mathscr{P}$
restricts to $\mathcal{O}_0^\mathfrak{p}$, in particular, both 
$Z^{\mathfrak{p}}$ and $Z_{\mathfrak{p}}$ functorially commute with projective functors.

For $w\in (W^\mathfrak{p}\setminus W)_{\mathrm{short}}$, we have the 
corresponding 
\begin{itemize}
\item  {\em parabolic Verma module}
$\Delta^\mathfrak{p}_w:=Z^{\mathfrak{p}}(\Delta_w)$;
\item  indecomposable projective cover
$P^\mathfrak{p}_w:=Z^{\mathfrak{p}}(P_w)$
of $L_w$ in $\mathcal{O}_0^\mathfrak{p}$;
\item  indecomposable injective envelope
$I^\mathfrak{p}_w:=Z_{\mathfrak{p}}(I_w)$
of $L_w$ in $\mathcal{O}_0^\mathfrak{p}$;
\item  {\em parabolic dual Verma module}
$\nabla^\mathfrak{p}_w:=(\Delta^\mathfrak{p}_w)^\star=Z_{\mathfrak{p}}(\nabla_w)$.
\end{itemize}
We will use the usual conventions for the graded version of these
modules.

Let $w_0^\mathfrak{p}$ denote the longest element in $W^\mathfrak{p}$.
Consider the element $w_0^\mathfrak{p}w_0$ and let $\mathcal{R}^\mathfrak{p}$
be the KL-right cell of this element. Let $d^\mathfrak{p}$ be the Duflo element
in $\mathcal{R}^\mathfrak{p}$. The module $\Delta^{\mathfrak{p}}_e$ has simple
socle which is isomorphic to $L_{d^\mathfrak{p}}$
(in the graded version, to $L_{d^\mathfrak{p}}\langle -\mathbf{a}(d^\mathfrak{p})\rangle$).
Moreover, all other composition subquotients of $\Delta^{\mathfrak{p}}_e$
are of the form $L_x$, where $x<_J d^\mathfrak{p}$ and hence
$\theta_w L_x=0$, for every $w\in \mathcal{R}^\mathfrak{p}$, see \cite[Lemma~12]{MM1}.
For $w\in \mathcal{R}^\mathfrak{p}$, the  projective module 
$P^\mathfrak{p}_w\cong\theta_w \Delta^{\mathfrak{p}}_e\cong \theta_w L_{d^\mathfrak{p}}$
is isomorphic to $I^\mathfrak{p}_w$ and any indecomposable projective-injective module in 
$\mathcal{O}_0^\mathfrak{p}$ is of such form, see \cite{Ir}.

\subsection{Special cases of Theorem~\ref{conj1} related 
to parabolic subcategories}\label{s3.4}

\begin{proposition}\label{prop2}
Let $\mathfrak{p}$ be a parabolic subalgebra of $\mathfrak{g}$ containing
$\mathfrak{b}$.
Then, for any $w\in \mathcal{R}^\mathfrak{p}$, the module
$\theta_w L_{d^\mathfrak{p}}$ is a Calabi-Yau object in $\mathcal{D}^b(\mathcal{O}_0)$
of dimension $2\mathbf{a}(w_0d^\mathfrak{p})$.
\end{proposition}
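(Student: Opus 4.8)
The plan is to exploit the fact that, for $w\in\mathcal{R}^\mathfrak{p}$, the module $\theta_w L_{d^\mathfrak{p}}$ is, up to the embedding $\iota_\mathfrak{p}$, an indecomposable projective-injective module $P^\mathfrak{p}_w\cong I^\mathfrak{p}_w$ in the parabolic category $\mathcal{O}_0^\mathfrak{p}$. Working inside the triangulated category $\mathcal{D}^b(\mathcal{O}_0)$, I would compute $\mathbb{S}(\theta_w L_{d^\mathfrak{p}})$ by using the description $\mathbb{S}=(\mathcal{L}C_{w_0})^2$ (equivalently $(\mathcal{L}\top_{w_0})^2$) from \cite{MS2}, together with the fact that twisting (and hence shuffling) functors functorially commute with projective functors. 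Thus $\mathbb{S}(\theta_w L_{d^\mathfrak{p}})\cong\theta_w\,\mathbb{S}(L_{d^\mathfrak{p}})$ as objects of $\mathcal{D}^b(\mathcal{O}_0)$, and the problem reduces to understanding the action of the Serre functor on the single simple module $L_{d^\mathfrak{p}}$, then applying $\theta_w$ to the resulting complex.

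The second step is to identify $\mathbb{S}(L_{d^\mathfrak{p}})$ in $\mathcal{D}^b(\mathcal{O}_0)$ well enough to control $\theta_w\mathbb{S}(L_{d^\mathfrak{p}})$. The key structural input is the Koszul self-duality of Subsection~\ref{s2.9}: $\mathcal{P}_\bullet(L_x)$ corresponds under the linear-complexes equivalence to $I_{x^{-1}w_0}$, and Auslander regularity (Subsection~\ref{s2.8}) gives tight control over which injectives appear in $\mathcal{I}_\bullet({}_AA)$, hence over the homology of $\mathcal{L}_\bullet\mathbb{S}$ on simples. Concretely, one wants that the highest cohomology of $\mathbb{S}(L_{d^\mathfrak{p}})$, after applying $\theta_w$, survives and produces exactly $\theta_w L_{d^\mathfrak{p}}$ with the predicted shift, while all lower cohomologies either vanish after applying $\theta_w$ (because their composition factors $L_x$ satisfy $x<_J d^\mathfrak{p}$, so $\theta_w L_x=0$ for $w\in\mathcal{R}^\mathfrak{p}$ by \cite[Lemma~12]{MM1}) or collapse. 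Here I would use the fact, recorded in Subsection~\ref{s3.3}, that $\Delta^\mathfrak{p}_e$ has socle $L_{d^\mathfrak{p}}$ and all its other composition factors $L_x$ satisfy $x<_J d^\mathfrak{p}$; this is precisely the mechanism that kills the unwanted cohomology after applying $\theta_w$.

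The third step is the bookkeeping of the two shifts. The homological shift $2\mathbf{a}(w_0 d^\mathfrak{p})$ should match the injective dimension of $P_{d^\mathfrak{p}}$, which equals $2\mathbf{a}(w_0 d^\mathfrak{p})$ by the Mazorchuk computation quoted in Subsection~\ref{s2.8}; I would pin down the Koszul-grading shift $\langle 2(\mathbf{a}(d^\mathfrak{p})-\mathbf{a}(w_0 d^\mathfrak{p}))\rangle$ by tracking the internal degree of the relevant subquotient $L_{d^\mathfrak{p}}\langle -\mathbf{a}(d^\mathfrak{p})\rangle$ inside $\Delta^\mathfrak{p}_e$ and comparing it with the grading normalization of $\mathbb{V}(P_{w_0})$ and of the Nakayama functor. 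Since $P^\mathfrak{p}_w$ is self-dual-ish (projective = injective in $\mathcal{O}_0^\mathfrak{p}$) with its top and socle both isomorphic to $L_w$, the graded-degree calculation should reduce to the observation that the Serre functor on $\mathcal{D}^b(\mathcal{O}_0)$ sends a projective with top $L_y$ to an injective with socle $L_y$, twisted by the degree of the highest nonzero $\mathrm{Ext}$.

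The main obstacle I anticipate is not the existence of an isomorphism in each cohomological degree but the \emph{collapse} of $\theta_w\mathbb{S}(L_{d^\mathfrak{p}})$ to a complex concentrated in a single degree: a priori $\mathbb{S}(L_{d^\mathfrak{p}})$ is a genuine complex with cohomology in several degrees, and applying the exact functor $\theta_w$ only guarantees that the cohomology of $\theta_w\mathbb{S}(L_{d^\mathfrak{p}})$ is $\theta_w$ applied to the cohomology of $\mathbb{S}(L_{d^\mathfrak{p}})$. One must show that all but one of these vanish — this is where the cell-theoretic input ($\theta_w L_x=0$ for $x<_J d^\mathfrak{p}$, $w\in\mathcal{R}^\mathfrak{p}$) and the precise knowledge of the composition factors of the structural modules in a resolution of $L_{d^\mathfrak{p}}$ must be combined, plausibly via the linear-complex-of-injectives picture for $\mathcal{I}_\bullet(L_{d^\mathfrak{p}})$ together with the identification $\mathcal{I}_\bullet(L_x)\leftrightarrow P_{x^{-1}w_0}$. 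Verifying that the surviving degree is exactly $2\mathbf{a}(w_0 d^\mathfrak{p})$, with no residual extension data, is the crux of the argument; everything else is degree bookkeeping.
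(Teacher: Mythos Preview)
Your strategy is genuinely different from the paper's, and the obstacle you identify as the crux is real --- but the paper's proof avoids it entirely by never applying $\mathbb{S}$ to the simple $L_{d^\mathfrak{p}}$. Instead it computes $\mathbb{S}(\Delta_e^\mathfrak{p})$ directly: the parabolic Verma $\Delta_e^\mathfrak{p}$ has an explicit resolution by ordinary Verma modules (the parabolically induced BGG resolution), and $\mathcal{L}\top_{w_0}$ sends each $\Delta_x$ to $\nabla_{w_0x}$. One application yields a dual-Verma coresolution of the simple parabolic Verma $L_{w_0w_0^\mathfrak{p}}=\Delta_{w_0w_0^\mathfrak{p}}^{w_0(\mathfrak{p})}$ shifted by $[\ell(w_0^\mathfrak{p})]$; a second application gives $\mathbb{S}(\Delta_e^\mathfrak{p})\cong\nabla_e^\mathfrak{p}[2\ell(w_0^\mathfrak{p})]$ --- already a single module, no collapse argument needed. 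Then $\theta_w\Delta_e^\mathfrak{p}\cong\theta_w\nabla_e^\mathfrak{p}\cong\theta_w L_{d^\mathfrak{p}}$ and commutation of $\mathbb{S}$ with $\theta_w$ finish the job, with $\ell(w_0^\mathfrak{p})=\mathbf{a}(w_0d^\mathfrak{p})$ following from elementary cell combinatorics.

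Your route --- reduce to $\mathbb{S}(L_{d^\mathfrak{p}})$, then control its cohomology via Auslander regularity and Koszul duality and kill unwanted factors with $\theta_w$ --- is essentially the strategy the paper uses later to prove the \emph{general} Theorem~\ref{conj1} (Lemma~\ref{lem11}). It can be made to work, but needs more than you list: Auslander regularity gives only a one-sided bound on the homological position, and you need the matching bound from the injective dimension of $P_{d^\mathfrak{p}}$, plus an indecomposability/Grothendieck-group argument to identify \emph{which} module survives, plus Koszul--Ringel self-duality (swapping the roles of $d^\mathfrak{p}$ and $w_0d^\mathfrak{p}$) to extract the grading shift. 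Two small corrections: shuffling functors are built from projective functors and do \emph{not} commute with them --- it is the twisting description of $\mathbb{S}$ you need; and your remark about ``a projective with top $L_y$'' does not apply, since $\theta_w L_{d^\mathfrak{p}}$ is projective only in $\mathcal{O}_0^\mathfrak{p}$, not in $\mathcal{O}_0$. What the paper's parabolic computation buys is that concentration in one degree is automatic from the BGG resolution; what your approach buys is generality beyond the parabolic case, which is exactly why the paper adopts it afterwards.
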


\begin{proof}
The proof is based on an extension of the computation in the proof of 
\cite[Proposition~4.4]{MS2}. The parabolic Verma module
$\Delta_e^\mathfrak{p}$  has a resolution $\mathcal{X}_\bullet$ by Verma modules
which is obtained by parabolically inducing the BGG resolution, see \cite{BGG2}, of the
trivial module over the semi-simple subalgebra of the Levi factor of
$\mathfrak{p}$. In more detail, for $i\in\mathbb{Z}$, we have 
\begin{displaymath}
\mathcal{X}_i\cong
\begin{cases}
\displaystyle
\bigoplus_{w\in W^\mathfrak{p},\,\ell(w)=|i|}\Delta_w,
& \text{if }\,\,0\leq -i\leq \ell(w_0^\mathfrak{p});\\
0,& \text{else}.
\end{cases}
\end{displaymath}
Applying $\mathcal{L}\top_{w_0}$, maps each $\Delta_w$ appearing in this 
resolution to $\nabla_{w_0w}$ which results in a coresolution of
$L_{w_0w_0^\mathfrak{p}}[\ell(w_0^\mathfrak{p})]\cong 
\nabla_{w_0w_0^\mathfrak{p}}^{w_0(\mathfrak{p})}[\ell(w_0^\mathfrak{p})]$
by dual Verma modules. 
Here the fact that we hit the correct 
coresolution follows,  for example, from the uniqueness of the latter,
see, for example, \cite[Theorem~33]{MaMr}. Note that here we change the parabolic
subalgebra $\mathfrak{p}$ to another parabolic subalgebra which we denote
by $w_0(\mathfrak{p})$.
The simple reflections defining $w_0(\mathfrak{p})$
are exactly the $w_0$-conjugates of the simple reflections that define
$\mathfrak{p}$. In particular, we have
\begin{displaymath}
w_0w_0^\mathfrak{p}=(w_0w_0^\mathfrak{p}w_0)w_0=w_0^{w_0(\mathfrak{p})}w_0.
\end{displaymath}
Note that here $w_0^{w_0(\mathfrak{p})}$ is the longest element
of the parabolic subgroup $W^{w_0(\mathfrak{p})}$ of $W$.
For example, in type $A$, the action of $w_0$ corresponds
to the unique non-trivial automorphism of the Dynkin diagram of the root system.

Now we observe that $L_{w_0w_0^\mathfrak{p}}[\ell(w_0^\mathfrak{p})]\cong 
\nabla_{w_0w_0^\mathfrak{p}}^{w_0(\mathfrak{p})}[\ell(w_0^\mathfrak{p})]\cong 
\Delta_{w_0w_0^\mathfrak{p}}^{w_0(\mathfrak{p})}[\ell(w_0^\mathfrak{p})]$ is exactly
the simple parabolic Verma module in $\mathcal{O}_0^{w_0(\mathfrak{p})}$.
Therefore, similarly to $\Delta_e^\mathfrak{p}$, 
the module $\Delta_{w_0w_0^\mathfrak{p}}^{w_0(\mathfrak{p})}[\ell(w_0^\mathfrak{p})]$ 
has a resolution,
which we denote by $\mathcal{Y}_\bullet$, by Verma modules. 
In more detail, for $i\in\mathbb{Z}$, we have 
\begin{displaymath}
\mathcal{Y}_i\cong
\begin{cases}
\displaystyle
\bigoplus_{w\in W^{w_0(\mathfrak{p})},\,\ell(w)=|i|-\ell(w_0^\mathfrak{p})}\Delta_{ww_0w_0^\mathfrak{p}},
& \text{if }\,\,\ell(w_0^\mathfrak{p})\leq -i\leq 2\ell(w_0^\mathfrak{p});\\
0,& \text{else}.
\end{cases}
\end{displaymath}
Applying $\mathcal{L}\top_{w_0}$, maps each $\Delta_w$ appearing in this 
resolution to $\nabla_{w_0w}$ which results in a coresolution of
$\nabla_{e}^{\mathfrak{p}}[2\ell(w_0^\mathfrak{p})]$
by dual Verma modules. Note that here we come back to the parabolic
subalgebra $\mathfrak{p}$ since $w_0^2=e$.

The above shows that $\mathbb{S}(\Delta_e^\mathfrak{p})\cong 
\nabla_{e}^{\mathfrak{p}}[2\ell(w_0^\mathfrak{p})]$. Recall that $\mathbb{S}$
is a composition of derived twisting functors and the latter commute with 
projective functors. Hence, $\mathbb{S}$ commutes with projective functors.
For $w\in \mathcal{R}^\mathfrak{p}$, we note that
\begin{displaymath}
\theta_w(\Delta_e^\mathfrak{p})\cong
\theta_w(\nabla_e^\mathfrak{p})\cong
\theta_w L_{d^\mathfrak{p}}\cong
I_w^\mathfrak{p}\cong P_w^\mathfrak{p}.
\end{displaymath}
Therefore, applying such $\theta_w$ to $\mathbb{S}(\Delta_e^\mathfrak{p})\cong 
\nabla_{e}^{\mathfrak{p}}[2\ell(w_0^\mathfrak{p})]$, we obtain
$\mathbb{S}(P_w^\mathfrak{p})\cong P_w^\mathfrak{p}[2\ell(w_0^\mathfrak{p})]$.

Finally, note that $\ell(w_0^\mathfrak{p})=\mathbf{a}(w_0^\mathfrak{p})$ as $w_0^\mathfrak{p}$
is the longest element of a parabolic subalgebra. Also, we have
$\mathbf{a}(w_0^\mathfrak{p})=\mathbf{a}(w_0w_0^\mathfrak{p}w_0)$ as conjugation 
by $w_0$ is an automorphism of the root system. We can write 
$w_0w_0^\mathfrak{p}w_0=w_0(w_0^\mathfrak{p}w_0)$ and note that 
$w_0^\mathfrak{p}w_0\sim_R d^\mathfrak{p}$ implies that
$w_0(w_0^\mathfrak{p}w_0)\sim_R w_0d^\mathfrak{p}$, see \cite[Proposition~6.2.9]{BjBr}.
Hence $\mathbf{a}(w_0^\mathfrak{p})=\mathbf{a}(w_0d^\mathfrak{p})$
and the claim follows.
\end{proof}

\begin{proposition}\label{prop3}
Let $\mathfrak{p}$ be a parabolic subalgebra of $\mathfrak{g}$ containing
$\mathfrak{b}$.
In the graded picture, we have
\begin{displaymath}
\mathbb{S}(\theta_w L_{d^\mathfrak{p}})\cong
\theta_w L_{d^\mathfrak{p}}
\langle 2\big(\mathbf{a}(d^\mathfrak{p})-\mathbf{a}(w_0d^\mathfrak{p})\big)\rangle
[2\mathbf{a}(w_0d^\mathfrak{p})].
\end{displaymath}
\end{proposition}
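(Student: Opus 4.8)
The plan is to upgrade Proposition~\ref{prop2} to its graded refinement by keeping track of grading shifts throughout the argument. The ungraded proof already establishes the isomorphism $\mathbb{S}(P_w^\mathfrak{p})\cong P_w^\mathfrak{p}[2\ell(w_0^\mathfrak{p})]$ by tracing how $\mathcal{L}\top_{w_0}$ acts on the parabolically-induced BGG resolutions $\mathcal{X}_\bullet$ and $\mathcal{Y}_\bullet$; what remains is to determine the correct power of $\langle -1\rangle$ by which both sides differ in the graded setting, and then to check that this power matches $2(\mathbf{a}(d^\mathfrak{p})-\mathbf{a}(w_0 d^\mathfrak{p}))$.

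First I would fix graded lifts of the resolutions. In the graded category ${}^\mathbb{Z}\mathcal{O}_0$, the BGG resolution of the trivial module over the Levi factor, parabolically induced, lifts so that $\mathcal{X}_i$ in homological degree $-i$ is a direct sum of $\Delta_w\langle i\rangle$ with $\ell(w)=|i|$ (using that each differential is homogeneous of degree $0$ once the shifts are inserted). Since $\mathcal{L}\top_s$ acts on Verma modules by $\top_s\Delta_w\cong\Delta_{sw}$ when $\ell(sw)>\ell(w)$ and by a two-term complex otherwise, and since on the relevant terms $\mathcal{L}\top_{w_0}\Delta_w\cong\nabla_{w_0w}$ with a controlled shift, I would compute the total grading shift picked up when passing from $\Delta_e^\mathfrak{p}$ to $\mathbb{S}(\Delta_e^\mathfrak{p})$. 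The expected answer is $\mathbb{S}(\Delta_e^\mathfrak{p})\cong\nabla_e^\mathfrak{p}\langle -2\ell(w_0^\mathfrak{p})\rangle[2\ell(w_0^\mathfrak{p})]$, consistent with the fact that $L_e$ is Calabi-Yau with shift $\langle -\ell(w_0)\rangle[\ell(w_0)]$ per factor of $\mathcal{L}\top_s$. Then I would apply $\theta_w$ for $w\in\mathcal{R}^\mathfrak{p}$: from Subsection~\ref{s3.3}, $\theta_w(\Delta_e^\mathfrak{p})\cong\theta_w L_{d^\mathfrak{p}}\cong P_w^\mathfrak{p}$ with the chosen lift, and $\theta_w(\nabla_e^\mathfrak{p})\cong I_w^\mathfrak{p}\cong P_w^\mathfrak{p}$, where one must be careful that the graded socle of $\Delta_e^\mathfrak{p}$ sits in degree $-\mathbf{a}(d^\mathfrak{p})$, contributing a shift of $\langle\mathbf{a}(d^\mathfrak{p})\rangle$ or its negative when identifying the two presentations of $\theta_w L_{d^\mathfrak{p}}$.

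The bookkeeping then needs to be reconciled with the target shift. On the homological side, $\ell(w_0^\mathfrak{p})=\mathbf{a}(w_0^\mathfrak{p})=\mathbf{a}(w_0 d^\mathfrak{p})$ exactly as in the proof of Proposition~\ref{prop2} (using that $w_0^\mathfrak{p}$ is the longest element of a parabolic, that conjugation by $w_0$ preserves $\mathbf{a}$, and that $w_0^\mathfrak{p}w_0\sim_R d^\mathfrak{p}$ forces $w_0 w_0^\mathfrak{p}w_0\sim_R w_0 d^\mathfrak{p}$). For the Koszul-grading side I would combine: (i) the shift $-2\ell(w_0^\mathfrak{p})=-2\mathbf{a}(w_0 d^\mathfrak{p})$ coming from the derived twisting functors applied to Verma modules; (ii) a compensating shift of $+2\mathbf{a}(d^\mathfrak{p})$ coming from the asymmetry between the graded socle of $\Delta_e^\mathfrak{p}$ (in degree $-\mathbf{a}(d^\mathfrak{p})$) and the graded top of $P_w^\mathfrak{p}$ (in degree $0$) versus the dual statement for $\nabla_e^\mathfrak{p}$ and $I_w^\mathfrak{p}$; the net shift $2\mathbf{a}(d^\mathfrak{p})-2\mathbf{a}(w_0 d^\mathfrak{p})$ is precisely the one in the statement. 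Equivalently, one can sidestep some of the socle/top bookkeeping by noting that $\mathbb{S}$ commutes with $\star$ up to the appropriate shift and that $(\theta_w L_{d^\mathfrak{p}})^\star\cong\theta_{w} L_{d^\mathfrak{p}}$ for these self-dual projective-injectives, forcing the total shift to be symmetric, which pins it down once the homological degree $2\mathbf{a}(w_0 d^\mathfrak{p})$ is known.

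The main obstacle I anticipate is getting the grading shifts on $\mathcal{L}\top_{w_0}$ applied to Verma modules correct and consistent with the fixed normalizations of $\Delta_w$, $\nabla_w$, $P_w$, $I_w$ from Subsection~\ref{s2.3}: the twisting functor $\mathcal{L}\top_s$ is only gradeable up to shift (appendix of \cite{MO}), and one must pick the normalization compatible with $\mathcal{L}\top_s L_e\cong L_e\langle -1\rangle[1]$ and then verify that, under this normalization, $\mathcal{L}\top_{w_0}\Delta_w$ lands in the predicted dual Verma with the predicted shift uniformly across the resolution. Once that normalization is nailed down, the rest is a direct, if careful, accounting; the self-duality argument in the last step provides a useful consistency check that the bookkeeping has not dropped a sign or a factor of two.
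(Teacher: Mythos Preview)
Your proposal is correct and follows essentially the same approach as the paper: both track grading shifts through the two successive applications of $\mathcal{L}\top_{w_0}$ to the parabolically induced BGG resolutions $\mathcal{X}_\bullet$ and $\mathcal{Y}_\bullet$, and then use $\ell(w_0^\mathfrak{p})=\mathbf{a}(w_0d^\mathfrak{p})$ from Proposition~\ref{prop2}. The only organizational difference is that the paper tracks the degree of the single subquotient $L_{d^\mathfrak{p}}$ directly through the two steps (observing that $\top_{w_0}$ takes a module with top in degree $-i$ to one with socle in degree $-i$), whereas you first record $\mathbb{S}(\Delta_e^\mathfrak{p})\cong\nabla_e^\mathfrak{p}\langle -2\ell(w_0^\mathfrak{p})\rangle[2\ell(w_0^\mathfrak{p})]$ and then extract the extra $2\mathbf{a}(d^\mathfrak{p})$ from the socle/top discrepancy between $\theta_w\Delta_e^\mathfrak{p}$ and $\theta_w\nabla_e^\mathfrak{p}$; these are the same computation, repackaged. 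One small caution: under the paper's convention $\langle n\rangle$ shifts degree $m$ to $m-n$, so $L_{d^\mathfrak{p}}\langle -\mathbf{a}(d^\mathfrak{p})\rangle$ sits in degree $+\mathbf{a}(d^\mathfrak{p})$, not $-\mathbf{a}(d^\mathfrak{p})$; your final shift is nonetheless correct, but be careful with this sign when writing out the details.
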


\begin{proof}
Recall that twisting functors are acyclic on Verma modules, see
\cite[Theorem~2.2]{AS}. Therefore application of derived twisting functors to 
complexes whose components have Verma filtration reduces to 
application of the ordinary twisting functors.
We will use this frequently in the following proof.

In $\Delta_e^\mathfrak{p}$, we have the subquotient 
$L_{d^\mathfrak{p}}\langle -\mathbf{a}(d^\mathfrak{p})\rangle$
of interest and the top of each 
$\mathcal{X}_i$ lives in degree $-i$. The application
of $\top_{w_0}$ to $\mathcal{X}_i$ results in a module that has
its socle in degree $-i$. In particular, we have that 
$\top_{w_0}\mathcal{X}_{-\ell(w_0^\mathfrak{p})}$, which is isomorphic to
$\nabla_{w_0w_0^\mathfrak{p}}$ as an ungraded module,
has its socle $L_{w_0w_0^\mathfrak{p}}$ in degree 
$\ell(w_0^\mathfrak{p})$, that is, this subquotient is
$L_{w_0w_0^\mathfrak{p}}\langle -\ell(w_0^\mathfrak{p})\rangle$.

If we now take $\mathcal{Y}_\bullet$ as the graded Verma resolution of
$L_{w_0w_0^\mathfrak{p}}\langle -\ell(w_0^\mathfrak{p})\rangle
[\ell(w_0^\mathfrak{p})]$, then, again, the top of each $\mathcal{Y}_i$ 
lives in degree $-i$. Again, the application of $\top_{w_0}$ to 
$\mathcal{Y}_i$ results in a module that has its socle in degree $-i$.
So, now we have $\top_{w_0}\mathcal{Y}_{-2\ell(w_0^\mathfrak{p})}$, 
which is isomorphic to $\nabla_{e}$ as an ungraded module,
has its socle $L_{e}$ in degree  $2\ell(w_0^\mathfrak{p})$,
that is, comes with a $-2\ell(w_0^\mathfrak{p})$ shift.
Taking this shift into account, the subquotient 
$L_{d^\mathfrak{p}}$
of interest now lives in degree $2\ell(w_0^\mathfrak{p})-\mathbf{a}(d^\mathfrak{p})$,
in other words, this subquotient is 
$L_{d^\mathfrak{p}}\langle \mathbf{a}(d^\mathfrak{p})-2\ell(w_0^\mathfrak{p})\rangle$.
Adding all this up, we get the total shift of 
$ 2\big(\mathbf{a}(d^\mathfrak{p})-\ell(w_0^\mathfrak{p})\big)$
for our $L_{d^\mathfrak{p}}$ subquotient.

During the proof of Proposition~\ref{prop2} we established that 
$\ell(w_0^\mathfrak{p})=\mathbf{a}(w_0d^\mathfrak{p})$.
Hence the claim of our proposition follows.
%
\end{proof}

\subsection{Serre functor on simple modules}\label{s4.2}

We make the following general observation on
the value of the Serre functor $\mathbb{S}$ on simple modules.

\begin{lemma}\label{lem6}
Let $x,w\in W$ and $i\in\mathbb{Z}$ be such that 
$\mathcal{D}^b(\mathcal{O}_0)(P_x[i],\mathbb{S}(L_w))\neq 0$.
Then $x\sim_H w$ or $x<_J w$.
\end{lemma}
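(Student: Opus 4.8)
The statement compares morphisms in $\mathcal{D}^b(\mathcal{O}_0)$ from $P_x[i]$ to $\mathbb{S}(L_w)$. Since $P_x$ is projective, it is its own minimal projective resolution, so
\[
\mathcal{D}^b(\mathcal{O}_0)(P_x[i],\mathbb{S}(L_w))\cong
\operatorname{Hom}_{\mathcal{D}^b(\mathcal{O}_0)}(P_x,\mathbb{S}(L_w)[-i]).
\]
The plan is to use the defining property of the Serre functor to rewrite this as
\[
\mathcal{D}^b(\mathcal{O}_0)(L_w,P_x[-i])^*\cong \operatorname{Ext}^{-i}(L_w,P_x)^*.
\]
So the hypothesis is equivalent to $\operatorname{Ext}^{-i}_{\mathcal{O}_0}(L_w,P_x)\neq 0$ for some $i$, i.e.\ $P_x$ appears (up to a shift) in the minimal injective coresolution of $L_w$ — equivalently, via Koszul duality (Subsection~\ref{s2.9}), $I_{w^{-1}w_0}$ appears as a summand of a term of $\mathcal{P}_\bullet(L_w)$, but the cleaner route is simply: $\operatorname{Ext}^{-i}(L_w,P_x)\neq 0$ forces a nonzero map $P_x\to$ (something in the injective coresolution of $L_w$), hence $\operatorname{Hom}_{\mathcal{O}_0}(P_x, I_x')\neq 0$ where $I_x'$ is built from $L_x$-summands, so $[L_x:$ a subquotient of the coresolution of $L_w]\neq 0$.

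\textbf{Key steps.} First I would record the Serre-adjunction rewriting above, reducing the claim to: if $\operatorname{Ext}^{j}_{\mathcal{O}_0}(L_w,P_x)\neq 0$ for some $j\geq 0$, then $x\sim_H w$ or $x<_J w$. Second, I would invoke the standard fact that $\operatorname{Ext}^{j}(L_w,P_x)$ is computed from the minimal injective coresolution $\mathcal{I}_\bullet(P_x)$ of $P_x$: nonvanishing means $L_w$ occurs in the socle of some $I_y$ appearing in $\mathcal{I}_j(P_x)$, forcing $y=w$. Thus $I_w$ is a summand of $\mathcal{I}_j(P_x)$ for some $j$. Third — and this is the combinatorial heart — I would use the known description of which injectives appear in the injective coresolution of a projective in $\mathcal{O}_0$, together with the cell-theoretic constraints: an arrow or extension between structural modules respects the two-sided order, and $\operatorname{Hom}$- / $\operatorname{Ext}$-spaces between $L_x$-related and $L_w$-related objects are governed by $\leq_J$. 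Concretely, $\operatorname{Ext}^\bullet(L_w,P_x)\neq 0$ combined with the fact that $P_x$ has Verma-flag subquotients $\Delta_z$ with $z\geq x$ in the appropriate order, and that the relevant higher extensions into $P_x$ can only be nonzero when $w$ and $x$ are comparable in $\leq_J$, yields $w\leq_J x$; separating the equality case $w\sim_J x$ (where one further pins down $x\sim_H w$, using that within a two-sided cell the nonvanishing of such $\operatorname{Ext}$'s is detected by the $\sim_H$-relation, cf.\ the Duflo/apex analysis available from \cite{MM1,KMM2}) from the strict case $w<_J x$ gives exactly the dichotomy $x\sim_H w$ or $x<_J w$ (after matching conventions, $w<_J x$ on one side corresponds to $x<_J w$ on the dual side under $\star$ and $w_0$-twists).

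\textbf{Main obstacle.} The routine part is the Serre-functor bookkeeping; the genuine difficulty is the third step — extracting the precise cell inequality $x<_J w$ (rather than merely $x\sim_J w$) in the non-$\sim_H$ case, and correctly tracking how the duality $\star$, the Nakayama/Serre functor, and Koszul self-duality permute the three one-sided orders. I expect to need the monotonicity of $\mathbf{a}$ along KL-orders and the explicit structure of injective coresolutions of projectives from \cite{Maz1,Maz2,KMM} to rule out the ``equal two-sided cell but not $\sim_H$'' possibility, which is where Auslander regularity of $A$ (Subsection~\ref{s2.8}) should do the decisive work by bounding projective dimensions of the coresolution terms.
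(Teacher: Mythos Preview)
Your Serre-duality rewriting is fine (modulo a harmless sign: $\mathcal{D}^b(P_x[i],\mathbb{S}(L_w))\cong\mathcal{D}^b(L_w,P_x[i])^*=\operatorname{Ext}^{i}(L_w,P_x)^*$, not $\operatorname{Ext}^{-i}$), but it leads you away from the direct argument, and your third step --- the ``combinatorial heart'' --- is not actually carried out. You are trying to extract a \emph{two-sided} inequality from $\operatorname{Ext}^j(L_w,P_x)\neq 0$ in one blow, invoking Verma flags, Auslander regularity, and Koszul duality; but none of those tools produces both $x\leq_L w$ and $x\leq_R w$ simultaneously, and your closing parenthetical about ``matching conventions'' and swapping $w<_J x$ for $x<_J w$ under $\star$ and $w_0$-twists shows you have not pinned down either inequality. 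The Verma subquotients of $P_x$ are indexed by Bruhat order, not KL order, so that line does not yield cell constraints directly; and the results you cite from \cite{KMM,Maz1,Maz2} control homological degrees, not which left/right cells can occur.

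The paper's proof avoids Serre duality entirely and is a two-line argument. Since $P_x$ is projective, the hypothesis says exactly that $L_x$ is a composition factor of some cohomology of $\mathbb{S}(L_w)$. Now use the two explicit realizations of $\mathbb{S}$ from Subsection~\ref{s2.7}. Writing $\mathbb{S}=(\mathcal{L}\top_{w_0})^2$ as a composition of derived twisting functors $\mathcal{L}\top_s$, the results of \cite[Section~6]{AS} give that every simple constituent $L_y$ of the cohomology satisfies $y\leq_L w$; hence $x\leq_L w$. Writing $\mathbb{S}=(\mathcal{L}C_{w_0})^2$ as a composition of derived shuffling functors, each $\mathcal{L}C_s$ is represented by the complex $\theta_e\to\theta_s$ of projective functors, and by \cite[Lemma~13]{MM1} every simple subquotient of $\theta L_w$ (for any projective functor $\theta$) is of the form $L_y$ with $y\leq_R w$; hence $x\leq_R w$. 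Combining $x\leq_L w$ and $x\leq_R w$ gives $x\sim_H w$ or $x<_J w$. The point you missed is that the two one-sided constraints come from the two \emph{different} descriptions of $\mathbb{S}$, not from a single homological analysis of $\operatorname{Ext}^\bullet(L_w,P_x)$.
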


\begin{proof}
From \cite[Section~6]{AS} it follows that, applying 
$\mathcal{L}\top_s$, where $s\in S$, to a simple module
$L_a$, for some $a\in W$, results in a complex whose homology 
only contains simple subquotients of the form $L_b$, where $b\leq _L a$.
Consequently, any finite composition of functors of 
the form $\mathcal{L}\top_s$, for various $s\in S$,
has a similar property: applying such composition  to $L_a$
results in a complex whose homology 
only contains simple subquotients of the form $L_b$, where $b\leq _L a$.
As we know that $\mathbb{S}$ is a composition of  functors
of the form $\mathcal{L}\top_s$, where $s\in S$, the property
$x\leq_L w$ follows.

On the other hand, we also know that 
$\mathbb{S}$ is a composition of the functors
of the form $\mathcal{L} C_s$, where $s\in S$. As, for any 
projective functor $\theta$, any simple 
subquotient of $\theta L_w$ is of the form $L_y$, for some
$y\leq_R w$, see \cite[Lemma~13]{MM1}, it follows that 
$x\leq_R w$. 

Combining $x\leq_L w$ and $x\leq_R w$, we get the claim of the lemma.
\end{proof}

\subsection{Kazhdan-Lusztig  combinatorics of type $A$}\label{s4.3}

Kazhdan-Lusztig combinatorics in type $A$ is especially nice, see \cite{KL,Ge}.
Let $\mathfrak{g}=\mathfrak{sl}_n$, then $W=S_n$. The Robinson-Schensted
correspondence, see \cite{Sa}, gives rise to a bijection between the elements of $S_n$ and pairs
of standard Young tableaux of the same shape, where the shape is a partition of $n$:
\begin{displaymath}
\mathbf{RS}:S_n\to\coprod_{\lambda\vdash n}
\mathbf{SYT}_\lambda\times  \mathbf{SYT}_\lambda,
\end{displaymath}
where $\mathbf{SYT}_\lambda$ denotes the set of all standard Young
tableaux of shape $\lambda$.  Let us
write $\mathbf{RS}(\pi)=(\mathbf{p}_\pi,\mathbf{q}_\pi)$. Then $\pi\sim_L\sigma$
if and only if $\mathbf{q}_\pi=\mathbf{q}_\sigma$ and $\pi\sim_R\sigma$
if and only if $\mathbf{p}_\pi=\mathbf{p}_\sigma$. Consequently, $\sim_H$
is the equality relation. Furthermore, $\pi\sim_J\sigma$ if and only if
the shapes of $\mathbf{p}_\pi$ and $\mathbf{p}_\sigma$ coincide. This means that
the two-sided cells of $S_n$ are in bijection with partitions of $n$.
Additionally, the two-sided order $\leq_J$ is given by the dominance order 
on partitions.

As a consequence of the above description, any two-sided KL-cell in type $A$
contains both the longest element $w_0^\mathfrak{p}$
of some parabolic subalgebra $\mathfrak{p}$;
an element of the form $w_0w_0^\mathfrak{q}$, for some 
some parabolic subalgebra $\mathfrak{q}$; and an element of the form 
$w_0^\mathfrak{r}w_0$, for some parabolic subalgebra $\mathfrak{r}$.

Another consequence is that the Duflo elements in type $A$ are exactly the
involutions.

Let $\mathcal{R}$ be a right KL-cell and $d$ a Duflo element in 
$\mathcal{R}$. Then the additive closure $\mathbf{C}_\mathcal{R}$
of all modules in $\mathcal{O}$ of the form $\theta_w L_d$, where $w\in \mathcal{R}$,
is stable under the action of $\mathscr{P}$. It is called the 
{\em cell birepresentation} of $\mathscr{P}$ corresponding to $\mathcal{R}$, 
see \cite{MM1} for detail. A very special feature of type $A$ is the 
following: given two right KL-cells $\mathcal{R}$ and $\mathcal{R}'$
inside the same two-sided KL-cell, the corresponding cell birepresentations
$\mathbf{C}_\mathcal{R}$ and $\mathbf{C}_{\mathcal{R}'}$ are
biequivalent, see \cite{MS4,MM1}. The biequivalence can be 
constructed fairly explicitly using recursive application of the derived twisting functors
$\mathcal{L}\top_s$, where $s\in S$, which functorially commute with
projective functors, followed by projections onto
the appropriate cell birepresentations, see \cite{MS4} for details.

As a consequence of this biequivalence, we have the following in type $A$:
given a two-sided cell $\mathcal{J}$ and two elements $x,y\in \mathcal{J}$,
the module $\theta_x L_y$ is either zero or indecomposable. Moreover,
in the latter case, we have $\theta_x L_y\cong \theta_w L_d$, where
$d$ is the Duflo element in the right KL-cell $\mathcal{R}$ of $y$
and $w$ is the unique element in the intersection of $\mathcal{R}$
with the left KL-cell of $x$. This gives us substantial flexibility 
allowing to rewrite the module $\theta_w L_d$, which appears in the
formulation of Theorem~\ref{conj1}, in a different 
form $\theta_x L_y$ that will make the proof in the next subsection work.

\subsection{Proof of Theorem~\ref{conj1} in type $A$}\label{s4.4}

In this subsection, we will prove Theorem~\ref{conj1} in type $A$.

Let $d$ be an involution in $S_n$ (i.e. a Duflo element in $S_n$). 
Consider its left KL-cell $\mathcal{L}$,
its right  KL-cell $\mathcal{R}$ and its two-sided KL-cell $\mathcal{J}$.
We want to prove Formula~\eqref{eq-1}, for any $w\in \mathcal{R}$.

There is a parabolic subalgebra $\mathfrak{p}$ in $\mathfrak{g}$
such that $w_0^\mathfrak{p}w_0$ belongs to $\mathcal{J}$. Let
$\mathcal{R}'$ be the right KL-cell of this element 
$w_0^\mathfrak{p}w_0$ and let $d'$ be the Duflo element in $\mathcal{R}'$.
From Proposition~\ref{prop3} we know that Formula~\eqref{eq-1} is true
for $d=d'$ and for any $w\in \mathcal{R}'$.

As mentioned in the previous paragraph, the cell birepresentations
$\mathbf{C}_\mathcal{R}$ and $\mathbf{C}_{\mathcal{R}'}$ are biequivalent.
Let $\Phi:\mathbf{C}_\mathcal{R}\to \mathbf{C}_{\mathcal{R}'}$ be such 
a biequivalence and $\Phi^{-1}$ be its inverse. Then both $\Phi$ and
$\Phi^{-1}$ functorially commute with all projective functors. 

Now recall that our Serre functor $\mathbb{S}$ is a composition of the
derived shuffling functors $\mathcal{L}C_s$, where $s\in S$. The functor
$\mathcal{L}C_s$ is represented by the complex $0\to \theta_e\to \theta_s\to 0$,
where the map $\theta_e\to \theta_s$ is the adjunction morphism.
In other words, this is a complex consisting of projective functors 
and morphisms between them. Consequently, both $\Phi$ and $\Phi^{-1}$ 
commute with the functor given by this complex. This means that
both $\Phi$ and $\Phi^{-1}$ commute with $\mathbb{S}$.
Here we note that $\Phi$ and $\Phi^{-1}$ are only equivalences 
between certain subquotients of $\mathcal{O}_0$, but not of the whole
$\mathcal{O}_0$, so we cannot directly use that Serre functors commute
with any auto-equivalence of $\mathcal{O}_0$.

Since $\Phi(\theta_w L_d)\cong \theta_x L_{d'}$, for some
$x\in \mathcal{R}'$ and $\Phi^{-1}$ commutes with $\mathbb{S}$, 
using Proposition~\ref{prop3} we have:
\begin{displaymath}
\begin{array}{rcl}
\mathbb{S}(\theta_w L_d)&\cong &\mathbb{S}(\Phi^{-1}(\theta_x L_{d'}))\\
&\cong &\Phi^{-1}(\mathbb{S}(\theta_x L_{d'}))\\
&\cong &\Phi^{-1}(\theta_x L_{d'}
\langle 2\big(\mathbf{a}(d')-\mathbf{a}(w_0d')\big)\rangle
[2\mathbf{a}(w_0d')] )\\
&\cong &\theta_w L_d
\langle 2\big(\mathbf{a}(d')-\mathbf{a}(w_0d')\big)\rangle
[2\mathbf{a}(w_0d')].
\end{array}
\end{displaymath}
Note that $\mathbf{a}(d)=\mathbf{a}(d')$ and
$\mathbf{a}(w_0d)=\mathbf{a}(w_0d')$ since $d\sim_J d'$.
This completes the proof of Theorem~\ref{conj1} in type $A$.

\subsection{Subcategories $\mathcal{O}^{\hat{\mathcal{R}}}$}\label{s5.2}

For a  right KL-cell $\mathcal{R}$ in $W$, denote by $\hat{\mathcal{R}}$
the set of all elements $w\in W$ such that $w\leq_R \mathcal{R}$.
Denote by $\mathcal{O}^{\hat{\mathcal{R}}}$  the Serre subcategory 
of $\mathcal{O}_0$ generated by all simples $L_w$, where $w\in \hat{\mathcal{R}}$.
The category $\mathcal{O}^{\hat{\mathcal{R}}}$ appears in \cite{MS3,Maz2}.
This category is stable under the action of $\mathscr{P}$.
The inclusion $\mathcal{O}^{\hat{\mathcal{R}}}\subset 
\mathcal{O}_0$ is exact and hence has two adjoints:
the left adjoint $Z^{\hat{\mathcal{R}}}$ given by taking the maximal
quotient in $\mathcal{O}^{\hat{\mathcal{R}}}$ and the right 
adjoint $Z_{\hat{\mathcal{R}}}$ given by taking the maximal
submodule in $\mathcal{O}^{\hat{\mathcal{R}}}$. 

As the natural inclusion $\mathcal{O}^{\hat{\mathcal{R}}}\subset 
\mathcal{O}_0$ is a morphism of birepresentations of $\mathscr{P}$,
both $Z^{\hat{\mathcal{R}}}$ and $Z_{\hat{\mathcal{R}}}$ are also
morphisms of birepresentations of $\mathscr{P}$, that is, they both  
functorially commute with projective functors.

\subsection{Further special cases of  Theorem~\ref{conj1}}\label{s5.3}

The proof in Subsection~\ref{s4.4} can be (substantially) refined to
establish the following.

\begin{proposition}\label{thm7}
Assume that $\mathfrak{g}$ is any semi-simple finite
dimensional complex  Lie algebra and $d\in\mathbf{D}$ is such that
there exists a parabolic subalgebra $\mathfrak{p}$ of $\mathfrak{g}$ with the property
$d\sim_J w_0^\mathfrak{p}w_0$. Then, for  any $w\in W$ such that $w\sim_R d$, the object 
$\theta_w L_d$ is a Calabi-Yau object in $\mathcal{D}^b(\mathcal{O}_0)$
of dimension $2\mathbf{a}(w_0d)$. Moreover, in the graded picture, we have 
\begin{displaymath}
\mathbb{S}(\theta_w L_{d})\cong
\theta_w L_{d}
\langle 2\big(\mathbf{a}(d)-\mathbf{a}(w_0d)\big)\rangle
[2\mathbf{a}(w_0d)].
\end{displaymath}
\end{proposition}

\subsection{Proof of Proposition~\ref{thm7}}\label{s5.3-5}

Let $\mathcal{R}$ be the right KL-cell of $d$
and $\mathcal{J}$ the two-sided KL-cell  of $d$.
Let $d'$ be a Duflo element such that $d\sim_J d'$ 
and $d'\sim_R w_0^\mathfrak{p}w_0$, as assumed in the formulation. 
Let $\mathcal{R}'$ be the right KL-cell of $d'$. Then, from  
Proposition~\ref{prop3} we know that Formula~\eqref{eq-1} is true
for $d=d'$ and for any $w\in \mathcal{R}'$.
Assume now that $d\neq d'$.

Since we are outside type $A$, the birepresentations 
$\mathbf{C}_{\mathcal{R}}$ and $\mathbf{C}_{\mathcal{R}'}$
are not biequivalent, in general. However, we still do have 
homomorphisms between these birepresentations, that is 
functors between the underlying additive categories that
commute with the action of projective functors. The problem
is that these homomorphism are no longer equivalences, for
example, they might send indecomposable objects to 
decomposable objects etc.

Note that both birepresentations are {\em transitive} in the
sense that, for any non-zero object  $X$ of this birepresentation,
the additive closure of $\mathscr{P}X$ coincides with the whole
birepresentation. Consequently, any non-zero homomorphism from such
a birepresentation must be non-zero on any non-zero object.
Also, the additive closure of the image of any non-zero 
homomorphism to a transitive birepresentation coincides with the
whole birepresentation. 

\begin{lemma}\label{lem8}
In the situation described above,
there exist a non-zero  homomorphisms $\Phi$
of birepresentations of $\mathscr{P}$ from 
$\mathbf{C}_{\mathcal{R}'}$ and $\mathbf{C}_{\mathcal{R}}$.
\end{lemma}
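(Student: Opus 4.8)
\textbf{Proof plan for Lemma~\ref{lem8}.}
The plan is to produce a non-zero homomorphism of birepresentations of $\mathscr{P}$ going \emph{from} $\mathbf{C}_{\mathcal{R}'}$ \emph{to} $\mathbf{C}_{\mathcal{R}}$ by composing a suitable projective functor with a (dual) Zuckerman-type projection. The starting observation is that $d$ and $d'$ lie in the same two-sided cell $\mathcal{J}$, and $\mathcal{R}$, $\mathcal{R}'$ are right cells inside $\mathcal{J}$, while $d'$ is the Duflo element of $\mathcal{R}'$ and in fact $d'\sim_R w_0^{\mathfrak{p}}w_0$. The generating object of $\mathbf{C}_{\mathcal{R}'}$ is $L_{d'}$ (realized, as in Subsection~\ref{s3.3}, as the socle of the parabolic Verma module $\Delta_e^{\mathfrak{p}'}$ for the relevant parabolic, up to shift), and the generating object of $\mathbf{C}_{\mathcal{R}}$ is $L_d$. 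Since $d\sim_J d'$, there is an element $u\in W$ with $u$ in the left cell of $d$ and right cell of $d'$; by Kazhdan--Lusztig combinatorics this is exactly the setup in which $\theta_u$ is non-zero on the cell birepresentation associated to $\mathcal{R}'$, and $\theta_u L_{d'}$ has $L_d$ among its composition factors. (This uses \cite[Lemma~13]{MM1} for the direction of the cells, together with transitivity of $\mathbf{C}_{\mathcal{R}}$.)

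First I would fix $u$ as above and consider the functor $X\mapsto \theta_{u}X$ restricted to the additive category $\mathbf{C}_{\mathcal{R}'}$; a priori this lands in $\mathcal{O}_0$, not in $\mathbf{C}_{\mathcal{R}}$. To land in $\mathbf{C}_{\mathcal{R}}$, I would postcompose with the projection functor $Z_{\hat{\mathcal{R}}}$ (or $Z^{\hat{\mathcal{R}}}$) to the Serre subcategory $\mathcal{O}^{\hat{\mathcal{R}}}$ from Subsection~\ref{s5.2}, and then, if necessary, further cut down to the cell birepresentation. The key point is that both $\theta_u$ and $Z_{\hat{\mathcal{R}}}$ functorially commute with projective functors (the former by biadjunction properties of projective functors as in \cite{BG}, the latter by the remark following the definition of $\mathcal{O}^{\hat{\mathcal{R}}}$ in Subsection~\ref{s5.2}), so the composite is automatically a homomorphism of birepresentations of $\mathscr{P}$. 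It remains to check that the composite is non-zero, which reduces to checking its value on the single generator $L_{d'}$: one must verify that the projection of $\theta_u L_{d'}$ to the cell birepresentation $\mathbf{C}_{\mathcal{R}}$ is non-zero, i.e.\ that some $\theta_w L_d$ with $w\in\mathcal{R}$ survives. This follows because $L_d$ (with $d\leq_R\mathcal{R}$ and $d$ Duflo in $\mathcal{R}$) appears as a subquotient of $\theta_u L_{d'}$, and $L_d\in\mathcal{O}^{\hat{\mathcal{R}}}$ is not annihilated by the projection; then transitivity of $\mathbf{C}_{\mathcal{R}}$ propagates non-vanishing across the whole birepresentation.

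I would then organize the argument as follows: (1) recall the cell combinatorics placing $d,d'$ in the same two-sided cell and extract the element $u$ with the required one-sided cell memberships, citing \cite{KL,MM1}; (2) show $L_d$ is a composition factor of $\theta_u L_{d'}$ using \cite[Lemma~13]{MM1} together with the Duflo property of $d$ in $\mathcal{R}$; (3) define $\Phi$ as the composite of $\theta_u$ with the appropriate Zuckerman projection onto (the cell birepresentation inside) $\mathcal{O}^{\hat{\mathcal{R}}}$, and observe commutation with $\mathscr{P}$; (4) check $\Phi(L_{d'})\neq 0$ by the subquotient argument, and conclude by transitivity that $\Phi$ is non-zero on all objects of $\mathbf{C}_{\mathcal{R}'}$.

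The main obstacle I expect is step (2)--(4): one must be careful that the projection functor does not kill the relevant factor $L_d$ inside $\theta_u L_{d'}$, i.e.\ that $L_d$ genuinely generates a nonzero quotient/submodule inside $\mathcal{O}^{\hat{\mathcal{R}}}$ and is not swallowed by higher terms in a filtration; and that after projecting one really lands in the additive closure of $\{\theta_w L_d : w\in\mathcal{R}\}$ rather than in a larger subcategory. Resolving this cleanly will likely use the explicit structure of the parabolic Verma module $\Delta_e^{\mathfrak{p}'}$ from Subsection~\ref{s3.3} (its socle is $L_{d'}$ up to shift, all other factors $L_x$ with $x<_J d'$ are annihilated by $\theta_w$ for $w\in\mathcal{R}'$), transported through $\theta_u$, so that the only surviving contribution in $\mathcal{O}^{\hat{\mathcal{R}}}$ after projection is exactly the desired cell object. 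Once non-vanishing on the generator is secured, transitivity of both birepresentations does the rest.
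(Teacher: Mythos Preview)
There is a genuine gap in your approach. Your step (3) asserts that $\theta_u$ functorially commutes with projective functors, citing biadjunction properties from \cite{BG}. This is false: projective functors are biadjoint to one another ($\theta_w$ is biadjoint to $\theta_{w^{-1}}$), but they do \emph{not} commute with each other in general. Indeed, at the level of the split Grothendieck group, $\mathscr{P}$ decategorifies to the Hecke algebra $\mathbf{H}$, which is non-commutative whenever $W$ is non-abelian. Hence the composite $Z_{\hat{\mathcal{R}}}\circ\theta_u$ is \emph{not} a homomorphism of birepresentations of $\mathscr{P}$, and your candidate $\Phi$ does not have the required structure.

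The paper circumvents this by replacing $\theta_u$ with twisting functors $\top_s$: by \cite{AS}, twisting functors \emph{do} functorially commute with projective functors (see Subsection~\ref{s2.4}), so $Z_{\hat{\mathcal{R}_2}}\circ\top_s$ is genuinely a morphism of birepresentations. For a simple reflection $s$ in the left descent set of $\mathcal{R}_1$, the module $\top_s L_w$ has top $L_w$ and semi-simple radical whose simple summands $L_x$ with $x\sim_J w$ encode exactly the covering relations generating the left KL-order. Iterating such steps along a chain of right cells inside $\mathcal{J}$ produces a non-zero homomorphism from $\mathbf{C}_{\mathcal{R}'}$ to $\mathbf{C}_{\mathcal{R}}$. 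Your transitivity argument for propagating non-vanishing is fine and is also used implicitly in the paper; the only missing ingredient is a functor that both commutes with $\mathscr{P}$ and moves between right cells, and twisting functors provide exactly this.
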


\begin{proof}
Such homomorphism can be constructed using twisting functors
similar to the corresponding equivalences in type $A$ that are constructed
in the proof of \cite[Theorem~5.4(ii)]{MS2}. Let us now describe the
details of how this works. 

Let $\mathcal{R}_1$ and $\mathcal{R}_2$ be two different 
right KL-cells inside the same two-sided KL-cell $\mathcal{J}$.
For a simple reflection $s$, consider the corresponding functor
$\top_s$. This functor commutes with the action of $\mathscr{P}$
and hence the composition $Z_{\hat{\mathcal{R}_2}}\circ \top_s$ 
commutes with the action of $\mathscr{P}$ as well.

Now, choose $s$ such that $sw<w$, for $w\in \mathcal{R}_1$
(we note that all elements in $\mathcal{R}_1$ have the same left descent).
Then, for $w\in \mathcal{R}_1$, the module $\top_s L_w$ has top
$L_w$ and a semi-simple radical described in \cite[Sections~6 and 7]{AS}.
As $\mathcal{R}_1\neq \mathcal{R}_2$, we have
$L_w\not\in \mathcal{O}^{\hat{\mathcal{R}_2}}$
since any two right KL-cells inside the
same two-sided KL-cell are not comparable with respect to the right KL-order.
This implies that $Z_{\hat{\mathcal{R}_2}}\circ \top_s(L_w)=
Z_{\hat{\mathcal{R}_2}}(\mathrm{Rad}(\top_s(L_w)))$.

Assume that $L_x$ is a summand of $\mathrm{Rad}(\top_s(L_w))$.
If $x\not\in \mathcal{J}$, then $\theta_y L_x=0$, for any
$y\in \mathcal{J}$. This implies that, for $y\in \mathcal{J}$,
the module $Z_{\hat{\mathcal{R}_2}}\circ \top_s(\theta_y L_w)$
is a direct sum of modules of the form $\theta_y L_x$, where
$x\in \mathcal{R}_2$ is such that $L_x$ appears in $\mathrm{Rad}(\top_s(L_w))$.
In other words, the functor $Z_{\hat{\mathcal{R}_2}}\circ \top_s$
maps objects of $\mathbf{C}_{\mathcal{R}_1}$ to objects of
$\mathbf{C}_{\mathcal{R}_2}$.

It remains to recall that the left KL order is generated by the 
relation connecting $w$ to all $x$ such that $L_x$ appears in the
radical of $\top_s L_w$, for some $s\in S$, cf. \cite{KL}
and \cite[Sections~7]{AS}. Therefore, starting from 
$\mathcal{R}_1$ we can find some $s$ and some
$\mathcal{R}_2\neq \mathcal{R}_1$ such that the above gives a
non-zero functor from $\mathbf{C}_{\mathcal{R}_1}$ to 
$\mathbf{C}_{\mathcal{R}_2}$. Then, repeating this finitely many times,
we eventually construct a non-zero functor from any 
$\mathbf{C}_{\mathcal{R}_1}$ to any other $\mathbf{C}_{\mathcal{R}_2}$,
for $\mathcal{R}_1$ and $\mathcal{R}_2$ in $\mathcal{J}$.
\end{proof}

Let $\Phi:\mathbf{C}_{\mathcal{R}'}\to\mathbf{C}_{\mathcal{R}}$ 
be a functor given by Lemma~\ref{lem6}. If we apply it to 
the direct sum $X$ of all $\theta_u L_{d'}$, where $u\in \mathcal{R}'$,
we will get a direct sum of all $\theta_w L_{d}$, where
$w\in \mathcal{R}$, each appearing with a non-zero multiplicity.
Applying $\Phi$ (which commutes with $\mathbb{S}$) to
\begin{displaymath}
\mathbb{S}(X)\cong 
X\langle 2\big(\mathbf{a}(d)-\mathbf{a}(w_0d)\big)\rangle
[2\mathbf{a}(w_0d)],
\end{displaymath}
and using the additivity of $\mathbb{S}$, we obtain that 
$\mathbb{S}(\theta_w L_{d})$ is a direct sum the objects
of the form
\begin{displaymath}
\theta_v L_d\langle 2\big(\mathbf{a}(d)-\mathbf{a}(w_0d)\big)\rangle
[2\mathbf{a}(w_0d)],
\end{displaymath}
where $v\in\mathcal{R}$, with some multiplicities.

To complete the proof, we only need to argue that 
all of these have multiplicity $0$ apart from
$v=w$, which has multiplicity $1$. Recall that 
$\mathbb{S}$ is an auto-equivalence. This implies
that $\mathbb{S}(\theta_w L_{d})$ is indecomposable,
since $\theta_w L_{d}$ is indecomposable. In other
words, 
\begin{displaymath}
\mathbb{S}(\theta_w L_{d})\cong
\theta_v L_d\langle 2\big(\mathbf{a}(d)-\mathbf{a}(w_0d)\big)\rangle
[2\mathbf{a}(w_0d)],
\end{displaymath}
for some $v\in \mathcal{R}$, and it remains to 
show that $v=w$.

Next we observe that
the Serre functor $\mathbb{S}$ defines the identity map 
at the level of the ungraded Grothendieck group of
$\mathcal{D}^b(\mathcal{O}_0)$. This is because
$\mathbb{S}$ maps each $P_w$ to $I_w$ and these two
modules have the same character as they are connected
by the simple preserving duality $\star$. Moreover,
the representatives of the modules $P_w$, for $w\in W$, form
a basis of the ungraded Grothendieck group as $\mathcal{O}_0$
has finite global dimension.

Therefore the representatives of the objects
$\mathbb{S}(\theta_w L_d)$ and $\theta_w L_d$
in the ungraded Gro\-then\-dieck group of $\mathcal{D}^b(\mathcal{O}_0)$
must coincide. Next we note that 
$\theta_d L_d$ is the only one of the modules $\theta_w L_d$,
where $w\in \mathcal{R}$, which has
$L_e$ as a simple subquotient. Consequently,
$\mathbb{S}(\theta_d L_d)$ is isomorphic to
$\theta_d L_d\langle 2\big(\mathbf{a}(d)-\mathbf{a}(w_0d)\big)\rangle
[2\mathbf{a}(w_0d)]$.

\begin{lemma}\label{lem9}
Assume that we are under the 
same assumptions as Proposition~\ref{thm7}, that is,
there exists a parabolic subalgebra $\mathfrak{p}$ such that
$d\sim_J w_0^\mathfrak{p}w_0$. Then,
for $w\in W$ such that $w\sim_H d$ and $i,j\in\mathbb{Z}$, we have
\begin{displaymath}
\mathcal{D}^b(\mathcal{O}_0)
\big(P_w\langle i\rangle[j],\mathbb{S}(L_d)\big)=
\begin{cases}
\mathbb{C},&  w=d, i=2\big(\mathbf{a}(d)-\mathbf{a}(w_0d)\big), j=2\mathbf{a}(w_0d);\\
0,&\text{else}. 
\end{cases}
\end{displaymath}
\end{lemma}

\begin{proof}
Assume that some $L_w\langle i\rangle[j]$ appears as a subquotient of
a homology of $\mathbb{S}(L_d)$. By Lemma~\ref{lem6}, we either have
$w\sim_H d$ or $w<_J d$. In the case $w<_J d$, we have that $\theta_d L_w=0$.
In the case $w\sim_H d$, we have
$\theta_d L_w\cong \theta_w L_d$, combining 
\cite[Proposition~4.15]{MMMTZ2} with \cite[Formulae (4.11) and (4.12)]{MMMTZ2}.
Now the claim of the lemma follows from the fact that
$\mathbb{S}(\theta_d L_d)$ is isomorphic to
$\theta_d L_d\langle 2\big(\mathbf{a}(d)-\mathbf{a}(w_0d)\big)\rangle
[2\mathbf{a}(w_0d)]$ which we established above.
\end{proof}

Lemma~\ref{lem9} says  that,  modulo simple modules indexed by 
cells that are strictly smaller than $\mathcal{J}$ with respect to the
two-sided KL-order, the only simple constituent of the homology of the complex 
$\mathbb{S}(L_d)$ is $L_d\langle 2\big(\mathbf{a}(d)-\mathbf{a}(w_0d)\big)\rangle
[2\mathbf{a}(w_0d)]$. Now we can complete the proof of Theorem~\ref{thm7}
as, for $w\in \mathcal{R}$, we have $\theta_w L_x=0$, for  all $x\leq_J w$.

Indeed,  for $w\in \mathcal{R}$,  applying $\theta_w$ to $\mathbb{S}(L_d)$,
we obtain that $\mathbb{S}(\theta_w L_d)\cong \theta_w \mathbb{S}(L_d)$ is isomorphic to
$\theta_w L_d\langle 2\big(\mathbf{a}(d)-\mathbf{a}(w_0d)\big)\rangle
[2\mathbf{a}(w_0d)]$. This proves Proposition~\ref{thm7}.

\subsection{Proof of Theorem~\ref{conj1} in the general case}\label{s5.7}

Our proof of Proposition~\ref{thm7} leads to a strategy to prove
Theorem~\ref{conj1} in the general case. We start by the
following generalization of Lemma~\ref{lem9}.

\begin{lemma}\label{lem11}
Under the assumptions of Theorem~\ref{conj1},
the claim of Lemma~\ref{lem9} is true for any $d\in\mathbf{D}$. 
Namely, for any $d\in\mathbf{D}$ and
$w\in W$ such that $w\sim_H d$ and any $i,j\in\mathbb{Z}$, we have
\begin{displaymath}
\mathcal{D}^b(\mathcal{O}_0)
\big(P_w\langle i\rangle[j],\mathbb{S}(L_d)\big)=
\begin{cases}
\mathbb{C},&  w=d, i=2\big(\mathbf{a}(d)-\mathbf{a}(w_0d)\big), j=2\mathbf{a}(w_0d);\\
0,&\text{else}. 
\end{cases}
\end{displaymath}
\end{lemma}

\begin{proof}
We start by noticing that
\begin{displaymath}
\mathcal{D}^b(\mathcal{O}_0)
\big(P_w\langle i\rangle[j],\mathbb{S}(L_d)\big)\cong
\mathcal{D}^b(\mathcal{O}_0)
\big(\mathbb{S}(L_d),I_w\langle i\rangle[j]\big).
\end{displaymath}
Recall that $\mathbb{S}$ is an equivalence 
and let $\mathbb{S}^{-1}$ be the inverse 
equivalence. Then we have 
\begin{displaymath}
\mathcal{D}^b(\mathcal{O}_0)
\big(\mathbb{S}(L_d),I_w\langle i\rangle[j]\big)\cong
\mathcal{D}^b(\mathcal{O}_0)
\big(L_d,\mathbb{S}^{-1}(I_w\langle i\rangle[j])\big).
\end{displaymath}
Also, recall that $\mathbb{S}$, being the derived of the
Nakayama functor, maps $P_w$ to $I_w$. Hence
\begin{displaymath}
\mathcal{D}^b(\mathcal{O}_0)
\big(\mathbb{S}(L_d),I_w\langle i\rangle[j]\big)\cong
\mathcal{D}^b(\mathcal{O}_0)
\big(L_d\langle -i\rangle[-j],P_w\big).
\end{displaymath}
Now we recall from \cite{Maz2} that the injective dimension of
$P_w$ equals $2\mathbf{a}(w_0w)$. At the same time, $A$ is 
Auslander regular by \cite{KMM}. Therefore, assuming
\begin{equation}\label{eq-5}
\mathcal{D}^b(\mathcal{O}_0)
\big(L_d\langle -i\rangle[-j],P_w\big)\neq 0
\end{equation}
implies $j\geq 2\mathbf{a}(w_0w)$ since $d$ and $w$ belong to the 
same two-sided cell and hence the projective dimension of $I_d$
equals $2\mathbf{a}(w_0w)$ as well. Consequently, 
Formula~\eqref{eq-5} implies $j=2\mathbf{a}(w_0w)$.

Next we look at the complex $\theta_d \mathbb{S}(L_d)\cong
\mathbb{S}(\theta_d L_d)$. The application of $\theta_d$ will
kill all simple constituents of the homology of
$\mathbb{S}(L_d)$ which come from the cell that are two-sided
smaller than the two-sided cell $\mathcal{J}$ containing $d$
(note that $\mathcal{J}$ contains $w$ as well). 
Therefore the previous paragraph implies that
all homology of $\theta_d \mathbb{S}(L_d)$ is concentrated at
the same homological position, namely at the position $-2\mathbf{a}(w_0w)$.
In other words, $\theta_d \mathbb{S}(L_d)$ is just a module shifted
by $2\mathbf{a}(w_0w)$ in the homological position.

As $\theta_d L_d$ is indecomposable and $\mathbb{S}$ is an equivalence, 
it follows that the homology of $\theta_d \mathbb{S}(L_d)$ must be
indecomposable. At the level of the ungraded Grothendieck group,
the application of $\mathbb{S}$ gives the identity map
(since $w_0^2=e$ in $W$). Therefore the ungraded homology of
$\theta_d \mathbb{S}(L_d)$ at position $-2\mathbf{a}(w_0w)$
is isomorphic to $\theta_d L_d$. As the latter is not
isomorphic to any $\theta_d L_u$, where $u\sim_H d$ and $d\neq u$
(for example, since $[\theta_d L_d:L_e]\neq 0$ while $[\theta_d L_u:L_e]=0$),
it follows that Formula~\eqref{eq-5} implies $w=d$.

It remains to show that $i=2\big(\mathbf{a}(d)-\mathbf{a}(w_0d)\big)$.
For this, we recall from \cite[Theorem~16]{Maz2} that the collection
$\{\theta_x L_y\,:\,x,y\in W\}$ of modules is Koszul-Ringel 
self-dual and this Koszul-Ringel self-duality swaps $\theta_x L_y$ with
$\theta_{y^{-1}w_0}L_{w_0x^{-1}}$. This Koszul-Ringel self-duality,
denoted $\mathrm{K}$, provides two
different realizations of ${}^\mathbb{Z}\mathcal{O}_0$ inside
$\mathcal{D}^b({}^\mathbb{Z}\mathcal{O}_0)$ and $\mathbb{S}$ is a
Serre functor on the latter category. The functor $\mathrm{K}$
is a self-equivalence of $\mathcal{D}^b({}^\mathbb{Z}\mathcal{O}_0)$
and hence commutes with $\mathbb{S}$. Moreover, the functor $\mathrm{K}$
commutes with the shifts in the following way: 
$\mathrm{K}\langle a\rangle[b]=\langle -a\rangle[a+b]\mathrm{K}$,
see \cite[Proposition~20]{MOS}.
For $w\in W$ and $d_w$ the Duflo element in the 
right KL-cell of $w$, denote by $i_w\in\mathbb{Z}$ the shift such that 
\begin{displaymath}
\mathbb{S}(\theta_w L_{d_w})\cong  
\theta_w L_{d_w}\langle i_w\rangle [2\mathbf{a}(w_0d_w)].
\end{displaymath}
Applying $\mathrm{K}$, we get
\begin{displaymath}
\mathbb{S}(\theta_{d_ww_0} L_{w_0w^{-1}})\cong  
\theta_{d_ww_0} L_{w_0w^{-1}}\langle -i_w\rangle [i_w+2\mathbf{a}(w_0d_w)].
\end{displaymath}
As we already know from the above  that the homological shift
$i_w+2\mathbf{a}(w_0d_w)$ on the right hand side of the latter
isomorphism should be equal
to $2\mathbf{a}(w_0(w_0w^{-1}))=2\mathbf{a}(w)$, we deduce that
$i_w=2\mathbf{a}(w)- 2\mathbf{a}(w_0d_w)$. 
As we always have $\mathbf{a}(w)=\mathbf{a}(d_w)$, the claim 
that $i=2\big(\mathbf{a}(d)-\mathbf{a}(w_0d)\big)$ follows. 
This completes the proof of our lemma.
\end{proof}

To prove Theorem~\ref{conj1}, we apply $\theta_w$ 
to  $\mathbb{S}(L_d)$. Lemma~\ref{lem11} tells us
that $\mathbb{S}(L_d)$ has a unique simple subquotient 
$L_d$ in homology in the correct homological position with 
the correct grading shift, while all other simple
subquotients of the homology of $\mathbb{S}(L_d)$ are killed by $\theta_w$.
Also, $\mathbb{S}$ functorially commutes with $\theta_w$
and hence $\theta_w\mathbb{S}(L_d)\cong \mathbb{S}(\theta_w L_d)$.
Therefore the claim of Theorem~\ref{conj1} follows.

\subsection{Homological consequences of Theorem~\ref{conj1}}\label{s5.8}

The following corollary is inspired by \cite[Theorem~A]{KMM}.

\begin{corollary}\label{cor-s5.8-1}
Let $x,y\in W$ and $i\in\mathbb{Z}$. Then we have:

\begin{enumerate}[$($a$)$]
\item\label{cor-s5.8-1.1}
$\mathrm{Ext}^i(I_x,L_y)=0$ unless $i\leq 2\mathbf{a}(w_0x)$
and $y\geq_L x$.
\item\label{cor-s5.8-1.2}
$\mathrm{Ext}^i(I_x,L_y)=0$ if $i<2\mathbf{a}(w_0x)$
and $y\sim_L x$.
\item\label{cor-s5.8-1.3}
$\mathrm{Ext}^{2\mathbf{a}(w_0x)}(I_x,L_y)=0$ if 
$y\sim_L x$  and $y\neq x$.
\item\label{cor-s5.8-1.4}
$\mathrm{Ext}^{2\mathbf{a}(w_0x)}(I_x,L_x)=\mathbb{C}$, more precisely,
in the graded picture, we have
\begin{displaymath}
\mathrm{ext}^{2\mathbf{a}(w_0x)}
(I_x,L_x\langle 2(\mathbf{a}(x)-\mathbf{a}(w_0x))\rangle)=\mathbb{C}.
\end{displaymath}
\item\label{cor-s5.8-1.5}
$\mathrm{Ext}^{2\mathbf{a}(w_0x)}(I_x,L_y)=0$ provided that
$y>_L x$.
\end{enumerate}
\end{corollary}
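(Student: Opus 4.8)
The plan is first to recast $\mathrm{Ext}^i(I_x,L_y)$ in two equivalent ways and then read off the five assertions. Since the simple-preserving duality $\star$ is exact, fixes every $L_y$, and interchanges $P_w$ with $I_w$, one has $\mathrm{Ext}^i(I_x,L_y)\cong\mathrm{Ext}^i(L_y,P_x)$; equivalently, this space is non-zero exactly when $I_y$ is a direct summand of the $i$-th term $Q_i$ of a minimal injective coresolution of $P_x$. On the other hand $\mathbb{S}$ exists on $\mathcal{D}^b(\mathcal{O}_0)$ with $\mathbb{S}(P_w)\cong I_w$, so the defining property of the Serre functor gives $\mathrm{Ext}^i(I_x,L_y)\cong\mathcal{D}^b(\mathcal{O}_0)\big(P_x[i],\mathbb{S}(L_y)\big)^*$. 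With this in hand, the bound $i\le 2\mathbf{a}(w_0x)$ in (a) is precisely the fact that the injective dimension of $P_x$ equals $2\mathbf{a}(w_0x)$ \cite{Maz1,Maz2}, and the condition $y\ge_L x$ in (a) follows by feeding the non-vanishing of $\mathcal{D}^b(\mathcal{O}_0)(P_x[i],\mathbb{S}(L_y))$ into Lemma~\ref{lem6} (applied with $w=y$), which outputs $x\le_L y$.

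For (b) I would use Auslander regularity of $A$ \cite{KMM}: if $I_y$ occurs in $Q_i$ then the projective dimension of $I_y$ is at most $i$; but the projective dimension of $I_y$ equals the injective dimension of $P_y$, namely $2\mathbf{a}(w_0y)$, and $y\sim_L x$ forces $y\sim_J x$, hence $\mathbf{a}(w_0y)=\mathbf{a}(w_0x)$. Thus $I_y$ can occur in $Q_i$ only for $i\ge 2\mathbf{a}(w_0x)$, which combined with (a) shows that for $y\sim_L x$ the only possibly non-zero degree is $i=2\mathbf{a}(w_0x)$; this is (b).

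The assertions (c)--(e) concern the top term $Q_m$, $m:=2\mathbf{a}(w_0x)$. First I would restrict which injectives can occur in any $Q_i$ at all, using $I_x\cong\theta_x\nabla_e$: since $\theta_x$ is exact and sends projectives to projectives, applying $\theta_x$ to a minimal projective resolution of $\nabla_e$ produces a projective resolution of $I_x$, so a minimal projective resolution of $I_x$ is a direct summand of it; hence every indecomposable summand $P_z$ of any of its terms occurs in some $\theta_x P_w$, and as $[\theta_x P_w]=\underline{H}_w\underline{H}_x$ in $\mathbf{H}$, the occurrence of $\underline{H}_z$ there forces $z\le_L x$. Dualising by $\star$, every $I_z$ occurring in any $Q_i$ satisfies $z\le_L x$; together with (a) this gives $z\sim_L x$, which already yields (e). It remains, for $y\sim_L x$, to identify the single degree-$m$ contribution. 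By (a) and (b) the graded space $\bigoplus_i\mathrm{Ext}^i(I_x,L_y)$ is concentrated in the even degree $m$, so $\dim\mathrm{Ext}^m(I_x,L_y)=\langle[I_x],[L_y]\rangle=\langle[L_y],[P_x]\rangle$ for the Euler form on the Grothendieck group; applying Serre duality for the Euler form and the fact that $\mathbb{S}$ sends each $P_w$ to $I_w$ and hence acts as the identity on the ungraded Grothendieck group (cf.\ the proof of Lemma~\ref{lem11}) rewrites this as $\langle[P_x],[\mathbb{S}L_y]\rangle=\langle[P_x],[L_y]\rangle=[L_y:L_x]=\delta_{x,y}$. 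Hence $Q_m\cong I_x\langle c_x\rangle$ for a unique $c_x\in\mathbb{Z}$, which gives (c) and the ungraded content of (d).

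Finally, the value of $c_x$: when $x=d$ is the Duflo involution of its left cell, the graded versions of the two isomorphisms above together with Theorem~\ref{conj1} (equivalently Lemma~\ref{lem11}) give $c_d=2(\mathbf{a}(d)-\mathbf{a}(w_0d))$. For general $x$ in the same left cell, since $\mathbf{a}(x)$ and $\mathbf{a}(w_0x)$ depend only on the two-sided cell, it is enough to see that $c_x$ is constant along the left cell, which I would obtain by tracking graded shifts through the right action of $\mathscr{P}$ on the left-cell birepresentation, in the spirit of the proof of Proposition~\ref{thm7}, or equivalently via the graded Euler form and the fact that the full twist acts on the subquotient of $\mathbf{H}$ attached to the two-sided cell by a scalar in $\mathbb{Z}[v,v^{-1}]$. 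This last step --- transporting the precise graded shift in (d) from the Duflo element to an arbitrary element of its left cell --- is the only delicate point; everything else is a direct assembly of $\star$-duality, the Serre functor, Auslander regularity \cite{KMM}, the injective-dimension formula \cite{Maz1,Maz2}, Lemma~\ref{lem6}, and the relation $I_x\cong\theta_x\nabla_e$.
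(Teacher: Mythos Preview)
Your arguments for (a) and (b) are essentially the paper's. For (a), note that Lemma~\ref{lem6} as \emph{stated} gives only $x\sim_H y$ or $x<_J y$, and the latter does not formally imply $x\leq_L y$; however, the \emph{proof} of Lemma~\ref{lem6} establishes $x\leq_L y$ directly via twisting functors, so your conclusion stands.

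Your Euler-form argument for the ungraded parts of (c) and (d) is correct and is a pleasant alternative to the paper's approach. The paper instead applies $\theta_d$ (for $d$ the Duflo element in the left cell of the relevant simple) to $\mathbb{S}(L_w)$ and invokes Theorem~\ref{conj1} to read off both the uniqueness and the multiplicity one directly. The advantage of the paper's route is that it simultaneously yields the graded shift in (d) for \emph{every} $x$, not just Duflo elements: since $\theta_d L_w\cong\theta_w L_{d'}$ with $w\sim_R d'$, Theorem~\ref{conj1} applies directly and pins down the graded position. Your acknowledged ``delicate point''---transporting the shift from $d$ to a general $x$ in its left cell---is therefore unnecessary if you follow the paper's line; as written, your sketch for that step is not a proof.

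There is a genuine error in your argument for (e). The occurrence of $\underline{H}_z$ in $\underline{H}_w\underline{H}_x$ means, by the definition of the left KL-order, that $x\leq_L z$, i.e.\ $z\geq_L x$, \emph{not} $z\leq_L x$. So your projective-resolution constraint only reproduces (a) and gives nothing new towards (e). The paper's argument is different: if $\mathrm{Ext}^{2\mathbf{a}(w_0x)}(I_x,L_y)\neq 0$, then right-exactness of $\mathrm{Ext}^{2\mathbf{a}(w_0x)}(I_x,{}_-)$ (this being the top degree) lifts it to $\mathrm{Ext}^{2\mathbf{a}(w_0x)}(I_x,P_y)\neq 0$, forcing the injective dimension $2\mathbf{a}(w_0y)$ of $P_y$ to be at least $2\mathbf{a}(w_0x)$; but $y>_L x$ gives $w_0y<_L w_0x$ and hence $\mathbf{a}(w_0y)<\mathbf{a}(w_0x)$ by strict monotonicity of $\mathbf{a}$, a contradiction.
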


\begin{proof}
In Claim~\eqref{cor-s5.8-1.1}, the restriction $i\leq 2\mathbf{a}(w_0x)$
follows from \cite[Theorem~16]{Maz1} and \cite[Theorem~20]{Maz2}.
To prove the second restriction,  let $\mathcal{X}_\bullet=
\mathcal{P}_\bullet(I_e)$. Then $\theta_x I_e\cong I_x$
and hence $\theta_x\mathcal{X}_\bullet$ is a projective resolution
(not necessarily minimal)  of  $I_x$. For any $w\in W$, all summands
of $\theta_xI_w$ are of the form  $I_z$, where $z\geq_L x$. This means that
only such summands appear in $\theta_x\mathcal{X}_\bullet$ and implies
the second restriction in Claim~\eqref{cor-s5.8-1.1}.
 
Claim~\eqref{cor-s5.8-1.2} follows from \cite[Theorem~A]{KMM}.

To  prove the rest, for $w\in W$, consider the Duflo element $d$ in the
left KL-cell of $w$  and the Duflo element  $d'$ in the right
KL-cell of $w$.  Then $\theta_d L_w\cong \theta_w L_{d'}$  by
\cite[Proposition~4.15]{MMMTZ2}.

Consider the complex  $\mathbb{S}(L_w)$. Let $L_z$, for $z\in W$, be a simple
constituent of some homology of $\mathbb{S}(L_w)$. Then either
$z\sim_H w$ or $z<_J w$ by Lemma~\ref{lem6}. In the case $z<_J w$,
we have $\theta_d L_z=0$ by \cite[Lemma~12]{MM1}. If $z\sim_H w$,
then, by the previous paragraph,  $\theta_d L_z\cong \theta_z L_{d'}$.

Therefore, from Theorem~\ref{conj1} it follows that,  among all  homologies
of $\mathbb{S}(L_w)$, only the homology at the homological  position 
$-2\mathbf{a}(w_0w)$ contains a simple  constituent $L_z$ such that
$z\sim_H w$, moreover, such $z$ is unique and, in fact, for this unique $z$
we have $z=w$. 

Since this constituent can be detected by taking homomorphism from
$\mathbb{S}(L_w)$ to the complex $I_w[2\mathbf{a}(w_0w)]$, applying the
inverse of $\mathbb{S}$ to this hom-space, we get the dual of Claim~\eqref{cor-s5.8-1.3}
and the dual of the ungraded formula  in Claim~\eqref{cor-s5.8-1.4}. The
graded formula in Claim~\eqref{cor-s5.8-1.4} follows from 
Lemma~\ref{lem9}.

Finally, to prove Claim~\eqref{cor-s5.8-1.5}, let us assume 
$\mathrm{Ext}^{2\mathbf{a}(w_0x)}(I_x,L_y)\neq 0$ for some $y>_L x$.
Since the projective dimension of $I_x$ equals $2\mathbf{a}(w_0x)$,
this non-zero extension induces a non-zero extension
$\mathrm{Ext}^{2\mathbf{a}(w_0x)}(I_x,P_y)\neq 0$.
But this, in turn, implies that the injective dimension
$2\mathbf{a}(w_0y)$ of $P_y$ is at least $2\mathbf{a}(w_0x)$.
This, however, contradicts the strict monotonicity of the 
$\mathbf{a}$ function as we assumed $y>_L x$.
\end{proof}

\subsection{Degrees}\label{s5.85}

Denote by $\mathbf{S}$ the set of all integers of the form
$2\mathbf{a}(w_0d)$, where $d\in\mathbf{D}$. For $i\in\mathbf{S}$,
consider the following subcategories of $\mathcal{O}_0$:
\begin{itemize}
\item  the full subcategory $\mathbf{CY}_i$ of $\mathcal{O}_0$
consisting of all objects $M\in \mathcal{O}_0$ such that $\mathbb{S}(M)\cong M[i]$;
\item  the full subcategory $\mathbf{PCY}_i$ of $\mathcal{O}_0$
consisting of all objects $M\in \mathcal{O}_0$, for which there exists an object 
$M'\in \mathcal{O}_0$ such that $\mathbb{S}(M)\cong M'[i]$;
\item  the full subcategory $\mathbf{F}_i$ of $\mathcal{O}_0$
consisting of all objects $M\in \mathcal{O}_0$ which admit a filtration 
\begin{equation}\label{eq-abs1}
0=M_0\subset M_1\subset \dots \subset M_{k-1}\subset M_k=M 
\end{equation}
such that each subquotient $M_j/M_{j-1}$ is isomorphic to 
$\theta_w L_d$, for some $d\in\mathbf{D}$ such that 
$\mathbf{a}(w_0d)=i$ and some $w\in W$ such that $w\sim_R d$
(both $w$ and $d$ might depend on $j$).
\end{itemize}
The notation $\mathbf{PCY}$ abbreviates {\em provisionally Calabi-Yau}.
We record the following observations.

\begin{proposition}\label{prop-s5.85-1}
For $i\in\mathbf{S}$, we have:

\begin{enumerate}[$($a$)$]
\item\label{prop-s5.85-1.1}
both $\mathbf{CY}_i\subset \mathbf{PCY}_i$ and $\mathbf{F}_i\subset \mathbf{PCY}_i$;
\item\label{prop-s5.85-1.2}
both $\mathbf{CY}_i$, $\mathbf{PCY}_i$ and $\mathbf{F}_i$
are closed with respect to finite direct sums;
\item\label{prop-s5.85-1.3}
both $\mathbf{CY}_i$, $\mathbf{PCY}_i$ and $\mathbf{F}_i$ are closed with 
respect to the action of $\mathscr{P}$;
\item\label{prop-s5.85-1.4}
$\mathbf{PCY}_i$ is closed with respect to extensions, 
kernels of epimorphisms and cokernels of monomorphisms; 
\item\label{prop-s5.85-1.5}
both $\mathbf{PCY}_i$ and $\mathbf{F}_i$ are exact categories in which the exact structure
is defined by the usual short exact sequences.
\end{enumerate}
\end{proposition}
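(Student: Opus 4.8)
The plan is to translate membership in $\mathbf{PCY}_i$ into a homological purity condition for the algebra $A$ and then handle the five items in turn, the genuine work being concentrated in (d). As a preliminary, I would record that, under the identification $\mathcal{O}_0\cong A$-mod, the Serre functor $\mathbb{S}$ is the left derived Nakayama functor, so that for a module $M$ the homology of $\mathbb{S}(M)$ in position $k$ is naturally isomorphic to $\mathrm{Ext}^k_A(M,A)^*$. Hence $M\in\mathbf{PCY}_i$ precisely when the complex $\mathbb{S}(M)$ has homology concentrated in a single position (which is then $i$), equivalently $\mathrm{Ext}^k_A(M,A)=0$ for all $k\neq i$, i.e.\ $M$ is of pure codimension $i$ in the Auslander-regular sense. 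I would also isolate the elementary remark that in a triangle $X\to Y\to Z\to X[1]$ in $\mathcal{D}^b(\mathcal{O}_0)$, if two of $X,Y,Z$ have homology concentrated in one and the same position $d$, then so does the third; this is immediate from the long exact homology sequence.

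With this in hand, (a)--(c) are soft. For (a), $\mathbf{CY}_i\subset\mathbf{PCY}_i$ is tautological (take $M'=M$), and $\mathbf{F}_i\subset\mathbf{PCY}_i$ follows by induction on the length of the filtration \eqref{eq-abs1}: applying $\mathbb{S}$ to each short exact sequence, Theorem~\ref{conj1} shows $\mathbb{S}(\theta_wL_d)$ has homology concentrated in position $2\mathbf{a}(w_0d)=i$, and the triangle remark propagates concentration along the filtration. For (b) one uses additivity of $\mathbb{S}$ (for $\mathbf{CY}_i$, $\mathbf{PCY}_i$) and concatenation of filtrations (for $\mathbf{F}_i$). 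For (c): $\mathbb{S}$ commutes with all projective functors (as recalled in the proof of Proposition~\ref{prop2}), and projective functors, being exact, commute with $[i]$, which settles $\mathbf{CY}_i$ and $\mathbf{PCY}_i$; for $\mathbf{F}_i$, a projective functor $\theta$ sends the filtration of $M$ to a filtration of $\theta M$ with subquotients $\theta(\theta_wL_d)$, and since each $\theta_wL_d$ with $w\sim_R d$ lies in the cell birepresentation $\mathbf{C}_{\mathcal{R}}$ of the right cell $\mathcal{R}$ of $d$, which is stable under $\mathscr{P}$ (see \cite{MM1}), $\theta(\theta_wL_d)$ again lies in $\mathbf{C}_{\mathcal{R}}$ and is therefore a finite direct sum of modules $\theta_{w'}L_d$, $w'\in\mathcal{R}$; as $\mathbf{F}_i$ is obviously closed under extensions (splice the filtrations), this gives $\theta M\in\mathbf{F}_i$.

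For (d), closure under extensions is read off the long exact $\mathrm{Ext}_A(-,A)$-sequence. For kernels of epimorphisms, given $0\to K\to B\to C\to 0$ with $B,C\in\mathbf{PCY}_i$, that sequence yields $\mathrm{Ext}^k_A(K,A)=0$ for $k\notin\{i-1,i\}$ together with an embedding $\mathrm{Ext}^{i-1}_A(K,A)\hookrightarrow\mathrm{Ext}^i_A(C,A)$, so it remains to show $\mathrm{Ext}^{i-1}_A(K,A)=0$. This is exactly the point where Auslander regularity of $A$ (from \cite{KMM}) enters: the Auslander condition forces every submodule of $\mathrm{Ext}^i_A(C,A)$, in particular $\mathrm{Ext}^{i-1}_A(K,A)$, to have grade $\geq i$ (the grade of a module $N$ being the least $k$ with $\mathrm{Ext}^k_A(N,A)\neq 0$); on the other hand, if $\mathrm{Ext}^{i-1}_A(K,A)$ were nonzero then $K$ would have grade exactly $i-1$, and Auslander--Gorenstein biduality would give $\mathrm{Ext}^{i-1}_{A^{\mathrm{op}}}(\mathrm{Ext}^{i-1}_A(K,A),A)\neq 0$, i.e.\ grade $\leq i-1$, a contradiction. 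Closure under cokernels of monomorphisms is the dual statement, obtained by the symmetric argument, using $A\cong A^{\mathrm{op}}$ and the fact that $\mathrm{Ext}^i_A(-,A)$ restricts to an exact duality on modules of pure codimension $i$.

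Finally, (e) is formal: by (b) both $\mathbf{PCY}_i$ and $\mathbf{F}_i$ are full additive subcategories of $\mathcal{O}_0$, and both are closed under extensions --- $\mathbf{PCY}_i$ by (d), $\mathbf{F}_i$ by concatenation of filtrations --- so each carries its canonical exact structure in which the conflations are precisely the ambient short exact sequences all of whose terms lie in the subcategory, which is the assertion. The main obstacle is the kernel/cokernel part of (d): this is where the formal triangulated arguments run out and one must exploit the fine homological structure of $A$, namely its Auslander regularity and the attendant codimension filtration; I also expect the reduction of the cokernel-of-monomorphism case to the kernel-of-epimorphism case through the $\mathrm{Ext}^i_A(-,A)$-duality to require the most care in the write-up.
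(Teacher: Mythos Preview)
Your proof is correct. For items \eqref{prop-s5.85-1.1}, \eqref{prop-s5.85-1.2}, \eqref{prop-s5.85-1.3} and \eqref{prop-s5.85-1.5} it runs along the same lines as the paper's. The genuine difference is in \eqref{prop-s5.85-1.4}.

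The paper proves extension-closure of $\mathbf{PCY}_i$ via the triangle argument you also use (two terms of a triangle have homology concentrated in degree $-i$, hence so does the third), and then simply states that closure under kernels of epimorphisms and cokernels of monomorphisms ``are proved by the same argument.'' Taken literally, this does not close: in the kernel case the two known terms of the rotated triangle sit in degrees $-i$ and $-i+1$, so the long exact homology sequence leaves the term $H^{-i+1}(\mathbb{S}M)\cong \big(\mathrm{Ext}^{i-1}_A(M,A)\big)^*$ potentially nonzero, and dually for cokernels. Your reformulation of $\mathbf{PCY}_i$ as the category of $A$-modules that are pure of grade $i$, together with the Auslander regularity of $A$ from \cite{KMM}, is exactly what is needed to kill that extra term; this is the standard way one shows that Cohen--Macaulay modules of a fixed grade over an Auslander--Gorenstein ring are closed under kernels and cokernels. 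So your route for \eqref{prop-s5.85-1.4} is more explicit than the paper's, and what it buys is that the argument visibly goes through.

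One small refinement: the biduality step you invoke (``$\mathrm{Ext}^{i-1}_{A^{\mathrm{op}}}(\mathrm{Ext}^{i-1}_A(K,A),A)\neq 0$'') is not automatic from $\mathrm{grade}(K)=i-1$ alone. A clean way to finish is: since $\mathrm{Ext}^k_A(K,A)=0$ for $k\notin\{i-1,i\}$ and $\mathrm{Ext}^{i-1}_A(K,A)$ has grade $\geq i$ (being a submodule of $\mathrm{Ext}^i_A(C,A)$, by the Auslander condition), the double-$\mathrm{Ext}$ spectral sequence collapses to give $K\cong \mathrm{Ext}^i_{A^{\mathrm{op}}}(\mathrm{Ext}^i_A(K,A),A)$, whence $\mathrm{grade}(K)\geq i$, contradicting $\mathrm{grade}(K)=i-1$. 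The cokernel case then genuinely needs the symmetric spectral-sequence computation (or an appeal to the standard literature on Auslander--Gorenstein rings) rather than a one-line duality reduction, since $\star$ intertwines $\mathbb{S}$ with $\mathbb{S}^{-1}$ and does not obviously preserve $\mathbf{PCY}_i$; your caveat that this step ``require[s] the most care'' is well placed.
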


\begin{proof}
To start with, $\mathbf{CY}_i\subset \mathbf{PCY}_i$ follows from the definitions
combined with the fact that $\theta_w L_d\in \mathbf{CY}_i$, for $d\in\mathbf{D}$ such that 
$\mathbf{a}(w_0d)=i$ and $w\in W$ such that $w\sim_R d$, given by Theorem~\ref{conj1}.
Also, Claim~\eqref{prop-s5.85-1.2} follows from the definitions and the additivity 
of the Serre functor $\mathbb{S}$.

Assume $M,N\in \mathbf{PCY}_i$ and 
\begin{equation}\label{eq-s5.85-1-7}
0\to M\to X\to N\to 0
\end{equation}
be a short exact sequence.
This sequence corresponds to a distinguished triangle $N[-1]\to M\to X\to N$ in
$\mathcal{D}^b(\mathcal{O}_0)$. As $\mathbb{S}$ is triangulated, we have a
distinguished triangle 
\begin{equation}\label{eq-s5.85-1-5}
\mathbb{S}(N)[-1]\to \mathbb{S}(M)\to \mathbb{S}(X)\to \mathbb{S}(N). 
\end{equation}
We have $\mathbb{S}(M)\cong M'[i]$ and $\mathbb{S}(N)\cong N'[i]$, for some
$M',N'\in \mathcal{O}_0$, due to our assumption that $M,N\in \mathbf{PCY}_i$.
The distinguished triangle given by \eqref{eq-s5.85-1-5} implies existence of 
some $X'\in \mathcal{O}_0$ such that we both have $\mathbb{S}(X)\cong X'[i]$ and 
also an exact sequence 
\begin{equation}\label{eq-s5.85-1-9}
0\to M'\to X'\to N'\to 0.
\end{equation}
Consequently,
$X\in \mathbf{PCY}_i$. This proves that $\mathbf{PCY}_i$ is extension closed.
In particular, it follows that $\mathbf{F}_i$, being the extension closure of
the modules of the form $\theta_w L_d\in \mathbf{CY}_i$, for $d\in\mathbf{D}$ such that 
$\mathbf{a}(w_0d)=i$ and $w\in W$ such that $w\sim_R d$, by definition,
is a subcategory of $\mathbf{PCY}_i$. This proves Claim~\eqref{prop-s5.85-1.1}.

The above proves the first assertion of Claim~\eqref{prop-s5.85-1.4}. The two other
assertions are proved by the same argument.

Next, Claim~\eqref{prop-s5.85-1.5} for $\mathbf{F}_i$ follows from the
definition of $\mathbf{F}_i$, while Claim~\eqref{prop-s5.85-1.5} for 
$\mathbf{PCY}_i$ follows from Claim~\eqref{prop-s5.85-1.4}.

Finally, let us prove Claim~\eqref{prop-s5.85-1.3}. If $\mathbb{S}(M)\cong M'[i]$
and $\theta$ is a projective functor, then 
$\theta \mathbb{S}(M)\cong\mathbb{S}(\theta M)\cong
\theta M'[i]$ as $\theta$ commutes with $\mathbb{S}$. This proves Claim~\eqref{prop-s5.85-1.3}
for both $\mathbf{PCY}_i$ and $\mathbf{CY}_i$. To prove Claim~\eqref{prop-s5.85-1.3}
for $\mathbf{F}_i$, we just need to show that $\theta \theta_w L_d\in \mathbf{F}_i$,
for any $d\in\mathbf{D}$ such that $\mathbf{a}(w_0d)=i$ and $w\in W$ such that $w\sim_R d$. 
For this, we note that $\theta \theta_w$ is a direct sum of functors of the form 
$\theta_{w'}$, where $w'\in W$ such that $w'\sim_R d$, and projective functors from 
strictly greater two-sided KL-cells. The latter projective functors annihilate $L_d$,
see \cite[Lemma~12]{MM1}. This completes the proof.
\end{proof}

Now we can describe all possible degrees for Calabi-Yau objects in $\mathcal{O}_0$.

\begin{corollary}\label{cor-s5.85-2}
Let $i\in\mathbb{Z}$ and $0\neq M\in\mathcal{O}_0$ be such that 
$\mathbb{S}(M)\cong M[i]$. Then there exists $d\in\mathbf{D}$ such that
$i=2\mathbf{a}(w_0d)$.
\end{corollary}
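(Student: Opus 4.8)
The plan is to deduce Corollary~\ref{cor-s5.85-2} from Theorem~\ref{conj1} together with the fact that $\mathbb{S}$ acts as the identity on the ungraded Grothendieck group. First I would observe that, since $\mathcal{O}_0$ has finite global dimension, the classes $[P_w]$, $w\in W$, form a $\mathbb{Z}$-basis of $K_0(\mathcal{D}^b(\mathcal{O}_0))$, and $\mathbb{S}$ fixes this basis (it sends $P_w$ to $I_w$, which has the same character since $P_w^\star\cong$ socle-dual and $[I_w]=[P_w]$ via $\star$ combined with $w_0^2=e$; this is exactly the argument already used in Subsection~\ref{s5.3-5}). Hence $[\mathbb{S}(M)]=[M]$ in the ungraded Grothendieck group for every $M\in\mathcal{D}^b(\mathcal{O}_0)$.

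Next, suppose $0\neq M\in\mathcal{O}_0$ with $\mathbb{S}(M)\cong M[i]$. Then in $K_0$ we get $[M]=[\mathbb{S}(M)]=(-1)^i[M]$, so if $i$ is odd then $[M]=0$; but $M$ is a nonzero module, so $[M]\neq 0$, a contradiction. Therefore $i$ is even, say $i=2k$. Now I would pin down $k$ using a ``highest two-sided cell'' argument. Write $[M]=\sum_{w}c_w[L_w]$ with not all $c_w$ zero, and among the $w$ with $c_w\neq 0$ pick one, call it $y$, whose two-sided cell $\mathcal{J}$ is maximal with respect to $\leq_J$ (such a maximal cell among a finite set exists). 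Let $d$ be the Duflo element in the right cell of $y$ and consider $\theta_d M$. On one hand $\theta_d M\cong\theta_d$ applied to a module, hence has homology only in degree $0$; on the other hand $\mathbb{S}(\theta_d M)\cong\theta_d\mathbb{S}(M)\cong\theta_d M[2k]$, so $\mathbb{S}(\theta_d M)$ has homology concentrated in homological degree $-2k$.

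To extract $k$ from this, I would compute the homological position of $\mathbb{S}(\theta_d M)$ directly. By Lemma~\ref{lem11} (the general form of Lemma~\ref{lem9}), for each $w\sim_R d$ one has $\mathbb{S}(\theta_w L_d)\cong\theta_w L_d\langle\cdots\rangle[2\mathbf{a}(w_0d)]$, and by Lemma~\ref{lem6} together with $\theta_d L_z=0$ for $z<_J d$, the only simple constituents of $\mathbb{S}(L_z)$ surviving after applying $\theta_d$ come from the cell $\mathcal{J}$. Decomposing $\theta_d M$ into its $\theta_w L_d$ constituents using the filtration/character information (the $c_w$ with $w\in\mathcal{J}$ give a nonzero combination of the indecomposables $\theta_w L_d$, $w\sim_R d$, up to simples from strictly smaller cells which $\theta_d$ — applied appropriately — kills), I conclude that $\mathbb{S}(\theta_d M)$ lives in homological degree $-2\mathbf{a}(w_0d)$, forcing $2k=2\mathbf{a}(w_0d)$, i.e.\ $i=2\mathbf{a}(w_0d)$ with $d\in\mathbf{D}$, as desired.

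The main obstacle I anticipate is the bookkeeping in this last step: one must be careful that $\theta_d M$ is genuinely a \emph{module} (not a complex with homology in several degrees) while simultaneously controlling which $\theta_w L_d$-summands appear, since $M$ itself is only assumed to be a module with no a priori $\mathbf{PCY}$- or $\mathbf{F}$-structure. The clean way around this is to note that $\theta_d M$ being a module, concentrated in homological degree $0$, is incompatible with $\mathbb{S}(\theta_d M)$ being concentrated in degree $-2k$ \emph{unless} $\theta_d M$ is actually nonzero and the value $2k$ is forced by the homological position formula of Lemma~\ref{lem11} applied to whichever $\theta_w L_d$ (with $w\in\mathcal{J}$, $w\sim_R d$) occurs in the socle/character of $\theta_d M$; choosing $y$ in the maximal cell guarantees such a constituent exists and is not cancelled. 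Alternatively, one avoids choosing a maximal cell by inducting on $\leq_J$: replacing $M$ by a suitable subquotient in $\mathbf{F}_{k}$ via Proposition~\ref{prop-s5.85-1} and peeling off the top cell, but the direct Grothendieck-group argument above is cleaner and self-contained.
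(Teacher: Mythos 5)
Your overall strategy is the paper's: isolate a $\leq_J$-maximal constituent $L_y$ of $M$, apply $\theta_d$ for a Duflo element $d$ attached to $y$ so that only top-cell information survives, and read off the shift from the known behaviour of $\mathbb{S}$ on the objects $\theta_w L_{d'}$. (The opening parity argument via the ungraded Grothendieck group is correct but superfluous.) There is, however, a concrete error: you take $d$ to be the Duflo element in the \emph{right} cell of $y$, whereas it must be the Duflo element in the \emph{left} cell of $y$. Indeed $\theta_d L_z\neq 0$ forces $d\leq_L z$, hence $z\geq_J d$, hence (by $\leq_J$-maximality of the cell of $y$) $z\sim_J d$ and therefore $z\sim_L d$; so $\theta_d$ kills every constituent of $M$ that does not lie in the \emph{left} cell of $d$. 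With your choice, $\theta_d L_y\neq 0$ would require $y\sim_L d$ in addition to the automatic $y\sim_R d$, i.e.\ $y\sim_H d$, which in type $A$ forces $y=d$. Concretely, in $S_3$ the right cell of $st$ is $\{s,st\}$ with Duflo element $s$, and $\theta_s L_{st}=0$; if all top-cell constituents of $M$ lie outside the left cell of your $d$, then $\theta_d M=0$ and the argument yields nothing. The identity you implicitly invoke, $\theta_d L_z\cong\theta_z L_{d'}$ from \cite[Proposition~4.15]{MMMTZ2}, is precisely the statement for $d$ the Duflo element of the left cell of $z$ and $d'$ that of its right cell (note also that $d'$ varies with $z$, though $\mathbf{a}(w_0d')$ does not, being constant on the two-sided cell).

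With the corrected choice of $d$ the ``bookkeeping obstacle'' you flag at the end disappears, and no decomposition of $\theta_d M$ into indecomposables is needed: $\theta_d$ is exact, so applying it to a composition series of $M$ exhibits the module $\theta_d M$ as filtered with subquotients $\theta_d L_z$ that are either zero or of the form $\theta_z L_{d'}$ with $d'\sim_J d$; hence $\theta_d M\in\mathbf{F}_{i'}$ for $i'=2\mathbf{a}(w_0d')$, and $\theta_d M\neq 0$ because $\theta_d L_y\cong\theta_y L_{d'}\neq 0$. Proposition~\ref{prop-s5.85-1} then gives $\theta_d M\in\mathbf{PCY}_{i'}$, so $\mathbb{S}(\theta_d M)\cong N[i']$ for some module $N$, while $\mathbb{S}(\theta_d M)\cong\theta_d M[i]$; comparing homological positions yields $i=i'$. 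This is exactly the paper's argument, and it avoids any appeal to Lemma~\ref{lem6} or Lemma~\ref{lem11}.
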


\begin{proof}
Let $x\in W$ be such that $L_x$ is a subquotient of $M$ and, for any $y\in W$
such that $L_y$ is a subquotient of $M$, we have $y\not\geq_J x$.
Let  $d$ be the Duflo element in the left KL-cell of $x$.
Then, for any $y\in W$ such that $L_y$ is a subquotient of $M$, we
either have $\theta_d L_y=0$ or $y\sim_L x$ by \cite[Lemma~12]{MM1}.

For $y\sim_L x$, let $d'$ be the Duflo element in the right KL-cell of $y$.
Then $\theta_d L_y$ belongs to the additive closure of the modules of the form $\theta_w L_{d'}$,
for $w\sim_R d'$, see \cite[Proposition~4.15]{MMMTZ2}. This implies that,
for $i=2\mathbf{a}(w_0d')$, we have $\theta_{d}M\in\mathbf{F}_i$.
Now the claim of our corollary follows from Proposition~\ref{prop-s5.85-1}.
\end{proof}

\begin{corollary}\label{cor-s5.85-3}
Let $d\in\mathbf{D}$ and $0\neq M\in\mathcal{O}_0$ be such that 
$\mathbb{S}(M)\cong M[2\mathbf{a}(w_0d)]$. Then, if $M$ is graded,
we have $\mathbb{S}(M)\cong 
M\langle 2(\mathbf{a}(d)-\mathbf{a}(w_0d))\rangle[2\mathbf{a}(w_0d)]$.
\end{corollary}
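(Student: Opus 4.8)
The idea is to reduce the graded refinement for an arbitrary $M$ to the already-established graded statements on the generators $\theta_w L_d$ (Theorem~\ref{conj1}/Proposition~\ref{prop3}), using that $M$ lies in the exact category $\mathbf{F}_{\mathbf{a}(w_0d)}$ after applying a suitable projective functor, and then transporting this back to $M$ itself. First I would fix the grading on $M$ and set $i:=2\mathbf{a}(w_0d)$. Running the argument of Corollary~\ref{cor-s5.85-2} in the graded setting: choose $x\in W$ with $L_x$ a subquotient of $M$ and $x$ maximal with respect to $\leq_J$ among all such indices, let $d_0$ be the Duflo element in the left KL-cell of $x$, and observe via \cite[Lemma~12]{MM1} and \cite[Proposition~4.15]{MMMTZ2} that $\theta_{d_0}M$ admits a filtration by graded shifts of modules $\theta_w L_{d'}$ with $w\sim_R d'$ and $\mathbf{a}(w_0d')$ forced (by the ungraded Corollary~\ref{cor-s5.85-2} applied to $\theta_{d_0}M$, which is again Calabi-Yau of the same dimension since $\theta_{d_0}$ commutes with $\mathbb{S}$) to equal $\mathbf{a}(w_0d)$; hence $d'\sim_J d$, so $\mathbf{a}(d')=\mathbf{a}(d)$ and $\mathbf{a}(w_0d')=\mathbf{a}(w_0d)$.

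Next I would prove the graded statement on the exact category $\mathbf{F}_i$ directly: every subquotient $\theta_w L_{d'}$ in a filtration \eqref{eq-abs1} of an object of $\mathbf{F}_i$ satisfies, by Theorem~\ref{conj1}, $\mathbb{S}(\theta_w L_{d'})\cong \theta_w L_{d'}\langle 2(\mathbf{a}(d)-\mathbf{a}(w_0d))\rangle[i]$ with the \emph{same} graded shift $\langle 2(\mathbf{a}(d)-\mathbf{a}(w_0d))\rangle$ for all of them (this uniformity is the crucial point — it uses $d'\sim_J d$). Since $\mathbb{S}$ is triangulated and the shift functors $\langle k\rangle[i]$ are exact autoequivalences commuting with $\mathbb{S}$ up to the fixed twist, an induction on the filtration length via the triangles coming from \eqref{eq-s5.85-1-7}–\eqref{eq-s5.85-1-9} (already set up in the proof of Proposition~\ref{prop-s5.85-1}) shows that any graded $N\in\mathbf{F}_i$ satisfies $\mathbb{S}(N)\cong N\langle 2(\mathbf{a}(d)-\mathbf{a}(w_0d))\rangle[i]$. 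In particular this applies to $N=\theta_{d_0}M$.

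Finally I would descend from $\theta_{d_0}M$ back to $M$. The cleanest route is to use that $\mathbb{S}(M)\cong M'[i]$ for some graded $M'\in{}^{\mathbb{Z}}\mathcal{O}_0$ (the Calabi-Yau hypothesis together with Corollary~\ref{cor-s5.85-2} guarantees the ungraded version, and $\mathbb{S}$ of a gradeable object is gradeable), so the only thing to determine is the graded shift relating $M'$ and $M$; write $\mathbb{S}(M)\cong M\langle c\rangle[i]$ for the unknown $c\in\mathbb{Z}$ — this makes sense because $\mathrm{Forget}(M')\cong\mathrm{Forget}(M)$ and graded lifts of a fixed object differ only by a shift, applied here to each indecomposable summand after noting $\mathbb{S}$ permutes them. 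Applying the graded Soergel functor $\mathbb{V}$ (or, equivalently, comparing with $\theta_{d_0}M\in\mathbf{F}_i$: since $\theta_{d_0}$ is a graded projective functor commuting with $\mathbb{S}$, we get $\theta_{d_0}M\langle c\rangle[i]\cong\mathbb{S}(\theta_{d_0}M)\cong\theta_{d_0}M\langle 2(\mathbf{a}(d)-\mathbf{a}(w_0d))\rangle[i]$), and using that $\theta_{d_0}M\neq 0$ — which holds because $L_x$, with $\theta_{d_0}L_x\neq 0$, is a subquotient of $M$ and the relevant Hom/multiplicity in degree $2(\mathbf{a}(d)-\mathbf{a}(w_0d))$ is nonzero by the graded Lemma~\ref{lem9}/Lemma~\ref{lem11} — one concludes $c=2(\mathbf{a}(d)-\mathbf{a}(w_0d))$, as desired. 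The main obstacle is this last descent step: one must be careful that the graded lift $M'$ really is a global shift of $M$ and not merely a summand-by-summand shift with \emph{different} shifts on different indecomposable summands; here the common value of the twist on all the $\theta_w L_{d'}$ (forced by $d'\sim_J d$) is exactly what rules that out, since any indecomposable summand of $M$ generates, under $\theta_{d_0}$, objects in a single $\mathbf{F}_i$ with a single uniform twist.
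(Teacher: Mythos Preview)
Your overall strategy is the same as the paper's (which just says ``proved similarly to the proof of Corollary~\ref{cor-s5.85-2}''): apply $\theta_{d_0}$ for a suitable Duflo element, land in $\mathbf{F}_i$, and exploit that all the building blocks $\theta_w L_{d'}$ carry the \emph{same} graded twist under $\mathbb{S}$ because $d'\sim_J d$. So the architecture is correct.

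There is, however, a genuine gap in your step~2. You claim that induction on the filtration length, via the triangles \eqref{eq-s5.85-1-7}--\eqref{eq-s5.85-1-9}, yields the isomorphism $\mathbb{S}(N)\cong N\langle c_0\rangle[i]$ for every graded $N\in\mathbf{F}_i$ (with $c_0=2(\mathbf{a}(d)-\mathbf{a}(w_0d))$). But the triangle argument only produces a short exact sequence $0\to A\langle c_0\rangle\to \mathbb{S}(N)[-i]\to B\langle c_0\rangle\to 0$ whose connecting morphism is $\mathbb{S}$ applied to the original one, conjugated by the \emph{chosen} isomorphisms $\mathbb{S}(A)\cong A\langle c_0\rangle[i]$ and $\mathbb{S}(B)\cong B\langle c_0\rangle[i]$. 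There is no reason this connecting morphism coincides (up to automorphisms of the ends) with the $\langle c_0\rangle$-shift of the original one, so you cannot conclude $\mathbb{S}(N)[-i]\cong N\langle c_0\rangle$. Indeed, the statement $\mathbf{F}_i\subset\mathbf{CY}_i$ is exactly Corollary~\ref{cor-8.25775-1}, and the paper proves it \emph{later}, using the natural transformation $\alpha$ from Theorem~\ref{prop-s8.25-1}; it is not available at this point.

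The fix is that you do not need the isomorphism in step~2, only the identity in the graded Grothendieck group
\[
[\mathbb{S}(\theta_{d_0}M)] \;=\; (-1)^{i}\,v^{-c_0}\,[\theta_{d_0}M],
\]
which \emph{does} follow from the filtration since $[\,\cdot\,]$ is additive on distinguished triangles and Theorem~\ref{conj1} gives the value on each subquotient. Then in step~3, for $M$ indecomposable the uniqueness of graded lifts forces $\mathbb{S}(M)\cong M\langle c\rangle[i]$ for a single $c$, whence $v^{-c}[\theta_{d_0}M]=v^{-c_0}[\theta_{d_0}M]$ and, as $[\theta_{d_0}M]\neq 0$ in a free $\mathbb{Z}[v,v^{-1}]$-module, $c=c_0$. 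Your worry about decomposable $M$ is legitimate, but your proposed resolution again leans on the flawed step~2; once step~2 is downgraded to the Grothendieck-group statement you should revisit that reduction separately.
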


\begin{proof}
This is proved similarly to the proof of Corollary~\ref{cor-s5.85-2}.
\end{proof}

\begin{remark}
{\em 
Some further information about the categories 
$\mathbf{F}_i$, $\mathbf{CY}_i$ and $\mathbf{PCY}_i$
will be obtained latter in Subsection~\ref{s8.25775}.
}
\end{remark}

\section{Homomorphisms from $\mathbb{S}$ to shifts of the identity}\label{s8}

\subsection{Motivation from categorical diagonalization}\label{s8.1}

One of the main ideas of categorical diagonalization described in
the papers \cite{EH1,EH2} is that 
isomorphisms similar to $\mathbb{S}(M)\cong 
M\langle 2(\mathbf{a}(x)-\mathbf{a}(w_0x))\rangle[2\mathbf{a}(w_0d)]$
should not only exist as abstract isomorphisms, but they should come
as evaluations at $M$ of natural transformations
from $\mathbb{S}$ to 
$\mathrm{Id}\langle 2(\mathbf{a}(x)-\mathbf{a}(w_0x))\rangle[2\mathbf{a}(w_0d)]$.
Here the collection $\{\mathrm{Id}\langle t\rangle[s]\,:\,s,t\in\mathbb{Z}\}$
is referred to as a {\em choice of scalars} for the setup in question.

In this section we will take a closer look at the possibility of such
phenomena in the context of category $\mathcal{O}$.

\subsection{Two extreme cases}\label{s8.2}

\begin{proposition}\label{prop-s8.2-1}
There is a unique, up to scalar,  non-zero natural transformation 
$\alpha_{w_0}:\mathbb{S}\to \mathrm{Id}\langle 2(\mathbf{a}(w_0))\rangle$.
Moreover, the evaluation of $\alpha_{w_0}$ at $P_{w_0}\cong \theta_{w_0}L_{w_0}$
is an isomorphism.
\end{proposition}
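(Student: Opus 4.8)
The plan is to establish the existence of $\alpha_{w_0}$ by relating natural transformations $\mathbb{S}\to\mathrm{Id}\langle 2\mathbf{a}(w_0)\rangle$ to a concrete homological computation, and then to read off both uniqueness and the isomorphism statement from the structure of $P_{w_0}$. First I would recall that $\mathbb{S}$ is (the derived of) the Nakayama functor $A^*\otimes_A{}_-$, so $\mathbb{S}P_w\cong I_w$, and that the injective dimension of $P_{w_0}$ is $2\mathbf{a}(w_0w_0)=0$; hence $P_{w_0}$ is projective-injective with top isomorphic to socle, so $\mathbb{S}P_{w_0}\cong P_{w_0}$ as ungraded objects and, by the graded normalization fixed in Subsection~\ref{s2.3} together with Corollary~\ref{cor-s5.85-3} (applied with $d=w_0$, noting $\mathbf{a}(w_0)-\mathbf{a}(w_0w_0)=\mathbf{a}(w_0)$), we have $\mathbb{S}(\theta_{w_0}L_{w_0})=\mathbb{S}(P_{w_0})\cong P_{w_0}\langle 2\mathbf{a}(w_0)\rangle$ in the graded picture. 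This identifies the target shift and shows the evaluation at $P_{w_0}$ of any such $\alpha_{w_0}$ lands in the correct space $\mathrm{hom}(P_{w_0},P_{w_0}\langle 2\mathbf{a}(w_0)\rangle)$, which I expect to be one-dimensional (the degree-$2\mathbf{a}(w_0)$ part of $\mathrm{End}(P_{w_0})\cong\mathtt{C}$, i.e. the top of the coinvariant algebra, which is one-dimensional).

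Next I would construct the transformation. Since $\mathbb{S}\cong(\mathcal{L}C_{w_0})^2$ by \cite{MS2}, and each $\mathcal{L}C_s$ is represented by the two-term complex $0\to\theta_e\to\theta_s\to 0$ of projective functors, $\mathbb{S}$ is represented by an explicit complex built from projective functors concentrated in homological degrees $-2\ell(w_0)\le\bullet\le 0$. A natural transformation from this complex to $\mathrm{Id}\langle 2\mathbf{a}(w_0)\rangle[0]=\mathrm{Id}\langle 2\mathbf{a}(w_0)\rangle$ (placed in homological degree $0$) is, by standard homotopy-category considerations, the same as a morphism in the homotopy category of complexes of projective functors, i.e. an element of $\mathrm{Hom}_{\mathcal{D}^b}(\mathbb{S},\mathrm{Id}\langle 2\mathbf{a}(w_0)\rangle)$ computed in the appropriate functor category. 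The cleanest route is to identify natural transformations $\mathbb{S}\to\mathrm{Id}\langle t\rangle[s]$ with graded Hochschild-type cohomology, or more concretely with the bimodule $\mathrm{Hom}$ from $A^*$ (resolved projectively as an $A$-bimodule) to $A\langle t\rangle[s]$; by Koszul self-duality and the fact that $A$ has global dimension $2\ell(w_0)$ with $A^*$ sitting in the ``top'' position, the space of such transformations in homological degree $0$ and internal degree $2\mathbf{a}(w_0)=2\ell(w_0)$ is one-dimensional, and a generator $\alpha_{w_0}$ is the canonical ``highest'' map $A^*\to A\langle 2\ell(w_0)\rangle$ dual to the inclusion of the socle. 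This gives existence and uniqueness up to scalar simultaneously.

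Finally, I would verify that the evaluation of (a nonzero choice of) $\alpha_{w_0}$ at $P_{w_0}$ is an isomorphism. The evaluation is an element of $\mathrm{hom}(P_{w_0},P_{w_0}\langle 2\mathbf{a}(w_0)\rangle)$, which by Soergel's description is the socle/top-degree component of $\mathrm{End}(P_{w_0})\cong\mathtt{C}$; this component is one-dimensional and any nonzero element of it is an isomorphism $P_{w_0}\xrightarrow{\sim}P_{w_0}\langle 2\mathbf{a}(w_0)\rangle$ (the grading shift exactly matches the internal degree of the ``long element'' of the coinvariant algebra, which generates the socle-to-top isomorphism). So it suffices to show the evaluation is nonzero; this follows because $\alpha_{w_0}$ itself is nonzero, $P_{w_0}$ is a projective generator-summand on which every nonzero natural transformation of exact functors must be detected (more precisely: a natural transformation of additive endofunctors that vanishes on all projectives vanishes identically, and if it vanishes on $P_{w_0}$ but the target shift forces it to vanish on all $P_w$ by adjunction $\theta_w$-arguments, contradicting $\alpha_{w_0}\ne 0$). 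The main obstacle I anticipate is the bookkeeping in the second step: rigorously identifying the space of natural transformations $\mathbb{S}\to\mathrm{Id}\langle 2\mathbf{a}(w_0)\rangle$ as one-dimensional, i.e. controlling $\mathrm{Hom}$ from the Nakayama bimodule to a shift of the regular bimodule in the homotopy category, which requires either a careful Koszulity argument or an explicit analysis of the complex representing $(\mathcal{L}C_{w_0})^2$ and its endomorphisms; once that dimension count is in hand, nonvanishing of the evaluation at $P_{w_0}$ is essentially automatic.
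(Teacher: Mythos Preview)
Your approach diverges from the paper's and, as written, contains a real error in the third paragraph together with an acknowledged gap in the second.

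The paper's proof is direct and constructive: it uses the twisting-functor realization $\mathbb{S}=(\mathcal{L}\top_{w_0})^2$. For each simple reflection $s$ there is a natural transformation $\top_s\to\mathrm{Id}\langle 1\rangle$ (from \cite{KM,MS3}) whose evaluation at $P_{w_0}$ is an isomorphism; composing $2\ell(w_0)$ such transformations along a reduced expression of $w_0^2$ yields $\alpha_{w_0}$, and the isomorphism property at $P_{w_0}$ is immediate because each factor is already an isomorphism there. Uniqueness is a one-line citation of \cite[Theorem~6]{MS3}. You instead try to compute the entire space $\mathrm{Hom}(\mathbb{S},\mathrm{Id}\langle 2\mathbf{a}(w_0)\rangle)$ abstractly via bimodule cohomology and Koszulity; this is in principle possible (and close in spirit to the paper's later use of \cite{Kh} in the proof of Theorem~\ref{prop-s8.25-1}, which identifies such natural transformations with elements of $\mathrm{hom}(\nabla_e,\Delta_e\langle 2\mathbf{a}(w_0)\rangle)$), but you never carry out the computation, and you yourself flag it as the main obstacle.

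The concrete error is in your third paragraph. After you have established $\mathbb{S}P_{w_0}\cong P_{w_0}\langle 2\mathbf{a}(w_0)\rangle$, the evaluation $\alpha_{w_0}(P_{w_0})$ is a map $P_{w_0}\langle 2\mathbf{a}(w_0)\rangle\to P_{w_0}\langle 2\mathbf{a}(w_0)\rangle$, i.e.\ an element of the \emph{degree-zero} part of $\mathrm{End}(P_{w_0})\cong\mathtt{C}$, not of the top-degree (socle) part as you write. This matters: a nonzero element of the socle of the positively graded local algebra $\mathtt{C}$ is nilpotent and certainly not invertible, so your sentence ``any nonzero element of it is an isomorphism $P_{w_0}\xrightarrow{\sim}P_{w_0}\langle 2\mathbf{a}(w_0)\rangle$'' is false as stated. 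With the correct bookkeeping the step becomes: any nonzero degree-zero endomorphism of $P_{w_0}$ is a scalar and hence an isomorphism, so it suffices to show nonvanishing. Your nonvanishing argument (``vanishing on $P_{w_0}$ forces vanishing on all $P_w$ by $\theta_w$-adjunction'') is not quite right either, since applying $\theta_x$ to $P_{w_0}$ only produces sums of $P_{w_0}$ again; the workable version is to use \cite{Kh} to reduce to $\Delta_e$, observe that the unique nonzero map $\nabla_e\to\Delta_e\langle 2\ell(w_0)\rangle$ factors through $L_{w_0}$, and check that applying $\theta_{w_0}$ to this factorization yields an isomorphism (each factor becomes a surjection, respectively injection, between copies of $P_{w_0}$ of equal dimension). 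The paper's constructive approach sidesteps all of this: building $\alpha_{w_0}$ as a composite of maps that are each isomorphisms on $P_{w_0}$ makes both existence and the isomorphism property automatic.
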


\begin{proof}
We will use that $\mathbb{S}=(\mathcal{L}\top_{w_0})^2$, see 
Subsection~\ref{s2.7}. For a simple reflection $s$, consider
$\mathcal{L}\top_s$. By definition, the $0$-th homology of this
functor is $\top_s$. By \cite[Theorem~4]{KM}, see also
\cite[Proposition~2.3]{MS3}, there is a natural transformation
$\top_s\to \mathrm{Id}\langle 1\rangle$ whose evaluation at 
$P_{w_0}$ is an isomorphism. Composing these natural transformations
along some reduced expression of $w_0$, we get $\alpha_{w_0}$.

Uniqueness of $\alpha_{w_0}$, up to scalar, follows from 
\cite[Theorem~6]{MS3}.
\end{proof}

\begin{proposition}\label{prop-s8.2-2}
There exists a unique, up to scalar,  non-zero natural transformation 
$\alpha_{e}:\mathbb{S}\to 
\mathrm{Id}\langle -2(\mathbf{a}(w_0))\rangle[2(\mathbf{a}(w_0)]$.
Moreover, the evaluation of $\alpha_{e}$ at 
$L_{e}\cong \theta_{e}L_{e}$
is an isomorphism.
\end{proposition}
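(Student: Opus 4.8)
The plan is to dualize the argument for Proposition~\ref{prop-s8.2-1} using the other realization of the Serre functor, namely $\mathbb{S}=(\mathcal{L}\top_{w_0})^2$, but now tracking the action on $L_e$ rather than on $P_{w_0}$. The key observation is that for a simple reflection $s$ there is a natural transformation going the \emph{other} way between $\top_s$ and a shift of the identity: while $\top_s\to\mathrm{Id}\langle 1\rangle$ detects the projective-injective $P_{w_0}$, there should be a natural transformation $\mathrm{Id}\to E_s\langle -1\rangle$ (equivalently, by adjunction between $\top_s$ and $E_s$, a transformation whose derived version maps $\mathcal{L}\top_s$ to $\mathrm{Id}\langle -1\rangle[1]$) whose evaluation at $L_e$ is an isomorphism. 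Concretely, $\mathcal{L}\top_s L_e\cong L_e\langle -1\rangle[1]$ as recalled in Subsection~\ref{s3.1} (citing \cite{AS}), so the complex $\mathcal{L}\top_s$ has a natural map to its $(-1)$-st cohomology shifted appropriately, and this map is an isomorphism precisely at $L_e$ (and more generally on the subcategory $\mathcal{O}^{\hat{\mathcal R}}$ for the lowest cell). Composing these natural transformations along a reduced expression $w_0=s_1\cdots s_k$ with $k=\ell(w_0)=\mathbf{a}(w_0)$, and then doing it twice (since $\mathbb{S}=(\mathcal{L}\top_{w_0})^2$), we obtain $\alpha_e:\mathbb{S}\to\mathrm{Id}\langle -2\mathbf{a}(w_0)\rangle[2\mathbf{a}(w_0)]$ whose evaluation at $L_e$ is an isomorphism, as required.

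For existence of the required transformation $\mathcal{L}\top_s\to\mathrm{Id}\langle -1\rangle[1]$ at the 2-categorical level, I would argue as follows. The functor $\top_s$ is represented by tensoring with a Harish-Chandra bimodule sitting in a short exact sequence relating it to the identity bimodule; dually to the presentation used for $C_s$, there is a triangle in the homotopy category of complexes of projective functors expressing $\mathcal{L}\top_s$ together with a canonical morphism to $\mathrm{Id}\langle -1\rangle[1]$. Alternatively — and this is probably cleaner — one uses that the coshuffling/shuffling picture and the twisting picture are intertwined by the duality $\star$ together with the equivalences of Subsection~\ref{s2.9}: applying $\star$ (or a Koszul--Ringel duality) to the transformation $\top_s\to\mathrm{Id}\langle 1\rangle$ of \cite[Theorem~4]{KM} produces the desired transformation in the opposite direction with the shift $\langle -1\rangle[1]$, and $\star$ sends $P_{w_0}$ to itself but, under the combined Koszul--Ringel duality $\mathrm{K}$ of the excerpt, sends $P_{w_0}=\theta_{w_0}L_{w_0}$ to $L_e=\theta_e L_e$ (which matches the shift bookkeeping $\mathrm{K}\langle a\rangle[b]=\langle -a\rangle[a+b]\mathrm{K}$, converting $\langle 2\mathbf{a}(w_0)\rangle[0]$ into $\langle -2\mathbf{a}(w_0)\rangle[2\mathbf{a}(w_0)]$). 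This duality-transport simultaneously gives existence of $\alpha_e$, the evaluation being an isomorphism at $L_e$, and the exact shifts in the statement.

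For uniqueness up to scalar, I would compute the relevant morphism space directly. A natural transformation $\mathbb{S}\to\mathrm{Id}\langle -2\mathbf{a}(w_0)\rangle[2\mathbf{a}(w_0)]$ is an element of $\Hom_{\mathrm{End}(\mathcal{D}^b({}^{\mathbb{Z}}\mathcal{O}_0))}(\mathbb{S},\mathrm{Id}\langle -2\mathbf{a}(w_0)\rangle[2\mathbf{a}(w_0)])$, which by the defining property of $\mathbb{S}$ as derived Nakayama functor can be rewritten in terms of (graded, shifted) homomorphisms of the form $\bigoplus_{w}\mathcal{D}^b(\mathcal{O}_0)(P_w\langle -2\mathbf{a}(w_0)\rangle[2\mathbf{a}(w_0)],I_w)$, compatibly with all morphisms between the $P_w$. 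Since $\mathbb{S}P_w\cong I_w$, one reduces to understanding $\mathrm{ext}^{2\mathbf{a}(w_0)}(P_w,I_w\langle \cdot\rangle)$; the Auslander-regularity input (Subsection~\ref{s2.8}) together with the fact that $\mathrm{proj.dim}\,I_x=\mathrm{inj.dim}\,P_x=2\mathbf{a}(w_0x)\le 2\mathbf{a}(w_0)$ forces the top degree $2\mathbf{a}(w_0)$ to be attained only for $w=e$, and there the space is one-dimensional, exactly as in Corollary~\ref{cor-s5.8-1}\eqref{cor-s5.8-1.4} with $x=e$. This pins the transformation down up to scalar and simultaneously confirms that any such nonzero transformation must be an isomorphism when evaluated at $L_e$. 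The main obstacle I anticipate is making the ``dual of \cite[Theorem~4]{KM}'' natural transformation fully rigorous at the level of the bicategory $\mathscr{P}$ (as opposed to pointwise on objects): one needs the transformation $\mathcal{L}\top_s\Rightarrow\mathrm{Id}\langle -1\rangle[1]$ to be a genuine modification, which requires either an explicit bimodule-level construction or a careful transport through Koszul--Ringel duality of the already-constructed $\alpha_{w_0}$, and checking that duality really does swap the two extreme cells in the way the shift-bookkeeping predicts.
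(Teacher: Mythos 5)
Your primary route coincides in outline with the paper's: factor $\mathbb{S}=(\mathcal{L}\top_{w_0})^2$ into the functors $\mathcal{L}\top_s$ and compose natural transformations $\beta_s:\mathcal{L}\top_s\to \mathrm{Id}\langle -1\rangle[1]$ along a reduced expression of $w_0$. But the obstacle you flag at the end is exactly the substance of the proof, and your sketch does not close it. A complex $X$ with cohomology concentrated in degrees $-1$ and $0$ carries canonical truncation maps $H^{-1}(X)[1]\to X$ and $X\to H^0(X)$; there is no formal ``natural map from $X$ to its $(-1)$-st cohomology shifted'', so the pointwise isomorphism $\mathcal{L}\top_s L_e\cong L_e\langle -1\rangle[1]$ does not by itself produce a transformation with target $\mathrm{Id}\langle -1\rangle[1]$. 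The paper's missing ingredient is \cite[Theorem~2]{MS3}: the first derived functor of $\top_s$ is the dual Zuckerman functor $Z_{\mathfrak{p}_s}$, and the inclusion of the maximal submodule lying in $\mathcal{O}_0^{\mathfrak{p}_s}$ gives a natural transformation $\iota_{\mathfrak{p}_s}Z_{\mathfrak{p}_s}\to \mathrm{Id}\langle -1\rangle$ which, because $\mathcal{L}\top_s$ has no homology beyond the first two, lifts to the desired $\beta_s$; its evaluation at $L_e$ is an isomorphism since $L_e\in\mathcal{O}_0^{\mathfrak{p}_s}$. Your uniqueness argument also diverges from the paper's and is incomplete as written: natural transformations of endofunctors are not the same as compatible families of maps $\mathbb{S}P_w\to P_w\langle\cdot\rangle[\cdot]$, and the paper instead deduces uniqueness from $\mathrm{Hom}(\iota_{\mathfrak{p}}Z_{\mathfrak{p}},\mathrm{Id})\cong \mathrm{Hom}(Z_{\mathfrak{p}},Z_{\mathfrak{p}})\cong Z(A^{\mathfrak{p}})$, which is positively graded and local, hence one-dimensional in degree zero.

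Your fallback route, transporting $\alpha_{w_0}$ from Proposition~\ref{prop-s8.2-1} through the Koszul--Ringel self-equivalence $\mathrm{K}$, is a genuinely different and workable construction of $\alpha_e$: since $\mathrm{K}$ commutes with $\mathbb{S}$, satisfies $\mathrm{K}\langle a\rangle[b]=\langle -a\rangle[a+b]\mathrm{K}$, and sends $\theta_{w_0}L_{w_0}$ to $\theta_eL_e$, conjugation by $\mathrm{K}$ turns $\alpha_{w_0}:\mathbb{S}\to\mathrm{Id}\langle 2\mathbf{a}(w_0)\rangle$ into a transformation $\mathbb{S}\to\mathrm{Id}\langle -2\mathbf{a}(w_0)\rangle[2\mathbf{a}(w_0)]$ whose evaluation at $L_e$ is $\mathrm{K}$ applied to an isomorphism. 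This buys existence and the isomorphism at $L_e$ essentially for free, at the cost of invoking the full strength of the graded Koszul--Ringel duality (which the paper only deploys later, in the proof of Lemma~\ref{lem11}). Two caveats: the parenthetical alternative of ``applying $\star$'' cannot work, since $\star$ is contravariant and reverses the direction of natural transformations, producing a map into $\star\top_s\star$ rather than one out of $\mathbb{S}$; and the transport argument gives no uniqueness statement, for which you would still need the paper's center-of-$A^{\mathfrak{p}}$ argument or an honest computation of the space of natural transformations.
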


\begin{proof}
We will use that $\mathbb{S}=(\mathcal{L}\top_{w_0})^2$, see 
Subsection~\ref{s2.7}. For a simple reflection $s$, 
let $\mathfrak{p}=\mathfrak{p}_s$ be the (minimal) parabolic subcategory of 
$\mathfrak{g}$ corresponding to $s$. Consider the functor 
$\mathcal{L}\top_s$.

By \cite[Theorem~2.2]{AS}, only the first two homologies of
$\mathcal{L}\top_s$ are non-zero.
By \cite[Theorem~2]{MS3}, the $1$-st homology of 
$\mathcal{L}\top_s$ is isomorphic to the dual 
Zuckerman functor $Z_{\mathfrak{p}}$. 
The embedding $\iota_{\mathfrak{p}}$
of $\mathcal{O}_0^\mathfrak{p}$ into $\mathcal{O}_0$
gives rise to a natural transformation from 
$\iota_{\mathfrak{p}} Z_{\mathfrak{p}}$ to 
$\mathrm{Id}\langle -1\rangle$ whose evaluation at 
$L_{e}$ is an isomorphism. Since $\mathcal{L}\top_s$ has no higher homologies,
this induces a natural transformation $\beta_s$ from
$\mathcal{L}\top_s$ to $\mathrm{Id}\langle -1\rangle[1]$.
Composing these $\beta_s$
along some reduced expression of $w_0$, we get $\alpha_{e}$.

By adjunction, in the category of functors, we have  
$\mathrm{Hom}(\iota_{\mathfrak{p}} Z_{\mathfrak{p}},\mathrm{Id})\cong
\mathrm{Hom}(Z_{\mathfrak{p}},Z_{\mathfrak{p}})$.
The algebra $\mathrm{Hom}(Z_{\mathfrak{p}},Z_{\mathfrak{p}})$ 
is isomorphic to the endomorphism algebra of the identity endofunctor of 
$\mathcal{O}_0^\mathfrak{p}$, that is, to the center of $A^\mathfrak{p}$, 
by \cite[Theorem~6]{MS3}. The algebra $A^\mathfrak{p}$ is
positively graded and hence its center is positively graded as well.
This center is local as $\mathcal{O}_0^\mathfrak{p}$ is connected.
This means that the homogeneous part of degree zero in this center
is one-dimensional (as $\mathbb{C}$ is algebraically closed).
This implies uniqueness of $\alpha_{e}$, up to scalar.
\end{proof}

\subsection{General case}\label{s8.25}

\begin{theorem}\label{prop-s8.25-1}
For any $d\in \mathbf{D}$ and $w\in W$ such that 
$w\sim_R d$, there exists a natural transformation
\begin{displaymath}
\alpha_d:\mathbb{S}\to 
\mathrm{Id}\langle 2(\mathbf{a}(d)-\mathbf{a}(w_0d))\rangle[2\mathbf{a}(w_0d)]
\end{displaymath}
such that the evaluation of $\alpha_d$ at $\theta_{w}L_{d}$
is an isomorphism.
\end{theorem}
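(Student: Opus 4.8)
The plan is to build $\alpha_d$ by composing, along a reduced expression for $w_0$, the local natural transformations coming from twisting functors, exactly as in the two extreme cases of Propositions~\ref{prop-s8.2-1} and~\ref{prop-s8.2-2}, but now aiming at a cell-dependent shift rather than at one of the two extremes. Recall $\mathbb{S}=(\mathcal{L}\top_{w_0})^2$, so for a reduced expression $w_0=s_1s_2\cdots s_N$ we may write $\mathbb{S}$ as a composition of $2N$ copies of functors $\mathcal{L}\top_{s}$. For each simple reflection $s$, Proposition~\ref{prop-s8.2-1} gives us the ``upward'' transformation $\top_s\to\mathrm{Id}\langle 1\rangle$ (whose evaluation at $P_{w_0}$ is an isomorphism), and Proposition~\ref{prop-s8.2-2} gives the ``downward'' transformation $\mathcal{L}\top_s\to\mathrm{Id}\langle -1\rangle[1]$ (whose evaluation at $L_e$ is an isomorphism). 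The combinatorics of Theorem~\ref{conj1} tells us the target shift must be $\langle 2(\mathbf{a}(d)-\mathbf{a}(w_0d))\rangle[2\mathbf{a}(w_0d)]$; writing $\ell(w_0)=\mathbf{a}(w_0)$ and using that $\mathbf{a}(w_0d)\le\mathbf{a}(w_0)$, one sees this shift is a ``mixture'' interpolating between the two extremes, so one expects to use $2\mathbf{a}(w_0d)$ of the local factors in the ``downward'' form and the remaining $2(\mathbf{a}(w_0)-\mathbf{a}(w_0d))$ in the ``upward'' form. The first task is therefore to isolate, for a fixed $d\in\mathbf{D}$, which of the $2N$ elementary twisting steps should be taken in which of the two flavours so that the accumulated shift comes out correctly and the evaluation at $\theta_w L_d$ survives as an isomorphism rather than collapsing to zero.

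The natural device for this is to pass through the parabolic/Zuckerman picture. Since $w\sim_R d$, Theorem~\ref{conj1} already identifies $\mathbb{S}(\theta_w L_d)$ with the required shift of $\theta_w L_d$, and the proof of Proposition~\ref{thm7} (together with the reduction in Subsection~\ref{s5.7}) shows that the ``geography'' of $d$ inside its two-sided cell is governed by a parabolic $\mathfrak{p}$ with $d'\sim_R w_0^{\mathfrak{p}}w_0$ and $\ell(w_0^{\mathfrak{p}})=\mathbf{a}(w_0 d)$. For such a $d'$ one can run an argument entirely parallel to Proposition~\ref{prop-s8.2-2}: the module $\theta_{w}L_{d'}\cong P_w^{\mathfrak{p}}\cong I_w^{\mathfrak{p}}$ is projective-injective in $\mathcal{O}_0^{\mathfrak{p}}$, hence Calabi-Yau of the right dimension inside that parabolic block, and the $2\ell(w_0^{\mathfrak{p}})$ twisting steps that ``see'' the parabolic should be taken in the downward (Zuckerman) form while the remaining $2(\ell(w_0)-\ell(w_0^{\mathfrak p}))$ steps are taken in the upward ($P_{w_0}$-isomorphism) form. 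Concretely: choose a reduced expression for $w_0$ adapted to $w_0^{\mathfrak{p}}$ so that the first $\ell(w_0^{\mathfrak{p}})$ letters spell $w_0^{\mathfrak{p}}$; for those letters use $\beta_s:\mathcal{L}\top_s\to\mathrm{Id}\langle -1\rangle[1]$ (via the dual Zuckerman functor), for the rest use $\gamma_s:\top_s\to\mathrm{Id}\langle 1\rangle$; repeat for the second copy of $\mathcal{L}\top_{w_0}$; compose. This produces $\alpha_{d'}:\mathbb{S}\to\mathrm{Id}\langle 2(\mathbf{a}(d')-\mathbf{a}(w_0 d'))\rangle[2\mathbf{a}(w_0 d')]$, and by construction its evaluation at $\theta_w L_{d'}\cong P_w^{\mathfrak p}$ is a composition of isomorphisms, hence an isomorphism.

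It then remains to transport this from $d'$ to a general Duflo element $d$ in the same two-sided cell. Here I would invoke the cell combinatorics already used for Theorem~\ref{conj1}: in the proof of Proposition~\ref{thm7} one produces a non-zero homomorphism of birepresentations $\Phi:\mathbf{C}_{\mathcal{R}'}\to\mathbf{C}_{\mathcal{R}}$ built from twisting functors (Lemma~\ref{lem8}), and the key point is that $\Phi$ is given by a complex of projective functors and morphisms between them, hence commutes with $\mathbb{S}$ and, more to the point, with \emph{any} natural transformation of functors that are themselves built from projective functors. Since both $\mathbb{S}$ and $\mathrm{Id}$ (realized via the Harish--Chandra bimodule $U(\mathfrak g)/U(\mathfrak g)\mathbf n$) are of this kind, the natural transformation $\alpha_{d'}$ pushes through $\Phi$ to a natural transformation $\alpha_d:\mathbb{S}\to\mathrm{Id}\langle 2(\mathbf a(d)-\mathbf a(w_0 d))\rangle[2\mathbf a(w_0 d)]$ — using $\mathbf a(d)=\mathbf a(d')$, $\mathbf a(w_0 d)=\mathbf a(w_0 d')$ as $d\sim_J d'$. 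Because $\Phi(\theta_u L_{d'})$ is a non-zero (in fact indecomposable, by the argument following Lemma~\ref{lem8}) direct sum of $\theta_w L_d$'s and evaluation commutes with $\Phi$, the evaluation of $\alpha_d$ at $\theta_w L_d$ is obtained by applying $\Phi$ to an isomorphism, hence is an isomorphism; one then deals with the remaining $w\sim_R d$ by the same flexibility (rewriting $\theta_w L_d$ as $\theta_x L_y$) exploited in Subsection~\ref{s4.4}, or simply by noting that $\theta_w L_d\cong\theta_w\theta_d L_d$ up to summands killed by Duflo considerations so that applying $\theta_w$ to the isomorphism at $\theta_d L_d$ does the job.

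\textbf{Main obstacle.} The delicate point is not the existence of \emph{some} natural transformation in the right degree — one can always compose elementary pieces — but controlling that the evaluation at $\theta_w L_d$ is an \emph{isomorphism} and not merely a nonzero-or-zero map. Two things can go wrong: (i) the elementary transformation $\beta_s:\mathcal{L}\top_s\to\mathrm{Id}\langle-1\rangle[1]$ is only an isomorphism on objects supported in the appropriate parabolic block, so one must verify that at each stage of the composition the object one is evaluating at still lies in (or is built from) the correct $\mathcal{O}_0^{\mathfrak p}$ — this is why the reduced expression must be chosen adapted to $w_0^{\mathfrak p}$, and checking that the intermediate twists $\top_{s_i}\cdots\top_{s_1}(\theta_w L_{d'})$ stay projective-injective in successively smaller parabolic blocks is the real content; (ii) transporting along $\Phi$ preserves isomorphisms, but one must make sure $\Phi$ does not annihilate the relevant hom-space between functors, i.e. that the naturality square one needs actually exists — this should follow from transitivity of the cell birepresentations plus the fact that $\Phi$ is a nonzero morphism of birepresentations, but it requires a careful bookkeeping of how natural transformations of functors restrict to subquotient categories. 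I expect step (i), the parabolic-support tracking along a carefully chosen reduced word, to be the technical heart of the proof.
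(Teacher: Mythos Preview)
Your proposal has a genuine arithmetic gap that breaks the construction at the very first step. You want to write $\alpha_d$ as a horizontal composite of $2\ell(w_0)$ elementary pieces, each either an ``upward'' $\gamma_s:\mathcal{L}\top_s\to\mathrm{Id}\langle 1\rangle$ or a ``downward'' $\beta_s:\mathcal{L}\top_s\to\mathrm{Id}\langle -1\rangle[1]$. If you use $b$ of the latter and $a=2\ell(w_0)-b$ of the former, the total target shift is $\langle a-b\rangle[b]$. Matching the homological position forces $b=2\mathbf{a}(w_0d)$, and then the graded shift comes out as $2\ell(w_0)-4\mathbf{a}(w_0d)$. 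For this to equal the required $2(\mathbf{a}(d)-\mathbf{a}(w_0d))$ you would need $\mathbf{a}(d)+\mathbf{a}(w_0d)=\ell(w_0)$, which is false already in type $A_2$ (take $d=s$: then $\mathbf{a}(s)+\mathbf{a}(w_0s)=1+1=2\neq 3=\ell(w_0)$). So the two elementary natural transformations from Propositions~\ref{prop-s8.2-1} and~\ref{prop-s8.2-2} simply cannot be combined to land in the correct bidegree; no choice of reduced expression or ordering will fix this. The interpolation you have in mind does not exist at the level of simple-reflection pieces.

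The paper circumvents this entirely: rather than factor $\alpha_d$ through elementary twists, it constructs $\alpha_d$ in one shot from an element of $\mathrm{ext}^{2\mathbf{a}(w_0d)}\big(\nabla_e,\Delta_e\langle 2(\mathbf{a}(d)-\mathbf{a}(w_0d))\rangle\big)$. The key observation is that both $\mathbb{S}$ and $\mathrm{Id}$ commute with projective functors, so by Khomenko's theorem a natural transformation between them is determined by its value at the dominant Verma $\Delta_e$; since $\mathbb{S}\Delta_e\cong\nabla_e$, the problem reduces to producing a specific nonzero extension class. That class is located by a careful analysis of $\mathcal{P}_\bullet(I_e)$ using Lemmas~\ref{lem-s8.25-255} and~\ref{lem-s8.25-26} together with Corollary~\ref{cor-s5.8-1}, and the isomorphism at $\theta_w L_d$ is then deduced from the positivity of the grading on $\mathrm{End}(\theta_w L_d)$ plus the fact that the relevant adjunction map $\theta_d\to\theta_e\langle\mathbf{a}(d)\rangle$ is nonzero on $\theta_w L_d$. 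Your transport step via $\Phi$ is also suspect (you cannot manufacture a \emph{global} natural transformation on $\mathcal{D}^b(\mathcal{O}_0)$ from a functor between cell subcategories), but this becomes moot once the degree count fails.
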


We expect $\alpha_d$ to be a scalar multiple of $\alpha_{d'}$
if (and, probably, only if) $d\sim_J d'$.

\begin{remark}\label{rem-s8.25-15}
{\rm 
The function $\mathbf{a}$ is constant on two-sided cells and strictly
monotone with respect to the two-sided order. However, the two-sided
order is not linear in general. It is easy to find examples of
$W$ and $x,y\in W$ such that $\mathbf{a}(x)=\mathbf{a}(y)$
but $\mathbf{a}(w_0x)\neq \mathbf{a}(w_0y)$. For instance,
we can take $W=S_7$ with $x$ in the two-sided cell corresponding to
the partition $(5,1,1)$ and $y$ in the two-sided cell corresponding to
the partition $(4,3)$. Then $\mathbf{a}(x)=\mathbf{a}(y)=3$ while
$\mathbf{a}(w_0x)=10\neq 9=\mathbf{a}(w_0y)$. Thus we might have 
homomorphisms from $\mathbb{S}$ to $\mathrm{Id}$ corresponding 
to different two-sided cells which live in the same
homological position but in different degrees.
More details on this example can be found in Subsection~\ref{s7.3}.
}
\end{remark}

I do not know how to extend 
Theorem~\ref{prop-s8.25-1} to Soergel bimodules  over
the polynomial algebra.

\subsection{Preparation}\label{s8.26}

To prove Proposition~\ref{prop-s8.25-1}, we will need some
preparation. Essentially, we need to go over the proof of 
\cite[Theorem~3]{KMM} with a fine-tooth comb.

For any $w\in W$, denote by $\widehat{w}$
the Duflo element in the left KL-cell of $w$.

\begin{lemma}\label{lem-s8.25-255}
For any $x,w\in W$, $d\in\mathbf{D}$,
$k\in\mathbb{Z}_{\geq 0}$ and $i\in\mathbb{Z}$, 
we have:
\begin{enumerate}[$($i$)$]
\item\label{lem-s8.25-255-000} 
$\mathrm{Ext}^k(T_w,L_x)\neq 0$ implies $k\leq\mathbf{a}(w)$.
\item\label{lem-s8.25-255-0000} 
$\mathrm{ext}^k(T_w,L_x\langle i\rangle)\neq 0$ implies 
$-\mathbf{a}(x)-k\leq i\leq \mathbf{a}(x)-k$.
\item\label{lem-s8.25-255-00} 
$\mathrm{Ext}^{\mathbf{a}(w)}(T_w,L_x)\neq 0$ implies 
$x\sim_L {w_0w}$.
\item\label{lem-s8.25-255-0} 
$\mathrm{Ext}^{k}(T_w,L_x)\neq 0$ and $k<\mathbf{a}(w)$ implies 
$\mathbf{a}(x)>\mathbf{a}(w_0w)$.
\item\label{lem-s8.25-255-1} 
$\mathrm{ext}^{\mathbf{a}(d)}(T_d,L_{\widehat{w_0d}}
\langle\mathbf{a}(w_0d)-\mathbf{a}(d)\rangle)\cong\mathbb{C}$.
\item\label{lem-s8.25-255-5} 
If $x\sim_L w_0d$,
the inequality
$\mathrm{ext}^{\mathbf{a}(d)}(T_d,L_{x}
\langle i\rangle)\neq 0$ implies both
$i=\mathbf{a}(w_0d)-\mathbf{a}(d)$
and $x=\widehat{w_0d}$.
\end{enumerate}
\end{lemma}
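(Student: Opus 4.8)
The plan is to transfer all six assertions to the injective side of $\mathcal{O}_0$ by Ringel self-duality, and then to re-run, with additional bookkeeping of internal degrees and Kazhdan--Lusztig cells, the analysis of the derived twisting functor $\mathcal{L}\top_{w_0}$ on simple modules that underlies \cite[Theorem~3]{KMM}.

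\emph{The reduction.} Since $\top_{w_0}$ is exact on $\Delta$-filtered modules and $\top_{w_0}P_v\cong T_{w_0v}$, we have $T_w\cong\mathcal{L}\top_{w_0}P_{w_0w}$ in $\mathcal{D}^b(\mathcal{O}_0)$. Applying the auto-equivalence $\mathcal{L}\top_{w_0}$, which carries $P_{w_0w}$ to $(\mathcal{L}\top_{w_0})^2P_{w_0w}=\mathbb{S}P_{w_0w}\cong I_{w_0w}$, to both arguments of $\mathcal{D}^b(\mathcal{O}_0)(T_w,L_x[k])$ gives
\begin{displaymath}
\mathrm{Ext}^k(T_w,L_x)\cong\mathcal{D}^b(\mathcal{O}_0)\big(I_{w_0w},\,\mathcal{L}\top_{w_0}L_x[k]\big),
\end{displaymath}
and a similar graded identity for a suitable graded normalisation of $\top_{w_0}$. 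Filtering $\mathcal{L}\top_{w_0}L_x$ by its homology, one is reduced to $\mathrm{Ext}^{k+j}(I_{w_0w},\mathcal{L}_j\top_{w_0}L_x)$ for $j\ge 0$, and here two inputs apply: first, the structure of $\top_sL_y$ and $\mathcal{L}_1\top_sL_y$ on simples from \cite[Sections~6 and 7]{AS}, propagated along a reduced word for $w_0$, which shows inductively that every simple constituent $L_z$ of $\mathcal{L}_j\top_{w_0}L_x$ satisfies $z\le_L x$ and constrains the homological degree $j$ in which it can appear; second, Corollary~\ref{cor-s5.8-1} together with the Auslander-regularity consequence — using $\operatorname{id}P_w=2\mathbf{a}(w_0w)$ from \cite{Maz1,Maz2,KMM} — that $\mathrm{Ext}^m(I_y,L_z)\neq 0$ forces $2\mathbf{a}(w_0z)\le m\le 2\mathbf{a}(w_0y)$ and $z\ge_L y$.

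\emph{Parts (i)--(iv).} Since $\mathrm{Ext}^{k+j}(I_{w_0w},-)$ vanishes for $k+j>2\mathbf{a}(w)$, the cell constraint on $j$ then squeezes $k\le\mathbf{a}(w)$, which is (i) (parts (i) and (iii) are in any case essentially the content of \cite[Theorem~3]{KMM}). In the extreme case $k=\mathbf{a}(w)$ the only admissible constituent is $L_{w_0w}$, which via $w_0w\le_L z\le_L x$ and the strict monotonicity of $\mathbf{a}$ forces $x\sim_L w_0w$, proving (iii). For $k<\mathbf{a}(w)$, every admissible constituent lies in a strictly smaller two-sided cell than that of $w_0w$, hence $\mathbf{a}(x)>\mathbf{a}(w_0w)$, proving (iv); it is Auslander regularity, ruling out the boundary homological degree, that makes this inequality strict. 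For (ii) one carries the internal grading through the same induction: each $\mathcal{L}\top_s$ introduces degree shifts of size at most $1$, and only the $\le\mathbf{a}(x)$ wall-crossings that affect the two-sided cell of $x$ can move the relevant constituent off its Koszul position $-j$; translating back through the displayed isomorphism yields the stated bound $-\mathbf{a}(x)-k\le i\le\mathbf{a}(x)-k$. (Alternatively, (ii) can be read directly off the graded form of Corollary~\ref{cor-s5.8-1}.)

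\emph{Parts (v) and (vi).} By (iii), in the top degree $k=\mathbf{a}(d)$ the space $\mathrm{ext}^{\mathbf{a}(d)}(T_d,L_x\langle i\rangle)$ can be nonzero only for $x\sim_L w_0d$, so it remains to single out the unique such $x$ that occurs and to compute its multiplicity and its graded shift. Here I would use the distinguished role of the Duflo element $\widehat{w_0d}$: composing the displayed isomorphism with the projective functor $\theta_d$ kills every constituent from a strictly smaller two-sided cell and, by \cite[Proposition~4.15]{MMMTZ2} together with the cell-birepresentation identities used in the proof of Theorem~\ref{conj1}, identifies $\theta_dL_x$ (for $x\sim_L w_0d$) with the indecomposable object $\theta_xL_{d'}$ of the corresponding cell birepresentation, which is nonzero precisely for $x$ in the right cell of the relevant Duflo element; this forces $x=\widehat{w_0d}$. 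Indecomposability of $\theta_{\widehat{w_0d}}L_{d'}$ together with an ungraded Grothendieck-group computation, exactly as in the proof of Theorem~\ref{conj1}, then pins the multiplicity at one, and Lemma~\ref{lem9} (equivalently, the Koszul--Ringel self-duality used in Lemma~\ref{lem11}) fixes the internal shift at $\mathbf{a}(w_0d)-\mathbf{a}(d)$. I expect this last step to be the main obstacle: unlike the purely homological bounds in (i)--(iv), it requires matching the graded and cell data precisely — verifying that the relevant homology of $\mathcal{L}\top_{w_0}L_{\widehat{w_0d}}$ carries $L_d$ in its top exactly once and in exactly the predicted degree, rather than merely within the expected range.
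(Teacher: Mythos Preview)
Your reduction $\mathrm{Ext}^k(T_w,L_x)\cong\mathcal{D}^b(\mathcal{O}_0)\big(I_{w_0w},\mathcal{L}\top_{w_0}L_x[k]\big)$ is valid and is a genuinely different route from the paper's. The paper instead writes $T_w=\theta_{w_0w}T_{w_0}=\theta_{w_0w}L_{w_0}$, takes the Koszul-linear projective resolution $\mathcal{P}_\bullet(L_{w_0})$ (which is Koszul-dual to $\nabla_e$), and applies $\theta_{w_0w}$. Claims~(i)--(iv) then drop out from the combinatorics of which $P_y$ sit at which position in $\mathcal{P}_\bullet(L_{w_0})$ together with the degree bound $|\text{shift}|\le\mathbf{a}(x)$ built into the definition of the $\mathbf{a}$-function. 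Your approach trades this for Auslander regularity plus Corollary~\ref{cor-s5.8-1}; that is a reasonable exchange, although the ``cell constraint on $j$'' you invoke to squeeze $k\le\mathbf{a}(w)$ out of $k+j\le 2\mathbf{a}(w)$ is never stated and is not obvious from what you wrote.

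The genuine gap is in (v) and (vi). Your displayed isomorphism involves a \emph{single} $\mathcal{L}\top_{w_0}$, whereas Lemma~\ref{lem9}, Lemma~\ref{lem11} and the Theorem~\ref{conj1} machinery concern $\mathbb{S}=(\mathcal{L}\top_{w_0})^2$; you give no mechanism for halving that information to control $\mathcal{L}\top_{w_0}L_{\widehat{w_0d}}$. The $\theta_d$ step is also off: for $x\sim_L w_0d$ the elements $x$ and $d$ lie in \emph{different} two-sided cells, so the identity $\theta_dL_x\cong\theta_xL_{d'}$ from \cite{MMMTZ2} (which needs $x\sim_H d$) is unavailable, and $\theta_d$ cannot distinguish $\widehat{w_0d}$ from the other elements of its left cell. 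The paper takes a concrete alternative: by adjunction $\mathrm{ext}^{\mathbf{a}(d)}(T_d,L_x)\cong\mathrm{ext}^{\mathbf{a}(d)}(T_{w_0},\theta_{dw_0}L_x)$; Koszul--Ringel self-duality sends $\theta_{dw_0}L_{\widehat{w_0d}}$ to $\theta_{\widehat{w_0d}w_0}L_d$, whose simple top identifies the $\mathbf{a}(d)$-component of the linear tilting complex as $T_{\widehat{w_0d}w_0}$; then BGG reciprocity and $[\Delta_e:L_{w_0\widehat{w_0d}w_0}\langle -\mathbf{a}(dw_0)\rangle]=1$ give the multiplicity, and comparison with the bound in (ii) pins the degree. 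Your own diagnosis that this last step is ``the main obstacle'' is accurate; as written the sketch does not close it.
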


\begin{proof}
Claim~\eqref{lem-s8.25-255-000} follows from \cite[Theorem~17]{Maz2}.
As $T_w=\theta_{w_0w}T_{w_0}$, 
Claim~\eqref{lem-s8.25-255-00} follows from \cite[Theorem~A]{KMM2}.

To prove Claim~\eqref{lem-s8.25-255-0}, consider 
$\mathcal{P}_\bullet(T_{w_0})$ and note that $T_{w_0}=L_{w_0}$.
Therefore the complex $\mathcal{P}_\bullet(T_{w_0})\in\mathcal{LP}(\mathcal{O}_0)$
is sent to to $\nabla_e$ via the Koszul duality. By the
KL-combinatorics, if some $P_y$ appears as a summand of 
$\mathcal{P}_{-k}(T_{w_0})$, for $k<\mathbf{a}(w)$, then 
$\mathbf{a}(w_0y)<\mathbf{a}(w)$, that is $\mathbf{a}(y)>\mathbf{a}(w_0w)$.
Applying $\theta_{w_0w}$ to $\mathcal{P}_\bullet(T_{w_0})$, we get some,
not necessarily minimal, projective resolution of $T_w$, as projective
functors are exact and send projective objects to projective objects.
Now Claim~\eqref{lem-s8.25-255-0000} follows from the definition of 
the $\mathbf{a}$-function, see Subsection~\ref{s2.3}.
Further, any summand $P_x$ of $\mathcal{P}_{-k}(T_{w})$ is a summand of 
$\theta_{w_0w}\mathcal{P}_{-k}(T_{w_0})$, that is of some 
$\theta_{w_0w}P_y$, for $y$ as above. Therefore we have 
$\mathbf{a}(x)\geq \mathbf{a}(y)$, implying Claim~\eqref{lem-s8.25-255-0}.

As $T_d\cong \theta_{w_0d}T_{w_0}$, by adjunction, we have
\begin{displaymath}
\mathrm{ext}^{\mathbf{a}(d)}(T_{d},L_{\widehat{w_0d}}\langle i\rangle)\cong 
\mathrm{ext}^{\mathbf{a}(d)}(T_{w_0},\theta_{dw_0}L_{\widehat{w_0d}}\langle i\rangle).
\end{displaymath}
Since all Duflo elements are involutions, the element $\widehat{w_0d}$, 
being the Duflo element in the left KL-cell of $w_0d$, is also the 
Duflo element in the  right KL-cell of $dw_0=(w_0d)^{-1}$.
The module $\theta_{dw_0}L_{\widehat{w_0d}}$ is Koszul-Ringel dual
to the module $\theta_{\widehat{w_0d}w_0}L_{d}$ by
\cite[Theorem~16]{Maz2}. 
As $\widehat{w_0d}\sim_R dw_0 $,
we have $\widehat{w_0d}w_0\sim_R d$
by \cite[Page~179]{BjBr}. It follows that
$\theta_{\widehat{w_0d}w_0}L_{d}$ has simple top 
$L_{\widehat{w_0d}w_0}$. The latter corresponds to
$\theta_{w_0\widehat{w_0d}w_0}L_{w_0}\cong T_{\widehat{w_0d}w_0}$
under the Koszul-Ringel duality. This means that the
$\mathbf{a}(d)$-position of the complex
$\mathcal{T}_\bullet(\theta_{dw_0}L_{\widehat{w_0d}})\in\mathcal{LT}(\mathcal{O}_0)$
is isomorphic to $T_{\widehat{w_0d}w_0}\langle \mathbf{a}(d)\rangle$.
%

The module
$T_{\widehat{w_0d}w_0}$ is the image of $P_{w_0\widehat{w_0d}w_0}$ under $\top_{w_0}$,
moreover, any costandard filtration of $T_{\widehat{w_0d}w_0}$ is the image of
a standard filtration of $P_{w_0\widehat{w_0d}w_0}$ under $\top_{w_0}$.
Homomorphisms from $T_{w_0}$  to $T_{\widehat{w_0d}w_0}$ correspond to
costandard subquotients of the form $\nabla_{w_0}$ of $T_{\widehat{w_0d}w_0}$,
which, in turn, correspond to standard subquotients of the form
$\Delta_e$ in $P_{w_0\widehat{w_0d}w_0}$.

By the BGG reciprocity, 
the standard subquotients of  the form $\Delta_e$ in $P_{w_0\widehat{w_0d}w_0}$
correspond to simple subquotients of the form $L_{w_0\widehat{w_0d}w_0}$
in $\Delta_e$.
Since conjugation by $w_0$ corresponds to an automorphism of the Dynkin diagram,
$w_0\widehat{w_0d}w_0\in\mathbf{D}$. Clearly,
$\mathbf{a}(w_0\widehat{w_0d}w_0)=\mathbf{a}(dw_0)$. From the definition
of the $\mathbf{a}$-function, the simple subquotients 
$L_{w_0\widehat{w_0d}w_0}$ in $\Delta_e$ appear in degrees between
$\mathbf{a}(dw_0)$ and $\ell(\widehat{w_0d})$,
with $\mathbf{a}(dw_0)\leq \ell(\widehat{w_0d})$. 

To establish Claim~\eqref{lem-s8.25-255-1}, it remains to compare
with Claim~\eqref{lem-s8.25-255-0000}. The shift by 
$\mathbf{a}(dw_0)$ is the minimal possible by the previous paragraph,
but is the maximal one allowed by Claim~\eqref{lem-s8.25-255-0000}.
Hence Claim~\eqref{lem-s8.25-255-1} follows from
$[\Delta_e:L_{w_0\widehat{w_0d}w_0}\langle -\mathbf{a}(dw_0)\rangle]=1$
and the next paragraph, which, together with
Claim~\eqref{lem-s8.25-255-00}, implies that 
$\mathrm{Ext}^{\mathrm{a}(d)}(T_d,L_{\widehat{w_0d}})\neq 0$.

The same arguments applied to some $x\sim_L w_0d$,
different from $\widehat{w_0d}$, will give strictly larger
shifts as 
$[\Delta_e:L_{x}\langle -i\rangle]\neq 0$
implies $i>\mathbf{a}(dw_0)$ and this is not allowed
by Claim~\eqref{lem-s8.25-255-0000}. This proves 
Claim~\eqref{lem-s8.25-255-5} and completes the proof of the lemma.
\end{proof}

\begin{lemma}\label{lem-s8.25-26}
For any $w\in W$, $k\in\mathbb{Z}_{\geq 0}$
and $i\in\mathbb{Z}$, we have:
\begin{enumerate}[$($i$)$]
\item\label{lem-s8.25-26-1} 
$\mathrm{Ext}^{k}(\nabla_e,L_{w})\neq 0$ implies $k\geq 2\mathbf{a}(w_0w)$.
\item\label{lem-s8.25-26-15} 
$\mathrm{ext}^{k}(\nabla_e,L_{w}\langle i\rangle)\neq 0$ 
implies $-k\leq i\leq \mathbf{a}(x)-k$.
\item\label{lem-s8.25-26-3} 
$\mathrm{ext}^{2\mathbf{a}(w_0w)}(\nabla_e,L_{w}\langle i\rangle)\neq 0$ implies 
$i= \mathbf{a}(w)-2\mathbf{a}(w_0w)$.
\item\label{lem-s8.25-26-4} 
$\mathrm{ext}^{2\mathbf{a}(w_0w)}(\nabla_e,L_{w}\langle 
\mathbf{a}(w)-2\mathbf{a}(w_0w)\rangle)\neq 0$ implies
$w\in \mathbf{D}$.
\item\label{lem-s8.25-26-5} 
$\mathrm{ext}^{2\mathbf{a}(w_0w)}(\nabla_e,L_{w}\langle 
\mathbf{a}(w)-2\mathbf{a}(w_0w)\rangle)\cong\mathbb{C}$, if $w\in \mathbf{D}$.
\end{enumerate}
\end{lemma}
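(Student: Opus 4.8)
I would prove Lemma~\ref{lem-s8.25-26} by turning it into a statement about the minimal graded injective coresolution of the dominant Verma module. Recall that $0$ is the dominant weight in the block, so $\Delta_e=P_e$ is projective and $\nabla_e=\Delta_e^{\star}=P_e^{\star}=I_e$ is injective. Applying the duality $\star$ and using $\nabla_e^{\star}=\Delta_e$, $L_w^{\star}=L_w$ one gets, for all $k\in\mathbb{Z}_{\geq 0}$ and $i\in\mathbb{Z}$,
\[
\mathrm{ext}^{k}(\nabla_e,L_w\langle i\rangle)\;\cong\;\mathrm{ext}^{k}\big(L_w,\Delta_e\langle i\rangle\big),
\]
and the right-hand side equals the multiplicity of $I_w$, occurring with socle in internal degree $i$, among the indecomposable summands of the $k$-th term $\mathcal{I}_k(\Delta_e)$ of the minimal graded injective coresolution of $\Delta_e$ (equivalently, of $P_w$ with top in degree $-i$ in $\mathcal{P}_{-k}(\nabla_e)=\star\,\mathcal{I}_k(\Delta_e)$). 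So everything reduces to a fine analysis of $\mathcal{I}_\bullet(\Delta_e)$.

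Part~\eqref{lem-s8.25-26-1} then follows immediately: $P_e$ is a direct summand of the left regular module, hence $\mathcal{I}_k(\Delta_e)$ is a direct summand of the $k$-th term of the injective coresolution of ${}_AA$; by Auslander regularity of $A$ (\cite{KMM}) that term has projective dimension at most $k$, whereas $\mathrm{projdim}\,I_w=\mathrm{injdim}\,P_w=2\mathbf{a}(w_0w)$ by \cite{Maz1,Maz2}, so $I_w$ cannot appear before step $k=2\mathbf{a}(w_0w)$. The lower bound $i\geq -k$ in part~\eqref{lem-s8.25-26-15} is equally cheap: $\Delta_e$ is concentrated in non-negative degrees, with its composition factor $L_x$ sitting in degree $\ell(x)$, so computing $\mathrm{ext}^{k}(L_w,\Delta_e\langle i\rangle)$ from the linear projective resolution $\mathcal{P}_\bullet(L_w)$ forces $\ell(x)=i+k$ for some $x$ with $\mathrm{ext}^{k}(L_w,L_x\langle k\rangle)\neq 0$, and $\ell(x)\geq 0$.

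For the remaining assertions I would take the proof of \cite[Theorem~3]{KMM} — which constructs the injective coresolution of $\Delta_e$ out of those of the projectives $P_x=\theta_x\Delta_e$ via the Auslander-regular structure — and rerun it with the Koszul grading switched on, in exact parallel with the proof of Lemma~\ref{lem-s8.25-255} for tilting modules and the element $T_d$. Tracking in which internal degree each $I_w$ enters the $k$-th step, which is controlled by the value of $\mathbf{a}$, yields the upper bound $i\leq\mathbf{a}(w)-k$ in~\eqref{lem-s8.25-26-15}, i.e. the cancellation in cohomology of the ``off-diagonal'' contributions having $\ell(x)=i+k>\mathbf{a}(w)$. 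At the first admissible step $k=2\mathbf{a}(w_0w)$ one then reads off, via BGG reciprocity and the linear graded structure of $\Delta_e$, that the only surviving contribution comes from a simple subquotient of $\Delta_e$ occurring with multiplicity one and in a single forced degree — precisely as in the proof of Lemma~\ref{lem-s8.25-255}\eqref{lem-s8.25-255-1}. This pins the internal degree to $\mathbf{a}(w)-2\mathbf{a}(w_0w)$ (part~\eqref{lem-s8.25-26-3}), forces $w$ to be a Duflo involution (part~\eqref{lem-s8.25-26-4}), and gives multiplicity exactly one (part~\eqref{lem-s8.25-26-5}).

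The main obstacle is exactly this sharpening of Auslander regularity to a \emph{graded} statement: Auslander regularity is ungraded, so the bound $i\leq\mathbf{a}(w)-k$ and the precise value at the critical step are not free and require genuinely re-examining the \cite{KMM} argument with internal degrees, using strict monotonicity of $\mathbf{a}$ with respect to the KL-orders together with the explicit resolution data for structural modules developed in the earlier papers of the series. Once that graded refinement is in place, parts~\eqref{lem-s8.25-26-3}--\eqref{lem-s8.25-26-5} follow, as above, from the $\mathbf{a}$-function description of the degrees in which the simple subquotients of $\Delta_e$ sit.
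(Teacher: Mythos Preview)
Your treatment of part~\eqref{lem-s8.25-26-1} and of the lower bound $i\geq -k$ in part~\eqref{lem-s8.25-26-15} is essentially the paper's: Auslander regularity of $A$ together with $\mathrm{projdim}\,I_w=2\mathbf{a}(w_0w)$ gives~\eqref{lem-s8.25-26-1}, and Koszulity gives the diagonal bound. (There is a minor sign slip in your duality step: $\star\circ\langle n\rangle=\langle -n\rangle\circ\star$, so $\mathrm{ext}^k(\nabla_e,L_w\langle i\rangle)\cong\mathrm{ext}^k(L_w,\Delta_e\langle -i\rangle)$, but this does not affect the argument.)

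For the remaining claims, however, your proposal diverges from the paper and carries a real gap. The paper does \emph{not} attempt to analyse $\mathcal{P}_\bullet(\nabla_e)$ (equivalently, your $\mathcal{I}_\bullet(\Delta_e)$) directly. Instead it factors through the linear tilting coresolution $\mathcal{T}_\bullet(\nabla_e)\in\mathcal{LT}(\mathcal{O}_0)$: under Koszul--Ringel self-duality this complex corresponds to $\nabla_e$ itself, so the summands $T_x\langle -k\rangle$ of $\mathcal{T}_{-k}(\nabla_e)$ are governed by the KL-combinatorics, with $\mathbf{a}(x)\leq k\leq\ell(x)$ and $k=\mathbf{a}(x)$ forcing $x\in\mathbf{D}$. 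One then glues the minimal projective resolutions $\mathcal{P}_\bullet(T_x)$ of these tilting summands, and it is precisely Lemma~\ref{lem-s8.25-255} --- already proved --- that controls those resolutions. The upper bound in~\eqref{lem-s8.25-26-15} then drops out of Lemma~\ref{lem-s8.25-255}\eqref{lem-s8.25-255-0000}, and parts~\eqref{lem-s8.25-26-3}--\eqref{lem-s8.25-26-5} from Lemma~\ref{lem-s8.25-255}\eqref{lem-s8.25-255-1} and~\eqref{lem-s8.25-255-5}. In other words, Lemma~\ref{lem-s8.25-255} is not merely an analogy to be imitated; it is the engine of the proof.

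Your alternative --- re-running \cite[Theorem~3]{KMM} with the grading switched on --- is where the gap lies. You correctly flag this as ``the main obstacle'' and then assume it away. The phrases ``tracking in which internal degree each $I_w$ enters'', ``cancellation of off-diagonal contributions'', and ``via BGG reciprocity'' are placeholders, not arguments: you have not said which structural feature of the \cite{KMM} proof survives the passage to graded modules, nor why the first occurrence of $I_w$ at step $2\mathbf{a}(w_0w)$ should be pinned to a single degree and force $w\in\mathbf{D}$. The tilting factorisation is exactly what makes these degree constraints visible, because the combinatorics of $\mathcal{T}_\bullet(\nabla_e)$ is transparent while that of $\mathcal{P}_\bullet(\nabla_e)$ is not.
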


\begin{proof}
Claim~\eqref{lem-s8.25-26-1} follows from \cite[Theorem~3]{KMM}
and \cite[Theorem~20]{Maz2}. 

To prove the rest,
consider $\mathcal{T}_\bullet(\nabla_e)\in\mathcal{LT}(\mathcal{O}_0)$.
Under the Koszul-Ringel self-duality, this corresponds to 
$\nabla_e$. From the Kazhdan-Lusztig combinatorics, if some
$T_x\langle -k\rangle$ is a summand of some
$\mathcal{T}_{-k}(\nabla_e)$, then $\mathbf{a}(x)\leq k\leq\ell(x)$.
Moreover, if $k=\mathbf{a}(x)$, then $x\in\mathbf{D}$.
To construct a (not necessarily minimal) 
projective resolution of $\nabla_e$, we need to glue
minimal projective resolutions of all such $T_x\langle -k\rangle$.
In particular, the right inequality in Claim~\eqref{lem-s8.25-26-15} follows directly from
Lemma~\ref{lem-s8.25-255}\eqref{lem-s8.25-255-0000}.
The left inequality in Claim~\eqref{lem-s8.25-26-15} follows from the 
Koszulity of $\mathcal{O}_0$. Indeed, due to this Koszulity, extensions
between simple modules of $\mathcal{O}_0$ live on the main diagonal
(i.e. the diagonal defined by the ``homological position equals the graded shift''
condition).
Since the module $\nabla_e$ lives in non-positive degrees, it follows
that all extensions from it to simple modules live in non-negative 
shifts of the main diagonal.

A minimal projective resolution $\mathcal{P}_\bullet(T_x)$ 
of $T_x$ has length $\mathbf{a}(x)$. From 
Lemma~\ref{lem-s8.25-255}\eqref{lem-s8.25-255-0}, we see that,
if some $T_y$ appears as  a summand of some $\mathcal{P}_{m}(T_x)$,
for $m>-\mathbf{a}(x)$, then $\mathbf{a}(y)> \mathbf{a}(w_0x)$. At the same time,
if $x\in\mathbf{D}$ and
$T_y$ is a summand of $\mathcal{P}_{-\mathbf{a}(x)}(T_x)$,
then $y=\widehat{w_0x}$ by Lemma~\ref{lem-s8.25-255}\eqref{lem-s8.25-255-5}.
Now, Claims~\eqref{lem-s8.25-26-3}, \eqref{lem-s8.25-26-4}
and \eqref{lem-s8.25-26-5} follow from 
Lemma~\ref{lem-s8.25-255}\eqref{lem-s8.25-255-5} and 
and Lemma~\ref{lem-s8.25-255}\eqref{lem-s8.25-255-1}.
\end{proof}

\subsection{Proof of Theorem~\ref{prop-s8.25-1}}\label{s8.257}
For $d\in\mathbf{D}$, consider the minimal projective resolution 
$\mathcal{P}_\bullet(I_d)$ of $I_d$. We know that 
$\mathcal{P}_i(I_d)=0$, for $i<-2\mathbf{a}(w_0d)$,
by \cite[Theorem~20]{Maz2}; and also that
$\mathcal{P}_{-2\mathbf{a}(w_0d)}(I_d)\cong 
P_d\langle 2(\mathbf{a}(d)-\mathbf{a}(w_0d))\rangle$,
by Corollary~\ref{cor-s5.8-1}. We also know that 
any $P_x$ appearing as a summand of $\mathcal{P}_i(I_d)$,
for $i>-2\mathbf{a}(w_0d)$, satisfies $x>_L d$,
by Corollary~\ref{cor-s5.8-1}.

The natural projection $P_d\tto L_d$ gives rise to a non-zero
homomorphism from $\mathcal{P}_\bullet(I_d)$ to
$L_d\langle 2(\mathbf{a}(d)-\mathbf{a}(w_0d))\rangle[2\mathbf{a}(w_0d)]$
in the derived category. In fact, we have
\begin{displaymath}
\mathcal{D}^b({}^\mathbb{Z}\mathcal{O}_0)\big(
\mathcal{P}_\bullet(I_d),
L_d\langle 2(\mathbf{a}(d)-\mathbf{a}(w_0d))\rangle[2\mathbf{a}(w_0d)]
\big) \cong \mathbb{C}. 
\end{displaymath}
As $\mathcal{P}_\bullet(I_d)\cong \theta_d\mathcal{P}_\bullet(I_e)$
and $\theta_d$ is self-adjoint, by adjunction we have 
\begin{equation}\label{eq-123-nn}
\mathcal{D}^b({}^\mathbb{Z}\mathcal{O}_0)\big(
\mathcal{P}_\bullet(I_e),\theta_d
L_d\langle 2(\mathbf{a}(d)-\mathbf{a}(w_0d))\rangle[2\mathbf{a}(w_0d)]
\big) \cong \mathbb{C}. 
\end{equation}
Now, recall that, up to graded shift, all simple subquotients of 
$\theta_d L_d$ are of the form $L_x$, where
$x\leq_R d$, see \cite[Lemma~13]{MM1}. At the same time,
from Lemma~\ref{lem-s8.25-26} it follows that the only summand
of $\mathcal{P}_{-2\mathbf{a}(w_0d)}(I_e)$ that is an indecomposable
projective cover of a simple module of the latter form is the summand
$P_d\langle \mathbf{a}(d)-2\mathbf{a}(w_0d)\rangle$. This means that
the map which realizes a non-zero element in the space
\eqref{eq-123-nn} comes from a projection $\pi$ of 
$P_d\langle \mathbf{a}(d)-2\mathbf{a}(w_0d)\rangle$
onto $L_d\langle \mathbf{a}(d)-2\mathbf{a}(w_0d)\rangle$ 
which is the socle of $\theta_d
L_d\langle 2(\mathbf{a}(d)-\mathbf{a}(w_0d))\rangle[2\mathbf{a}(w_0d)]$,
see \cite[Corollary~3]{Maz2}.

Since $d\in\mathbf{D}$, there is a non-zero morphism $\varphi$ from
$\Delta_e\langle 2(\mathbf{a}(d)-\mathbf{a}(w_0d))\rangle[2\mathbf{a}(w_0d)]$
to $\theta_d
L_d\langle 2(\mathbf{a}(d)-\mathbf{a}(w_0d))\rangle[2\mathbf{a}(w_0d)]$
see \cite[Proposition~17]{MM1},
and so the image of $\pi$ belongs to the image of $\varphi$.

From the definition of the $\mathbf{a}$-function, we also have
a unique, up to scalar, non-zero homomorphism
$\psi:P_d\tto \Delta_e\langle\mathbf{a}(d)\rangle$. This homomorphism has
the property that any simple $L_w$ appearing in the cokernel is
killed by $\theta_d$ and hence satisfies $d\not\leq_L w$,
see \cite[Proposition~17]{MM1} and \cite[Lemma~12]{MM1}.
In particular, there are no non-zero homomorphism from 
$\mathcal{P}_{1-2\mathbf{a}(w_0d)}(I_d)$ to the cokernel of
$\psi$. This implies that
\begin{displaymath}
\mathcal{D}^b({}^\mathbb{Z}\mathcal{O}_0)\big(
\mathcal{P}_\bullet(I_d),
\Delta_e\langle 3\mathbf{a}(d)-2\mathbf{a}(w_0d)\rangle[2\mathbf{a}(w_0d)]
\big) \cong \mathbb{C}. 
\end{displaymath}
As $\mathcal{P}_\bullet(I_d)\cong \theta_d\mathcal{P}_\bullet(I_e)$,
$\theta_d P_e\cong P_d$
and $\theta_d$ is self-adjoint, by adjunction we have 
\begin{equation}\label{eq-123-nnn}
\mathcal{D}^b({}^\mathbb{Z}\mathcal{O}_0)\big(
\mathcal{P}_\bullet(I_e),
P_d\langle 3\mathbf{a}(d)-2\mathbf{a}(w_0d)\rangle[2\mathbf{a}(w_0d)]
\big) \cong \mathbb{C}. 
\end{equation}

The module $\theta_d
L_d\langle 2(\mathbf{a}(d)-\mathbf{a}(w_0d))\rangle$
is a quotient of 
$P_d\langle 3\mathbf{a}(d)-2\mathbf{a}(w_0d)\rangle$,
in fact, this quotient map is the image, under $\theta_d$,
of the projection from the image $\mathrm{Im}(\psi)$ of 
$\psi$ onto the simple top of $\mathrm{Im}(\psi)$, as the 
cokernel of $\psi$ is killed by the exact functor $\theta_d$.

The module $P_d$ has a Verma filtration. By the BGG-reciprocity
and the definition of $\mathbf{a}$, the module 
$\Delta_e\langle 2(\mathbf{a}(d)-\mathbf{a}(w_0d))\rangle$
appears exactly once as a subquotient of this filtration.
Using the usual long exact sequence arguments,
our observation in the previous paragraph and 
the fact that we already established that the 
image of $\pi$ belongs to the image of $\varphi$, we obtain that
any non-zero element in \eqref{eq-123-nnn} induces
a non-zero element $\eta$ in 
\begin{equation}\label{eq-123-nnnn}
\mathcal{D}^b({}^\mathbb{Z}\mathcal{O}_0)\big(
\mathcal{P}_\bullet(I_e),
\Delta_e\langle 2(\mathbf{a}(d)-\mathbf{a}(w_0d))\rangle[2\mathbf{a}(w_0d)]
\big). 
\end{equation}
In particular, it follows that
\begin{displaymath}
\mathrm{ext}^{2\mathbf{a}(w_0d)}
(\nabla_e,\Delta_e\langle 2(\mathbf{a}(d)-\mathbf{a}(w_0d))\rangle)
\neq 0. 
\end{displaymath}

Now let us recall that $\mathbb{S}$, being a composition of 
derived twisting functors, commutes with the action of
the bicategory of projective functors. 
The identity functor obviously 
commutes with the action of the bicategory of projective functors.  
When evaluated at the dominant object
$\Delta_e$, the functor $\mathbb{S}$ outputs $\nabla_e$,
while the identity outputs $\Delta_e$.
By \cite[Theorem~1]{Kh}, this determines out functors uniquely,
up to isomorphism.
Furthermore, by \cite[Theorem~2]{Kh}, our $\eta$ induces a non-zero 
natural transformation from $\mathbb{S}$ to  
$\mathrm{Id}\langle 2(\mathbf{a}(d)-\mathbf{a}(w_0d))\rangle[2\mathbf{a}(w_0d)]$. 
This is our $\alpha_d$.

It remains to show that $\alpha_d$ evaluates at $\theta_w L_d$ to
a non-zero map. Indeed, $\theta_w L_d$ is indecomposable as a
quotient of (shifted) $P_w$, moreover, the latter also gives that the 
endomorphism algebra of $\theta_w L_d$ is a quotient of the 
endomorphism algebra of $P_w$ and thus is positively graded.
Consequently, any non-zero endomorphism of $\theta_w L_d$
of degree zero is an automorphism.

By our above construction of $\alpha_d$, we see that it is defined
in terms of the evaluation of the canonical map 
$\theta_d\to\theta_e\langle \mathbf{a}(d)\rangle$ 
as constructed in \cite[Proposition~17]{MM1}
at $\theta_w L_d$. This canonical map is non-zero when 
evaluated at $L_d$ by construction. The fact that it is non-zero
when evaluated on $\theta_w L_d$ follows from the adjunction
axioms as $\theta_w L_d\neq 0$.

This completes the proof of Theorem~\ref{prop-s8.25-1}.

\subsection{Consequences}\label{s8.25775}

Using Theorem~\ref{prop-s8.25-1}, we can now strengthen the
assertion of Proposition~\ref{prop-s5.85-1}:

\begin{corollary}\label{cor-8.25775-1}
For $i\in\mathbf{S}$, we have $\mathbf{F}_i\subset \mathbf{CY}_i$.
\end{corollary}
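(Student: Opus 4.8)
The plan is to induct on the length $k$ of a filtration \eqref{eq-abs1} witnessing that $M\in\mathbf{F}_i$. The base case $k=1$, in which $M\cong\theta_wL_d$ for some $d\in\mathbf{D}$ with $\mathbf{a}(w_0d)=i$ and some $w\sim_Rd$, is exactly Theorem~\ref{conj1}: it gives $\mathbb{S}(\theta_wL_d)\cong\theta_wL_d[2\mathbf{a}(w_0d)]=\theta_wL_d[i]$, hence $M\in\mathbf{CY}_i$. The reason this corollary must come after Theorem~\ref{prop-s8.25-1} rather than being immediate is that $\mathbf{CY}_i$ is \emph{not} extension closed a priori: applying the triangulated functor $\mathbb{S}$ to a short exact sequence $0\to A\to M\to B\to 0$ with $A,B\in\mathbf{CY}_i$ only shows that $\mathbb{S}(M)[-i]$ is \emph{some} extension of $B$ by $A$ in the heart of $\mathcal{D}^b(\mathcal{O}_0)$, which need not be the original one (it could even be split). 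To pin down the extension class one must transport it through an explicit natural transformation, which is what Theorem~\ref{prop-s8.25-1} supplies.

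So the main preparatory step is to fix, for the given $i\in\mathbf{S}$, a \emph{single} natural transformation $\alpha\colon\mathbb{S}\to\mathrm{Id}[i]$ of endofunctors of $\mathcal{D}^b(\mathcal{O}_0)$ (here we work ungraded, since $\mathbf{F}_i$ and $\mathbf{CY}_i$ are subcategories of $\mathcal{O}_0$) whose evaluation at $\theta_wL_d$ is an isomorphism for \emph{every} pair $d\in\mathbf{D}$, $w\sim_Rd$ with $\mathbf{a}(w_0d)=i$ --- a finite list of indecomposable modules. Forgetting the Koszul grading, Theorem~\ref{prop-s8.25-1} gives for each such $d$ a natural transformation $\alpha_d\colon\mathbb{S}\to\mathrm{Id}[i]$ which is an isomorphism on $\theta_wL_d$ for all $w\sim_Rd$; but $\alpha_d$ and $\alpha_{d'}$ need not be proportional when $d\not\sim_Jd'$, and $\alpha_d$ may fail to be invertible on $\theta_{w'}L_{d'}$. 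I would therefore take $\alpha=\sum_{d}c_d\,\alpha_d$, the sum over the finitely many relevant Duflo elements, and choose the scalars $c_d$ generically. Concretely, for a fixed brick $N=\theta_wL_d$ the ring $\mathrm{End}_{\mathcal{D}^b(\mathcal{O}_0)}(N)=\mathrm{End}_{\mathcal{O}_0}(N)$ is local since $N$ is a finite-length indecomposable module; fixing an isomorphism $\mathbb{S}(N)\cong N[i]$ from Theorem~\ref{conj1}, the evaluation $\alpha_N$ corresponds to an element of this local ring depending linearly on $(c_d)_d$, and it is a unit precisely when its image in the residue field $\mathbb{C}$ is nonzero; that image is a linear functional of $(c_d)_d$ whose coefficient on $\alpha_d$ is nonzero, because the term coming from $\alpha_d$ is a unit by Theorem~\ref{prop-s8.25-1}. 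Intersecting these finitely many dense open conditions, a generic $(c_d)_d$ makes $\alpha_N$ an isomorphism for every relevant brick $N$.

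With such an $\alpha$ fixed, I would prove by induction on $j$ that $\alpha$ evaluated at $M_j$ is an isomorphism, hence $\mathbb{S}(M_j)\cong M_j[i]$, and in particular $M=M_k\in\mathbf{CY}_i$. The case $j=1$ holds by the choice of $\alpha$. For the inductive step, the short exact sequence $0\to M_{j-1}\to M_j\to\theta_{w_j}L_{d_j}\to 0$ gives a distinguished triangle $M_{j-1}\to M_j\to\theta_{w_j}L_{d_j}\xrightarrow{f}M_{j-1}[1]$ in $\mathcal{D}^b(\mathcal{O}_0)$; applying the triangle functor $\mathbb{S}$ and using naturality of $\alpha$ at the three morphisms of this triangle (the connecting map $f$ being an honest morphism of $\mathcal{D}^b(\mathcal{O}_0)$, so no shift-compatibility subtlety is needed) exhibits $\alpha_{M_{j-1}}$, $\alpha_{M_j}$, $\alpha_{\theta_{w_j}L_{d_j}}$ as a morphism from $\mathbb{S}$ applied to this triangle to its $[i]$-shift. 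Since $\alpha_{M_{j-1}}$ is an isomorphism by the inductive hypothesis and $\alpha_{\theta_{w_j}L_{d_j}}$ is an isomorphism by the choice of $\alpha$, the five lemma for triangulated categories forces $\alpha_{M_j}$ to be an isomorphism.

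I expect the only genuine obstacle to be the point isolated in the second paragraph: because $\mathbf{CY}_i$ is not extension closed, one cannot argue with the objects $\theta_wL_d$ in isolation but must carry one fixed natural transformation through the whole filtration, which in turn forces the generic-linear-combination maneuver, since Theorem~\ref{prop-s8.25-1} produces one transformation per Duflo element rather than a universal one. The remaining ingredients --- the five-lemma bookkeeping, the compatibility of $\mathbb{S}$ with shifts, and the passage between the graded and ungraded pictures --- are routine given what has already been established, and one could alternatively replace the inductive five-lemma step by observing that Proposition~\ref{prop-s5.85-1} already places $\mathbb{S}(M)[-i]$ in the heart as an extension of $B$ by $A$, so that naturality of $\alpha$ (an isomorphism on $A$ and on $B$) identifies this extension with the original one.
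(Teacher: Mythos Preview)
Your proof is correct and follows essentially the same route as the paper: the paper isolates your ``generic linear combination'' step as a separate lemma (Lemma~\ref{lem-8.25775-2}), proved via the same local-endomorphism-ring argument, and then runs the identical induction on filtration length using naturality of $\alpha$ and the Five Lemma on the resulting morphism of triangles. One small slip: $\theta_wL_d$ need not be a brick (its endomorphism algebra is positively graded but not one-dimensional in general), and your condition should read $2\mathbf{a}(w_0d)=i$ rather than $\mathbf{a}(w_0d)=i$, but neither affects the argument.
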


We do expect that $\mathbf{F}_i=\mathbf{CY}_i$, for all $i\in\mathbf{S}$,
however, at the moment we can only prove this in the two extreme cases,
see Subsection~\ref{s6.55}.

To prove Corollary~\ref{cor-8.25775-1}, we would
need the following lemma.
Denote by $\alpha$ some linear combination of all $\alpha_d$, where
$d\in\mathbf{D}$ is such that $2\mathbf{a}(w_0d)=i$.

\begin{lemma}\label{lem-8.25775-2}
We can choose $\alpha$ such that,
for any $d\in\mathbf{D}$ with $2\mathbf{a}(w_0d)=i$
and for any $w\in W$ with $w\sim_R d$, the evaluation
of $\alpha$ at $\theta_w L_d$ is an isomorphism. 
\end{lemma}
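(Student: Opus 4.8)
The plan is to regard each $\alpha_d$ with $2\mathbf{a}(w_0d)=i$ as a natural transformation $\mathbb{S}\to\mathrm{Id}[i]$ of endofunctors of the \emph{ungraded} category $\mathcal{D}^b(\mathcal{O}_0)$ (via $\mathrm{Forget}$), and then to make a generic choice of coefficients. Passing to the ungraded setting is essential here: among the Duflo elements $d$ with $2\mathbf{a}(w_0d)=i$ one may well have different values of $\mathbf{a}(d)$ — this is the mirror image of the phenomenon in Remark~\ref{rem-s8.25-15}, obtained by replacing $x,y$ there with $w_0x,w_0y$ — so the graded targets $\mathrm{Id}\langle 2(\mathbf{a}(d)-\mathbf{a}(w_0d))\rangle[i]$ of the various $\alpha_d$ genuinely differ and a graded linear combination would be meaningless. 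Once the grading is forgotten, for any tuple $(c_d)_{d\in\mathbf{D}_i}\in\mathbb{C}^{\mathbf{D}_i}$, where $\mathbf{D}_i:=\{d\in\mathbf{D}\,:\,2\mathbf{a}(w_0d)=i\}$, the combination $\alpha:=\sum_{d\in\mathbf{D}_i}c_d\alpha_d\colon\mathbb{S}\to\mathrm{Id}[i]$ is well defined, and we must show $(c_d)$ can be chosen so that $\alpha$ evaluates to an isomorphism at $\theta_wL_d$ for every $d\in\mathbf{D}_i$ and every $w\sim_R d$.

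Fix such a pair $(d,w)$. By Theorem~\ref{conj1} (already by Theorem~\ref{prop-s8.25-1}) we have $\mathbb{S}(\theta_wL_d)\cong\theta_wL_d[i]$; fix an isomorphism $\phi_{d,w}\colon\theta_wL_d[i]\to\mathbb{S}(\theta_wL_d)$. The module $\theta_wL_d$ is indecomposable, being a nonzero quotient of $P_w$ as recalled in the proof of Theorem~\ref{prop-s8.25-1}, so $R_{d,w}:=\mathrm{End}_{\mathcal{O}_0}(\theta_wL_d)=\mathrm{End}_{\mathcal{D}^b(\mathcal{O}_0)}(\theta_wL_d[i])$ is a finite-dimensional local $\mathbb{C}$-algebra with $R_{d,w}/\mathrm{rad}(R_{d,w})\cong\mathbb{C}$; an element of $R_{d,w}$ is invertible iff its image in $R_{d,w}/\mathrm{rad}(R_{d,w})$ is nonzero. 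For $d'\in\mathbf{D}_i$ put $\beta^{d'}_{d,w}:=(\alpha_{d'})_{\theta_wL_d}\circ\phi_{d,w}\in R_{d,w}$; then $(\alpha)_{\theta_wL_d}\circ\phi_{d,w}=\sum_{d'\in\mathbf{D}_i}c_{d'}\beta^{d'}_{d,w}$, and $(\alpha)_{\theta_wL_d}$ is an isomorphism exactly when this sum is invertible in $R_{d,w}$.

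This is where Theorem~\ref{prop-s8.25-1} enters: since $w\sim_R d$, the evaluation $(\alpha_d)_{\theta_wL_d}$ is an isomorphism, hence $\beta^{d}_{d,w}$ is invertible and its image in $R_{d,w}/\mathrm{rad}(R_{d,w})\cong\mathbb{C}$ is a nonzero scalar. Therefore the $\mathbb{C}$-linear functional $\ell_{d,w}\colon\mathbb{C}^{\mathbf{D}_i}\to\mathbb{C}$ sending $(c_{d'})$ to the image of $\sum_{d'}c_{d'}\beta^{d'}_{d,w}$ modulo $\mathrm{rad}(R_{d,w})$ has a nonzero coefficient in the variable $c_d$, so $\ell_{d,w}\neq 0$ and $\ker\ell_{d,w}$ is a proper linear subspace of $\mathbb{C}^{\mathbf{D}_i}$. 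Since $W$ is finite there are finitely many admissible pairs $(d,w)$, so $\bigcup_{(d,w)}\ker\ell_{d,w}$ is a proper subset of $\mathbb{C}^{\mathbf{D}_i}$; choosing $(c_{d'})$ outside it, for every admissible $(d,w)$ the sum $\sum_{d'}c_{d'}\beta^{d'}_{d,w}$ has nonzero image modulo the radical, hence is invertible, and so $(\alpha)_{\theta_wL_d}$ is an isomorphism.

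Modulo Theorem~\ref{prop-s8.25-1} the argument is pure transversality — a generic linear combination avoids finitely many proper subspaces — so the only point that really needs care, and the one I expect to be the genuine subtlety, is the grading bookkeeping: one must pass to the ungraded category so that the $\alpha_d$ share a common target and can be added, and one must use the locality of $\mathrm{End}_{\mathcal{O}_0}(\theta_wL_d)$ so that ``evaluates to an isomorphism'' is detected by a single nonvanishing scalar. (If one wanted an explicit $\alpha$, the natural guess is that $(\alpha_{d'})_{\theta_wL_d}$ vanishes for $d'\not\sim_J d$ and is a scalar for $d'\sim_J d$; proving this is not needed, and the transversality argument is all that Corollary~\ref{cor-8.25775-1} requires.)
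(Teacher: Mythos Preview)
Your proof is correct and follows essentially the same route as the paper's: show that the evaluation of each $\alpha_{d'}$ at $\theta_wL_d$ reduces to a scalar (via locality of $\mathrm{End}(\theta_wL_d)$), use Theorem~\ref{prop-s8.25-1} to see that the scalar for $d'=d$ is nonzero, and then avoid the resulting finite family of proper hyperplanes in the coefficient space. Your explicit passage to the ungraded category is a genuine clarification: the paper simply writes ``denote by $\alpha$ some linear combination of all $\alpha_d$'' and, in the proof, speaks of ``the corresponding scalar $\lambda_{w,d,d'}$'' via ``any homogeneous endomorphism of $\theta_wL_d$ of degree zero is a scalar,'' tacitly treating all the $\alpha_{d'}$ as having a common target. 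As you observe, for $d,d'\in\mathbf{D}_i$ with $\mathbf{a}(d)\neq\mathbf{a}(d')$ the graded targets differ, so the linear combination only makes sense after forgetting the grading; your use of $R_{d,w}/\mathrm{rad}(R_{d,w})\cong\mathbb{C}$ in place of ``degree-zero part equals scalars'' is the right way to phrase this in the ungraded setting.
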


\begin{proof}
Recall that $\theta_w L_d$ is an indecomposable module
whose endomorphism algebra is positively graded. In particular,
any homogeneous endomorphism of $\theta_w L_d$ 
of degree zero is a scalar
multiple of the identity. For $d'\in\mathbf{D}$ such that 
$2\mathbf{a}(w_0d')=i$, consider the corresponding $\alpha_{d'}$. 
Then we have the corresponding scalar $\lambda_{w,d,d'}$ 
with which the evaluation of $\alpha_{d'}$ acts on
$\theta_w L_d$. 

Write $\alpha=\sum_{d'} c_{d'}\alpha_{d'}$.
We know from Theorem~\ref{prop-s8.25-1} that $\lambda_{w,d,d}\neq 0$.
Therefore the set of all those coefficient vectors $(c_{d'})$ for which
the evaluation of $\alpha$ at $\theta_w L_d$  is zero
(i.e. $\sum_{d'} c_{d'} \lambda_{w,d,d'}=0$) is a proper
subspace of the space of all coefficient vectors. Note 
that the set of all coefficient vectors is just a non-zero
finite dimensional complex vector space.
Recall that
a non-zero finite dimensional complex vector space is not
the union of a finite set of its proper subspaces
(e.g. because any such union has measure zero). Therefore
there is a vector of coefficients for which the corresponding
$\alpha$, when evaluated at any $\theta_w L_d$, is non-zero and
hence an isomorphism, as asserted.
\end{proof}

\begin{proof}[Proof of Corollary~\ref{cor-8.25775-1}.]
For $M\in \mathbf{F}_i$, it is enough to show that the 
evaluation of the natural transformation
$\alpha$ given by Lemma~\ref{lem-8.25775-2} at $M$ is an isomorphism.
We proceed by induction on the length $k$
of the filtration \eqref{eq-abs1}. If $k=1$, then 
$M\in \mathbf{CY}_i$ by Theorem~\ref{conj1}.

Assume now that we have a short exact sequence 
\begin{displaymath}
\xymatrix{
0\ar[r]&N\ar[r]^f&M\ar[r]^g&\theta_w L_d\ar[r]& 0 
},
\end{displaymath}
for some $d\in \mathbf{D}$ and $w\in W$ such that $w\sim_R d$,
with $N\in \mathbf{F}_i$. By induction, we may assume that 
the evaluation of $\alpha$ at $N$ is an isomorphism.
This short exact sequence corresponds to
a distinguished triangle 
\begin{displaymath}
\xymatrix{
N\ar[r]^f&M\ar[r]^g&\theta_w L_d\ar[r]^h& N[1]
}
\end{displaymath}
in the derived category. Since $\alpha$ is a natural transformation,
we have a commutative diagram
\begin{displaymath}
\xymatrix{
\mathbb{S}N\ar[r]^{\mathbb{S}f}\ar[d]_{\alpha_{{}_N}}&
\mathbb{S}M\ar[r]^{\mathbb{S}g}\ar[d]_{\alpha_{{}_M}}&
\mathbb{S}\theta_w L_d\ar[r]^{\mathbb{S}h}\ar[d]_{\alpha_{{}_{\theta_w L_d}}}& 
\mathbb{S}N[1]\ar[d]_{\alpha_{{}_N}[1]}\\
N\ar[r]^f&M\ar[r]^g&\theta_w L_d\ar[r]^h& N[1]
}
\end{displaymath}
In this diagram, $\alpha_{{}_N}$,
and hence also $\alpha_{{}_N}[1]$, are isomorphisms by the 
inductive assumption and $\alpha_{{}_{\theta_w L_d}}$
is an isomorphism by the basis of the induction. 
Now the fact  that $\alpha_{{}_M}$ is an isomorphism
follows from the Five Lemma, completing the proof.
\end{proof}

\section{Various additional results}\label{s6}

\subsection{Some extensions in the singular case}\label{s6.1}

For appropriate singular blocks, our results on
extensions from the dominant dual Verma module to the dominant Verma
module can be made more precise as follows:

\begin{lemma}\label{lem-s8.25-20}
Let $\mathfrak{p}$ be a parabolic subalgebra of $\mathfrak{g}$ 
containing $\mathfrak{b}$. Let $\lambda$ be a dominant integral
weight whose dot-stabilizer is $W^\mathfrak{p}$.
Then
\begin{displaymath}
\mathrm{ext}^{2\mathbf{a}(w_0w_0^\mathfrak{p})}
\big(\nabla(\lambda),\Delta(\lambda)\langle i\rangle\big)\cong
\begin{cases}
\mathbb{C}, & i=2(\mathbf{a}(w_0^\mathfrak{p})-\mathbf{a}(w_0w_0^\mathfrak{p}));\\
0, & \text{otherwise}.
\end{cases}
\end{displaymath}
\end{lemma}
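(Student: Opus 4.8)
The plan is to reduce the statement to the regular case already treated in Lemma~\ref{lem-s8.25-26}, using translation functors between the singular block $\mathcal{O}_\lambda$ and the principal block $\mathcal{O}_0$. Let $\theta^{\mathrm{on}}:\mathcal{O}_0\to\mathcal{O}_\lambda$ be translation onto the $\lambda$-wall and $\theta^{\mathrm{out}}:\mathcal{O}_\lambda\to\mathcal{O}_0$ translation out of the wall; these are exact, biadjoint (up to a grading shift that I will track), and graded. One has $\theta^{\mathrm{on}}\Delta_e\cong\Delta(\lambda)$ and $\theta^{\mathrm{on}}\nabla_e\cong\nabla(\lambda)$ (the dominant Verma and dual Verma of the principal block go to those of $\mathcal{O}_\lambda$), while $\theta^{\mathrm{out}}\Delta(\lambda)$ has a Verma flag with sections $\Delta_x$, $x\in W^\mathfrak{p}$, each occurring once, with the corresponding graded shifts governed by $\mathbf{a}$, and similarly for $\nabla(\lambda)$. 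First I would use adjunction to rewrite
\[
\mathrm{ext}^{k}\big(\nabla(\lambda),\Delta(\lambda)\langle i\rangle\big)\cong
\mathrm{ext}^{k}\big(\theta^{\mathrm{on}}\nabla_e,\Delta(\lambda)\langle i\rangle\big)\cong
\mathrm{ext}^{k}\big(\nabla_e,\theta^{\mathrm{out}}\Delta(\lambda)\langle i'\rangle\big),
\]
for the appropriate shift $i'$, and then analyze the right-hand side using the known behaviour of $\mathrm{ext}^\bullet(\nabla_e,-)$ on modules with a Verma flag.

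The key computational steps, in order: (1) identify $\theta^{\mathrm{on}}$ with $\theta_{w_0^\mathfrak{p}}$ composed with the equivalence on the wall, so that $\theta^{\mathrm{out}}\theta^{\mathrm{on}}\cong\theta_{w_0^\mathfrak{p}}$ on $\mathcal{O}_0$ up to grading shifts dictated by $\mathbf{a}(w_0^\mathfrak{p})=\ell(w_0^\mathfrak{p})$; (2) compute the graded Verma filtration of $\theta^{\mathrm{out}}\Delta(\lambda)$ explicitly — its sections are the $\Delta_x$ for $x$ ranging over $W^\mathfrak{p}$, with graded shift of $\Delta_x$ equal to $\langle \ell(w_0^\mathfrak{p})-\ell(x)\rangle$ or similar, matching $[\theta_{w_0^\mathfrak{p}}\Delta_e:\Delta_x]$; (3) apply the long exact sequence in $\mathrm{ext}^\bullet(\nabla_e,-)$ along this Verma filtration. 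Here the crucial input is Lemma~\ref{lem-s8.25-26}: we know $\mathrm{ext}^k(\nabla_e,L_w\langle j\rangle)$ vanishes for $k<2\mathbf{a}(w_0w)$, is one-dimensional in a single degree when $k=2\mathbf{a}(w_0w)$ and $w\in\mathbf{D}$ (namely $j=\mathbf{a}(w)-2\mathbf{a}(w_0w)$), and combined with $\mathrm{ext}^k(\nabla_e,\Delta_x\langle j\rangle)$ being controlled by the simple subquotients of $\Delta_x$ with the BGG/KL bookkeeping. Since $d^\mathfrak{p}=d$ is the Duflo element of the right cell $\mathcal{R}^\mathfrak{p}$ of $w_0^\mathfrak{p}w_0$ and $\mathbf{a}(w_0^\mathfrak{p}w_0)=\ell(w_0^\mathfrak{p})$, the only contribution at homological degree $2\mathbf{a}(w_0w_0^\mathfrak{p})$ comes from the section $\Delta_e$ (equivalently, from $L_{d^\mathfrak{p}}$ sitting in its socle), giving a one-dimensional space; all other sections $\Delta_x$, $x\neq e$, contribute either at strictly higher homological degree or are killed by the degree constraints of Lemma~\ref{lem-s8.25-26}\eqref{lem-s8.25-26-15}.

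For the degree in which the one-dimensional space sits, I would track the grading shifts through the three isomorphisms above and through the graded Verma filtration, using that the socle $L_{d^\mathfrak{p}}$ of $\Delta^\mathfrak{p}_e$ sits in degree $-\mathbf{a}(d^\mathfrak{p})$ (recalled in Subsection~\ref{s3.3}), that $\mathbf{a}(d^\mathfrak{p})=\mathbf{a}(w_0^\mathfrak{p})=\ell(w_0^\mathfrak{p})$, and that $\mathbf{a}(w_0d^\mathfrak{p})=\mathbf{a}(w_0w_0^\mathfrak{p})$; this is exactly the arithmetic carried out in the proof of Proposition~\ref{prop3}, so it should reproduce $i=2(\mathbf{a}(w_0^\mathfrak{p})-\mathbf{a}(w_0w_0^\mathfrak{p}))$. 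Alternatively, and perhaps more cleanly, I would deduce the singular statement directly from the regular one: apply $\theta^{\mathrm{on}}$ to the resolution $\mathcal{P}_\bullet(I_e)$ and use that $\theta^{\mathrm{on}}I_e$ is the dominant injective of $\mathcal{O}_\lambda$, whose minimal projective resolution has length $2\mathbf{a}(w_0w_0^\mathfrak{p})$ by \cite{Maz2} applied to the singular block, with top term $P(\lambda)\langle 2(\mathbf{a}(w_0^\mathfrak{p})-\mathbf{a}(w_0w_0^\mathfrak{p}))\rangle$ — then the same argument as in Corollary~\ref{cor-s5.8-1}\eqref{cor-s5.8-1.4} in the singular block gives the claim. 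The main obstacle I anticipate is step (2): getting the graded multiplicities in the Verma filtration of $\theta^{\mathrm{out}}\Delta(\lambda)$ exactly right (not just the ungraded ones), and checking carefully that no $\Delta_x$ with $x\neq e$ sneaks a nonzero $\mathrm{ext}^{2\mathbf{a}(w_0w_0^\mathfrak{p})}$ contribution in the target degree — this requires combining the right-cell structure of $\mathcal{R}^\mathfrak{p}$ with the strict monotonicity of $\mathbf{a}$ and the degree bounds of Lemma~\ref{lem-s8.25-26}, exactly in the spirit of the proof of Lemma~\ref{lem-s8.25-255}.
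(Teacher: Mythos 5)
You have correctly identified the reduction to the regular block via the wall-crossing functors $\theta^{\mathrm{on}}$ and $\theta^{\mathrm{out}}$, and you have located the right regular-block input (Corollary~\ref{cor-s5.8-1}\eqref{cor-s5.8-1.4} together with \cite[Theorem~20]{Maz2}). However, neither of your two routes actually closes. In your main route, after the adjunction you are left with $\mathrm{ext}^{2\mathbf{a}(w_0w_0^{\mathfrak p})}\big(\nabla_e,P_{w_0^{\mathfrak p}}\langle j\rangle\big)$ (since $\theta^{\mathrm{out}}\Delta(\lambda)$ is, up to shift, the indecomposable projective $P_{w_0^{\mathfrak p}}$), and you propose to run the long exact sequence along its Verma, and ultimately composition, series using Lemma~\ref{lem-s8.25-26}. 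That lemma only gives the lower bound $k\geq 2\mathbf{a}(w_0w)$ for non-vanishing of $\mathrm{ext}^k(\nabla_e,L_w)$ and pins down the answer at $k=2\mathbf{a}(w_0w)$ exactly; for the many composition factors $L_w$ of $P_{w_0^{\mathfrak p}}$ with $\mathbf{a}(w_0w)<\mathbf{a}(w_0w_0^{\mathfrak p})$ it says nothing about degree $k=2\mathbf{a}(w_0w_0^{\mathfrak p})$, so their contributions cannot be excluded by these inputs alone --- this is precisely the obstacle you flag yourself, and it is not removed by the right-cell structure or the monotonicity of $\mathbf{a}$. Your alternative route is essentially circular: the assertions that the minimal projective resolution of the dominant injective of $\mathcal{O}_\lambda$ has length $2\mathbf{a}(w_0w_0^{\mathfrak p})$ and top term $P(\lambda)\langle 2(\mathbf{a}(w_0^{\mathfrak p})-\mathbf{a}(w_0w_0^{\mathfrak p}))\rangle$ are exactly the nontrivial content of the lemma, and the regular-block results you invoke are not established for singular blocks in this paper.

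The missing mechanism is a two-way transport of minimal projective resolutions instead of a long exact sequence. One has $\theta^{\mathrm{out}}\nabla(\lambda)\langle\ell(w_0^{\mathfrak p})\rangle\cong I_{w_0^{\mathfrak p}}$, while in the other direction $\theta^{\mathrm{on}}I_{w_0^{\mathfrak p}}$ is a direct sum of shifted copies of $\nabla(\lambda)$ with multiplicities given by the length generating function of $W^{\mathfrak p}$. Since projective functors are exact and send projectives to projectives, each of $\mathcal{P}_\bullet(\nabla(\lambda))$ and $\mathcal{P}_\bullet(I_{w_0^{\mathfrak p}})$ is, up to shifts, a direct summand of the image of the other, so their lengths coincide and equal $2\mathbf{a}(w_0w_0^{\mathfrak p})$. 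The last term of $\mathcal{P}_\bullet(I_{w_0^{\mathfrak p}})$ is $P_{w_0^{\mathfrak p}}\langle 2(\mathbf{a}(w_0^{\mathfrak p})-\mathbf{a}(w_0w_0^{\mathfrak p}))\rangle$ by Corollary~\ref{cor-s5.8-1}, and $\theta^{\mathrm{on}}P_{w_0^{\mathfrak p}}$ decomposes as a direct sum of shifted copies of $\Delta(\lambda)=P(\lambda)$; matching the graded summands of $\theta^{\mathrm{on}}$ applied to this last term against the decomposition of $\theta^{\mathrm{on}}I_{w_0^{\mathfrak p}}$ into shifted copies of $\nabla(\lambda)$ identifies the last term of $\mathcal{P}_\bullet(\nabla(\lambda))$ as $\Delta(\lambda)\langle 2(\mathbf{a}(w_0^{\mathfrak p})-\mathbf{a}(w_0w_0^{\mathfrak p}))\rangle$. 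The claim then follows because Verma modules have one-dimensional, degree-zero endomorphism rings. This summand-matching step replaces all of the Ext bookkeeping in your proposal.
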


\begin{proof}
Let $\theta^\mathrm{on}$
be the projective functor of translation onto the $w_0^\mathfrak{p}$-wall from
$\mathcal{O}_0$ to $\mathcal{O}_\lambda$. Let $\theta^\mathrm{out}$
be the projective functor of translation out of the $w_0^\mathfrak{p}$-wall from
$\mathcal{O}_\lambda$ to $\mathcal{O}_0$. These two functors are 
biadjoint, the composition $\theta^\mathrm{out}\theta^\mathrm{on}$
is isomorphic to $\theta_{{}_{w_0^{\mathfrak{p}}}}$, and the composition
$\theta^\mathrm{on}\theta^\mathrm{out}$ is isomorphic to a direct sum of
$|W^\mathfrak{p}|$ copies of the identity functor on $\mathcal{O}_\lambda$,
see \cite[Proposition~4.1]{CM}. The more detailed graded decomposition is:
\begin{displaymath}
\theta^\mathrm{on}\theta^\mathrm{out}\cong
\bigoplus_{i\geq 0}
\mathrm{Id}_{{}_{\mathcal{O}_\lambda}}
\langle 2i-\ell(w_0^\mathfrak{p})
\rangle^{\oplus |\{w\in W^\mathfrak{p}\,:\,\ell(w)=i\}|}.
\end{displaymath}

We have $\theta^\mathrm{on}\nabla_e\cong \nabla(\lambda)
\langle \ell(w_0^\mathfrak{p})\rangle$, then
$\theta^\mathrm{out}\nabla(\lambda)\langle \ell(w_0^\mathfrak{p})\rangle
= I_{w_0^\mathfrak{p}}$ and, finally,
\begin{displaymath}
\theta^\mathrm{on}I_{w_0^\mathfrak{p}}\cong
\bigoplus_{i\geq 0}\nabla(\lambda)
\langle 2i\rangle^{\oplus |\{w\in W^\mathfrak{p}\,:\,\ell(w)=i\}|}.
\end{displaymath}
As projective functors are exact and map projectives to projectives,
it follows that the projective dimensions of $\nabla(\lambda)$
and $I_{w_0^\mathfrak{p}}$ coincide. The latter module has projective
dimension $2\mathbf{a}(w_0w_0^\mathfrak{p})$ by \cite[Theorem~20]{Maz2}.
Moreover, Corollary~\ref{cor-s5.8-1} describes explicitly the 
last non-zero term of a minimal projective resolution of $I_{w_0^\mathfrak{p}}$.
Namely, this term is 
$P_{w_0^\mathfrak{p}}
\langle 2(\mathbf{a}(w_0^\mathfrak{p})-\mathbf{a}(w_0w_0^\mathfrak{p}))\rangle$.
 
Note that we dually have  $\theta^\mathrm{on}\Delta_e\cong \Delta(\lambda)
\langle -\ell(w_0^\mathfrak{p})\rangle$, then
$\theta^\mathrm{out}\Delta(\lambda)\langle -\ell(w_0^\mathfrak{p})\rangle
= P_{w_0^\mathfrak{p}}$ and, finally,
\begin{displaymath}
\theta^\mathrm{on}P_{w_0^\mathfrak{p}}\cong
\bigoplus_{i\geq 0}\Delta(\lambda)
\langle -2i\rangle^{\oplus |\{w\in W^\mathfrak{p}\,:\,\ell(w)=i\}|}.
\end{displaymath}
By matching the summands, it follows that the  last non-zero term of a 
minimal projective resolution of $\nabla(\lambda)$ is
$\Delta(\lambda)
\langle 2(\mathbf{a}(w_0^\mathfrak{p})-\mathbf{a}(w_0w_0^\mathfrak{p}))\rangle$.
Since all Verma modules have trivial endomorphism algebras, 
the claim of the lemma follows.
\end{proof}

\subsection{Self-extensions of the simple $\mathtt{C}$-modules}\label{s6.3}

Consider the coinvariant algebra $\mathtt{C}$ with its natural positive
grading in which  the generators have degree $1$. Note that this differs 
from the grading associated to the interpretation of $\mathtt{C}$
as $\mathrm{End}(P_{w_0})$ as in the latter the generators have degree $2$.
So, we need to be careful with rescaling when interpreting the  
content of this subsection in the setup of category $\mathcal{O}$.

Let $\mathbb{C}$  be the unique simple $\mathtt{C}$-module which we consider
as the graded module  concentrated in degree $0$. Recall, see 
\cite[Subsection~1.1]{St2}, the description of 
$\mathrm{Ext}^*_\mathtt{C}(\mathbb{C},\mathbb{C})$.

Here is the list of the facts which we will need:
\begin{itemize}
\item as an algebra,  $\mathrm{Ext}^*_\mathtt{C}(\mathbb{C},\mathbb{C})$
is generated by $\mathrm{Ext}^1_\mathtt{C}(\mathbb{C},\mathbb{C})$
and $\mathrm{Ext}^2_\mathtt{C}(\mathbb{C},\mathbb{C})$;
\item as a vector space, $\mathrm{Ext}^*_\mathtt{C}(\mathbb{C},\mathbb{C})$ 
is isomorphic to the tensor product of the exterior algebra 
$\bigwedge \mathrm{Ext}^1_\mathtt{C}(\mathbb{C},\mathbb{C})$
and the symmetric algebra of a certain subspace $V$
of $\mathrm{Ext}^2_\mathtt{C}(\mathbb{C},\mathbb{C})$ of dimension $\dim(\mathfrak{h})$;
\item  $\mathrm{Ext}^1_\mathtt{C}(\mathbb{C},\mathbb{C})$ 
is a homogeneous space of dimension $\dim(\mathfrak{h})$ 
and is concentrated in degree $1$;
\item $V$ is the direct sum of homogeneous
subspaces whose degrees  are the degrees  of the algebraically independent
generators of the algebra of $W$-invariant polynomials
(note that these  degrees are  connected, using the $\pm 1$ shift, to 
the exponents of the root system, see \cite{Lee}),
moreover, those subspaces have dimension $1$, if the root system is irreducible.
\end{itemize}
This information allows  us to efficiently compute $\mathrm{Ext}^*_\mathtt{C}(\mathbb{C},\mathbb{C})$,
especially, for small values of $*$.
For example,  in the case of a root system of type $A_n$, the degrees 
in question are $2,3,\dots,n+1$. Consequently, in this case, we  have
\begin{equation}\label{eq-51}
\dim  \mathrm{ext}^2_\mathtt{C}(\mathbb{C},\mathbb{C}\langle -i\rangle)=
\begin{cases}
\binom{n}{2}+1,& i=2;\\
1,& i=3,4,\dots,n+1;\\
0, & \text{otherwise}.
\end{cases}
\end{equation}
Here we see the contribution $\binom{n}{2}$ from 
$\bigwedge \mathrm{Ext}^1_\mathtt{C}(\mathbb{C},\mathbb{C})$
and all the $1$'s, including that in degree  $2$, add up to the contribution of $V$.

\subsection{Some stability phenomena}\label{s6.4}

Relevance of $\mathrm{Ext}^*_\mathtt{C}(\mathbb{C},\mathbb{C})$
for the problems considered in the previous section is 
explained by the following proposition:

\begin{proposition}\label{prop-s6.4-1}
For all $k,m\in\mathbb{Z}_{\geq 0}$ with 
$m\leq k$, if $k=0$, and $m\leq k+1$, if $k>0$, 
and all $i\in\mathbb{Z}$, we have:
\begin{equation}\label{eq-a1b3}
\mathrm{ext}^m_{{}^\mathbb{Z}\mathcal{O}_0}
(\mathbb{S}^k\Delta_e,\Delta_e\langle i\rangle)
\cong
\begin{cases}
\mathrm{ext}^m_\mathtt{C}(\mathbb{C},\mathbb{C}
\langle \frac{i}{2}-k\ell(w_0)\rangle), & i\equiv 0\mod 2;\\
0, & i\equiv 1\mod 2.
\end{cases}
\end{equation}
\end{proposition}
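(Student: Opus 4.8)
The plan is to push the computation to the coinvariant algebra $\mathtt{C}$, taking advantage of the fact that the dominant Verma module is projective.

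First I would record that $\Delta_e=P_e$ (the dominant Verma module is projective) and, dually, $\nabla_e=I_e$. Since $\mathbb{S}$ is the derived Nakayama functor, no resolution is needed to evaluate it on the projective module $P_e$, so $\mathbb{S}(\Delta_e)\cong\nabla_e$ up to a grading shift that can be read off from Theorem~\ref{conj1} at $d=w=w_0$. Using the adjunction $\mathrm{Hom}(\mathbb{S}^k X,Y)\cong\mathrm{Hom}(X,\mathbb{S}^{-k}Y)$ together with the fact that a morphism in $\mathcal{D}^b$ out of the projective $P_e$ merely reads off the corresponding cohomology group, one gets
\begin{displaymath}
\mathrm{ext}^m_{{}^\mathbb{Z}\mathcal{O}_0}\big(\mathbb{S}^k\Delta_e,\Delta_e\langle i\rangle\big)\;\cong\;
\mathrm{hom}_{{}^\mathbb{Z}\mathcal{O}_0}\big(P_e,\,H^m(\mathbb{S}^{-k}\Delta_e)\langle i\rangle\big).
\end{displaymath}
The case $k=m=0$ is then immediate from $\mathrm{End}(\Delta_e)=\mathbb{C}$ (rigidity of Verma modules), which matches $\mathrm{hom}_\mathtt{C}(\mathbb{C},\mathbb{C}\langle i/2\rangle)$.

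Secondly I would transport this to $\mathtt{C}$-modules via Soergel's functor $\mathbb{V}=\mathrm{Hom}(P_{w_0},{}_-)$. It is exact, it sends $\Delta_e=P_e$ and $\nabla_e$ to the simple graded module $\mathbb{C}$, it annihilates every simple object except $L_{w_0}$, and by Soergel's Struktursatz $\mathrm{hom}(P,N)\cong\mathrm{hom}_\mathtt{C}(\mathbb{V}P,\mathbb{V}N)$ for $P$ projective and $N$ arbitrary. Applying this with $P=P_e$ and using exactness of $\mathbb{V}$ to commute it past cohomology, the right-hand side becomes $\mathrm{hom}_\mathtt{C}\big(\mathbb{C},H^m(\mathbb{V}\circ\mathbb{S}^{-k}(\Delta_e))\langle i\rangle\big)$. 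So the statement is equivalent to understanding the complex $\mathbb{V}(\mathbb{S}^{-k}\Delta_e)$ of graded $\mathtt{C}$-modules in cohomological degrees $\le k+1$.

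For that identification the plan is to argue by induction on $k$, starting from the recursion $\mathbb{S}\Delta_e\cong\nabla_e$ and the truncation triangles of $\mathbb{S}^{-(k-1)}\Delta_e$; conceptually, under the monoidal equivalence $\mathscr{P}\simeq\mathscr{B}$ the functor $\mathbb{S}=(\mathcal{L}C_{w_0})^2$ corresponds to tensoring with the full-twist complex of Soergel bimodules, so $\mathbb{V}(\mathbb{S}^{-k}\Delta_e)$ is obtained by applying the $k$-th power of the inverse full twist to $\mathbb{C}=\mathbb{V}(\Delta_e)$ — this is the category-$\mathcal{O}$ shadow of the categorical diagonalization picture of \cite{EH1,EH2}. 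The crucial structural point is that $\mathbb{V}$ kills all simples but $L_{w_0}$, while strict monotonicity of the $\mathbf{a}$-function (through Theorem~\ref{conj1}) forces a further application of $\mathbb{S}^{-1}$ to move the cohomology contributed by the lower truncation $\tau_{<0}\mathbb{S}^{-(k-1)}\Delta_e$ strictly beyond cohomological degree $k+1$; hence in degrees $\le k+1$ the complex $\mathbb{V}(\mathbb{S}^{-k}\Delta_e)$ stabilizes and, degree by degree, $H^m(\mathbb{V}\mathbb{S}^{-k}\Delta_e)$ becomes the semisimple $\mathtt{C}$-module $\mathrm{Ext}^m_\mathtt{C}(\mathbb{C},\mathbb{C})\otimes_\mathbb{C}\mathbb{C}$, sitting in the internal degree forced by the $k$ iterations of $\mathbb{S}^{-1}$ (the $\langle -2k\ell(w_0)\rangle$-shift). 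Combining this with the previous display, with the fact that $\mathrm{hom}_\mathtt{C}(\mathbb{C},-)$ detects exactly the degree-$0$ $\mathbb{C}$-isotypic part, with the explicit structure of $\mathrm{Ext}^*_\mathtt{C}(\mathbb{C},\mathbb{C})$ from Subsection~\ref{s6.3} (which in the $\mathrm{Hom}(P_{w_0},{}_-)$-normalization, where the generators of $\mathtt{C}$ sit in degree $2$, is concentrated in even internal degrees), and with the rescaling of that normalization to the one with generators in degree $1$ (dividing internal degrees by $2$, whence the $i/2$), yields \eqref{eq-a1b3} including the vanishing for odd $i$.

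The hard part will be the last step: pinning down the \emph{precise} internal shift $\tfrac i2-k\ell(w_0)$ and the \emph{exact} stable range $m\le k+1$ (resp.\ $m\le k$ when $k=0$). I expect this to require a careful term-by-term control of how $\mathcal{L}C_{w_0}$ (equivalently $\mathcal{L}\top_{w_0}$) interacts with the Koszul grading on the cohomology of $\mathbb{S}^{-(k-1)}\Delta_e$, in the same spirit as the ``fine-tooth comb'' estimates of Lemmas~\ref{lem-s8.25-255} and \ref{lem-s8.25-26}, using Auslander regularity of $A$ from \cite{KMM} and the explicit injective dimensions from \cite{Maz2} to bound where cohomology can appear — the bound $m\le k+1$ being exactly the range that the image of $\tau_{<0}\mathbb{S}^{-(k-1)}\Delta_e$ under $\mathbb{S}^{-1}$ has not yet entered.
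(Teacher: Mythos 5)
Your opening reduction is fine: since $\Delta_e=P_e$ is projective and $\mathbb{S}$ is an auto-equivalence, $\mathrm{ext}^m(\mathbb{S}^k\Delta_e,\Delta_e\langle i\rangle)\cong\mathrm{hom}(P_e,H^m(\mathbb{S}^{-k}\Delta_e)\langle i\rangle)$. But the next step — transporting this to $\mathtt{C}$ via ``$\mathrm{hom}(P,N)\cong\mathrm{hom}_\mathtt{C}(\mathbb{V}P,\mathbb{V}N)$ for $P$ projective and $N$ arbitrary'' — is not the Struktursatz and is false. Soergel's theorem gives $\mathrm{Hom}(M,P)\cong\mathrm{Hom}_\mathtt{C}(\mathbb{V}M,\mathbb{V}P)$ for $P$ projective, i.e.\ Hom \emph{into} projectives; in the direction you need one already has the counterexample $\mathrm{Hom}(P_e,L_e)=\mathbb{C}$ versus $\mathrm{Hom}_\mathtt{C}(\mathbb{V}P_e,\mathbb{V}L_e)=\mathrm{Hom}_\mathtt{C}(\mathbb{C},0)=0$. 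The failure here is not cosmetic: $\mathrm{hom}(P_e,N\langle i\rangle)$ computes graded multiplicities $[N:L_e\langle -i\rangle]$, and $\mathbb{V}$ annihilates $L_e$ (and every simple except $L_{w_0}$), so after you pass to cohomology the functor $\mathbb{V}$ can no longer see the quantity you are trying to compute. The only projective for which $\mathrm{Hom}(P,-)\cong\mathrm{Hom}_\mathtt{C}(\mathbb{V}P,\mathbb{V}(-))$ holds on all of $\mathcal{O}_0$ is $P_{w_0}$ itself, tautologically, since $\mathrm{Hom}(P_{w_0},-)=\mathbb{V}$ and $\mathbb{V}P_{w_0}=\mathtt{C}$.

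This is exactly why the paper's argument never takes cohomology: it builds, recursively, an explicit (non-minimal) projective resolution $\mathcal{X}^k_\bullet$ of $\mathbb{S}^k\Delta_e$ by gluing projective resolutions of the tilting/injective summands, observes that every tilting and every injective module has a projective cover in $\mathrm{add}(P_{w_0})$ and that $P_{w_0}$ is fixed by $\mathbb{S}$ up to $\langle 2\ell(w_0)\rangle$, so that all terms $\mathcal{X}^k_{-m}$ with $m\leq k$ are sums of shifted copies of $P_{w_0}$ and stabilize as $k$ grows; applying $\mathrm{Hom}(-,\Delta_e\langle i\rangle)$ to this complex and using $\mathrm{Hom}(P_{w_0},-)=\mathbb{V}$ identifies the resulting complex with $\mathrm{Hom}_\mathtt{C}$ from (the beginning of) a free resolution of $\mathbb{C}$ over $\mathtt{C}$, which yields both the shift $\tfrac{i}{2}-k\ell(w_0)$ and the parity vanishing; the boundary case $m=k+1$ is then handled by a separate argument about the non-$P_{w_0}$ summands first appearing in position $-k-1$. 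Your proposal contains neither this stabilization mechanism nor any substitute for it — the third and fourth paragraphs explicitly defer the identification of the cohomology, the precise internal shift, and the range $m\leq k+1$ to arguments you ``expect'' to exist. So as it stands the proposal has a genuine gap: the one concrete tool it invokes to reach $\mathrm{ext}^*_\mathtt{C}(\mathbb{C},\mathbb{C})$ does not apply, and the remaining steps are not carried out.
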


\begin{proof}
For $k=0$, we have $m=0$ and 
\begin{displaymath}
\mathrm{ext}^0_{{}^\mathbb{Z}\mathcal{O}_0}
(\Delta_e,\Delta_e\langle i\rangle)
\cong\mathrm{ext}^0_\mathtt{C}(\mathbb{C},\mathbb{C}\langle i\rangle)\cong
\begin{cases}
\mathbb{C}, & i=0;\\
0, & i\neq 0;
\end{cases}
\end{displaymath}
is, obviously, true.

Consider
the linear tilting resolution $\mathcal{T}_\bullet(\nabla_e)$
of $\nabla_e$. To form a projective resolution of $\nabla_e$,
we need to glue projective resolutions of each tilting summand
$T_w$ appearing in $\mathcal{T}_\bullet(\nabla_e)$. 
Let $\mathcal{X}^1_\bullet$ be the resulting complex. 
Note that $\mathcal{P}_\bullet(\nabla_e)$ is a summand of  
$\mathcal{X}^1_\bullet$ and can be obtained from the latter
by removing subcomplexes homotopic to zero.

Now, for $k>1$, define $\mathcal{X}^k_\bullet$ recursively
from $\mathcal{X}^{k-1}_\bullet$ as follows: apply
$\mathbb{S}$ to $\mathcal{X}^{k-1}_\bullet$ to get a complex
of injective modules and then glue projective resolutions of 
all individual indecomposable summands of that complex.
Again,  $\mathcal{P}_\bullet(\mathbb{S}^k\Delta_e)$ is a summand of  
$\mathcal{X}^k_\bullet$ and can be obtained from the latter
by removing subcomplexes homotopic to zero.

Note that each tilting and each injective module has 
a projective cover by a sum of copies of $P_{w_0}$.
Also note that the latter module is unchanged under
$\mathbb{S}$ (up to isomorphism and 
apart from the shift in grading by $2\ell(w_0)$).
Consequently, all $\mathcal{X}^k_m$, for $m\leq k$,
are sums of copies of $P_{w_0}$. And all these components
and the differentials between them are unchanged, up to isomorphism
and shift of grading, when $m$ is fixed and $k$ increases.

Now we note that 
\begin{equation}\label{eq-a1b2}
\mathrm{ext}^m_{{}^\mathbb{Z}\mathcal{O}_0}
(\mathbb{S}^k\Delta_e,\Delta_e\langle i\rangle)=
\mathrm{hom}
(\top_{w_0}\mathcal{X}^k_{-m},\top_{w_0}\Delta_e\langle i\rangle)/N_m,
\end{equation}
where $N_m$ is the subspace generated by all maps that factor through
$\top_{w_0}\mathcal{X}^k_{1-m}$. Also note 
that $\top_{w_0}\Delta_e=L_{w_0}$. For a fixed $m\leq k$,
the previous paragraph implies that the right hand side
of \eqref{eq-a1b2}
does not depend on $k$. Since $\mathbb{V}$ is exact and sends 
$\Delta_e$ to the simple $\mathtt{C}$-module $\mathbb{C}$
and $P_{w_0}$ to the regular $\mathtt{C}$-module $\mathtt{C}$,
we obtain that the right hand side of \eqref{eq-a1b2}
is computed by the corresponding right hand side of 
\eqref{eq-a1b3}. 

It remains to consider the case $m=k+1$, for $k>0$, in which we need a bit more
care. The component $\mathcal{X}^k_{-k-1}$ might have, up to shift of
grading, indecomposable projective summands of the form
$P_w$, where $w\neq w_0$. Applying $\top_{w_0}$ maps such $P_w$
to $T_{w_0w}$ and the latter has a projective cover whose all summands
are of the form $P_{w_0}$, up to shift. This cover also covers
$I_{w}\cong \mathbb{S}P_w$. This implies that the above arguments
extend to the case $m=k+1$, but this is the absolute limit
of these arguments. The claim follows.
\end{proof}

\subsection{Homological position $-2$ in type $A$}\label{s6.5}

Consider the type  $A_n$, for $n\geq 3$. In this case we have the  following:
\begin{itemize}
\item the minimum partition with respect to  the two-sided order is $(n)$,
the $\mathbf{a}$-value of the corresponding two-sided KL-cell is $0$;
\item after removing $(n)$, the remaining minimum partition with respect 
to  the two-sided order is $(n-1,1)$, the $\mathbf{a}$-value of the 
corresponding two-sided KL-cell is $1$;
\item the maximum partition with respect to  the two-sided order is $(1^n)$,
the $\mathbf{a}$-value of the corresponding two-sided KL-cell is $\binom{n+1}{2}$;
\item after removing $(1^n)$, the remaining maximum partition with respect 
to  the two-sided order is $(2,1^{n-2})$, the $\mathbf{a}$-value of the 
corresponding two-sided KL-cell is $\binom{n}{2}$.
\end{itemize}
Note that, in types $A_1$ and $A_2$ some  of the partitions listed above coincide.

Recall the shifts $\langle 2\big(\mathbf{a}(d^\mathfrak{p})-\mathbf{a}(w_0d^\mathfrak{p})\big)\rangle
[2\mathbf{a}(w_0d^\mathfrak{p})]$ in \eqref{eq-1}.
If our Duflo element $d^\mathfrak{p}$ equals $w_0$
(i.,e. belongs to the two-sided cell  corresponding to $(1^n)$), this formula  outputs
$\langle n(n+1)\rangle [0]$.
If our Duflo element $d^\mathfrak{p}$ belongs to the two-sided cell corresponding
to $(2,1^{n-2})$, this formula  outputs
$\langle n(n-1)-2\rangle [2]$.

Now let us look at the degree shift between $n(n+1)$ and $n(n-1)-2$.
The difference is $2(n+1)$. At the same  time, Formula~\eqref{eq-51}
says that the maximal expected shift between a non-trivial 
$\mathrm{ext}^0_\mathtt{C}(\mathbb{C},\mathbb{C}\langle -i\rangle)$
and a non-trivial $\mathrm{ext}^2_\mathtt{C}(\mathbb{C},\mathbb{C}\langle -j\rangle)$ 
is $n+1$, which becomes $2(n+1)$ in the setup of category $\mathcal{O}$.
Therefore the extension given by Theorem~\ref{prop-s8.25-1}
corresponds exactly to this extreme shift.

\subsection{Classification of Calabi-Yau objects in the two extreme cases}\label{s6.55}

\begin{proposition}\label{prop-s6.55-1}
We have $\mathbf{F}_0=\mathbf{CY}_0=\mathrm{add}(P_{w_0})$. 
\end{proposition}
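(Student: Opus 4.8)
The plan is to show the three stated categories coincide by proving the chain $\mathrm{add}(P_{w_0})\subseteq \mathbf{F}_0\subseteq \mathbf{CY}_0\subseteq \mathrm{add}(P_{w_0})$. The first inclusion is immediate: $w_0$ is a Duflo element with $\mathbf{a}(w_0w_0)=\mathbf{a}(e)=0$, so $P_{w_0}\cong\theta_{w_0}L_{w_0}$ is one of the generating objects of $\mathbf{F}_0$ (here $w_0\sim_R w_0$ trivially), and $\mathbf{F}_0$ is closed under finite direct sums by Proposition~\ref{prop-s5.85-1}\eqref{prop-s5.85-1.2}. The inclusion $\mathbf{F}_0\subseteq \mathbf{CY}_0$ is exactly Corollary~\ref{cor-8.25775-1}. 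So the real content is the last inclusion $\mathbf{CY}_0\subseteq\mathrm{add}(P_{w_0})$, i.e.\ showing that any $M\in\mathcal{O}_0$ with $\mathbb{S}(M)\cong M$ must be a sum of copies of $P_{w_0}$.

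For that last step I would argue as follows. Let $M\in\mathbf{CY}_0$. Since $\mathbb{S}$ is the left derived Nakayama functor, $\mathbb{S}(P_w)\cong I_w$ and more generally, for a module (complex concentrated in degree $0$), the homologies of $\mathbb{S}(M)$ are controlled by the injective coresolution of $M$: one has $\mathcal{L}_{-j}\mathbb{S}(M)$ related to the projective dimensions of the injective terms, and in particular $\mathcal{L}_0\mathbb{S}(M)$ is the Nakayama translate of the projective cover data while negative homology detects non-projectivity. More precisely, $\mathbb{S}(M)$ has homology only in non-positive degrees, and it is isomorphic (as a complex) to a module in degree $0$ iff $M$ has finite projective dimension zero in an appropriate dual sense; I would make this precise by noting that $\mathbb{S}(M)\cong M$ forces $M$ to be concentrated in homological degree $0$ after applying $\mathbb{S}$, which by the standard description of the Nakayama functor forces $M$ to be projective \emph{and} injective. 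Indeed, if $M$ is projective, $M=\bigoplus P_{w}^{\oplus m_w}$ and $\mathbb{S}(M)=\bigoplus I_w^{\oplus m_w}$; the isomorphism $\mathbb{S}(M)\cong M$ in $\mathcal{D}^b(\mathcal{O}_0)$ then forces $\bigoplus I_w^{\oplus m_w}\cong\bigoplus P_w^{\oplus m_w}$, and comparing composition factors (or using that $P_w\cong I_w$ only when... ) one concludes $m_w=0$ unless $P_w$ is injective. The only indecomposable projective-injective in $\mathcal{O}_0$ whose top equals its socle in the relevant sense is $P_{w_0}$; more carefully, among projective-injectives $P_w$ one has $P_w\cong I_w$, but $\mathbb{S}(P_w)\cong I_w\cong P_w$ needs $I_w\cong P_w$ as the same module, which by the block-diagonal structure and the description of projective-injectives in $\mathcal{O}_0$ pins down $w=w_0$.

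The main obstacle is the first half of the last inclusion: showing that $\mathbb{S}(M)\cong M$ in the derived category actually forces $M$ to be projective-injective, rather than merely some complex. I would handle this by the Auslander-regularity machinery already invoked in Subsection~\ref{s2.8}: the homology $\mathcal{L}_{-i}\mathbb{S}(M)$ is nonzero only for $i$ up to the injective dimension of $M$, and its nonvanishing at the top degree $i_{\max}$ is governed by $\mathrm{Ext}^{i_{\max}}$ from simples to $M$. If $\mathbb{S}(M)$ is concentrated in degree $0$ (which $\mathbb{S}(M)\cong M$ demands, as $M$ is a module) then $M$ has injective dimension $0$, i.e.\ $M$ is injective; dually, applying $\mathbb{S}^{-1}$ (which is the right derived inverse Nakayama, hence cohomology controlled by projective dimension) gives that $M$ is projective. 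A projective-injective module in $\mathcal{O}_0$ is a sum of $P_x$ with $x$ in the top two-sided cell, i.e.\ with $\mathbf{a}(x)=\ell(w_0)$; then $\mathbb{S}(P_x)\cong I_x\cong P_x$ only distinguishes... and in fact by Corollary~\ref{cor-s5.85-2} a nonzero $M$ with $\mathbb{S}(M)\cong M[0]$ must have $0=2\mathbf{a}(w_0d)$ for some $d\in\mathbf{D}$, forcing $\mathbf{a}(w_0d)=0$, hence $w_0d=e$, hence $d=w_0$; combined with Corollary~\ref{cor-s5.85-2}'s proof (which shows $\theta_{\widehat x}M\in\mathbf{F}_i$) one localizes everything onto the cell of $w_0$ and concludes $M\in\mathrm{add}(\theta_{w_0}L_{w_0})=\mathrm{add}(P_{w_0})$. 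I expect the cleanest writeup routes entirely through Corollary~\ref{cor-s5.85-2} plus the observation that $\mathbf{F}_0$ for $i=0$ is exactly $\mathrm{add}(P_{w_0})$ since the only pair $(d,w)$ with $\mathbf{a}(w_0d)=0$, $d\in\mathbf{D}$, $w\sim_R d$ is $d=w=w_0$.
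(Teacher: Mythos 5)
Your strategy is genuinely different from the paper's and is viable, but the pivotal step is mis-justified as written. The paper proves $\mathbf{CY}_0\subseteq\mathbf{F}_0$ by showing that every simple constituent of the top of $M$ is $L_{w_0}$ (using that the $0$-th term of $\mathcal{P}_\bullet(\nabla_e)$ is $P_{w_0}$), deducing that the evaluation of $\alpha_{w_0}$ from Proposition~\ref{prop-s8.2-1} at $M$ is an isomorphism, and then running a syzygy recursion (via the Five Lemma, as in the proof of Corollary~\ref{cor-8.25775-1}) that terminates by finite global dimension and exhibits a projective resolution of $M$ by projective-injective modules. Your route through the derived Nakayama functor bypasses $\alpha_{w_0}$ entirely and is in principle cleaner.

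The problem is your justification that $\mathbb{S}(M)\cong M$ forces $M$ projective-injective. The homology of $\mathbb{S}(M)=\mathcal{L}\nu(M)$ is \emph{not} bounded by the injective dimension of $M$, nor is its top nonvanishing degree governed by extensions from simples to $M$; the correct identification is $\mathcal{L}_i\mathbb{S}(M)\cong\mathrm{Tor}_i^A(A^*,M)\cong\mathrm{Ext}^i_A(M,A)^*$ (this is implicit in Subsection~\ref{s2.8}), and for an algebra of finite global dimension one has $\mathrm{pd}(M)=\max\{i:\mathrm{Ext}^i(M,A)\neq 0\}$. Hence concentration of $\mathbb{S}(M)$ in homological position $0$ gives $\mathrm{pd}(M)=0$, i.e.\ $M$ is \emph{projective}; your two ``dual'' applications have their conclusions swapped, and only projectivity is actually needed. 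With that repair, the Krull--Schmidt comparison $\bigoplus I_w^{\oplus m_w}\cong\bigoplus P_w^{\oplus m_w}$ finishes the proof, since $P_{w_0}$ is the unique indecomposable projective-injective object of $\mathcal{O}_0$ (the case $\mathfrak{p}=\mathfrak{b}$ of Subsection~\ref{s3.3}). By contrast, your closing suggestion to route everything through Corollary~\ref{cor-s5.85-2} does not close the argument: its proof only places $\theta_{w_0}M$ in $\mathbf{F}_0$, which gives no control over $M$ itself.
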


\begin{proof}
Note that $\mathbf{F}_0$
consists of those modules in $\mathcal{O}_0$ which have a filtration whose
subquotients are isomorphic to the projective-injective module $P_{w_0}=\theta_{w_0}L_{w_0}$.
This means that $\mathbf{F}_0=\mathrm{add}(P_{w_0})$, that is
the category of all projective-injective objects in $\mathcal{O}_0$.
So, due to Corollary~\ref{cor-8.25775-1}, we just need to prove
$\mathbf{CY}_0\subset \mathbf{F}_0$. 

Let $M\in \mathcal{O}_0$ be a non-zero object such that $\mathbb{S}M\cong M$.
As $\nabla_e$ is a quotient of $P_{w_0}$, the $0$-th component of
$\mathcal{P}_\bullet(\nabla_e)$ is $P_{w_0}$, which implies that 
$M$ is a quotients of $\theta_{w_0}M$. This means that each simple
constituent of the top  of $M$ is isomorphic to $L_{w_0}$.
In particular, for any simple reflection $s$, the evaluation of
the natural  transformation $\top_s\to \theta_e$ at $M$ is surjective.
Hence, by our construction of $\alpha_{w_0}$ in Proposition~\ref{prop-s8.2-1},
the evaluation of  $\alpha_{w_0}$ at $M$ is surjective, and hence an
isomorphism as $\mathbb{S}M\cong M$.

Let $P$ be a minimal projective cover of $M$. Then each summand of 
$P$  is isomorphic to $P_{w_0}$. In particular,   $P\in \mathbf{F}_0\subset \mathbf{CY}_0$.
Consider a short exact sequence
\begin{displaymath}
0\to N\to P \to M\to 0. 
\end{displaymath}
Here $N$ is isomorphic, in the derived category and up  to shift, to the  cone of the projection $P\to M$.
We know  that both $P$ and $M$  are in $\mathbf{CY}_0$ and that the evaluation of
$\alpha_{w_0}$ at both these modules is an isomorphism. Now the same  argument as in the
proof of Corollary~\ref{cor-8.25775-1} shows that the evaluation of
$\alpha_{w_0}$ at $N$ is an isomorphism. In particular, $N\in  \mathbf{CY}_0$.

We can now repeat the same argument for $N$ and continue recursively. This must,
however, stop after finitely many steps since $\mathcal{O}_0$ has finite
global  dimension. It follows that $M$ has a projective resolution consisting
of injective modules.  Therefore  $M$ itself  is both projective and injective,
that is belongs to $\mathbf{F}_0$.
The claim follows.
\end{proof}

\begin{proposition}\label{prop-s6.55-2}
We have $\mathbf{F}_{2\mathbf{a}(w_0)}=\mathbf{CY}_{2\mathbf{a}(w_0)}=
\mathrm{add}(L_e)$. 
\end{proposition}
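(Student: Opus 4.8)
The plan is to prove $\mathbf{F}_{2\mathbf{a}(w_0)}=\mathbf{CY}_{2\mathbf{a}(w_0)}=\mathrm{add}(L_e)$ by mimicking, in ``dual'' form, the argument of Proposition~\ref{prop-s6.55-1}, with the roles of $P_{w_0}$ and $L_e$ interchanged and with $\alpha_{w_0}$ replaced by $\alpha_e$ from Proposition~\ref{prop-s8.2-2}. First I would observe that $\mathbf{F}_{2\mathbf{a}(w_0)}=\mathrm{add}(L_e)$ directly from the definition: the only Duflo element $d$ with $\mathbf{a}(w_0 d)=\mathbf{a}(w_0)$ is $d=e$ (since $\mathbf{a}$ is strictly monotone and $\mathbf{a}(w_0)$ is its maximal value, forcing $w_0 d$ to lie in the top two-sided cell, i.e.\ $w_0 d = w_0$), and the only $w\sim_R e$ is $w=e$, with $\theta_e L_e = L_e$; since $L_e$ is simple, any module filtered by copies of $L_e$ is a direct sum of copies of $L_e$. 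Then by Corollary~\ref{cor-8.25775-1} we already have $\mathrm{add}(L_e)=\mathbf{F}_{2\mathbf{a}(w_0)}\subset \mathbf{CY}_{2\mathbf{a}(w_0)}$, so it remains to prove the reverse inclusion $\mathbf{CY}_{2\mathbf{a}(w_0)}\subset \mathrm{add}(L_e)$.

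For this, let $0\neq M\in\mathcal{O}_0$ with $\mathbb{S}M\cong M[2\mathbf{a}(w_0)]$. The key structural input is the dual of the observation used in Proposition~\ref{prop-s6.55-1}: there $P_{w_0}$ is the $0$-th term of $\mathcal{P}_\bullet(\nabla_e)$, forcing the top of any object of $\mathbf{CY}_0$ to be semisimple of $L_{w_0}$-type; here I would instead use the minimal \emph{injective coresolution} $\mathcal{I}_\bullet(\Delta_e)$, whose top homological term (in position $2\mathbf{a}(w_0)$) is $I_e$, i.e.\ is the injective envelope of $L_e$, by Koszul–Ringel duality and the results recalled in Subsection~\ref{s2.9} (dually to the statement that $\mathcal{P}_\bullet(L_w)$ corresponds to $I_{w^{-1}w_0}$). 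Applying $\mathbb{S}^{-1}$ and $\theta_{w_0}$-type arguments, or more directly dualizing via $\star$, one sees that for $M\in \mathbf{CY}_{2\mathbf{a}(w_0)}$ every simple constituent of the \emph{socle} of $M$ is isomorphic to $L_e$. Consequently, for each simple reflection $s$ the evaluation of the natural transformation $\theta_e\to K_s$ (the kernel of $\theta_s\to\theta_e$, whose $0$-th cohomology sits inside the identity), equivalently the evaluation of $\beta_s$ built in the proof of Proposition~\ref{prop-s8.2-2} using the dual Zuckerman functor, is injective at $M$. Composing along a reduced word for $w_0$, the evaluation of $\alpha_e$ at $M$ is injective, and since $\mathbb{S}M\cong M[2\mathbf{a}(w_0)]$ has the same (ungraded) class as $M$ — as $\mathbb{S}$ is the identity on the ungraded Grothendieck group — it is an isomorphism.

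Finally I would run the recursion exactly as in Proposition~\ref{prop-s6.55-1}, but using an injective \emph{envelope} rather than a projective cover. Let $Q$ be a minimal injective envelope of $M$; by the socle computation each summand of $Q$ is $I_e$, and $I_e \in \mathrm{add}(L_e)$? — here I must be careful: $I_e$ need not equal $L_e$. So instead the cleanest route is to dualize the whole of Proposition~\ref{prop-s6.55-1} under the composite of the simple-preserving duality $\star$ with the Ringel/Koszul dualities, which swaps $P_{w_0}$ with $L_e$, swaps $\mathbf{F}_0$ with $\mathbf{F}_{2\mathbf{a}(w_0)}$, and swaps $\alpha_{w_0}$ with $\alpha_e$; under this dictionary the conclusion $\mathbf{CY}_0=\mathrm{add}(P_{w_0})$ transports verbatim to $\mathbf{CY}_{2\mathbf{a}(w_0)}=\mathrm{add}(L_e)$. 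The main obstacle is verifying that the chosen duality genuinely intertwines $\mathbb{S}$ with $\mathbb{S}$ up to the correct shift and sends $\mathbf{CY}_0$ to $\mathbf{CY}_{2\mathbf{a}(w_0)}$ (equivalently, pinning down precisely which self-duality of $\mathcal{D}^b(\mathcal{O}_0)$ exchanges the Calabi--Yau degrees $0$ and $2\mathbf{a}(w_0)$ and the two extreme transformations $\alpha_{w_0}$, $\alpha_e$); once that bookkeeping is in place, the rest is a formal transcription of the proof of Proposition~\ref{prop-s6.55-1}. Alternatively, and perhaps more transparently, one argues directly with $\alpha_e$ and the injective coresolution as sketched above, using at the end that $L_e$ is the only simple module $L_w$ with $\mathbb{S}L_w\cong L_w[2\mathbf{a}(w_0)]$ (by Lemma~\ref{lem6} applied with $w=e$) to identify the ``residual'' summands, and that finite global dimension forces the recursion to terminate with $M$ itself injective-with-injective-resolution, hence semisimple of $L_e$-type.
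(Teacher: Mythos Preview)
Your approach has a genuine gap that you yourself flag but do not resolve. The recursive argument in Proposition~\ref{prop-s6.55-1} works because $P_{w_0}$ is projective \emph{and} injective: the projective cover $P$ of $M$ then lies in $\mathbf{F}_0\subset\mathbf{CY}_0$, so the cone $N$ is again in $\mathbf{CY}_0$, and after finitely many steps (by finite global dimension) $M$ acquires a finite projective resolution by projective-injectives, forcing $M$ itself to be projective-injective. In your ``dual'' version this breaks: even if the socle of $M$ consists only of copies of $L_e$, the injective envelope $Q$ is a sum of copies of $I_e=\nabla_e$, and $\nabla_e$ is \emph{not} a Calabi--Yau object of dimension $2\mathbf{a}(w_0)$ (indeed $\mathbb{S}\nabla_e$ is not $\nabla_e[2\mathbf{a}(w_0)]$). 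So the recursion cannot start. Your fallback of transporting Proposition~\ref{prop-s6.55-1} via ``the composite of $\star$ with the Ringel/Koszul dualities'' is left unspecified; there is no off-the-shelf self-equivalence of $\mathcal{D}^b(\mathcal{O}_0)$ that literally swaps $P_{w_0}$ with $L_e$ and $\mathbf{CY}_0$ with $\mathbf{CY}_{2\mathbf{a}(w_0)}$, and you would have to construct one and check all the compatibilities you list --- which is harder than the proposition itself.

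The paper bypasses all of this with a two-line homological argument. If $M\in\mathbf{CY}_{2\mathbf{a}(w_0)}$ had $[M:L_w]\neq 0$ for some $w\neq e$, then $\mathcal{D}^b(\mathcal{O}_0)(\mathbb{S}M,I_w)\neq 0$; applying $\mathbb{S}^{-1}$ (which sends $I_w$ to $P_w$) gives $\mathrm{Ext}^{2\mathbf{a}(w_0)}(M,P_w)\neq 0$, contradicting the fact that the injective dimension of $P_w$ equals $2\mathbf{a}(w_0w)<2\mathbf{a}(w_0)$ by \cite[Theorem~20]{Maz2}. Hence every composition factor of $M$ is $L_e$, and Weyl's complete reducibility gives $M\in\mathrm{add}(L_e)$. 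No recursion, no $\alpha_e$, no duality bookkeeping.
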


\begin{proof}
By Corollary~\ref{cor-8.25775-1}, we know that 
$\mathbf{F}_{2\mathbf{a}(w_0)}\subset \mathbf{CY}_{2\mathbf{a}(w_0)}$. 
Let $M\in \mathbf{CY}_{2\mathbf{a}(w_0)}$ and $w\in W$ be such that $w\neq e$.
We claim that $[M:L_w]=0$.

Indeed, if $[M:L_w]\neq 0$, 
then $\mathcal{D}^b(\mathcal{O}_0)(\mathbb{S}M,I_w)\neq 0$.
Applying $\mathbb{S}^{-1}$, we get the inequality
$\mathrm{Ext}^{2\mathbf{a}(w_0)}(M,P_w)\neq 0$. This, however,
is not possible as the injective dimension of $P_w$ is $2\mathbf{a}(w_0w)<
2\mathbf{a}(w_0)$ by \cite[Theorem~20]{Maz2}.

Therefore the only composition subquotients of $M$ are $L_e$, in particular,
$M\in \mathbf{F}_{2\mathbf{a}(w_0)}$. Also,
by Weyl's theorem on complete reducibility, it follows that
$M\in \mathrm{add}(L_e)$. This completes the proof.
\end{proof}

\section{Parabolic category $\mathcal{O}$ and its generalizations}\label{s9}

\subsection{The category $\mathcal{O}^{\hat{\mathcal{R}}}_0$}\label{s9.1}

Fix a right KL-cell $\mathcal{R}$ in $W$ and denote by 
$\hat{\mathcal{R}}$ the ideal which $\mathcal{R}$  generates 
with respect to the right order, that is 
\begin{displaymath}
\hat{\mathcal{R}}=\{w\in W\,:\, w\leq_R x\text{ for some }x\in \mathcal{R}\}.
\end{displaymath}
Denote by $\mathcal{O}^{\hat{\mathcal{R}}}_0$ the Serre subcategory of
$\mathcal{O}_0$ generated by all $L_w$, where $w\in \hat{\mathcal{R}}$.
If $\mathcal{R}$ contains the element $w_0^\mathfrak{p}w_0$,
for some parabolic subalgebra  $\mathfrak{p}$ of $\mathfrak{g}$
containing $\mathfrak{b}$, then $\mathcal{O}^{\hat{\mathcal{R}}}_0=
\mathcal{O}^{\mathfrak{p}}_0$. Hence, the categories of the form 
$\mathcal{O}^{\hat{\mathcal{R}}}_0$ are natural generalizations of 
the parabolic category $\mathcal{O}$, see \cite{MS4}.

In the general case, the structure of $\mathcal{O}^{\hat{\mathcal{R}}}_0$
is not as nice as that of blocks of parabolic category 
$\mathcal{O}$, for example, $\mathcal{O}^{\hat{\mathcal{R}}}_0$
might fail to be a highest weight category, see \cite[Lemma~11]{MS5}.

The category $\mathcal{O}^{\hat{\mathcal{R}}}_0$ is stable with respect
to the action of projective functors and inherits from
$\mathcal{O}_0$ a natural $\mathbb{Z}$-grading. For $w\in \hat{\mathcal{R}}$,
we denote by $P^{\hat{\mathcal{R}}}_w$ and $I^{\hat{\mathcal{R}}}_w$
the indecomposable projective cover and injective envelope of 
$L_w$ in $\mathcal{O}^{\hat{\mathcal{R}}}_0$, respectively.
The module $P^{\hat{\mathcal{R}}}_w$ is injective if and only if 
$w\in \mathcal{R}$. In the latter case, 
$P^{\hat{\mathcal{R}}}_w\cong I^{\hat{\mathcal{R}}}_w$.

\subsection{Kostant's problem and dominant dimension}\label{s9.2}

For $w\in W$, we have two natural birepresentations of $\mathscr{P}$
associated with $L_x$. The first one is 
$\mathscr{P}/\mathrm{Ann}_\mathscr{P}(L_
w)$ and the other one
is $\displaystyle\mathrm{add}(\bigoplus_{x\in W}\theta_x L_w)$. There is the 
obvious morphism of birepresentations from the former to the latter,
given by sending $\theta$ to $\theta L_w$. Recall, see 
\cite[Corollary~7.6]{KMM2}, that Kostant's problem, in the sense of \cite{Jo2},
has positive solution for $L_w$ if and only if above morphism of
birepresentations is an equivalence.

For $d\in\mathbf{D}$, the Duflo element in a KL right cell $\mathcal{R}$, 
\cite[Theorem~5]{KaM10} says that Kostant's problem has positive solution 
for $L_d$ if and only if the cokernel of the natural embedding
$ P^{\hat{\mathcal{R}}}_e\hookrightarrow P^{\hat{\mathcal{R}}}_d$
embeds into a projective-injective object in $\mathcal{O}^{\hat{\mathcal{R}}}_0$.
This is equivalent to the property that $\mathcal{O}^{\hat{\mathcal{R}}}_0$
has dominant dimension at least two with respect to projective-injective objects,
see \cite{KSX} for the details on the latter notion.

One particular case is the following: Kostant's problem has positive solution 
for $L_d$, where $d\in\mathbf{D}$ is the Duflo element in the right KL-cell
$\mathcal{R}$ which contains $w_0^\mathfrak{p}w_0$, for some parabolic 
subalgebra $\mathfrak{p}$ of $\mathfrak{g}$ containing $\mathfrak{b}$,
see \cite[Corollary~18]{KaM10}.

\subsection{Applying $\mathbb{S}$ to 
$\mathcal{O}^{\hat{\mathcal{R}}}_0$}\label{s9.3}

\begin{proposition}\label{prop-s9.3-1}
Let $\mathcal{R}$ be a right KL-cell and
$d\in\mathbf{D}$ be the Duflo element in $\mathcal{R}$.
Then, for any $-2\mathbf{a}(w_0d)< i\leq 0$
and $M\in\mathcal{O}^{\hat{\mathcal{R}}}_0 $,
the $i$-th homology of $\mathbb{S}M$
is zero.
\end{proposition}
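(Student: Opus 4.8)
The plan is to convert the statement into the vanishing of certain $\mathrm{Ext}$-groups from $M$ into projectives, via the defining property of the Serre functor, and then to read that vanishing off from Auslander regularity of $A$ together with the combinatorics of the longest element.

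First I would reformulate. Since every $P_x$ is projective, $\mathrm{Hom}(P_x,-)$ is exact, so $\mathcal{D}^b(\mathcal{O}_0)\big(P_x,\mathbb{S}(M)[i]\big)\cong\mathrm{Hom}\big(P_x,H^{i}(\mathbb{S}M)\big)$ for every $i\in\mathbb{Z}$. On the other hand, the Serre property gives $\mathcal{D}^b(\mathcal{O}_0)\big(P_x,\mathbb{S}(M)[i]\big)\cong\mathcal{D}^b(\mathcal{O}_0)\big(M,P_x[-i]\big)^{*}\cong\mathrm{Ext}^{-i}(M,P_x)^{*}$. As the $P_x$, for $x\in W$, are precisely the indecomposable projectives of $\mathcal{O}_0$, it follows that $H^{i}(\mathbb{S}M)=0$ if and only if $\mathrm{Ext}^{-i}(M,P_x)=0$ for all $x\in W$. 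Hence it suffices to prove that $\mathrm{Ext}^{k}(M,{}_AA)=0$ for all $0\le k<2\mathbf{a}(w_0d)$ and all $M\in\mathcal{O}_0^{\hat{\mathcal{R}}}$.

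By induction along a composition series of $M$ — whose simple subquotients are of the form $L_y$ with $y\in\hat{\mathcal{R}}$ — it is enough to treat $M=L_y$. For this I would invoke that $A$ is Auslander regular \cite{KMM}: fixing a minimal injective coresolution $0\to{}_AA\to Q_0\to Q_1\to\cdots$ with $\mathrm{pd}\,Q_k\le k$ for all $k$, minimality yields $\mathrm{Ext}^{k}(L_y,A)\cong\mathrm{Hom}(L_y,Q_k)$, which is non-zero exactly when $I_y$ is a direct summand of $Q_k$; in that case $\mathrm{pd}\,I_y\le\mathrm{pd}\,Q_k\le k$. Since $\mathrm{pd}\,I_y=\mathrm{injdim}\,P_y=2\mathbf{a}(w_0y)$ (by \cite{Maz2} together with the duality $\star$), we conclude $\mathrm{Ext}^{k}(L_y,A)=0$ for all $k<2\mathbf{a}(w_0y)$. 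It remains to note that $\mathbf{a}(w_0y)\ge\mathbf{a}(w_0d)$ for $y\in\hat{\mathcal{R}}$: indeed $y\le_R d$ by the definition of $\hat{\mathcal{R}}$ (every element of $\mathcal{R}$ lies in the right cell of $d$), and since left multiplication by $w_0$ sends right cells to right cells and reverses the two-sided Kazhdan-Lusztig order (cf. \cite[Proposition~6.2.9]{BjBr}), we get $w_0y\ge_J w_0d$, so strict monotonicity of $\mathbf{a}$ gives $\mathbf{a}(w_0y)\ge\mathbf{a}(w_0d)$. Combining, $\mathrm{Ext}^{k}(L_y,A)=0$ for all $k<2\mathbf{a}(w_0d)$ and all $y\in\hat{\mathcal{R}}$, which is exactly the vanishing required above.

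Given the tools already assembled, the argument is fairly routine; the part requiring most care is the degree bookkeeping in the first step, in particular the endpoint $i=0$ — there one needs $\mathrm{Hom}(L_y,A)=0$ for $y\in\hat{\mathcal{R}}$, which is automatic since $2\mathbf{a}(w_0d)>0$ whenever the asserted range of $i$ is non-empty (the case $d=w_0$ being vacuous) — and one must be careful to invoke the \emph{reversal}, not the preservation, of the two-sided order under $w\mapsto w_0w$.
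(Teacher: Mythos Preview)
Your proof is correct and takes a genuinely different route from the paper's. The paper works on the functor side: since $\mathbb{S}$ commutes with projective functors and $\mathbb{S}\Delta_e\cong\nabla_e$, one represents $\mathbb{S}$ by a complex of projective functors whose $i$-th term is the direct sum of those $\theta_y$ for which $P_y$ occurs in $\mathcal{P}_i(\nabla_e)$; for $-2\mathbf{a}(w_0d)<i\le 0$ every such $y$ satisfies $y\not\le_J d$, hence $\theta_y$ annihilates each composition factor $L_x$ of $M$ (as $x\le_R d$), so the $i$-th term of the complex applied to $M$ is already zero. You instead dualize via the defining property of $\mathbb{S}$ to reduce to $\mathrm{Ext}^k(L_y,{}_AA)=0$ for $k<2\mathbf{a}(w_0d)$ and read this off directly from Auslander regularity together with $\mathrm{pd}\,I_y=2\mathbf{a}(w_0y)$. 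Both arguments rest on the same underlying inputs from \cite{KMM} and \cite{Maz2}, just packaged differently: the paper's version fits its running theme of computing with the $\mathscr{P}$-action and with $\mathcal{P}_\bullet(\nabla_e)$, while yours is a self-contained piece of homological algebra that sidesteps the ``complex of projective functors'' picture entirely. Your order-reversal step $y\le_R d\Rightarrow w_0y\ge_J w_0d$ is exactly the standard fact needed, and your treatment of the endpoint $i=0$ is fine.
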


\begin{proof}
Each composition subquotient of $M$ is of the form $L_x$, for some 
$x\leq_R d$. Since $-2\mathbf{a}(w_0d)<i$, each summand $P_y$ of 
$\mathcal{P}_i(\nabla_e)$ satisfies $\mathbf{a}(y)>\mathbf{a}(d)$.
Therefore $\theta_y L_x=0$. The claim follows.
\end{proof}

\begin{proposition}\label{prop-s9.3-2}
Let $\mathcal{R}$ be a KL-right cell and
$d\in\mathbf{D}$ be the Duflo element in $\mathcal{R}$.
Then the condition that Kostant's problem has positive 
solution for $L_d$ is equivalent to the condition that 
the homology  of
$\mathbb{S}P^{\hat{\mathcal{R}}}_e$ at position $-2\mathbf{a}(w_0d)$
is isomorphic to $I^{\hat{\mathcal{R}}}_e$.
\end{proposition}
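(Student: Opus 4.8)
The plan is to translate the statement about Kostant's problem into a statement about a certain $\mathrm{Ext}$-space via Koszul--Ringel duality, and then to identify that $\mathrm{Ext}$-space with a hom-space computing the homology of $\mathbb{S}P^{\hat{\mathcal{R}}}_e$ in the relevant position. First I would recall from Subsection~\ref{s9.2} that Kostant's problem has a positive solution for $L_d$ if and only if the cokernel $Q$ of the natural embedding $P^{\hat{\mathcal{R}}}_e\hookrightarrow P^{\hat{\mathcal{R}}}_d$ embeds into a projective-injective object of $\mathcal{O}^{\hat{\mathcal{R}}}_0$, equivalently, that $\mathcal{O}^{\hat{\mathcal{R}}}_0$ has dominant dimension at least two with respect to the projective-injective objects (which in $\mathcal{O}^{\hat{\mathcal{R}}}_0$ are precisely the $P^{\hat{\mathcal{R}}}_w=I^{\hat{\mathcal{R}}}_w$ with $w\in\mathcal{R}$, by Subsection~\ref{s9.1}). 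The embedding $P^{\hat{\mathcal{R}}}_e\hookrightarrow P^{\hat{\mathcal{R}}}_d$ is itself obtained by applying $Z^{\hat{\mathcal{R}}}$ (or the corresponding parabolic-type quotient functor) to the canonical map $P_e=\Delta_e\hookrightarrow P_d$, and $P^{\hat{\mathcal{R}}}_e$ has simple socle $L_{d}$ (up to shift) since $\Delta_e^{\hat{\mathcal{R}}}$-type modules behave as in Subsection~\ref{s3.3}.

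The key computation is to describe $\mathbb{S}P^{\hat{\mathcal{R}}}_e$ in $\mathcal{D}^b(\mathcal{O}_0)$. I would use that $P^{\hat{\mathcal{R}}}_e$ is a quotient of $P_e=\Delta_e$, in fact the maximal quotient lying in $\mathcal{O}^{\hat{\mathcal{R}}}_0$, and hence fits into a short exact sequence $0\to N\to \Delta_e\to P^{\hat{\mathcal{R}}}_e\to 0$ where all composition factors of $N$ are of the form $L_x$ with $x\not\leq_R d$. Applying the triangulated functor $\mathbb{S}$ and using the known value $\mathbb{S}\Delta_e\cong\nabla_e$ together with Lemma~\ref{lem-s8.25-26} (homology of $\mathbb{S}\Delta_e$, up to the resolution viewpoint via $\mathcal{P}_\bullet(\nabla_e)$), one sees that the bottom nonzero homology of $\mathbb{S}P^{\hat{\mathcal{R}}}_e$ sits in position $-2\mathbf{a}(w_0d)$ (using Proposition~\ref{prop-s9.3-1} to kill positions strictly between $-2\mathbf{a}(w_0d)$ and $0$). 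The homology in position $-2\mathbf{a}(w_0d)$ is then computed from the tail of the minimal projective resolution $\mathcal{P}_\bullet(\nabla_e)$ restricted through $Z^{\hat{\mathcal{R}}}$; by Corollary~\ref{cor-s5.8-1} the term $\mathcal{P}_{-2\mathbf{a}(w_0d)}$ involves $P_d\langle 2(\mathbf{a}(d)-\mathbf{a}(w_0d))\rangle$ plus summands $P_x$ with $x>_L d$, and passing to $\mathcal{O}^{\hat{\mathcal{R}}}_0$ the relevant contribution survives exactly as $I^{\hat{\mathcal{R}}}_e$ precisely when the cokernel $Q$ above has the dominant-dimension-two property. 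Concretely, the homology being isomorphic to $I^{\hat{\mathcal{R}}}_e$ is equivalent, via Koszul--Ringel self-duality applied to the tilting resolution $\mathcal{T}_\bullet(\nabla_e)$ inside $\mathcal{O}^{\hat{\mathcal{R}}}_0$, to $\mathrm{Ext}$-vanishing statements that are exactly the dominant dimension $\geq 2$ condition in the sense of \cite{KSX}.

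The technical heart — and the step I expect to be the main obstacle — is controlling the interaction between the quotient functor $Z^{\hat{\mathcal{R}}}$ (which is only right exact) and the Serre functor $\mathbb{S}$ when restricting the projective resolution $\mathcal{P}_\bullet(\nabla_e)$; in general $Z^{\hat{\mathcal{R}}}$ does not send projectives in $\mathcal{O}_0$ to projectives in $\mathcal{O}^{\hat{\mathcal{R}}}_0$ unless they already lie in $\hat{\mathcal{R}}$-supported part, so one must argue carefully that, in the single homological degree $-2\mathbf{a}(w_0d)$, the only surviving indecomposable summand is the one giving $P^{\hat{\mathcal{R}}}_d\cong I^{\hat{\mathcal{R}}}_e$ up to shift, all other summands $P_x$ with $x>_L d$ being annihilated or contributing only to strictly higher homology. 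Once this bookkeeping is done, the equivalence with Kostant's problem follows by matching it to the cokernel criterion of \cite[Theorem~5]{KaM10} recalled in Subsection~\ref{s9.2}, since the condition ``$Q\hookrightarrow$ projective-injective'' is precisely the statement that the second syzygy information at position $-2\mathbf{a}(w_0d)$ assembles into $I^{\hat{\mathcal{R}}}_e$ rather than a proper submodule thereof.
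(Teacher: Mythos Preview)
Your proposal has the right overall shape but contains a structural gap and at least one concrete error that would prevent it from going through.

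The concrete error: you write ``the only surviving indecomposable summand is the one giving $P^{\hat{\mathcal{R}}}_d\cong I^{\hat{\mathcal{R}}}_e$ up to shift''. These modules are not isomorphic. The module $P^{\hat{\mathcal{R}}}_d=I^{\hat{\mathcal{R}}}_d=\theta_d L_d$ is the big projective-injective, while $I^{\hat{\mathcal{R}}}_e$ is only a proper quotient of it (it has simple top $L_d$ since $P^{\hat{\mathcal{R}}}_e$ has simple socle $L_d$). What you have correctly identified is that the relevant \emph{term} of the complex at position $-2\mathbf{a}(w_0d)$, after killing the irrelevant summands, is $\theta_d P^{\hat{\mathcal{R}}}_e\cong\theta_d L_d=P^{\hat{\mathcal{R}}}_d$. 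But the \emph{homology} $M$ at that position is some quotient of this term, and the entire content of the proposition is to determine which quotient.

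The structural gap is that you never compute this quotient. The paper's argument does this in two steps that your outline does not contain. First, it pins down the composition factors of $M$ coming from the two-sided cell $\mathcal{J}$ of $d$: applying $\theta_d$ to $M$ and invoking Theorem~\ref{conj1} forces $[M:L_d]=1$ and $[M:L_u]=0$ for $u\sim_H d$, $u\neq d$; applying $\theta_{d'}$ for the other Duflo elements $d'\sim_J d$ (which annihilate $P^{\hat{\mathcal{R}}}_e$) forces $[M:L_x]=0$ for all $x\in\mathcal{R}\setminus\{d\}$. This shows $M$ is a quotient of the specific module $N:=P^{\hat{\mathcal{R}}}_d/\operatorname{tr}_{x\in\mathcal{R}}(P^{\hat{\mathcal{R}}}_x,\operatorname{Rad}P^{\hat{\mathcal{R}}}_d)$. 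Second, an $\mathbf{a}$-function argument on $\mathcal{P}_{-2\mathbf{a}(w_0d)-1}(\nabla_e)$ shows the surjection $N\twoheadrightarrow M$ is an isomorphism. Only \emph{then} does the criterion from \cite{KaM10} enter: $N\cong I^{\hat{\mathcal{R}}}_e$ is exactly the Kostant condition. Your proposal jumps from ``the summand is $P^{\hat{\mathcal{R}}}_d$'' to ``the homology is $I^{\hat{\mathcal{R}}}_e$ iff Kostant'', skipping the identification $M\cong N$ entirely.

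Finally, your appeal to ``Koszul--Ringel self-duality applied to the tilting resolution $\mathcal{T}_\bullet(\nabla_e)$ inside $\mathcal{O}^{\hat{\mathcal{R}}}_0$'' is problematic: as noted in Subsection~\ref{s9.1}, the category $\mathcal{O}^{\hat{\mathcal{R}}}_0$ need not be highest weight, so there is no tilting theory or Koszul--Ringel duality available there in general. The paper works entirely in $\mathcal{O}_0$ and only passes to $\mathcal{O}^{\hat{\mathcal{R}}}_0$ at the very end via the \cite{KaM10} criterion.
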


\begin{proof}
Let $M$ denote the homology  of
$\mathbb{S}P^{\hat{\mathcal{R}}}_e$ at position $-2\mathbf{a}(w_0d)$.

The only element $w\in W$ such that $P_w$ is a summand of
$\mathcal{P}_{-2\mathbf{a}(w_0d)}(\nabla_e)$ and
$\theta_w L_d\neq 0$ is $w=d$. Therefore $M$ is a quotient of
$\theta_d P^{\hat{\mathcal{R}}}_e\cong \theta_d L_d$.
At the same time, from $\theta_d M\cong \theta_d L_d$,
which is true by Theorem~\ref{conj1}, it follows that
$[M:L_d]=1$ and $[M:L_u]=0$, for any $u\sim_H d$
different from $d$. Moreover, if $d'\in \mathbf{D}$ is such that $d'\sim_J d$
and $d'\neq d$, then $\theta_{d'}P^{\hat{\mathcal{R}}}_e=0$
and thus $\theta_{d'} M=0$. This implies that $[M:L_x]=0$,
for any $x\sim_R d$ different from $d$.

This means that $M$ is a quotient of the module $N$ defined 
as the quotient of $P^{\hat{\mathcal{R}}}_d$ by the trace of 
all $P^{\hat{\mathcal{R}}}_x$, where $x\sim_R d$, in the
radical of $P^{\hat{\mathcal{R}}}_d$. 

The kernel of the map $N\tto M$ contains only simples of the form
$L_x$, where $x<_R d$. However, the corresponding projectives
$P_x$ cannot be summands of $\mathcal{P}_{-2\mathbf{a}(w_0d)-1}(\nabla_e)$
as $w_0x>_R w_0d$ and therefore $-2\mathbf{a}(w_0x)<-2\mathbf{a}(w_0d)-1$,
since $\mathbf{a}(w_0x)>\mathbf{a}(w_0d)$.
This means that $N$ is isomorphic to $M$.

The fact that $N\cong I^{\hat{\mathcal{R}}}_e$ is equivalent to 
the condition that Kostant's problem has positive 
solution for $L_d$ follows from 
\cite[Theorem~5]{KaM10}, see Subsection~\ref{s9.2}.
The claim follows.
\end{proof}

\begin{corollary}\label{cor-s9.3-4}
Let $\mathcal{R}$ be a KL-right cell and
$d\in\mathbf{D}$ be the Duflo element in $\mathcal{R}$.
Then $\mathbb{S}P^{\hat{\mathcal{R}}}_e[-2\mathbf{a}(w_0d)]
\cong I^{\hat{\mathcal{R}}}_e$ if and only if 
Kostant's problem has positive solution for $L_d$ 
and the projective dimension of $P^{\hat{\mathcal{R}}}_e$
in $\mathcal{O}$ equals $2\mathbf{a}(w_0d)$.
\end{corollary}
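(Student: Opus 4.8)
The plan is to deduce Corollary~\ref{cor-s9.3-4} from Propositions~\ref{prop-s9.3-1} and \ref{prop-s9.3-2} together with one bookkeeping lemma on the homological support of $\mathbb{S}$. Throughout, write $a:=2\mathbf{a}(w_0d)$, $P:=P^{\hat{\mathcal{R}}}_e$, $I:=I^{\hat{\mathcal{R}}}_e$, and let $\operatorname{pd}(P)$ denote the projective dimension of $P$ computed in $\mathcal{O}_0$. Since $\mathbb{S}$ is the left derived Nakayama functor $\mathbb{L}\nu$ with $\nu=A^*\otimes_A{}_-$ (see Subsection~\ref{s2.7}), the object $\mathbb{S}P$ of $\mathcal{D}^b(\mathcal{O}_0)$ is represented by $\nu\mathcal{P}_\bullet(P)$, a complex of injective modules sitting in cohomological positions $-\operatorname{pd}(P),\dots,0$. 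Because $\mathcal{P}_\bullet(P)$ is a minimal projective resolution and $\nu$ restricts to an equivalence from projective modules to injective modules --- and an equivalence carries radical morphisms to radical morphisms --- the complex $\nu\mathcal{P}_\bullet(P)$ is moreover minimal. I would first record the following lemma: for every nonzero $M\in\mathcal{O}_0$, the homology of $\mathbb{S}M$ is nonzero in cohomological position $-\operatorname{pd}(M)$ and vanishes in all lower positions. Vanishing below $-\operatorname{pd}(M)$ is immediate from the shape of $\nu\mathcal{P}_\bullet(M)$. For nonvanishing at $-\operatorname{pd}(M)$, that homology equals the kernel of the differential $\delta$ emanating from $\nu\mathcal{P}_{-\operatorname{pd}(M)}(M)$ (or all of $\nu\mathcal{P}_0(M)\neq 0$ when $\operatorname{pd}(M)=0$); but $\delta$ is a radical morphism between injective modules, and if $\delta$ were a monomorphism its injective source would split off its target, so some indecomposable summand of the source would map isomorphically onto a summand of the target, contradicting radicality. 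Hence $\ker\delta\neq 0$. (Equivalently, this is the standard statement that $\operatorname{Tor}^A_{\operatorname{pd}(M)}(A^*,M)\neq 0$.)

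Granting the lemma, I would treat the reverse implication first. Assume Kostant's problem has a positive solution for $L_d$ and that $\operatorname{pd}(P)=a$. Then $\mathbb{S}P$ has homology only in cohomological positions $-a,\dots,0$. Since $P\in\mathcal{O}^{\hat{\mathcal{R}}}_0$, Proposition~\ref{prop-s9.3-1} shows that this homology is zero in all positions $i$ with $-a<i\leq 0$, so $\mathbb{S}P$ is concentrated in position $-a$ and is therefore isomorphic in $\mathcal{D}^b(\mathcal{O}_0)$ to its homology in that position, placed in degree $-a$. By Proposition~\ref{prop-s9.3-2}, positivity of Kostant's problem for $L_d$ identifies that homology with $I$. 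Hence $\mathbb{S}P\cong I[a]$, that is $\mathbb{S}P[-a]\cong I$.

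For the forward implication, assume $\mathbb{S}P[-a]\cong I$, equivalently $\mathbb{S}P\cong I[a]$; thus $\mathbb{S}P$ has homology $I$ in position $-a=-2\mathbf{a}(w_0d)$ and zero homology elsewhere. Since the homology in position $-2\mathbf{a}(w_0d)$ is isomorphic to $I$, Proposition~\ref{prop-s9.3-2} yields that Kostant's problem has a positive solution for $L_d$. Moreover, the lemma identifies the lowest position carrying nonzero homology of $\mathbb{S}P$ with $-\operatorname{pd}(P)$; as that position is $-a$, we get $\operatorname{pd}(P)=a=2\mathbf{a}(w_0d)$. This establishes both required conditions and completes the equivalence.

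The whole argument is bookkeeping once Propositions~\ref{prop-s9.3-1} and \ref{prop-s9.3-2} are in hand, so the one place carrying genuine content --- and which I expect to be the main obstacle to a clean write-up --- is the lemma pinning the bottom of the homological support of $\mathbb{S}M$ to $-\operatorname{pd}(M)$. The mild subtlety is that the minimal complex $\nu\mathcal{P}_\bullet(M)$ is generally not a stalk complex even when the homology of $\mathbb{S}M$ is concentrated in a single degree, so one cannot simply replace $\mathbb{S}M$ by its homology; the working input is the elementary fact that a radical morphism between injective modules is never a monomorphism. Everything else --- finite global dimension of $A$, the identification $\mathbb{S}=\mathbb{L}\nu$, and $\mathbb{S}P_w\cong I_w$ --- is already available earlier in the paper.
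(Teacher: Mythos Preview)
Your proof is correct and follows essentially the same route as the paper's: both directions are deduced from Propositions~\ref{prop-s9.3-1} and~\ref{prop-s9.3-2} together with the observation that the homology of $\mathbb{S}P$ is nonzero precisely in position $-\operatorname{pd}(P)$ at the bottom. The only real difference is in how that nonvanishing is established: the paper argues that a summand $P_w$ of $\mathcal{P}_{-m}(P)$ yields a nonzero element of $\mathrm{Ext}^m(P,P_w)$, and then applies the equivalence $\mathbb{S}$ to get a nonzero map $\mathbb{S}P\to I_w[m]$; you instead use that $\nu$ preserves minimality and that a radical map between injectives cannot be a monomorphism. Both arguments are short and equivalent in strength; yours is perhaps slightly more self-contained, while the paper's leans on the Serre-functor description already set up in Subsection~\ref{s2.7}. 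The paper also leaves the invocation of Proposition~\ref{prop-s9.3-1} in the reverse implication implicit, whereas you cite it explicitly, which makes your write-up a bit cleaner.
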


\begin{proof}
Assume that the projective dimension of $P^{\hat{\mathcal{R}}}_e$
in $\mathcal{O}$ equals $m$ and let $P_w$ be a summand of 
$\mathcal{P}_{-m}(P^{\hat{\mathcal{R}}}_e)$. Then the identity on
$P_w$ gives rise to a non-zero extension in $\mathcal{O}$, of 
degree $m$, from $P^{\hat{\mathcal{R}}}_e$ to $P_w$. Applying
$\mathbb{S}$, we get a non-zero homomorphism from 
$\mathbb{S}P^{\hat{\mathcal{R}}}_e$ to $I_w[m]$, that is,
the homology of $\mathbb{S}P^{\hat{\mathcal{R}}}_e$ at the
homological position $-m$ is non-zero.

We know that the homology of $\mathbb{S}P^{\hat{\mathcal{R}}}_e$ at the
homological position $-2\mathbf{a}(w_0d)$ is non-zero. Therefore
the projective dimension of $P^{\hat{\mathcal{R}}}_e$
in $\mathcal{O}$ is at least $2\mathbf{a}(w_0d)$. If the inequality 
is strict, then $\mathbb{S}P^{\hat{\mathcal{R}}}_e$ has a non-zero
homology at some position different from $-2\mathbf{a}(w_0d)$
and therefore $\mathbb{S}P^{\hat{\mathcal{R}}}_e[-2\mathbf{a}(w_0d)]
\cong I^{\hat{\mathcal{R}}}_e$ is not possible.

If the projective dimension of $P^{\hat{\mathcal{R}}}_e$
in $\mathcal{O}$ is exactly $2\mathbf{a}(w_0d)$, then 
the homology of $\mathbb{S}P^{\hat{\mathcal{R}}}_e$ 
is concentrated at the homological position $-2\mathbf{a}(w_0d)$.
Now the claim of the corollary follows from 
Proposition~\ref{prop-s9.3-2}.
\end{proof}

\begin{corollary}\label{cor-s9.3-3}
Let $\mathcal{R}$ be a KL-right cell and
$d\in\mathbf{D}$ be the Duflo element in $\mathcal{R}$.
Then $\mathbb{S}[-2\mathbf{a}(w_0d)]$ is an ungraded Serre functor
on the category of perfect complexes in 
$\mathcal{O}_0^{\hat{\mathcal{R}}}$
if and only if the following conditions are satisfied:
\begin{enumerate}[$($a$)$]
\item\label{cor-s9.3-3.1} 
Kostant's problem has positive solution for $L_d$;
\item\label{cor-s9.3-3.2}
the projective dimension of $P^{\hat{\mathcal{R}}}_e$
in $\mathcal{O}$ equals $2\mathbf{a}(w_0d)$;
\item\label{cor-s9.3-3.3}
the projective dimension of $I^{\hat{\mathcal{R}}}_e$
in $\mathcal{O}_0^{\hat{\mathcal{R}}}$ is finite.
\end{enumerate}
\end{corollary}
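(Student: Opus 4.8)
The plan is to establish the equivalence in Corollary~\ref{cor-s9.3-3} by analyzing when the functor $\mathbb{S}[-2\mathbf{a}(w_0d)]$ restricts to a Serre functor on the bounded derived category of perfect complexes over $\mathcal{O}_0^{\hat{\mathcal{R}}}$. Write $m=2\mathbf{a}(w_0d)$ and let $\mathcal{C}$ denote the category of perfect complexes in $\mathcal{O}_0^{\hat{\mathcal{R}}}$, equivalently $\mathcal{D}^b(B)$ where $B=\mathrm{End}(\bigoplus_{w\in\hat{\mathcal{R}}}P^{\hat{\mathcal{R}}}_w)^{\mathrm{op}}$. First I would recall that $\mathcal{O}_0^{\hat{\mathcal{R}}}$ is stable under all projective functors, that the inclusion $\iota:\mathcal{O}_0^{\hat{\mathcal{R}}}\hookrightarrow\mathcal{O}_0$ is exact with adjoints $Z^{\hat{\mathcal{R}}}$ (left) and $Z_{\hat{\mathcal{R}}}$ (right) commuting with projective functors, and that $\mathbb{S}$ — being a composition of derived twisting functors — commutes with projective functors on all of $\mathcal{D}^b(\mathcal{O}_0)$. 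The key structural input is that $\mathcal{O}_0^{\hat{\mathcal{R}}}$, as a birepresentation of $\mathscr{P}$, is generated by $P^{\hat{\mathcal{R}}}_e\cong Z^{\hat{\mathcal{R}}}(\Delta_e)$ in the sense that every projective is a summand of some $\theta_w P^{\hat{\mathcal{R}}}_e$; this reduces the question of whether $\mathbb{S}[-m]$ preserves $\mathcal{C}$ and acts as a Serre functor there to a single computation on $P^{\hat{\mathcal{R}}}_e$.

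The forward direction: suppose $\mathbb{S}[-m]$ is an ungraded Serre functor on $\mathcal{C}$. Then in particular $\mathbb{S}[-m]$ must send the perfect complex $P^{\hat{\mathcal{R}}}_e$ to a perfect complex, i.e. an object of $\mathcal{C}$, which is concentrated in homological degree $0$ (being a shift of a Serre dual of a module). By Corollary~\ref{cor-s9.3-4}, the statement $\mathbb{S}P^{\hat{\mathcal{R}}}_e[-m]\cong I^{\hat{\mathcal{R}}}_e$ already forces Kostant's problem to have positive solution for $L_d$ and the projective dimension of $P^{\hat{\mathcal{R}}}_e$ in $\mathcal{O}$ to equal $m$; I would argue that "$\mathbb{S}[-m]$ preserves $\mathcal{C}$" applied to $P^{\hat{\mathcal{R}}}_e$ forces exactly this isomorphism (the image must be a perfect complex whose only nonzero homology, by Proposition~\ref{prop-s9.3-1} and the analysis in the proof of Proposition~\ref{prop-s9.3-2}, sits in degree $-m$ and is forced to be $I^{\hat{\mathcal{R}}}_e$ once pd is exactly $m$). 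This gives conditions \eqref{cor-s9.3-3.1} and \eqref{cor-s9.3-3.2}. For condition \eqref{cor-s9.3-3.3}: the Serre functor on $\mathcal{C}=\mathcal{D}^b(B)$ is $\mathbb{L}(B^*\otimes_B-)$, which sends $B$ (the sum of the $P^{\hat{\mathcal{R}}}_w$) to $B^*$ (the sum of the $I^{\hat{\mathcal{R}}}_w$); for this to be a perfect complex we need each $I^{\hat{\mathcal{R}}}_w$ to have finite projective dimension over $B$, hence in particular $I^{\hat{\mathcal{R}}}_e$. Conversely, if \eqref{cor-s9.3-3.1}–\eqref{cor-s9.3-3.3} hold, then by Corollary~\ref{cor-s9.3-4} we get $\mathbb{S}P^{\hat{\mathcal{R}}}_e[-m]\cong I^{\hat{\mathcal{R}}}_e$, and then applying projective functors $\theta_w$ and using that $\mathbb{S}$ commutes with them we get $\mathbb{S}(\theta_w P^{\hat{\mathcal{R}}}_e)[-m]\cong\theta_w I^{\hat{\mathcal{R}}}_e$ for all $w$; since the $\theta_w P^{\hat{\mathcal{R}}}_e$ generate all projectives of $\mathcal{O}_0^{\hat{\mathcal{R}}}$ and the $\theta_w I^{\hat{\mathcal{R}}}_e$ land among injectives-of-finite-projective-dimension (using \eqref{cor-s9.3-3.3} and that projective functors preserve finite projective dimension here), $\mathbb{S}[-m]$ maps $\mathcal{C}$ into $\mathcal{C}$ and in fact matches, up to natural isomorphism, the Nakayama-derived Serre functor of $\mathcal{D}^b(B)$, hence is a Serre functor on $\mathcal{C}$.

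The technical heart — and the main obstacle — is making precise the identification "$\mathbb{S}|_{\mathcal{C}}[-m]$ equals the intrinsic Serre functor of $\mathcal{D}^b(B)$." The subtlety is that $\mathcal{O}_0^{\hat{\mathcal{R}}}$ is only a Serre subcategory of $\mathcal{O}_0$, not a direct summand, so $\mathbb{S}$ (a global object on $\mathcal{D}^b(\mathcal{O}_0)$) does not a priori restrict to anything, and the adjunctions $(\iota,Z_{\hat{\mathcal{R}}})$, $(Z^{\hat{\mathcal{R}}},\iota)$ on module categories become subtle at the derived level. The plan here is to use the characterisation of functors commuting with projective functors: by \cite[Theorem~1]{Kh} (or \cite[Theorem~2]{Kh} for natural transformations), a functor on $\mathcal{O}_0^{\hat{\mathcal{R}}}$ commuting with the $\mathscr{P}$-action is determined by its value on the "dominant" object $P^{\hat{\mathcal{R}}}_e$, just as in the proof of Theorem~\ref{prop-s8.25-1}. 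So once conditions \eqref{cor-s9.3-3.1}–\eqref{cor-s9.3-3.2} pin down $\mathbb{S}P^{\hat{\mathcal{R}}}_e[-m]\cong I^{\hat{\mathcal{R}}}_e$, and \eqref{cor-s9.3-3.3} guarantees the target category is the right one (perfect complexes, where the Nakayama functor is defined), the Serre-duality isomorphisms $\mathcal{C}(X,\mathbb{S}Y[-m])\cong\mathcal{C}(Y,X)^*$ follow by transporting the known Serre duality on $\mathcal{D}^b(\mathcal{O}_0)$ through the embedding $\mathcal{D}^b(\mathcal{O}_0^{\hat{\mathcal{R}}})\to\mathcal{D}^b(\mathcal{O}_0)$ induced by $\iota$ (which is fully faithful on $\mathcal{C}$ after the adjunction-functor derived functors are controlled, using finiteness of projective dimensions of the relevant injectives). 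I would carry out: (1) set-up and reduction to $P^{\hat{\mathcal{R}}}_e$; (2) forward direction extracting the three conditions from Corollary~\ref{cor-s9.3-4} plus the perfectness of $B^*$; (3) converse, using Corollary~\ref{cor-s9.3-4} and $\mathscr{P}$-equivariance to propagate the isomorphism to all projectives; (4) verification of the Serre-duality isomorphisms via \cite{Kh} and the compatibility of $\mathbb{S}$ with $\iota$.
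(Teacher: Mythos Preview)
Your proposal is correct and follows essentially the same approach as the paper: both directions hinge on Corollary~\ref{cor-s9.3-4} for the single object $P^{\hat{\mathcal{R}}}_e$, then propagate via $\mathscr{P}$-equivariance using \cite{Kh} to identify $\mathbb{S}[-m]$ with the derived Nakayama functor on perfect complexes. The paper's converse is slightly more streamlined than your step~(4): rather than transporting Serre duality through the embedding $\iota$, it simply observes (via \cite[Lemma~8]{Kh}) that the $(-m)$-th homology functor of $\mathbb{S}$ commutes with projective functors and sends $P^{\hat{\mathcal{R}}}_e$ to $I^{\hat{\mathcal{R}}}_e$, hence \emph{is} the Nakayama functor, so $\mathbb{S}[-m]$ agrees with the intrinsic Serre functor on projectives and therefore everywhere on $\mathcal{C}$.
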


\begin{proof}
If $\mathbb{S}[-2\mathbf{a}(w_0d)]$ is a Serre functor
on the category of perfect complexes in 
$\mathcal{O}_0^{\hat{\mathcal{R}}}$, then 
$\mathbb{S}P^{\hat{\mathcal{R}}}_e[-2\mathbf{a}(w_0d)]
\cong I^{\hat{\mathcal{R}}}_e$ which implies both 
\eqref{cor-s9.3-3.1} and \eqref{cor-s9.3-3.2}
by Corollary~\ref{cor-s9.3-4}. The necessity of 
\eqref{cor-s9.3-3.3} follows from the assumption that 
category of perfect complexes in 
$\mathcal{O}_0^{\hat{\mathcal{R}}}$
has a Serre functor.

Conversely, if we assume \eqref{cor-s9.3-3.3}, we know that 
the category of perfect complexes in 
$\mathcal{O}_0^{\hat{\mathcal{R}}}$
has a Serre functor. Additionally, assuming \eqref{cor-s9.3-3.1},
\eqref{cor-s9.3-3.2}, we have that 
$\mathbb{S}P^{\hat{\mathcal{R}}}_e[-2\mathbf{a}(w_0d)]
\cong I^{\hat{\mathcal{R}}}_e$
by Corollary~\ref{cor-s9.3-4}. Note that the 
homology functor of $\mathbb{S}$ at position
$-2\mathbf{a}(w_0d)$ naturally commutes with 
projective functors by \cite[Lemma~8]{Kh}. Since this
homology functor maps $P^{\hat{\mathcal{R}}}_e$
to $I^{\hat{\mathcal{R}}}_e$, it thus must be isomorphic to 
the Nakayama functor on $\mathcal{O}_0^{\hat{\mathcal{R}}}$.

Therefore $\mathbb{S}[-2\mathbf{a}(w_0d)]$ is a triangulated
endofunctor of the category of perfect complexes in 
$\mathcal{O}_0^{\hat{\mathcal{R}}}$ which is isomorphic 
to the Serre functor when restricted to the category of 
projective modules. Then the two functors are isomorphic,
completing the proof.
\end{proof}

\begin{remark}\label{rem-s9.3-4}
{\rm
The conditions \eqref{cor-s9.3-3.1}, \eqref{cor-s9.3-3.2} and 
\eqref{cor-s9.3-3.3} of Corollary~\ref{cor-s9.3-3} are satisfied 
provided that $\mathcal{R}$ contains the element
$w_0^\mathfrak{p}w_0$, for some parabolic subalgebra
$\mathfrak{p}$ of $\mathfrak{g}$ containing $\mathfrak{b}$.
Indeed, in this case $\mathcal{O}_0^{\hat{\mathcal{R}}}\cong
\mathcal{O}_0^\mathfrak{p}$ has finite global dimension, 
which implies Corollary~\ref{cor-s9.3-3}\eqref{cor-s9.3-3.3}.
Condition Corollary~\ref{cor-s9.3-3}\eqref{cor-s9.3-3.1}
follows from \cite[Corollary~18]{KaM10}. Finally, 
condition Corollary~\ref{cor-s9.3-3}\eqref{cor-s9.3-3.2}
follows from \cite[Proposition~4.4]{MS2} using the arguments in the proof of
Corollary~\ref{cor-s9.3-3}.

It is possible that the categories $\mathcal{O}_0^\mathfrak{p}$
are the only categories of the form $\mathcal{O}_0^{\hat{\mathcal{R}}}$
that satisfy all the conditions \eqref{cor-s9.3-3.1}, \eqref{cor-s9.3-3.2} and 
\eqref{cor-s9.3-3.3} of Corollary~\ref{cor-s9.3-3}.
A detailed example in Subsection~\ref{s7.5} points in that 
direction. If this turns out to be the case, then the results of
this subsection are not stronger than the corresponding results
of \cite{MS2}, however, our proof that $\mathbb{S}[-2\mathbf{a}(w_0d)]$ 
is a Serre functor is different (our proof uses \cite{Kh} while the proof
of \cite{MS2} is a computation based on the self-duality of derived
Zuckerman functors).
}
\end{remark}

\subsection{Calabi-Yau objects for $\mathcal{O}^{\hat{\mathcal{R}}}_0$}\label{s9.8}

\begin{corollary}\label{cor-s9.8-1}
Assume that the conditions \eqref{cor-s9.3-3.1}, \eqref{cor-s9.3-3.2} and 
\eqref{cor-s9.3-3.3} of Corollary~\ref{cor-s9.3-3} are satisfied. Then we have:
\begin{enumerate}[$($a$)$]
\item \label{cor-s9.8-1.1}
$\mathbb{S}\langle 2\mathbf{a}(w_0d)\rangle[-2\mathbf{a}(w_0d)]$ is a graded Serre functor
for the category of perfect complexes over $\mathcal{O}_0^{\hat{\mathcal{R}}}$.
\item \label{cor-s9.8-1.2} For $d'\in\mathbf{D}$ and $w\in W$ such that $d'\leq_R d$
and $w\sim_R d'$, the object $\theta_w L_{d'}$ is a Calabi-Yau object in 
$\mathcal{O}_0^{\hat{\mathcal{R}}}$ of dimension $2(\mathbf{a}(w_0d')-\mathbf{a}(w_0d))$.
\item \label{cor-s9.8-1.3} The evaluation of
$\alpha_{d'}[-2\mathbf{a}(w_0d)]\langle 2\mathbf{a}(w_0d)\rangle$
at $\theta_w L_{d'}$ gives rise to an isomorphism
between $\mathbb{S}\langle 2\mathbf{a}(w_0d)\rangle[-2\mathbf{a}(w_0d)] \theta_w L_{d'}$
and
\begin{displaymath}
\theta_w L_{d'}\langle 2(\mathbf{a}(d')-\mathbf{a}(w_0d')+\mathbf{a}(w_0d))\rangle
[2(\mathbf{a}(w_0d')-\mathbf{a}(w_0d))].
\end{displaymath}
\end{enumerate}
\end{corollary}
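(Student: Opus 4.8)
\emph{The plan.} I would derive Corollary~\ref{cor-s9.8-1} from Corollary~\ref{cor-s9.3-3} together with Theorem~\ref{conj1} and Theorem~\ref{prop-s8.25-1}. The only genuinely new ingredient is the graded statement~\eqref{cor-s9.8-1.1}, and inside it the single non-formal point is fixing one grading shift; parts~\eqref{cor-s9.8-1.2} and~\eqref{cor-s9.8-1.3} are then essentially immediate.

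\emph{The objects and the easy deductions.} First I record that for $d'\leq_R d$ and $w\sim_R d'$ we have $w\leq_R d$ and every composition factor $L_x$ of $\theta_w L_{d'}$ satisfies $x\leq_R d'\leq_R d$, so $\theta_w L_{d'}\in\mathcal{O}_0^{\hat{\mathcal{R}}}$; moreover $d'\leq_J d$ forces $w_0d'\geq_J w_0d$, whence $\mathbf{a}(w_0d')\geq\mathbf{a}(w_0d)$ and the dimension in~\eqref{cor-s9.8-1.2} is non-negative. Granting~\eqref{cor-s9.8-1.1}, part~\eqref{cor-s9.8-1.2} follows at once: Theorem~\ref{conj1} gives $\mathbb{S}(\theta_w L_{d'})\cong\theta_w L_{d'}\langle 2(\mathbf{a}(d')-\mathbf{a}(w_0d'))\rangle[2\mathbf{a}(w_0d')]$ in $\mathcal{D}^b(\mathcal{O}_0)$, and applying the autoequivalence $\langle 2\mathbf{a}(w_0d)\rangle[-2\mathbf{a}(w_0d)]$ turns this into precisely the isomorphism displayed in~\eqref{cor-s9.8-1.3}; since by~\eqref{cor-s9.8-1.1} the left-hand functor is the graded Serre functor on perfect complexes over $\mathcal{O}_0^{\hat{\mathcal{R}}}$, this says exactly that $\theta_w L_{d'}$ is a Calabi--Yau object there of the stated dimension. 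For~\eqref{cor-s9.8-1.3} itself one takes the natural transformation $\alpha_{d'}$ of Theorem~\ref{prop-s8.25-1}, shifts it by $\langle 2\mathbf{a}(w_0d)\rangle[-2\mathbf{a}(w_0d)]$, and notes that its evaluation at $\theta_w L_{d'}$ is the isomorphism just described, now realised as the evaluation of a natural transformation out of the graded Serre functor of $\mathcal{O}_0^{\hat{\mathcal{R}}}$.

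\emph{Proof of~\eqref{cor-s9.8-1.1}.} By Corollary~\ref{cor-s9.3-3} the hypotheses make $\mathbb{S}[-2\mathbf{a}(w_0d)]$ an (ungraded) Serre functor, hence an autoequivalence, of the category of perfect complexes over $\mathcal{O}_0^{\hat{\mathcal{R}}}$, and consequently each $\mathbb{S}\langle c\rangle[-2\mathbf{a}(w_0d)]$ is also such an autoequivalence. Since $\mathbb{S}=(\mathcal{L}\top_{w_0})^2$ is gradeable and the grading on $\mathcal{O}_0^{\hat{\mathcal{R}}}$ is induced from $\mathcal{O}_0$, the functor $\mathbb{S}\langle 2\mathbf{a}(w_0d)\rangle[-2\mathbf{a}(w_0d)]$ is a graded lift of this ungraded Serre functor; as $\mathcal{O}_0^{\hat{\mathcal{R}}}$ is connected, two graded lifts differ only by an overall shift $\langle k\rangle$ (the usual uniqueness-of-graded-lifts argument, cf.\ \cite{St}), so $\mathbb{S}\langle 2\mathbf{a}(w_0d)\rangle[-2\mathbf{a}(w_0d)]\cong\mathbb{S}^{\hat{\mathcal{R}}}_{\mathrm{gr}}\langle k\rangle$, where $\mathbb{S}^{\hat{\mathcal{R}}}_{\mathrm{gr}}$ is the graded derived Nakayama functor of $\mathcal{O}_0^{\hat{\mathcal{R}}}$. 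To see $k=0$, I would evaluate both sides at the graded lift of the projective--injective object $\theta_d L_d$ used in Theorem~\ref{conj1}. On the left, Theorem~\ref{conj1} gives $\mathbb{S}(\theta_d L_d)\langle 2\mathbf{a}(w_0d)\rangle[-2\mathbf{a}(w_0d)]\cong\theta_d L_d\langle 2\mathbf{a}(d)\rangle$. On the right, $\mathbb{S}^{\hat{\mathcal{R}}}_{\mathrm{gr}}$ carries a projective with top in internal degree $a$ to the injective with socle in internal degree $a$; combined with the fact (recorded in the proof of Theorem~\ref{prop-s8.25-1}, via \cite[Proposition~17]{MM1} and \cite[Corollary~3]{Maz2}) that this lift of $\theta_d L_d$ has top in internal degree $-\mathbf{a}(d)$ and socle in internal degree $\mathbf{a}(d)$, a short computation gives $\mathbb{S}^{\hat{\mathcal{R}}}_{\mathrm{gr}}(\theta_d L_d)\cong\theta_d L_d\langle 2\mathbf{a}(d)\rangle$ as well, so $k=0$. (Alternatively, one reruns the proof of Corollary~\ref{cor-s9.3-3} in the graded category: the homology functor at position $-2\mathbf{a}(w_0d)$ commutes with graded projective functors by \cite[Lemma~8]{Kh}, and a graded refinement of Proposition~\ref{prop-s9.3-2} pins down its value on $P^{\hat{\mathcal{R}}}_e$, forcing it to be the graded Nakayama functor.)

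\emph{Main obstacle.} Everything above is formal once~\eqref{cor-s9.8-1.1} is in hand, and~\eqref{cor-s9.8-1.1} reduces to the ungraded Corollary~\ref{cor-s9.3-3} plus the determination of the single shift $k$. That last point — equivalently, showing that $\mathbb{S}P^{\hat{\mathcal{R}}}_e$ has $L_e$ only in homological and internal degree $-2\mathbf{a}(w_0d)$, or tracking the gradings carefully through the proof of Proposition~\ref{prop-s9.3-2} — is the only place where real care is needed; the graded positions of the top and socle of $\theta_d L_d$ together with the graded form of Theorem~\ref{conj1} are exactly the inputs that make it go through.
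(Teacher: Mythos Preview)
Your proposal is correct and follows essentially the same approach as the paper: parts~\eqref{cor-s9.8-1.2} and~\eqref{cor-s9.8-1.3} are deduced formally from~\eqref{cor-s9.8-1.1} together with Theorem~\ref{conj1} and Theorem~\ref{prop-s8.25-1}, and for~\eqref{cor-s9.8-1.1} the graded shift on top of the ungraded Corollary~\ref{cor-s9.3-3} is pinned down by evaluating at the projective--injective object $\theta_d L_d$ and using the graded positions of its top and socle. The paper's proof is simply a terser version of exactly this argument.
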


\begin{proof}
The graded shift for the Serre functor on the category of perfect complexes 
over $\mathcal{O}_0^{\hat{\mathcal{R}}}$ can be determined by the condition
that it sends the projective object $\theta_d L_d\langle -\mathbf{a}(d)\rangle$
to the injective object $\theta_d L_d\langle \mathbf{a}(d)\rangle$.
Now the shift in \eqref{cor-s9.8-1.1} follows from the formulae
in Corollary~\ref{cor-s9.3-3} and Theorem~\ref{conj1}.
The remaining claims follow by combining Corollary~\ref{cor-s9.3-3} with
Theorem~\ref{conj1} and Theorem~\ref{prop-s8.25-1}.
\end{proof}

\section{Examples}\label{s7}

\subsection{Principal block of $\mathfrak{sl}_2$}\label{s7.1}

In the case  $\mathfrak{g}=\mathfrak{sl}_2$, the principal block 
$\mathcal{O}_0$ is equivalent to the category of modules over
the following quiver with relations, see \cite[Theorem~5.3.1]{Maz4}:
\begin{displaymath}
\xymatrix{
s\ar@/^2mm/[rr]^a&&e\ar@/^2mm/[ll]^b
},\qquad ab=0.
\end{displaymath}
Here are the graded diagrams of all structural modules in this case
(they are well-defined as all graded composition multiplicities
are either $0$ or $1$). For simplicity, the simple $L_s$ is 
displayed as $s$ and the simple $L_e$ as $e$:

\resizebox{\textwidth}{!}{
$
\xymatrix@C=3mm@R=3mm{
&L_s=T_s=\Delta_s=\nabla_s&L_e&P_s&P_e=\Delta_e&I_s&I_e=\nabla_e&T_e\\
-2&&&&&s\ar@{-}[d]^a&&\\
-1&&&&&e\ar@{-}[d]^b&s\ar@{-}[d]^a&s\ar@{-}[d]^a\\
0&s&e&s\ar@{-}[d]^a&e\ar@{-}[d]^b&s&e&e\ar@{-}[d]^b\\
1&&&e\ar@{-}[d]^b&s&&&s\\
2&&&s&&&&\\
}
$
}

Note that, up to a shift of grading,  all indecomposable object in $\mathcal{O}_0$
appear above.

A minimal graded projective resolution of {\color{magenta}$\nabla_e$} looks
as follows:
\begin{equation}\label{eq-ex1-1}
\xymatrix@C=7mm@R=3mm{
-1&&&&{\color{magenta}s}\ar@{-}@[magenta][d]\\
0&&&&{\color{magenta}e}\ar@{-}[d]\\
1&&&s\ar@{-}[d]\ar@{.>}[r]&s\\
2&&e\ar@{-}[d]\ar@{.>}[r]&e\ar@{-}[d]&\\
3&&s\ar@{.>}[r]&s&\\
}
\end{equation}
Here we directly see that
\begin{displaymath}
\mathrm{ext}^0(\nabla_e,\Delta_e\langle 2\rangle)=
\mathrm{ext}^1(\nabla_e,\Delta_e)=
\mathrm{ext}^2(\nabla_e,\Delta_e\langle -2\rangle)=\mathbb{C},
\end{displaymath}
where the natural transformation given by 
$\mathrm{ext}^0(\nabla_e,\Delta_e\langle 2\rangle)$
is constructed in Proposition~\ref{prop-s8.2-1} while
the natural transformation given by 
$\mathrm{ext}^2(\nabla_e,\Delta_e\langle -2\rangle)$
is constructed in Proposition~\ref{prop-s8.2-2}.

We have $\mathbf{D}=W=\{e,s\}$.
The object $L_e=\theta_e L_e$ is a Calabi-Yau object
of dimension $2$ and the object $T_e=\theta_s L_s$ 
is a Calabi-Yau object of dimension $0$. These exhaust
indecomposable Calabi-Yau objects.

\subsection{Principal block of $\mathfrak{sl}_3$}\label{s7.2}

Consider the principal block $\mathcal{O}_0$ for the Lie algebra 
$\mathfrak{sl}_3$. In this case $W=S_3=\{e,s,t,st,ts,sts=tst=w_0\}$.
The minimal graded tilting resolution of {\color{magenta}$\nabla_e$} looks as
follows:
\begin{displaymath}
0\to T_{w_0}\langle -3\rangle\to
T_{st}\langle -2\rangle\oplus T_{ts}\langle -2\rangle\to
T_s\langle -1\rangle\oplus T_t\langle -1\rangle\to 
T_e\to {\color{magenta}\nabla_e}\to 0.
\end{displaymath}
The minimal projective resolutions of the {\color{teal}indecomposable tilting
modules} look as follows:
\begin{gather*}
0\to P_{w_0}\langle 3\rangle \to {\color{teal}T_e}\to 0;\\
0\to P_{t} \to
P_{w_0}\langle 2\rangle \to {\color{teal}T_s}\to 0;\\
0\to P_{s} \to
P_{w_0}\langle 2\rangle \to {\color{teal}T_t}\to 0;\\
0\to P_{ts} \to
P_{w_0}\langle 1\rangle \to {\color{teal}T_{st}}\to 0;\\
0\to P_{st} \to
P_{w_0}\langle 1\rangle \to {\color{teal}T_{ts}}\to 0;\\
0\to P_{e}\langle -3\rangle \to
P_{s}\langle -2\rangle \oplus P_{t}\langle -2\rangle \to
P_{st}\langle -1\rangle \oplus P_{ts}\langle -1\rangle \to
P_{w_0} \to {\color{teal}T_{w_0}}\to 0.
\end{gather*}
Here $T_{w_0}$ is simple so the last linear complex is just 
computed using the Koszul duality. All other are obtained from the
last one applying indecomposable projective functors
(and removing summands that are homotopic to zero).
This implies that {\color{magenta}$\nabla_e$} has a projective resolution
of the form:
\begin{multline*}
0\to P_{e}\langle -6\rangle \to
P_{s}\langle -5\rangle \oplus P_{t}\langle -5\rangle \to \\ \to
P_{st}\langle -4\rangle \oplus P_{ts}\langle -4\rangle \to
P_{w_0}\langle -3\rangle\oplus P_{ts}\langle -2\rangle\oplus P_{st}\langle -2\rangle
\\
\to P_{w_0}\langle -1\rangle\oplus P_{w_0}\langle -1\rangle
\oplus P_{t}\langle -1\rangle\oplus P_{s}\langle -1\rangle
\to \\ \to 
P_{w_0}\langle 1\rangle\oplus P_{w_0}\langle 1\rangle
\to P_{w_0}\langle 3\rangle \to  {\color{magenta}\nabla_e}\to 0 .
\end{multline*}
As there are no potential cancellations, this resolution is minimal,
that is, coincides with $\mathcal{P}_\bullet(\nabla_e)$.
The Duflo elements are $\mathbf{D}=\{e,s,t,w_0\}$ an the corresponding
$\mathbf{a}$-values are:
\begin{displaymath}
\begin{array}{c||c|c|c|c}
d&e&s&t&w_0\\
\hline
\mathbf{a}(d)&0&1&1&3
\end{array}
\end{displaymath}
The summands of the $\mathcal{P}_\bullet(\nabla_e)$
which are relevant for Theorem~\ref{prop-s8.25-1} are:
\begin{itemize}
\item $P_e\langle -6\rangle[6]$ which corresponds to a non-zero
element in $\mathrm{ext}^6(\nabla_e,\Delta_e\langle -6\rangle)$;
\item $P_s\langle -1\rangle[2]$ which corresponds to a non-zero
element in $\mathrm{ext}^2(\nabla_e,\Delta_e)$;
\item $P_t\langle -1\rangle[2]$ which corresponds to a non-zero
element in $\mathrm{ext}^2(\nabla_e,\Delta_e)$;
\item $P_{w_0}\langle 3\rangle$ which corresponds to a non-zero
element in $\mathrm{ext}^0(\nabla_e,\Delta_e\langle 6\rangle)$ 
\end{itemize}
Note that $\mathrm{ext}^2(\nabla_e,\Delta_e)$ has dimension $1$
in this case, due to Proposition~\ref{prop-s6.4-1}
(in other words, the two non-zero elements of this space
corresponding to $P_s\langle -1\rangle[2]$ and $P_t\langle -1\rangle[2]$
are linearly dependent). To see  how this works,
we need to apply $\top_{w_0}$ to $\mathcal{P}_\bullet(\nabla_e)$
and then construct a projective resolution of the outcome.
In homological positions $-3$, $-2$, $-1$ and $0$, we get:
\begin{multline*}
P_{w_0}\oplus P_{w_0}\oplus P_{w_0}
\oplus P_{st}\langle -1\rangle \oplus P_{ts}\langle -1\rangle
\\
\to P_{w_0}\langle 2\rangle\oplus P_{w_0}\langle 2\rangle
\oplus P_{w_0}\oplus P_{w_0}
\to 
P_{w_0}\langle 4\rangle\oplus P_{w_0}\langle 4\rangle
\to P_{w_0}\langle 6\rangle \to  0
\end{multline*}
And here we see some potential for cancellation 
of summands $P_{w_0}$ in homological positions $-3$ and $-2$.
By Proposition~\ref{prop-s6.4-1}, one copy of $P_{w_0}$
must survive in homological position $-2$, so exactly one
such summand should be removed in each of these 
two positions to obtain a minimal projective resolution.
The remaining part
\begin{displaymath}
\to P_{w_0}\langle 2\rangle\oplus P_{w_0}\langle 2\rangle
\oplus P_{w_0}\to P_{w_0}\langle 4\rangle\oplus P_{w_0}\langle 4\rangle
\to P_{w_0}\langle 6\rangle \to  0
\end{displaymath}
mimics the beginning
\begin{displaymath}
\mathtt{C}\langle -2\rangle\oplus \mathtt{C}\langle -2\rangle\oplus 
\mathtt{C}\langle -3\rangle
\to
\mathtt{C}\langle -1\rangle\oplus \mathtt{C}\langle -1\rangle
\to\mathtt{C}\to {\color{cyan}\mathbb{C}}\to 0 
\end{displaymath}
of a minimal projective resolution of the simple
$\mathtt{C}$-module {\color{cyan}$\mathbb{C}$}
(note the overall grading shift and the fact that
the generators  of $\mathtt{C}$ have degree $2$
when related to  $\mathcal{O}_0$ via $\mathrm{End}(P_{w_0})$).

\subsection{Shifts in type $A_6$}\label{s7.3}

In type $A_6$, we have $W=S_7$. Two-sided cells of $S_7$
are in bijection with partitions of $7$ and have the 
following values of the $\mathbf{a}$-function:
\begin{displaymath}
\begin{array}{c||c|c|c|c|c|c|c|c|c}
\lambda&(7)&(6,1)&(5,2)&(5,1^2)&(4,3)&(4,2,1)&(4,1^3)&(3^2,1)&(3,2^2)\\
\hline
\mathbf{a}(\lambda)&0&1&2&3&3&4&6&5&6
\end{array}
\end{displaymath}
\begin{displaymath}
\begin{array}{c||c|c|c|c|c|c|c|c|c}
\lambda&(3,2,1^2)&(3,1^4)&(2^3,1)&(2^2,1^3)&(2,1^5)&(1^7)\\
\hline
\mathbf{a}(\lambda)&7&10&9&11&15&21
\end{array}
\end{displaymath}
Figure~\ref{fig1} represents graded shifts appearing in Theorem~\ref{conj1}
plotted in the coordinate plane. As usual, the horizontal axes depicts
homological position with positive shifts going to the left 
while the vertical axes depicts the grading with positive shifts going up.
Each magenta dot represents a two-sided cell and the partition corresponding
to that cell is written next to that dot. The position of the dot in the 
coordinate plane represents the shifts. One can note that there are 
two homological positions with two dots, representing two pairs of 
two-sided cells with the same $\mathbf{a}$-value. The area between the 
two dashed lines is the area of potential tops of $P_{w_0}$ summands as
predicted by $\mathrm{Ext}^*_\mathtt{C}(\mathbb{C},\mathbb{C})$
via Proposition~\ref{prop-s6.4-1}. Note that the magenta dot for
$(2,1^5)$ is on the lower dashed line.

\begin{figure}
\resizebox{!}{\textheight}{
\begin{tikzpicture}
\draw[gray, thin,  ->] (1,0) -- (-43,0) node[anchor=south west] {$\left[ a\right]$};
\draw[gray, thin,  ->] (0,-43) -- (0,43) node[anchor=north west] {$\langle b\rangle$};
\draw[gray,fill=gray] (0,0) circle (.3ex) node[anchor=north east] {\color{gray}$0$};
\draw[gray,fill=gray] (1,0) circle (.3ex) node[anchor=north east] {\color{gray}$-1$};
\draw[gray,fill=gray] (-1,0) circle (.3ex) node[anchor=north east] {\color{gray}$1$};
\draw[gray,fill=gray] (-2,0) circle (.3ex) node[anchor=north east] {\color{gray}$2$};
\draw[gray,fill=gray] (-3,0) circle (.3ex) node[anchor=north east] {\color{gray}$3$};
\draw[gray,fill=gray] (-4,0) circle (.3ex) node[anchor=north east] {\color{gray}$4$};
\draw[gray,fill=gray] (-5,0) circle (.3ex) node[anchor=north east] {\color{gray}$5$};
\draw[gray,fill=gray] (-6,0) circle (.3ex) node[anchor=north east] {\color{gray}$6$};
\draw[gray,fill=gray] (-7,0) circle (.3ex) node[anchor=north east] {\color{gray}$7$};
\draw[gray,fill=gray] (-8,0) circle (.3ex) node[anchor=north east] {\color{gray}$8$};
\draw[gray,fill=gray] (-9,0) circle (.3ex) node[anchor=north east] {\color{gray}$9$};
\draw[gray,fill=gray] (-10,0) circle (.3ex) node[anchor=north east] {\color{gray}$10$};
\draw[gray,fill=gray] (-11,0) circle (.3ex) node[anchor=north east] {\color{gray}$11$};
\draw[gray,fill=gray] (-12,0) circle (.3ex) node[anchor=north east] {\color{gray}$12$};
\draw[gray,fill=gray] (-13,0) circle (.3ex) node[anchor=north east] {\color{gray}$13$};
\draw[gray,fill=gray] (-14,0) circle (.3ex) node[anchor=north east] {\color{gray}$14$};
\draw[gray,fill=gray] (-15,0) circle (.3ex) node[anchor=north east] {\color{gray}$15$};
\draw[gray,fill=gray] (-16,0) circle (.3ex) node[anchor=north east] {\color{gray}$16$};
\draw[gray,fill=gray] (-17,0) circle (.3ex) node[anchor=north east] {\color{gray}$17$};
\draw[gray,fill=gray] (-18,0) circle (.3ex) node[anchor=north east] {\color{gray}$18$};
\draw[gray,fill=gray] (-19,0) circle (.3ex) node[anchor=north east] {\color{gray}$19$};
\draw[gray,fill=gray] (-20,0) circle (.3ex) node[anchor=north east] {\color{gray}$20$};
\draw[gray,fill=gray] (-21,0) circle (.3ex) node[anchor=north east] {\color{gray}$21$};
\draw[gray,fill=gray] (-22,0) circle (.3ex) node[anchor=north east] {\color{gray}$22$};
\draw[gray,fill=gray] (-23,0) circle (.3ex) node[anchor=north east] {\color{gray}$23$};
\draw[gray,fill=gray] (-24,0) circle (.3ex) node[anchor=north east] {\color{gray}$24$};
\draw[gray,fill=gray] (-25,0) circle (.3ex) node[anchor=north east] {\color{gray}$25$};
\draw[gray,fill=gray] (-26,0) circle (.3ex) node[anchor=north east] {\color{gray}$26$};
\draw[gray,fill=gray] (-27,0) circle (.3ex) node[anchor=north east] {\color{gray}$27$};
\draw[gray,fill=gray] (-28,0) circle (.3ex) node[anchor=north east] {\color{gray}$28$};
\draw[gray,fill=gray] (-29,0) circle (.3ex) node[anchor=north east] {\color{gray}$29$};
\draw[gray,fill=gray] (-30,0) circle (.3ex) node[anchor=north east] {\color{gray}$30$};
\draw[gray,fill=gray] (-31,0) circle (.3ex) node[anchor=north east] {\color{gray}$31$};
\draw[gray,fill=gray] (-32,0) circle (.3ex) node[anchor=north east] {\color{gray}$32$};
\draw[gray,fill=gray] (-33,0) circle (.3ex) node[anchor=north east] {\color{gray}$33$};
\draw[gray,fill=gray] (-34,0) circle (.3ex) node[anchor=north east] {\color{gray}$34$};
\draw[gray,fill=gray] (-35,0) circle (.3ex) node[anchor=north east] {\color{gray}$35$};
\draw[gray,fill=gray] (-36,0) circle (.3ex) node[anchor=north east] {\color{gray}$36$};
\draw[gray,fill=gray] (-37,0) circle (.3ex) node[anchor=north east] {\color{gray}$37$};
\draw[gray,fill=gray] (-38,0) circle (.3ex) node[anchor=north east] {\color{gray}$38$};
\draw[gray,fill=gray] (-39,0) circle (.3ex) node[anchor=north east] {\color{gray}$39$};
\draw[gray,fill=gray] (-40,0) circle (.3ex) node[anchor=north east] {\color{gray}$40$};
\draw[gray,fill=gray] (-41,0) circle (.3ex) node[anchor=north east] {\color{gray}$41$};
\draw[gray,fill=gray] (-42,0) circle (.3ex) node[anchor=north east] {\color{gray}$42$};
\draw[gray,fill=gray] (0,1) circle (.3ex) node[anchor=north east] {\color{gray}$1$};
\draw[gray,fill=gray] (0,2) circle (.3ex) node[anchor=north east] {\color{gray}$2$};
\draw[gray,fill=gray] (0,3) circle (.3ex) node[anchor=north east] {\color{gray}$3$};
\draw[gray,fill=gray] (0,4) circle (.3ex) node[anchor=north east] {\color{gray}$4$};
\draw[gray,fill=gray] (0,5) circle (.3ex) node[anchor=north east] {\color{gray}$5$};
\draw[gray,fill=gray] (0,6) circle (.3ex) node[anchor=north east] {\color{gray}$6$};
\draw[gray,fill=gray] (0,7) circle (.3ex) node[anchor=north east] {\color{gray}$7$};
\draw[gray,fill=gray] (0,8) circle (.3ex) node[anchor=north east] {\color{gray}$8$};
\draw[gray,fill=gray] (0,9) circle (.3ex) node[anchor=north east] {\color{gray}$9$};
\draw[gray,fill=gray] (0,10) circle (.3ex) node[anchor=north east] {\color{gray}$10$};
\draw[gray,fill=gray] (0,11) circle (.3ex) node[anchor=north east] {\color{gray}$11$};
\draw[gray,fill=gray] (0,12) circle (.3ex) node[anchor=north east] {\color{gray}$12$};
\draw[gray,fill=gray] (0,13) circle (.3ex) node[anchor=north east] {\color{gray}$13$};
\draw[gray,fill=gray] (0,14) circle (.3ex) node[anchor=north east] {\color{gray}$14$};
\draw[gray,fill=gray] (0,15) circle (.3ex) node[anchor=north east] {\color{gray}$15$};
\draw[gray,fill=gray] (0,16) circle (.3ex) node[anchor=north east] {\color{gray}$16$};
\draw[gray,fill=gray] (0,17) circle (.3ex) node[anchor=north east] {\color{gray}$17$};
\draw[gray,fill=gray] (0,18) circle (.3ex) node[anchor=north east] {\color{gray}$18$};
\draw[gray,fill=gray] (0,19) circle (.3ex) node[anchor=north east] {\color{gray}$19$};
\draw[gray,fill=gray] (0,20) circle (.3ex) node[anchor=north east] {\color{gray}$20$};
\draw[gray,fill=gray] (0,21) circle (.3ex) node[anchor=north east] {\color{gray}$21$};
\draw[gray,fill=gray] (0,22) circle (.3ex) node[anchor=north east] {\color{gray}$22$};
\draw[gray,fill=gray] (0,23) circle (.3ex) node[anchor=north east] {\color{gray}$23$};
\draw[gray,fill=gray] (0,24) circle (.3ex) node[anchor=north east] {\color{gray}$24$};
\draw[gray,fill=gray] (0,25) circle (.3ex) node[anchor=north east] {\color{gray}$25$};
\draw[gray,fill=gray] (0,26) circle (.3ex) node[anchor=north east] {\color{gray}$26$};
\draw[gray,fill=gray] (0,27) circle (.3ex) node[anchor=north east] {\color{gray}$27$};
\draw[gray,fill=gray] (0,28) circle (.3ex) node[anchor=north east] {\color{gray}$28$};
\draw[gray,fill=gray] (0,29) circle (.3ex) node[anchor=north east] {\color{gray}$29$};
\draw[gray,fill=gray] (0,30) circle (.3ex) node[anchor=north east] {\color{gray}$30$};
\draw[gray,fill=gray] (0,31) circle (.3ex) node[anchor=north east] {\color{gray}$31$};
\draw[gray,fill=gray] (0,32) circle (.3ex) node[anchor=north east] {\color{gray}$32$};
\draw[gray,fill=gray] (0,33) circle (.3ex) node[anchor=north east] {\color{gray}$33$};
\draw[gray,fill=gray] (0,34) circle (.3ex) node[anchor=north east] {\color{gray}$34$};
\draw[gray,fill=gray] (0,35) circle (.3ex) node[anchor=north east] {\color{gray}$35$};
\draw[gray,fill=gray] (0,36) circle (.3ex) node[anchor=north east] {\color{gray}$36$};
\draw[gray,fill=gray] (0,37) circle (.3ex) node[anchor=north east] {\color{gray}$37$};
\draw[gray,fill=gray] (0,38) circle (.3ex) node[anchor=north east] {\color{gray}$38$};
\draw[gray,fill=gray] (0,39) circle (.3ex) node[anchor=north east] {\color{gray}$39$};
\draw[gray,fill=gray] (0,40) circle (.3ex) node[anchor=north east] {\color{gray}$40$};
\draw[gray,fill=gray] (0,41) circle (.3ex) node[anchor=north east] {\color{gray}$41$};
\draw[gray,fill=gray] (0,42) circle (.3ex) node[anchor=north east] {\color{gray}$42$};
\draw[gray,fill=gray] (0,-1) circle (.3ex) node[anchor=north east] {\color{gray}$-1$};
\draw[gray,fill=gray] (0,-2) circle (.3ex) node[anchor=north east] {\color{gray}$-2$};
\draw[gray,fill=gray] (0,-3) circle (.3ex) node[anchor=north east] {\color{gray}$-3$};
\draw[gray,fill=gray] (0,-4) circle (.3ex) node[anchor=north east] {\color{gray}$-4$};
\draw[gray,fill=gray] (0,-5) circle (.3ex) node[anchor=north east] {\color{gray}$-5$};
\draw[gray,fill=gray] (0,-6) circle (.3ex) node[anchor=north east] {\color{gray}$-6$};
\draw[gray,fill=gray] (0,-7) circle (.3ex) node[anchor=north east] {\color{gray}$-7$};
\draw[gray,fill=gray] (0,-8) circle (.3ex) node[anchor=north east] {\color{gray}$-8$};
\draw[gray,fill=gray] (0,-9) circle (.3ex) node[anchor=north east] {\color{gray}$-9$};
\draw[gray,fill=gray] (0,-10) circle (.3ex) node[anchor=north east] {\color{gray}$-10$};
\draw[gray,fill=gray] (0,-11) circle (.3ex) node[anchor=north east] {\color{gray}$-11$};
\draw[gray,fill=gray] (0,-12) circle (.3ex) node[anchor=north east] {\color{gray}$-12$};
\draw[gray,fill=gray] (0,-13) circle (.3ex) node[anchor=north east] {\color{gray}$-13$};
\draw[gray,fill=gray] (0,-14) circle (.3ex) node[anchor=north east] {\color{gray}$-14$};
\draw[gray,fill=gray] (0,-15) circle (.3ex) node[anchor=north east] {\color{gray}$-15$};
\draw[gray,fill=gray] (0,-16) circle (.3ex) node[anchor=north east] {\color{gray}$-16$};
\draw[gray,fill=gray] (0,-17) circle (.3ex) node[anchor=north east] {\color{gray}$-17$};
\draw[gray,fill=gray] (0,-18) circle (.3ex) node[anchor=north east] {\color{gray}$-18$};
\draw[gray,fill=gray] (0,-19) circle (.3ex) node[anchor=north east] {\color{gray}$-19$};
\draw[gray,fill=gray] (0,-20) circle (.3ex) node[anchor=north east] {\color{gray}$-20$};
\draw[gray,fill=gray] (0,-21) circle (.3ex) node[anchor=north east] {\color{gray}$-21$};
\draw[gray,fill=gray] (0,-22) circle (.3ex) node[anchor=north east] {\color{gray}$-22$};
\draw[gray,fill=gray] (0,-23) circle (.3ex) node[anchor=north east] {\color{gray}$-23$};
\draw[gray,fill=gray] (0,-24) circle (.3ex) node[anchor=north east] {\color{gray}$-24$};
\draw[gray,fill=gray] (0,-25) circle (.3ex) node[anchor=north east] {\color{gray}$-25$};
\draw[gray,fill=gray] (0,-26) circle (.3ex) node[anchor=north east] {\color{gray}$-26$};
\draw[gray,fill=gray] (0,-27) circle (.3ex) node[anchor=north east] {\color{gray}$-27$};
\draw[gray,fill=gray] (0,-28) circle (.3ex) node[anchor=north east] {\color{gray}$-28$};
\draw[gray,fill=gray] (0,-29) circle (.3ex) node[anchor=north east] {\color{gray}$-29$};
\draw[gray,fill=gray] (0,-30) circle (.3ex) node[anchor=north east] {\color{gray}$-30$};
\draw[gray,fill=gray] (0,-31) circle (.3ex) node[anchor=north east] {\color{gray}$-31$};
\draw[gray,fill=gray] (0,-32) circle (.3ex) node[anchor=north east] {\color{gray}$-32$};
\draw[gray,fill=gray] (0,-33) circle (.3ex) node[anchor=north east] {\color{gray}$-33$};
\draw[gray,fill=gray] (0,-34) circle (.3ex) node[anchor=north east] {\color{gray}$-34$};
\draw[gray,fill=gray] (0,-35) circle (.3ex) node[anchor=north east] {\color{gray}$-35$};
\draw[gray,fill=gray] (0,-36) circle (.3ex) node[anchor=north east] {\color{gray}$-36$};
\draw[gray,fill=gray] (0,-37) circle (.3ex) node[anchor=north east] {\color{gray}$-37$};
\draw[gray,fill=gray] (0,-38) circle (.3ex) node[anchor=north east] {\color{gray}$-38$};
\draw[gray,fill=gray] (0,-39) circle (.3ex) node[anchor=north east] {\color{gray}$-39$};
\draw[gray,fill=gray] (0,-40) circle (.3ex) node[anchor=north east] {\color{gray}$-40$};
\draw[gray,fill=gray] (0,-41) circle (.3ex) node[anchor=north east] {\color{gray}$-41$};
\draw[gray,fill=gray] (0,-42) circle (.3ex) node[anchor=north east] {\color{gray}$-42$};
\draw[magenta,fill=magenta] (0,42) circle (.4ex) node[anchor=west] {\color{magenta}\large$(1^7)$};
\draw[magenta,fill=magenta] (-2,28) circle (.4ex) node[anchor=west] {\color{magenta}\large$(2,1^5)$};
\draw[magenta,fill=magenta] (-4,18) circle (.4ex) node[anchor=west] {\color{magenta}\large$(2^2,1^3)$};
\draw[magenta,fill=magenta] (-6,12) circle (.4ex) node[anchor=west] {\color{magenta}\large$(2^3,1)$};
\draw[magenta,fill=magenta] (-6,14) circle (.4ex) node[anchor=west] {\color{magenta}\large$(3,1^4)$};
\draw[magenta,fill=magenta] (-8,8) circle (.4ex) node[anchor=west] {\color{magenta}\large$(3,2,1^2)$};
\draw[magenta,fill=magenta] (-10,2) circle (.4ex) node[anchor=west] {\color{magenta}\large$(3,2^2)$};
\draw[magenta,fill=magenta] (-12,-2) circle (.4ex) node[anchor=west] {\color{magenta}\large$(3^2,1)$};
\draw[magenta,fill=magenta] (-12,-2) circle (.4ex) node[anchor=west] {\color{magenta}\large$(3^2,1)$};
\draw[magenta,fill=magenta] (-12,0) circle (.4ex) node[anchor=west] {\color{magenta}\large$(4,1^3)$};
\draw[magenta,fill=magenta] (-14,-6) circle (.4ex) node[anchor=west] {\color{magenta}\large$(4,2,1)$};
\draw[magenta,fill=magenta] (-18,-12) circle (.4ex) node[anchor=west] {\color{magenta}\large$(4,3)$};
\draw[magenta,fill=magenta] (-20,-14) circle (.4ex) node[anchor=west] {\color{magenta}\large$(5,1^2)$};
\draw[magenta,fill=magenta] (-22,-18) circle (.4ex) node[anchor=west] {\color{magenta}\large$(5,2)$};
\draw[magenta,fill=magenta] (-30,-28) circle (.4ex) node[anchor=west] {\color{magenta}\large$(6,1)$};
\draw[magenta,fill=magenta] (-42,-42) circle (.4ex) node[anchor=west] {\color{magenta}\large$(7)$};
\draw[gray, dashed] (0,42) -- (-42,-42);
\draw[gray, dashed] (0,42) -- (-12,-42);
\end{tikzpicture}
}
\caption{Shifts for $S_7$}\label{fig1}
\end{figure}
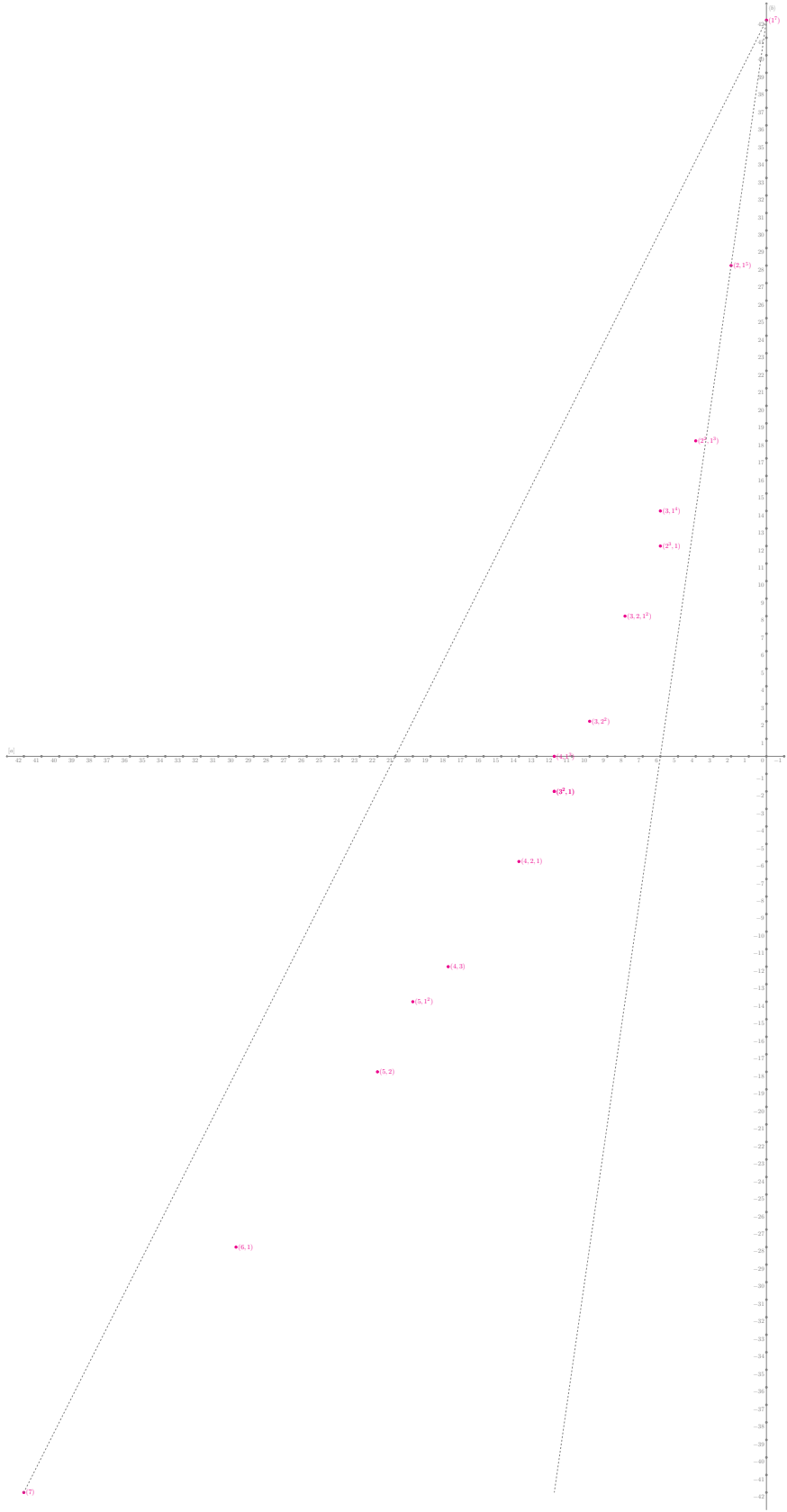

\subsection{Parabolic category of type $A_{n-1}/A_{n-2}$}\label{s7.4}

Consider $\mathfrak{g}=\mathfrak{sl}_n$, for $n\geq 2$, and a parabolic
subalgebra $\mathfrak{p}$ of $\mathfrak{g}$ containing $\mathfrak{b}$,
the semi-simple part of the Levi quotient of which is isomorphic to
$\mathfrak{sl}_{n-1}$, embedded into the upper left corner. 
Then $W\cong S_n$, the Weyl group of type
$A_{n-1}$, with Dynkin diagram
\begin{displaymath}
\xymatrix{s_1\ar@{-}[r]&s_2\ar@{-}[r]&s_3\ar@{-}[r]&\dots\ar@{-}[r]&s_{n-1}}, 
\end{displaymath}
and the group $W^\mathfrak{p}$ is the parabolic subgroup of $W$ generated by
the simple reflections $s_1$, $s_2,\dots$, $s_{n-2}$. We have 
$W^\mathfrak{p}\cong S_{n-1}$, the Weyl group of type
$A_{n-2}$, and
\begin{displaymath}
(W^\mathfrak{p}\setminus W)_{\mathrm{short}}=
\{e,s_{n-1},s_{n-1}s_{n-2},\dots, s_{n-1}s_{n-2}\dots s_2s_1\}.
\end{displaymath}
Set $\mathtt{1}:=s_{n-1}s_{n-2}\dots s_2s_1$,
$\mathtt{2}:=s_{n-1}s_{n-2}\dots s_2$, and so on
up to $\mathtt{n}:=e$. Then the category $\mathcal{O}_0^\mathfrak{p}$
is equivalent to the category of modules over the following
quiver:
\begin{displaymath}
\xymatrix{
\mathtt{1}\ar@/^2mm/[r]^{\alpha_1}&
\mathtt{2}\ar@/^2mm/[r]^{\alpha_2}\ar@/^2mm/[l]^{\beta_1}&
\dots\ar@/^2mm/[r]^{\alpha_{n-2}}\ar@/^2mm/[l]^{\beta_2}&
\mathtt{n}\text{-}\mathtt{1}\ar@/^2mm/[r]^{\alpha_{n-1}}\ar@/^2mm/[l]^{\beta_{n-2}}
&\mathtt{n}\ar@/^2mm/[l]^{\beta_{n-1}}} 
\end{displaymath}
with the following relations, for $i=1,2,\dots,n-2$:
\begin{displaymath}
\alpha_{i+1}\alpha_{i}=0,\quad
\beta_{i}\beta_{i+1}=0,\quad
\alpha_{i}\beta_{i}=\beta_{i+1}\alpha_{i+1},\quad
\alpha_{n-1}\beta_{n-1}=0.
\end{displaymath}
For further properties of this algebra, see \cite{PW}.
Here are the graded diagrams of projective objects in 
$\mathcal{O}_0^\mathfrak{p}$:
\begin{displaymath}
\xymatrix@C=5mm@R=5mm{
P^\mathfrak{p}_1&&P^\mathfrak{p}_2&&\dots&&P^\mathfrak{p}_{n-1}&&P^\mathfrak{p}_n\\
\mathtt{1}\ar@{-}[d]&&\mathtt{2}\ar@{-}[dl]\ar@{-}[dr]
&&&&\mathtt{n}\text{-}\mathtt{1}\ar@{-}[dl]\ar@{-}[dr]&&\mathtt{n}\ar@{-}[d]\\
\mathtt{2}\ar@{-}[d]&\mathtt{1}\ar@{-}[dr]&&\mathtt{3}\ar@{-}[dl]&\dots&
\mathtt{n}\text{-}\mathtt{2}\ar@{-}[dr]&&\mathtt{n}\ar@{-}[dl]&\mathtt{n}\text{-}\mathtt{1}\\
\mathtt{1}&&\mathtt{2}&&&&\mathtt{n}\text{-}\mathtt{1}&&
}
\end{displaymath}
In particular, the projectives $P^\mathfrak{p}_1$, $P^\mathfrak{p}_2$,\dots, $P^\mathfrak{p}_{n-1}$
are also injective. As all these projective-injective modules have
isomorphic top and socle, it follows that they all are 
Calabi-Yau objects of dimension $0$ for $\mathcal{O}^\mathfrak{p}$. 

The set $(W^\mathfrak{p}\setminus W)_{\mathrm{short}}$ splits into
two KL-right cells: $\mathcal{R}_0:=\{e\}$ and 
$\mathcal{R}_1$, containing all the remaining elements.
The only element $e$ of $\mathcal{R}_0$ is a Duflo element.
The Duflo element in $\mathcal{R}_1$ is $s_{n-1}$. We have
$P^\mathfrak{p}_{n-1}=\theta_{s_{n-1}}L_{s_{n-1}}$, 
$P^\mathfrak{p}_{n-2}=\theta_{s_{n-2}}\theta_{s_{n-1}}L_{s_{n-1}}$, and so on.
Note that $\theta_{s_{n-2}}\theta_{s_{n-1}}=
\theta_{s_{n-1}s_{n-2}}$,
$\theta_{s_{n-3}}\theta_{s_{n-2}}\theta_{s_{n-1}}=
\theta_{s_{n-1}s_{n-2}s_{n-3}}$ and so on.
We have $\mathbf{a}(e)=0$, $\mathbf{a}(s_{n-1})=1$,
$\mathbf{a}(w_0)=\frac{n(n-1)}{2}$ and 
$\mathbf{a}(w_0s_{n-1})=\frac{(n-1)(n-2)}{2}$.

The injective module ${\color{magenta}I^\mathfrak{p}_{n}}\cong 
(P^\mathfrak{p}_n)^\star$ has the following
projective resolution:
\begin{multline*}
0\to P^\mathfrak{p}_n\langle -2-2n\rangle\to P^\mathfrak{p}_{n-1}\langle -1-2n\rangle\dots \to 
P^\mathfrak{p}_2\langle -4-n\rangle\to P^\mathfrak{p}_1\langle -3-n\rangle\to 
\\ \to P^\mathfrak{p}_1\langle -1-n\rangle\to \dots \to 
P^\mathfrak{p}_{n-2}\langle 0\rangle\to P^\mathfrak{p}_{n-1}\langle 1\rangle\to {\color{magenta}I^\mathfrak{p}_n}\to0.
\end{multline*}
Note that the only non-linear step in this resolution is the 
step between the positions $P^\mathfrak{p}_1\langle -3-n\rangle$ 
and $P^\mathfrak{p}_1\langle -1-n\rangle$,
where the difference in degree shifts is $2$ instead of $1$ in all
other steps.

The unique, up to scalar, non-zero map $P^\mathfrak{p}_{n-1}\langle 1\rangle\to
P^\mathfrak{p}_n\langle 2\rangle$ gives rise to a natural transformation from
the Serre functor to the identity (shifted by $2=2\mathbf{a}(s_{n-1})$)
and the evaluation of this natural transformation at all 
projective-injective objects (i.e. Calabi-Yau objects of dimension $0$) 
is an isomorphism.

The remaining Calabi-Yau object is $L_n=\theta_e L_e$. 
The unique, up to scalar, non-zero map $P^\mathfrak{p}_{n}\langle -2-2n\rangle\to
P^\mathfrak{p}_n\langle -2-2n\rangle$ (both at the homological position $2-2n$)
gives rise to a natural transformation from
the Serre functor to the appropriately shifted identity.
The evaluation of this natural transformation at $L_n$
is an isomorphism. Note that 
\begin{displaymath}
2(n-1)=2\left(\frac{n(n-1)}{2}-\frac{(n-1)(n-2)}{2}\right)
=2(\mathbf{a}(w_0)-\mathbf{a}(w_0s_{n-1})).
\end{displaymath}

\subsection{A non-parabolic example of $\mathcal{O}_0^{\hat{\mathcal{R}}}$ 
in type $A_3$}\label{s7.5}

Let $\mathfrak{g}=\mathfrak{sl}_4$. Then $W=S_4$, the Weyl group 
of type $A_3$ with Dynkin diagram
\begin{displaymath}
\xymatrix{r\ar@{-}[r]&s\ar@{-}[r]&t}.
\end{displaymath}
Let $\mathcal{R}$ be the KL-right cell in $W$ containing the simple reflection 
$s$. Then $\mathcal{R}=\{s,sr,st\}$ and $\hat{\mathcal{R}}$ contains one
additional element, namely $e$. Consider $\mathcal{O}_0^{\hat{\mathcal{R}}}$.
The projective objects in this category have the following graded diagrams:
\begin{displaymath}
\xymatrix@C=5mm@R=5mm{
P^{\hat{\mathcal{R}}}_{st}&&P^{\hat{\mathcal{R}}}_{sr}
&&&P^{\hat{\mathcal{R}}}_{s}&&P_e^{\hat{\mathcal{R}}}\\
st\ar@{-}[d]&&sr\ar@{-}[d]&&&s\ar@{-}[dl]\ar@{-}[dr]\ar@{-}[d]&&e\ar@{-}[d]\\
s\ar@{-}[d]&&s\ar@{-}[d]&&sr\ar@{-}[dr]&e\ar@{-}[d]&st\ar@{-}[dl]&s\\
st&&sr&&&s&&
}
\end{displaymath}
Consequently, the category $\mathcal{O}_0^{\hat{\mathcal{R}}}$ is equivalent
to the category of modules over the following quiver:
\begin{displaymath}
\xymatrix{
sr\ar@/^2mm/[r]^{\alpha}&
s\ar@/^2mm/[r]^{\beta}\ar@/^2mm/[d]^{\gamma}\ar@/^2mm/[l]^{\varepsilon}
&st\ar@/^2mm/[l]^{\tau}\\
&e\ar@/^2mm/[u]^{\delta}&} 
\end{displaymath}
with the following relations:
\begin{displaymath}
\beta\alpha=\gamma\alpha=\varepsilon\tau=\gamma\tau=
\beta\delta=\varepsilon\delta=\gamma\delta=0,\quad
\alpha\varepsilon=\tau\beta=\delta\gamma.
\end{displaymath}
We see that $P^{\hat{\mathcal{R}}}_s$, $P^{\hat{\mathcal{R}}}_{st}$ 
and $P^{\hat{\mathcal{R}}}_{sr}$ are injective. The injective
${\color{teal}I^{\hat{\mathcal{R}}}_e}=(P^{\hat{\mathcal{R}}}_e)^\star$ 
has the following projective resolution:
\begin{displaymath}
0\to P^{\hat{\mathcal{R}}}_e\langle -2\rangle
\to P^{\hat{\mathcal{R}}}_s \langle -1\rangle \to 
P^{\hat{\mathcal{R}}}_{sr}\oplus P^{\hat{\mathcal{R}}}_{st}
\to P^{\hat{\mathcal{R}}}_s\langle 1\rangle\to 
{\color{teal}I^{\hat{\mathcal{R}}}_e}\to 0. 
\end{displaymath}
In particular, the projective dimension of $I^{\hat{\mathcal{R}}}_e$ is finite.
That is, the condition \eqref{cor-s9.3-3.3} of Corollary~\ref{cor-s9.3-3}
is satisfied.

Kostant's problem has positive solution for $L_s$ by 
\cite[Theorem~1]{Maz5}. That is, the condition \eqref{cor-s9.3-3.1} 
of Corollary~\ref{cor-s9.3-3} is satisfied.
Unfortunately, the condition \eqref{cor-s9.3-3.2} of 
Corollary~\ref{cor-s9.3-3} is 
{\bf not} satisfied in this example. Let us see why.

Just as in the previous example, we have  
$\mathbf{a}(e)=0$, $\mathbf{a}(s)=1$, $\mathbf{a}(w_0)=6$, 
$\mathbf{a}(w_0s)=3$. So,  the condition \eqref{cor-s9.3-3.2} of 
Corollary~\ref{cor-s9.3-3} requires that  the projective
dimension of $P^{\hat{\mathcal{R}}}_e$ in $\mathcal{O}$ equals $6$.
We claim that this is not the case and, in fact, the projective
dimension of $P^{\hat{\mathcal{R}}}_e$ in $\mathcal{O}$ equals $10$.

To see this, consider the short exact sequence
\begin{equation}\label{eq-ex4}
0\to L_s\langle 1\rangle\to P^{\hat{\mathcal{R}}}_e\to L_e\to 0.
\end{equation}
The module $L_e$ has a minimal projective resolution in $\mathcal{O}$ of 
length $12=2\mathbf{a}(w_0)$, see \cite[Proposition~6]{Maz1}. Under the Koszul duality,
this resolution is mapped to the indecomposable projective-injective
object $I_{w_0}$. The latter has a (dual) Verma flag in which each
Verma module appears with multiplicity $1$. 

The module $L_s$ has a minimal projective resolution of 
length $11$, see \cite[Proposition~6]{Maz1}. Under the Koszul duality,
this resolution is mapped to the indecomposable injective
object $I_{w_0s}$. Note that $[\Delta_e:L_{w_0s}]=2$, see
\cite[Page~344]{St7}. Therefore, by the BGG reciprocity, 
$I_{w_0s}$ has a dual Verma flag in which $\nabla_e$,
$\nabla_r$, $\nabla_t$ and $\nabla_{rt}$ appear twice 
and all other $\nabla_w$, where $w\leq w_0s$ with respect to the
Bruhat order, appear once.

We can use now the sequence in \eqref{eq-ex4} to construct a 
projective resolution of $P^{\hat{\mathcal{R}}}_e$: there is 
a unique, up to scalar, non-zero homomorphism $\varphi$ from 
$I_{w_0}$ to $I_{w_0s}\langle 1\rangle$, which corresponds,
via Koszul duality, to a non-zero homomorphism $\psi$ from
$L_e$ to $L_s\langle -1\rangle[1]$. The cone of the latter 
is isomorphic to $P^{\hat{\mathcal{R}}}_e$. 

As the functor $\mathbb{V}$ is full and faithful on 
injective modules, let us look at the images of 
$I_{w_0}$ and $I_{w_0s}\langle 1\rangle$ under this functor.
The graded dimension vectors of the corresponding images are:
\begin{displaymath}
v=(1,0,3,0,5,0,6,0,5,0,{\color{violet}3},0,1)\quad\text{ and }\quad
w=(0,0,1,0,4,0,7,0,7,0,{\color{violet}4},0,1).
\end{displaymath}
Here the leftmost degree is $0$. The vector $v$ just records the
number of elements in $S_4$ of a fixed length: $S_4$ has
$1$ element of length $0$, then $3$ elements of length $1$ and so on.
The vector
$w$ is the sum
\begin{displaymath}
w=(0,0,1,0,4,0,6,0,5,0,3,0,1)+
(0,0,0,0,0,0,1,0,2,0,1,0,0),
\end{displaymath}
where the first summand describes the contribution, similar  to $v$, of the 
Bruhat interval $[e,w_0s]$ while the second summand describes
the contribution of the additional four dual Verma modules $\nabla_e$,
$\nabla_r$, $\nabla_t$ and $\nabla_{rt}$.

Note the {\color{violet}violet} entries $\color{violet}3$ and $\color{violet}4$
in degree $10$. As $4>3$, the cokernel of $\varphi$ will be non-zero
in that degree. Via Koszul duality, this means that 
the minimal projective resolution of $P^{\hat{\mathcal{R}}}_e$ will
have a non-zero entry at homological position $-10$. This implies 
that the projective dimension of $P^{\hat{\mathcal{R}}}_e$ 
in $\mathcal{O}$ is at least $10$.

As the map $\varphi$ is bijective in both degrees $11$ and $12$,
it follows that  the projective dimension of $P^{\hat{\mathcal{R}}}_e$ 
in $\mathcal{O}$ is exactly $10$.

\begin{remark}\label{rem-eq4}
{\rm 
If one instead takes as $\mathcal{R}$ the right cell of $r$,
then this cell contains $\{r,sr,tsr\}$. Similarly to the above, we
can compute the the projective dimension of $P^{\hat{\mathcal{R}}}_e$ 
in $\mathcal{O}$ in this case. The difference will be that 
$[\Delta_e:L_{w_0r}]=1$ and, consequently, the new vector $w$
will be $(0,0,1,0,3,0,5,0,5,0,3,0,1)$, where we only have singleton 
contributions by each element in $[e,w_0r]$. Therefore the difference
between $v$ and $w$ is $(1,0,2,0,2,0,1,0,0,0,0,0,0)$. Here the maximal
non-zero degree is $6$. This implies that the 
projective dimension of $P^{\hat{\mathcal{R}}}_e$  
in $\mathcal{O}$ in this case equals $6$, as required by 
the condition \eqref{cor-s9.3-3.2} of 
Corollary~\ref{cor-s9.3-3}.
}
\end{remark}

\vspace{2mm}

\noindent
V.~M.: Department of Mathematics, Uppsala University, Box. 480,
SE-75106, Uppsala,\\ SWEDEN, email: {\tt mazor\symbol{64}math.uu.se}

\end{document}